\documentclass[preprint,fleqn,10pt]{cls/elsarticle}


\oddsidemargin  3pt 
\evensidemargin 60pt 
\topmargin      0.0in  %
\headheight      -0.5in     %
\textheight      9in  
\textwidth       6.5in   

\graphicspath{./figures}







\bibliographystyle{./cls/elsarticle-num-mod}

\usepackage{amsmath,amssymb,amsthm}
\allowdisplaybreaks[4]  
\usepackage[hidelinks,colorlinks,linkcolor=blue]{hyperref}
\usepackage{cleveref}
\usepackage{graphicx}
\usepackage{graphics}
\usepackage {graphicx,fancyhdr}
\usepackage{graphics}
\usepackage{color}
\usepackage{flafter}
\usepackage{multirow}
\usepackage{mathrsfs} 
\usepackage{subfig}
\usepackage{tabularx}
\usepackage{lineno}
\usepackage{xfrac}
\usepackage{rotating} 
\usepackage{stmaryrd} 
\usepackage{bm} 
\usepackage{cases} 
\usepackage{enumerate}
\usepackage{ulem}  
\usepackage{url}

\usepackage{enumitem}
\setlist{nolistsep}

\modulolinenumbers[5]  




\newcommand{\dx}{\ensuremath{{\,\hspace{-0.06em}\rm{d}}\bm{x}}}
\newcommand{\dxx}{\ensuremath{{\,\hspace{-0.06em}\rm{d}}x}}
\newcommand{\ds}{\ensuremath{{\,\hspace{-0.06em}\rm{d}}s(\bm{x})}}
\newcommand{\dt}{\ensuremath{{\,\hspace{-0.06em}\rm{d}}t}}
\newcommand{\diff}[1]{\ensuremath{{\,\hspace{-0.06em}\rm{d}}{#1}}}


\newcommand{\comment}[1]{\iffalse #1 \fi }

\theoremstyle{plain}

\newtheorem{Theorem}{Theorem}[section]
\newtheorem{Lemma}[Theorem]{Lemma}
\newtheorem{Def}[Theorem]{Definition}
\newtheorem{Remark}[Theorem]{Remark}
{\normalfont}


\begin{document}
	\begin{frontmatter}
		\title{
  Error Analysis and Numerical Algorithm for PDE Approximation  with Hidden-Layer Concatenated Physics Informed Neural Networks 
  } 
        \author[lab_1]{Yanxia Qian}\ead{yxqian@xaut.edu.cn}
        \author[lab_2]{Yongchao Zhang}\ead{yoczhang@nwu.edu.cn}
        \author[lab_3]{Suchuan Dong\corref{label4}} \ead{sdong@purdue.edu}
		
		\address[lab_1]{Department of Mathematics, School of Science, Xi'an University of Technology, Xi'an, Shaanxi, 710048, China}
		\address[lab_2]{School of Mathematics, Northwest University, Xi'an, Shaanxi 710069, China}
		\address[lab_3]{Center for Computational and Applied Mathematics, Department of Mathematics, Purdue University, West Lafayette, IN47907, USA}
        \cortext[label4]{Author of Correspondence.}
		
		
\begin{abstract}

We present the hidden-layer concatenated physics informed neural network (HLConcPINN) method, which combines hidden-layer concatenated feed-forward neural networks, a modified  block time marching strategy, and a physics informed approach for approximating partial differential equations (PDEs). We analyze the convergence properties and establish the error bounds of this method for two types of PDEs: parabolic (exemplified by the heat and Burgers' equations) and hyperbolic (exemplified by the wave and nonlinear Klein-Gordon equations). We  show that its approximation error of the solution can be effectively controlled by the training loss for dynamic simulations with long time horizons. The HLConcPINN method in principle allows an arbitrary number of hidden layers not smaller than two and any of the commonly-used smooth activation functions for the hidden layers beyond the first two, with theoretical guarantees. This generalizes several recent neural-network techniques, which have theoretical guarantees but are confined to two hidden layers in the network architecture and the $\tanh$ activation function. Our theoretical analyses subsequently inform the formulation of appropriate training loss functions for these PDEs, leading to physics informed neural network (PINN) type computational algorithms that differ from the standard PINN formulation. Ample numerical experiments are presented based on the proposed algorithm to validate the effectiveness of this method and confirm aspects of the theoretical analyses. 

\end{abstract}

\begin{keyword}
physics informed neural network; hidden-layer concatenation; block time-marching; deep neural networks; deep learning; scientific machine learning
\end{keyword}

	\end{frontmatter}


	\section{Introduction}\label{sec:intro}

The rapid growth in data availability and computing resources has ushered in a transformative era in machine learning and data analytics, fueling remarkable advancements across diverse scientific and engineering disciplines \cite{LeCun2015DP}. These breakthroughs have a significant impact on fields such as natural language processing, robotics, computer vision, speech and image recognition, and genomics. Of particular promise is the use of neural network (NN) based approaches to tackle challenges such as high-dimensional problems, including  high-dimensional partial differential equation (PDE). This is due to the intractable computational workload caused by the curse of dimensionality associated with conventional numerical techniques, rendering such techniques practically infeasible. Deep learning algorithms, on the other hand, can offer invaluable support.  Pertaining to PDE problems specifically, neural network methods provide implicit regularization, exhibiting a great potential to alleviate or overcome challenges related to high dimensionality \cite{Beck2019Machine, Berner2020Analysis}.

This surge of progress has driven extensive research efforts in recent years, fostering the integration of deep learning techniques into scientific computing \cite{Karniadakisetal2021, SirignanoS2018, Raissi2019pinn, EY2018, Lu2021DeepXDE}. 
Notably, the physics informed neural networks (PINNs) approach, introduced in \cite{Raissi2019pinn}, has demonstrated remarkable success in addressing various forward and inverse PDE problems, establishing itself as a widely adopted methodology in scientific machine learning \cite{Raissi2019pinn, HeX2019, CyrGPPT2020, JagtapKK2020, WangL2020, JagtapK2020, CaiCLL2020, Tartakovskyetal2020, DongN2021, CalabroFS2021, WanW2022, FabianiCRS2021, KrishnapriyanGZKM2021, DongY2022, DongY2022rm, WangYP2022, DongW2022, Siegeletal2022,Hu2022XPINN,HuLWX2022,Penwardenetal2023}. 
Comprehensive reviews of PINNs, including its benefits and limitations, can be found in \cite{Karniadakisetal2021, Cuomo2022Scientific}.

Theoretical understanding of the physics informed neural network approach has attracted extensive research, and contributions to the theoretical analysis of PDEs using PINNs have grown steadily and substantially in recent years. 
Shin et al.~\cite{Shin2020On,Shin2010.08019} conducted an extensive investigation into PINNs, demonstrating their consistency when applied to linear elliptic and parabolic PDEs. Mishra and Molinaro proposed an abstract framework to estimate the generalization error of PINNs in forward PDE problems \cite{Mishra2022Estimates} and extended it to inverse PDE problems in \cite{Mishra2022inverse}. Bai and Koley \cite{Bai2021PINN} focused on evaluating the approximation performance of PINNs in nonlinear dispersive PDEs. Biswa et al. \cite{Biswas2022Error} supplied error estimates and stability analysis for the incompressible Navier-Stokes equations. Zerbinati~\cite{Zerbinati2022pinns} treated PINNs as a point matching localization method and provided error estimates for elliptic problems. De Ryck et al.~\cite{DeRyck2021On} presented crucial theoretical findings on PINNs with tanh activation functions and analyzed their approximation errors. These results underlie the  theoretical studies on PINNs for the Navier-Stokes equations~\cite{2023_IMA_Mishra_NS}, high-dimensional radiative transfer problem~\cite{Mishra2021pinn}, and dynamic PDEs of second order in time~\cite{Qian2303.12245}. Hu et al.~\cite{Ruimeng2209.11929} provided valuable insights into the accuracy and convergence properties of PINNs for approximating the primitive equations. Berrone et al.~\cite{Berrone2022posteriori} conducted a posterior error analysis of variational PINNs for solving elliptic boundary-value problems. In~\cite{Hu2022XPINN} the generalized Barron space has been considered for the neural network and a prior and posterior generalization bound on the PDE residuals are provided. Panos et al.~\cite{Panos2023IFENN} concentrated on two critical aspects: error convergence and engineering-guided hyperparameter search, aiming to optimize the performance of the integrated finite element neural network. Gao and Zakharian~\cite{Gao2305.11915} shed insight into the error estimation for solving nonlinear equations using PINNs in the context of $\mathbb{R}$-smooth Banach spaces. 

%

The analyses of PINNs in the aforementioned contributions all involve feed-forward neural networks (FNNs). The network output represents the PDE solution, and the neural network is trained by a ``physics informed" approach, i.e.~by minimizing a loss function related to  residuals of the PDE and the boundary/initial conditions.
The PINN methods of~\cite{2023_IMA_Mishra_NS,Mishra2021pinn,Qian2303.12245} (among others) theoretically guarantee for a class of PDEs that (i) the approximation error of the solution field will be bounded by the training loss, and (ii) there indeed exist FNNs that can make the training loss arbitrarily small. 
While these methods with theoretical guarantees are attractive and important, they suffer from two limitations: (i) the neural network must have two hidden layers, and (ii) the activation function is restricted to $\tanh$ (hyperbolic tangent) only. 

We are interested in the following question: 
\begin{itemize}
\item Is it possible to develop a PINN technique that retains the theoretical guarantees and additionally allows (i) an arbitrary number of hidden layers larger than two, and (ii) activation functions other than $\tanh$?
\end{itemize}
In this work we develop a PINN approach to address the above question. Our method provides an  answer in the affirmative, and it alleviates and largely overcomes the two aforementioned limitations.


A key strategy in our approach is the adoption of a type of modified FNNs, known as hidden-layer concatenated feed-forward neural networks (HLConcFNNs). HLConcFNN was proposed by~\cite{NiDong_hLC_2023} originally for extreme learning machines (ELMs), in order to overcome the issue that achieving high accuracy often necessitates a wide last hidden layer in conventional ELMs~\cite{2021_CMAME_LEMDD,DongY2022rm,DongW2022,WangD2024}. Building upon FNNs, HLConcFNNs establish direct connections between all hidden layer nodes and the output layer through a logical concatenation of the hidden layers. HLConcFNNs have the interesting property that, by appending hidden layers or adding extra nodes to existing hidden layers of a network structure, its representation capacity is guaranteed to be not decreasing (in practice strictly increasing with nonlinear activation functions)~\cite{NiDong_hLC_2023}. By contrast, conventional FNNs lack such a property. The properties of HLConcFNNs prove crucial to generalizing the theoretical analysis of PINN to network architectures with more than two hidden layers and with other activation functions than $\tanh$.


Another strategy in our approach and theoretical analysis is the block time marching (BTM) scheme~\cite{2021_CMAME_LEMDD} for dynamic simulations of time-dependent PDEs with long (or longer) time horizons. For long-time dynamic simulations, training the neural network on the entire spatial-temporal domain with a large dimension in time proves to be especially difficult. In this case, dividing the domain into ``time blocks" with moderate sizes and training the neural network on the space-time domain of each time block individually and successively, with the initial conditions informed by computation results from the preceding time block, can significantly improve the accuracy and ease the training~\cite{2021_CMAME_LEMDD}. This is the essence of block time marching. We refer to e.g.~\cite{KrishnapriyanGZKM2021,Penwardenetal2023} (among others) for analogous strategies.  
%
Block time marching, as formulated in its existing form~\cite{2021_CMAME_LEMDD}, is not amenable to theoretical analysis. The problem lies in the data regularity for the initial conditions, when multiple time blocks are present. To overcome this issue, we present in this work a modified BTM scheme, denoted by ``ExBTM" (standing for Extended BTM), which enables the analysis of the block time marching strategy.


In this paper we present the hidden-layer concatenated PINN (or HLConcPINN) method, by combining hidden-layer concatenated FNNs, the modified block time marching strategy, and the physics informed neural network approach, for approximating parabolic and hyperbolic type PDEs. We analyze the convergence properties and error bounds  of this method for parabolic equations, exemplified by the heat and viscous Burgers' equations, and hyperbolic equations, exemplified by the wave and nonlinear Klein-Gordon equations. Our analyses show that the approximation error of the HLConcPINN solution can be effectively controlled by the training error for long-term dynamic simulations. The  network architecture for HLConcPINNs can in principle contain any number of hidden layers larger than two, and the activation function for all hidden layers beyond the first two can essentially be any of the commonly-used activation functions with sufficient regularity, as long as the first two hidden layers adopt the tanh activation function. 
We note that the analyses of the HLConcPINN method presented herein, excluding the BTM component, can be extended to elliptic type equations (not included here).

These theoretical analyses subsequently inform the formulation of appropriate training loss functions, giving rise to PINN-type computational algorithms, which differ from the standard PINN and BTM formulations for these PDEs. We present ample numerical experiments based on the proposed algorithm. The numerical results demonstrate the effectiveness of this method in accurately capturing the solution field and affirm the relationship between the approximation error and the training loss from the theoretical analyses. Extensive numerical comparisons between the current algorithm and that employing the original BTM scheme are also presented.


The main contributions of this paper lie in two aspects: (i) the hidden-layer concatenated PINN methodology, and (ii) the analyses of the convergence properties and error estimates for this technique. The HLConcPINN method has the salient property that it allows an arbitrary number of hidden layers not smaller than two in the network structure, and allows essentially all of the commonly-used smooth activation functions, with theoretical guarantees.

The remainder of this paper is structured as follows. Section \ref{Preliminaries} provides an overview of HLConcPINN and the BTM strategy. In Sections~\ref{Heat}--\ref{Klein-Gordon}, 
we analyze the convergence and errors of the HLConcPINN algorithm for approximating the heat equation, Burgers' equation, wave equation and the nonlinear Klein-Gordon equation.
Section~\ref{numerical_examples} provides a set of
numerical experiments with these PDEs to show the effectiveness of the HLConcPINN method and to supplement  our theoretical analyses. Section~\ref{sec_summary} concludes the presentation with some further remarks.
Finally, the appendix (Section~\ref{Appendix}) summarizes several auxiliary results and provides  proofs for the theorems discussed in Section \ref{Heat}.
	\section{Hidden-Layer Concatenated Physics Informed Neural Networks and Block Time Marching }\label{Preliminaries}

\subsection{Generic PDE}\label{Generic_PDE}

Consider a compact domain $D\subset \mathbb{R}^d$ ($d>0$ being an integer) and the following initial/boundary value problem on this domain,
\begin{subequations}\label{general}
	\begin{align}\label{general_eq1}
		&\frac{\partial u}{\partial t}(\bm{x},t)+
		\mathcal{L}[u](\bm{x},t) =0 \qquad\qquad\quad (\bm{x},t)\in D\times[0,T],\\
		\label{general_eq2}
		&\mathcal{B}u(\bm{x},t)=u_{d}(\bm{x},t)\qquad\qquad\qquad\quad \ \ (\bm{x},t) \in \partial D\times[0,T],\\
		\label{general_eq3}
		&u(\bm{x},0)=u_{in}(\bm{x}) \qquad\qquad\qquad\qquad\ \ \, \bm{x}\in D,
	\end{align}
\end{subequations}
Here, $u: D\times [0,T]\subset\mathbb R^{d+1}\rightarrow \mathbb R^m$ ($m\geq 1$ being an integer) is the unknown field solution. $u_{d}$ is the boundary data, and $u_{in}$ is the initial distribution for $u$. $\mathcal{L}$ and $\mathcal{B}$ denote the differential and boundary operators. $T$ is the  dimension in time.

\subsection{Physics Informed Neural Networks}\label{PINN1}

{Physics informed neural network (PINN) refers} to the general approach for approximating a PDE problem using feedforward neural networks (FNNs) by minimizing the residuals involved in the problem.
We first define feedforward neural networks, and then discuss the related machinery for the analysis of PINN.

\begin{figure}[tb]
	\vspace*{-10pt}
	\centering
		\subfloat[Conventional FNN]{
  \includegraphics[width=0.5\linewidth]{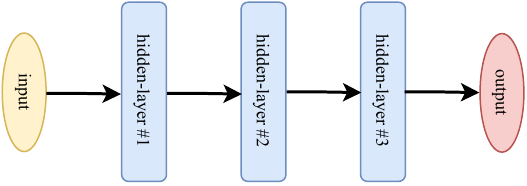}
  } \hspace{1em}
		\subfloat[Hidden-layer concatenated FNN]{
  \includegraphics[width=0.66\linewidth]{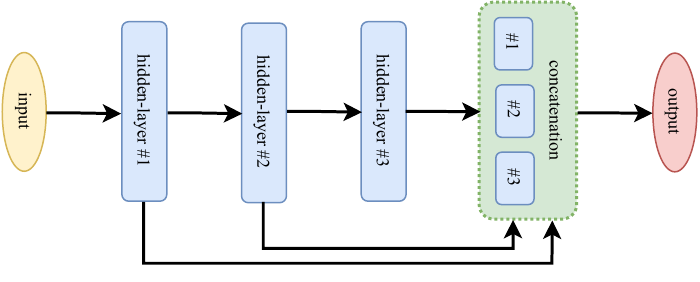}}
	\caption{Illustration of network structures (with 3 hidden layers) for conventional and hidden-layer concatenated neural networks. In hidden-layer concatenated FNN, all the hidden nodes are exposed to the output nodes, while in conventional FNN only the last hidden-layer nodes are exposed to the output nodes.}
	\label{pre_nn}
 \vspace*{-5pt}
\end{figure}

Let $\sigma: \mathbb{R} \rightarrow \mathbb{R}$ denote an activation function. 
For any $n\in \mathbb{N}$ and $z=(z_1,z_2,\cdots,z_n)\in \mathbb{R}^n$, we define $\sigma(z): = (\sigma(z_1),\cdots,\sigma(z_n))$.
A feedforward neural network  (with three hidden layers) is illustrated in Figure~\ref{pre_nn}(a) using a cartoon.
It is formally defined as follows.

\begin{Def}\label{pre_lem1}
Let $W$ and $L$ be integers, and $1\leq l_i\leq W$ ($0\leqslant i\leqslant L$) denote $(L+1)$ positive integers.
Let $R\in \mathbb{R}$ represent a bounded positive real number, $z_0\in \mathbb{R}^{l_0}$ denote the input variable, $\sigma: \mathbb{R} \rightarrow \mathbb{R}$ be a twice differentiable activation function, and $\mathcal{A}_k^{\vartheta}$ ($1\leq k\leq L$) be an affine mapping $\mathcal{A}_k^{\vartheta}: \mathbb{R}^{l_{k-1}}\rightarrow \mathbb{R}^{l_k}$ given by 
\[
z_{k-1}\mapsto W_kz_{k-1}+b_k \qquad \text{for} \ 1\leq k\leq L,
\]
where $W_k \in [-R,R]^{l_k\times l_{k-1}}\subset \mathbb{R}^{l_k\times l_{k-1}}$ and $b_k\in [-R,R]^{l_{k}} \subset\mathbb{R}^{l_{k}}$ are referred to as the weight/bias coefficients for $1\leq k\leq L$. Let  $\Theta$ denote the collection of all weights/biases and $\vartheta\in\Theta$. 

A feedforward neural network is defined as the mapping 
$u_{\vartheta}: \mathbb{R}^{l_0}\rightarrow \mathbb{R}^{l_L}$ given by
\begin{equation}\label{pre_eq2}
u_{\vartheta}(z_0)=\mathcal{A}_{L}^{\vartheta}\circ\sigma \circ\mathcal{A}_{L-1}^{\vartheta}\circ\cdots\circ\sigma\circ\mathcal{A}_{1}^{\vartheta}(z_0),\qquad z_0\in \mathbb{R}^{l_0},
\end{equation} 
where $\circ$ denotes a function composition. 
\end{Def}

The feedforward neural network in the definition contains $(L+1)$ layers ($L\geq 2$) with widths $(l_0,l_1, \cdots,l_L)$, respectively. The input layer and the output layer have $l_0$ and $l_L$ nodes, respectively. The $(L-1)$ layers between the input/output layers are the hidden layers, with widths $l_k$ ($1\leq k\leq L-1$). 
From layer to layer, the network logic represents an affine transform, followed by a function composition with the activation function $\sigma$. No activation function is applied to the output layer. 
Hereafter we refer to the vector of positive integers, $\bm l=(l_0,l_1,\dots,l_L)$, as an architectural vector, which characterizes the architecture of an FNN.

The neural network $u_{\vartheta}$, defined by \eqref{pre_eq2}, is a parameterized function of the input $z_0=$ $(\bm{x},t)$, with the parameter $\vartheta$ of weights and biases. We represent the solution field $u$ to problem~\eqref{general} by the neural network $u_{\vartheta}$, and 
wish to find the parameters $\vartheta$ such that $u_{\vartheta}$ approximates $u$ well. 

It is necessary to approximate  function integrals during the analysis of physics informed neural networks. 
Given a subset $\Lambda \subset \mathbb{R}^d$ and a function $f\in L^1(\Lambda)$, a quadrature rule provides an approximation of the integral by
$ 
\int_{\Lambda}f(z)\diff{z}\approx
\frac{1}{M}\sum_{n=1}^M \omega_n f(z_n),
$ 
where $z_n\in \Lambda$ ($1\leq n\leq M$) represents the quadrature points and $\omega_n$ ($1\leq n\leq M$) denotes the appropriate quadrature weights. The approximation accuracy is influenced by the regularity of $f$, the type of quadrature rule and the number of quadrature points ($M$). 
In the partial differential equations considered in this work, we assume that the problem dimension is low, thus allowing 
the use of standard deterministic values for the integrating points. 
Following \cite{2023_IMA_Mishra_NS,Qian2303.12245}, we employ the midpoint rule for numerical integrals. 
We partition $\Lambda$ into $M\sim N^d$ cubes 
with an edge length $\frac{1}{N}$.
The approximation accuracy is determined by
\begin{equation}\label{int1}
	\left|\int_{\Lambda}f(z)\diff{z}-
	\mathcal{Q}_M^{\Lambda}[f]
	\right|\leq C_fM^{-2/d},
\end{equation}
where $\mathcal{Q}_M^{\Lambda}[f]:=\frac{1}{M}\sum_{n=1}^M\omega_nf(z_n)$, $C_f\lesssim \|f\|_{C^2(\Lambda)}$ ($a\lesssim b$ denotes $a\leq Cb$ for some constant $C$) 
and $\{z_n\}_{n=1}^M$ denote the midpoints of these cubes~\cite{DavisR2007}.


\subsection{Hidden-Layer Concatenated Physics Informed Neural Networks (HLConcPINNs)}\label{HLC}

We consider a type of modified FNN, termed hidden-layer concatenated feed-forward neural network (HLConcFNN) proposed by~\cite{NiDong_hLC_2023}, for PDE approximation in this work.
%
HLConcFNNs differ from traditional FNNs by a modification that establishes direct connections between all hidden nodes and the output layer. 
The modified network, as illustrated in Figure \ref{pre_nn}(b) with three hidden layers, incorporates a logical concatenation layer between the last hidden layer and the output layer. This layer aggregates the output fields of all hidden nodes across the network, spanning from the first to the last hidden layers. From the logical concatenation layer to the output layer, a typical affine transformation is applied, possibly followed by an activation function, to obtain the output of the overall neural network. Notably, the logical concatenation layer does not introduce any trainable parameters. 
Hereafter we refer to the modified network as the hidden-layer concatenated FNN (HLConcFNN), as opposed to the original FNN, which serves as the base neural network. The incorporation of logical concatenation ensures that all the hidden nodes in the base network architecture have direct connections to the output nodes in  HLConcFNN. This direct connectivity facilitates the flow of information from the hidden layers to the output layer, enhancing the network's capacity to capture intricate relations. We refer to the approach,   combining physics informed neural networks with hidden-layer concatenated FNNs, as hidden-layer concatenated physics informed neural networks (HLConcPINNs).

Given architectural vector $\bm{l}= (l_0, l_1, \cdots, l_L)$, 
the logical concatenation layer contains a total of $N_c(\bm{l})=\sum_{i=1}^{L-1}l_i$ virtual nodes, with the total number of hidden-layer coefficients in the neural network given by $N_h(\bm{l})=\sum_{i=1}^{L-1}(l_{i-1}+1)l_i$. 
The total number of network parameters in  HLConcFNN is $N_a(\bm l)=N_h(\bm l)+ [N_c(\bm l)+1]l_L$.  
The HLConcFNN is formally defined as the mapping $u_{\theta}:\mathbb R^{l_0}\rightarrow\mathbb R^{l_L}$ given by
\begin{equation}\label{pre_eq3}
	u_{\theta}(z)=\sum_{i=1}^{L-1} M_iu_i^{\vartheta}(z)+b_{L},\qquad z\in \mathbb{R}^{l_0},
\end{equation} 
where $\theta\in\mathbb{R}^{N_a}$ denotes all the network parameters in HLConcFNN, and $\vartheta\in\mathbb{R}^{N_h}$ denotes the hidden-layer parameters. $u_i^{\vartheta}(z)=\sigma \circ\mathcal{A}_{i}^{\vartheta}\circ\sigma \circ\mathcal{A}_{i-1}^{\vartheta}\circ\cdots\circ\sigma\circ\mathcal{A}_{1}^{\vartheta}(z)$ $(1\leq i \leq L-1)$, with 
$M_i\in \mathbb{R}^{l_L\times l_i}$ $(1\leq i \leq L-1)$ denoting the connection coefficients between the output layer and the $i$-th hidden layer. $b_L\in\mathbb{R}^{l_L}$ is the bias of the output layer.

Given an architectural vector $\bm{l}$ and an activation function $\sigma$, let HLConcFNN$(\bm{l},\sigma)$ denote the  hidden-layer concatenated neural network associated with this architecture. For a given domain $D\subset \mathbb{R}^d$, 
we define
\begin{align}\label{pre_eq4}
	U(D,\bm{l},\sigma)=\{\ u_{\theta}(z)\ |\ u_{\theta}(z) \ {\rm is} \  {\rm the} \ {\rm output} \ {\rm of} \ {\rm HLConcFNN}(\bm{l},\sigma),\ z\in D,\ \theta\in\mathbb{R}^{N_a(\bm l)}\ \}
\end{align} 
as the collection of all possible output fields of this HLConcFNN$(\bm{l},\sigma)$. $U(D,\bm{l},\sigma)$ denotes the set of  functions that can be exactly represented by this HLConcFNN$(\bm{l},\sigma)$ on $D$. Following~\cite{NiDong_hLC_2023}, we refer to $U(D,\bm{l},\sigma)$ as the representation capacity of the HLConcFNN$(\bm{l},\sigma)$ for the domain $D$.


HLConcFNNs exhibit a hierarchical structure in terms of their representation capacity, which is crucial to the current analyses. Specifically,  given a base network architecture $\bm l$, if a new hidden layer is appended to this network or if extra nodes are added to an existing hidden layer, the representation capacity of the HLConcFNN associated with the new architecture is guaranteed to be not smaller than that of the original architecture. These points are made precise by Lemmas~\ref{Ar_5} and~\ref{Ar_6} in Section~\ref{Auxiliary lemmas}. As a result, starting with an initial network architecture, one can attain a sequence of network architectures, by either appending one or more hidden layers to or adding extra nodes to  existing hidden layers of the preceding architecture. Then we can conclude that the HLConcFNNs associated with this sequence of network architectures have non-decreasing representation capacities. By contrast, conventional FNNs do not have such a property. It is also noted that the HLConcFNN associated with a network architecture has a representation capacity at least as large as that of the conventional FNN associated with this architecture.

\begin{figure}[!htb]
	\vspace*{-5pt}
	\centering
	\subfloat{
 \includegraphics[width=1\linewidth]{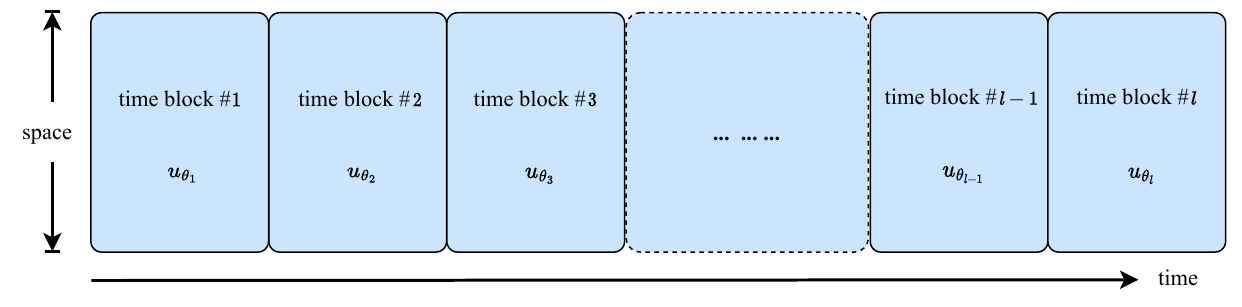}}
	\vspace*{-10pt}
	\caption{Illustration of the block time marching (BTM) strategy. 
 The large time domain is partitioned into multiple blocks, with each block 
 computed individually and successively.
 Solution in one block informs the initial condition for the subsequent time block.
 }
	\label{pre_btm}
 \vspace*{-5pt}
\end{figure}

\subsection{Block Time Marching (BTM)}\label{BTM}

Long-time dynamic simulation of time-dependent PDEs is a challenging issue to neural network-based methods. NN-based techniques oftentimes adopt a space-time approach for solving dynamic PDEs, in which the space and time variables are treated on the same footing. When the temporal dimension of the space-time domain becomes large, as necessitated by the interest in long-time dynamics, network training can become immensely difficult, leading to grossly inaccurate NN predictions, especially toward later time instants.

Block time marching (BTM)~\cite{2021_CMAME_LEMDD} is an effective strategy that can alleviate the challenge posed by long time horizons and facilitate accurate long-term simulations with neural networks. 
BTM addresses the challenge 
by partitioning the large time domain into multiple windows (time blocks) and successively advancing the solution through these blocks.
Figure \ref{pre_btm} provides a visual representation of the block time marching strategy. The space-time domain, with a long time horizon, is divided into time blocks along the time axis.  
Each time block should have a moderate  size in time, facilitating  effective capture of the dynamics. The initial-boundary value problem is solved individually and sequentially on the space-time domain of each time block using a suitable method, in particular with HLConcPINN in this work. The solution obtained on one time block, evaluated at the last time instant, informs the initial conditions for the computation of the subsequent time block. Starting with the first time block, we march in time block by block, until the last time block is traversed. 

The basic BTM formulation as described above, unfortunately, is not amenable to theoretical analysis. Our analysis requires a modification to the basic formulation, which will be discussed in subsequent sections.

\subsection{Residuals and Training Sets}
\label{sec_setting}

We combine block time marching and hidden-layer concatenated PINNs for solving the system~\eqref{general}. We provide a   theoretical analysis of the resultant method, and investigate  the numerical algorithms as suggested by the theory.

For simplicity we consider uniform time blocks in BTM. We divide the temporal dimension $T$ into $l\geq 1$ uniform time blocks, where $l$ is chosen such that the block size $\Delta T=T/l$ ($\Delta t$ or $\Delta T$, in Sect 3-6, time block $\Delta t = t_{i+1}-t_i$) is of a moderate value. Let $[t_{i-1},t_i]$ ($1\leq i\leq l$) denote the $i$-th block in time, where $t_i=i\Delta T$ (or $t_i=i\Delta t$) and $t_0$ denotes the initial time.
We march in time block by block, and within each time block solve the system~\eqref{general} using hidden-layer concatenated PINN.

To solve~\eqref{general},
it is necessary to specify the residuals  and the set of training collocation points. 
%
Let $\mathcal{S}_i\subset \overline{D}\times [0,t_i]$ denote the set of collocation points for training the HLConcPINN on the $i$-th time block ($1\leq i\leq l$).
We define  $\mathcal{S}_i = \mathcal{S}_{int_{i}} \cup \mathcal{S}_{sb_{i}} \cup \mathcal{S}_{tb_{i}}$ with,
\begin{itemize}
	\item Interior training points $\mathcal{S}_{int_{i}}=\{y_n, 1\leq n \leq N_{int_{i}}\}$, where $y_n= (\bm{x}_n,t_n) \in D\times (t_{i-1},t_i)$. 
	\item Spatial boundary training points $\mathcal{S}_{sb_{i}}=\{y_n, 1\leq n \leq N_{sb_{i}}\}$, where $y_n= (\bm{x}_n,t_n) \in \partial D\times (t_{i-1},t_i)$. 
	\item Temporal boundary training points $\mathcal{S}_{tb_{i}}=\{y_n, 1\leq n \leq N_{tb_{i}}\}$, where $y_n=(\bm{x}_n,t_n) \in D\times \{t_0,t_1,\dots,t_{i-1} \}$.
\end{itemize}
Here $(N_{{int}_i},N_{{sb}_i},N_{{tb}_i})$ denote the number of interior points, spatial boundary points, and temporal boundary points for the $i$-th block, respectively.

Define space-time domains $\Omega_i = D\times [t_{i-1},t_i]$ and $\Omega_{*i}= \partial D\times [t_{i-1},t_i]$ for time block $i$. We employ a HLConcFNN $u_{\theta}: \Omega_i\rightarrow \mathbb{R}^m$ to approximate the solution $u$  on $\Omega_i$, and
 define the residual functions by ($1\leq i\leq l$),
\begin{subequations}\label{general_pinn}
	\begin{align}
		\label{general_pinn_eq1}
		&\mathcal{R}_{int_i}[u_{\theta_i}](\bm{x},t)=
		\frac{\partial u_{\theta_i}}{\partial t}(\bm{x},t)+
		\mathcal{L}[u_{\theta_i}](\bm{x},t)\qquad\qquad\quad (\bm{x},t)\in \Omega_i , \\
		\label{general_pinn_eq2}
		&\mathcal{R}_{sb_i}[u_{\theta_i}](\bm{x},t)=\mathcal{B}u_{\theta_i}(\bm{x},t)-u_{d}(\bm{x},t)\qquad\qquad\qquad\ \ \ (\bm{x},t) \in \Omega_{*i},\\
		\label{general_pinn_eq3}
		&\mathcal{R}_{tb_i}[u_{\theta_i}](\bm{x},t_{i-1})=u_{\theta_i}(\bm{x},t_{i-1})-u_{\theta_{i-1}}(\bm{x},t_{i-1})\qquad\ \ \, \bm{x}\in D,
	\end{align}
\end{subequations}
where $u_{\theta_{0}}(\bm{x},t_0) = u_{in}(\bm{x})$. These residuals characterize the extent to which a given function $u$ satisfies the initial/boundary value problem~\eqref{general} for time block $i$. If $u$ is the exact solution, then $\mathcal{R}_{int_i}[u]=\mathcal{R}_{sb_i}[u]=\mathcal{R}_{tb_1}[u]\equiv 0$.
The above settings on the time block partitions and training sets will be employed throughout the subsequent sections.

\comment{
The question \eqref{general} belongs to parabolic-type partial differential equations. For hyperbolic-type PDE $\frac{\partial^2 u}{\partial t^2}(\bm{x},t)$ $+$ $\mathcal{L}[u](\bm{x},t)=0$, by introducing $v=\frac{\partial u}{\partial t}$, we divide the interior residual into the following two parts:
\begin{subequations}
\begin{align}
	\label{general1_pinn_eq1.1}
	&\mathcal{R}_{int1_i}[u_{\theta_i},v_{\theta_i}](\bm{x},t)=
	\frac{\partial u_{\theta_i}}{\partial t}(\bm{x},t)-v_{\theta_i}(\bm{x},t),\\
	\label{general1_pinn_eq1.2}
	&\mathcal{R}_{int2_i}[u_{\theta_i},v_{\theta_i}](\bm{x},t)=
	\frac{\partial v_{\theta_i}}{\partial t}(\bm{x},t)+
	\mathcal{L}[u_{\theta_i}](\bm{x},t).
\end{align}
\end{subequations}

In the absence of the term $\frac{\partial u}{\partial t}$ and time space, \eqref{general} can be transformed into an elliptic-type PDE. And then,  we choose the training set $\mathcal{S}^{E}$ as a subset of the domain $D$ and partition the full training set $\mathcal{S}^{E}$ into two distinct parts $\mathcal{S}^{E} =\mathcal{S}_{int}^{E}\cup \mathcal{S}_{sb}^{E}$: 
\begin{itemize}
	\item Interior training points $\mathcal{S}_{int}^{E}=\{\bm{x}_n\}$ for $1\leq n \leq N_{int}$, with each $\bm{x}_n \in D$. 
	\item Spatial boundary training points $\mathcal{S}_{sb}^{E}=\{\bm{x}_n\}$ for $1\leq n \leq N_{sb}$, with each $\bm{x}_n \in \partial D$. 
\end{itemize}
These analytical techniques and theoretical results can be extended to elliptic-type equations. 
} 

\vspace{0.1in} 
In the forthcoming sections, we  focus on four time-dependent partial differential equations: the heat equation, viscous Burgers’ equation, wave equation, and nonlinear Klein-Gordon equation,
 representative of parabolic and hyperbolic type PDEs. 
We provide an analysis of the HLConcPINN method for approximating the solutions to these equations for long-time dynamic simulations, and investigate the PINN type  computational algorithm stemming from these analyses. 
We implement these algorithms and numerically demonstrate the effectiveness of the HLConcPINN method using extensive experiments.

	\section{HLConcPINN  for Approximating the Heat Equation}\label{Heat}

\subsection{Heat Equation}

Let $D \subset \mathbb{R}^d$ denote an open connected bounded domain with a $C^k$ boundary $\partial D$. We consider the
 heat equation:
\begin{subequations}\label{heat}
	\begin{align}\label{heat_eq1}
		&\frac{\partial u}{\partial t}-\Delta u = f     \qquad\qquad\quad \text{in}\ D\times[0,T],\\
		\label{heat_eq2}
		&u(\bm{x},0)=u_{in}(\bm{x})\qquad\quad\ \ \, \text{in}\ D,\\
		\label{heat_eq3}
		&u|_{\partial D}=u_{d}\qquad\qquad\qquad\ \, \text{in}\ \partial D\times[0,T].
	\end{align}
\end{subequations}
Here $u(\bm x,t)$ is the field solution, $f$ is a source term, and $u_{in}$ and $u_d$ denote the initial distribution and the boundary data, respectively.


\subsection{Hidden-Layer Concatenated Physics Informed Neural Networks}


We divide the temporal domain into $l$ blocks, and seek $l$ deep neural networks $u_{\theta_{i}} : D\times [0,t_i] \rightarrow \mathbb{R}$, parameterized by $\theta_{i}$, 
to approximate the solution $u$ of \eqref{heat} for $1\leq i\leq l$. 
For any 
$u_{\theta_{i}} : D\times [0,t_i] \rightarrow \mathbb{R}$ ($1\leq i\leq l$), we define the residuals:
\begin{subequations}\label{heat_pinn}
	\begin{align}\label{heat_pinn_eq1}
		&R_{int_{i}}[u_{\theta_{i}}](\bm{x},t) =\frac{\partial u_{\theta_i}}{\partial t}-\Delta u_{\theta_{i}} - f,
  \qquad\qquad\qquad\quad \ \ (\bm{x},t)\in \Omega_i,
  \\
		\label{heat_pinn_eq2}
		&R_{tb_{i}}[u_{\theta_{i}}](\bm{x},t_{j-1}) =u_{\theta_i}(\bm{x},t_{j-1})-u_{\theta_{j-1}}(\bm{x},t_{j-1}),
  \qquad \bm{x}\in D, \quad \text{for}\ 1\leq j\leq i,
  \\
		\label{heat_pinn_eq3}
		&R_{sb_{i}}[u_{\theta_{i}}](\bm{x},t) =u_{\theta_i}(\bm{x},t)-u_{d}(\bm{x},t),
  \qquad\qquad\qquad\quad\ \, (\bm{x},t)\in \Omega_{*i}.
	\end{align}
\end{subequations}
Here $u_{\theta_{0}}(\bm{x},t_{0})= u_{in}(\bm{x})$, and $\Omega_i$ and $\Omega_{*i}$ are defined in Section~\ref{sec_setting}. Note that for the exact solution $R_{int_{i}}[u]=R_{tb_{i}}[u]=R_{sb_{i}}[u]=0$.

With HLConcPINN, we try to find a sequence of neural networks $u_{\theta_{i}}$ ($1\leq i\leq l$), for which all the residuals are  minimized. Specifically, we minimize  the quantity,
\begin{align}
	\label{heat_G}
	&\mathcal{E}_{G_i}(\theta_i)^2=\widetilde{\mathcal{E}}_{G_i}(\theta_i)^2+\mathcal{E}_{G_{i-1}}(\theta_{i-1})^2,
\end{align}
sequentially for $1\leq i\leq l$, where
\begin{align}
	\label{heat_Gi}
	&\widetilde{\mathcal{E}}_{G_i}(\theta_i)^2=\int_{\Omega_i}|R_{int_{i}}[u_{\theta_{i}}](\bm{x},t)|^2\dx\dt
	+\sum_{j=1}^{i}\int_{D}|R_{tb_{i}}[u_{\theta_{i}}](\bm{x},t_{j-1})|^2\dx
	\nonumber\\
	&\qquad+\left(\int_{\Omega_{*i}}|R_{sb_{i}}[u_{\theta_{i}}](\bm{x},t)|^2\ds\dt\right)^{\frac{1}{2}}.
\end{align}
In equation~\eqref{heat_G} we set $\mathcal{E}_{G_{i-1}}(\theta_{i-1}) =0$ for $i=1$. The quantity $\mathcal{E}_{G_i}(\theta)$ is commonly known as the population risk or generalization error of the neural networks $u_{\theta_i}$. 

\begin{Remark}\label{heat_Remark1} 
	In the original block time marching scheme from~\cite{2021_CMAME_LEMDD}, when computing a particular time block, the initial condition is taken to be the solution data from the preceding time block evaluated at the last time instant. Theorem \ref{Ar_7} (in the appendix) suggests that this initial value may have a different regularity from the true initial  data for the problem. This difference can affect the regularity of the computed solution in the current time block.
	
	To address this issue, we make the following crucial modification to  block time marching. We employ the true initial data for the problem  as the initial value for all time blocks within the interval $[0, t_i]$ for $1 \leq i \leq l$, as specified by~\eqref{heat_pinn_eq2}. This ensures that the regularity of the initial value is maintained throughout the time blocks. Essentially, we enforce the PDE and the boundary conditions only on the interval $[t_{i-1}, t_i]$ in time. 
 For the time periods $[0, t_{i-1}]$, however, we enforce the residuals solely at the discrete points $t_{j-1}$ ($1\leq j \leq i$).
	By using the true initial data consistently and training the neural network within individual time blocks successively, we can maintain the regularity of the solution across all time blocks. The initial condition \eqref{heat_pinn_eq2} and the setting for training data points in subsequent discussions employ this modified BTM formulation. 	
\end{Remark}

The integrals in \eqref{heat_G} can be approximated numerically, leading to a training loss function. Following the discussions of Section~\ref{sec_setting}, the full training set consists of $\mathcal{S} = \bigcup_{i=1}^l\mathcal{S}_{i}$ with $\mathcal{S}_i = \mathcal{S}_{int_{i}} \cup \mathcal{S}_{sb_{i}} \cup \mathcal{S}_{tb_{i}}$ and we employ the midpoint rule for the numerical quadrature. This leads to the following approximation:
\begin{align}
	\label{heat_T}
	&\mathcal{E}_{T_i}(\theta_i,\mathcal{S}_i)^2=\widetilde{\mathcal{E}}_{T_i}(\theta_i,\mathcal{S}_i)^2+\mathcal{E}_{T_{i-1}}(\theta_{i-1},\mathcal{S}_{i-1})^2,\\
	\label{heat_Ti}
	&\widetilde{\mathcal{E}}_{T_i}(\theta_i,\mathcal{S}_i)^2=\mathcal{E}_T^{int_{i}}(\theta_i,\mathcal{S}_{int_{i}})^2+\mathcal{E}_T^{tb_{i}}(\theta_i,\mathcal{S}_{tb_{i}})^2+\mathcal{E}_T^{sb_{i}}(\theta_i,\mathcal{S}_{sb_{i}}),
\end{align}
where
\begin{subequations}\label{heat_TT}
	\begin{align}
		\label{heat_T1}
		&\mathcal{E}_T^{int_{i}}(\theta_i,\mathcal{S}_{int_{i}})^2 = \sum_{n=1}^{N_{int_{i}}}\omega_{int_{i}}^n|R_{int_{i}}[u_{\theta_{i}}](\bm{x}_{int_{i}}^n,t_{int_{i}}^n))|^2,\\
		\label{heat_T2}
		&\mathcal{E}_T^{sb_{i}}(\theta_i,\mathcal{S}_{sb_{i}})^2 = \sum_{n=1}^{N_{sb_{i}}}\omega_{sb_{i}}^n|R_{sb_{i}}[u_{\theta_{i}}](\bm{x}_{sb_{i}}^n,t_{sb_{i}}^n))|^2,\\\label{heat_T3}
		&\mathcal{E}_T^{tb_{i}}(\theta_i,\mathcal{S}_{tb_{i}})^2 = \sum_{j=1}^{i}\sum_{n=1}^{N_{tb_{i}}}\omega_{tb_{i}}^n|R_{tb_{i}}[u_{\theta_{i}}](\bm{x}_{tb_i}^n,t_{j-1})|^2,
	\end{align}
\end{subequations}
with the term $\mathcal{E}_{T_{i-1}}(\theta_{i-1}$,$\mathcal{S}_{i-1}) =0$ for $i=1$. Here, the quadrature points in space-time constitute the data sets $\mathcal{S}_{int_i} = \{(\bm{x}_{int_{i}}^n,t_{int_{i}}^n)\}_{n=1}^{N_{int_i}}$, $\mathcal{S}_{tb_i} = \{\bm{x}_{tb_{i}}^n\}_{n=1}^{N_{tb_i}}$ and $\mathcal{S}_{sb_i} = \{(\bm{x}_{sb_{i}}^n,t_{sb_{i}}^n)\}_{n=1}^{N_{sb_i}}$, and $\omega_{\star_i}^n$ are the quadrature weights with $\star$ denoting $int$, $tb$ or $sb$. 

\subsection{Error Analysis}  

Let $\hat{u}_{i}= u_{\theta_{i}} - u$ denote the error of the HLConcPINN approximation ($u_{\theta_i}$) against the true solution ($u$). 
By using equation \eqref{heat} and  definitions of the  residuals~\eqref{heat_pinn}, we obtain
\begin{subequations}\label{heat_error}
	\begin{align}\label{heat_error_eq1}
		&R_{int_{i}}=\frac{\partial \hat{u}_{i}}{\partial t}-\Delta\hat{u}_{i},\\
		\label{heat_error_eq2}
		&R_{tb_{i}}|_{t=t_{j-1}}=\hat{u}_{i}|_{t=t_{j-1}} - \hat{u}_{j-1}|_{t=t_{j-1}},
  \qquad j = 1,2,\cdots, i,\\
		\label{heat_error_eq3}
		&R_{sb_{i}}=\hat{u}_{i}|_{\partial D},
	\end{align}
\end{subequations}
where $\hat{u}_{0}|_{t=t_{0}} = 0$. We define the total error of the HLConcPINN approximation by
\begin{equation}\label{heat_total}
	\mathcal{E}(\theta_i)^2=\int_{t_{i-1}}^{t_{i}}\int_{D}|\hat{u}_i(\bm{x},t)|^2\dx\dt,
 \quad 1\leq i\leq l.
\end{equation}

The bounds on the HLConcPINN residuals and its approximation errors are provided by the following three theorems.
The proofs for these theorems are given in the appendix (Section \ref{Proof}).
\begin{Theorem}\label{sec4_Theorem1} 
	Let $\widetilde\Omega_i = D\times [0,t_i]$ and $\widetilde\Omega_{*i}= \partial D\times [0,t_i]$. Suppose $n$, $d$, $k \in \mathbb{N}$ with $n\geq2$ and $k\geq 3$, and $u\in H^k(\widetilde\Omega_i)$. For every integer $N>5$, there exists a HLConcPINN $u_{\theta_i}$ such that
	\begin{equation}\label{lem4.1}
		\|R_{int_{i}}\|_{L^2(\widetilde\Omega_{i})}\lesssim N^{-k+2}{\rm ln}^2N;
		\qquad
		\|R_{tb_{i}}(\bm{x},t_{j-1})\|_{L^2(D)},\  \|R_{sb_{i}}\|_{L^2(\widetilde\Omega_{*i})}\lesssim N^{-k+1}{\rm ln}N,\quad 1\leq j\leq i.
	\end{equation}
\end{Theorem}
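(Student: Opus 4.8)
The plan is to construct, for every block $i$, a HLConcPINN that approximates the \emph{same} exact solution $u$ of~\eqref{heat}, and then to convert the three residual bounds into Sobolev-norm estimates for the approximation error $\hat u_i = u_{\theta_i}-u$ by means of the identities~\eqref{heat_error}. Concretely, \eqref{heat_error_eq1} shows $\|R_{int_{i}}\|_{L^2(\widetilde\Omega_i)}\le\|\partial_t\hat u_i\|_{L^2(\widetilde\Omega_i)}+\|\Delta\hat u_i\|_{L^2(\widetilde\Omega_i)}\lesssim\|\hat u_i\|_{H^2(\widetilde\Omega_i)}$, while \eqref{heat_error_eq3} gives $\|R_{sb_{i}}\|_{L^2(\widetilde\Omega_{*i})}=\|\hat u_i\|_{L^2(\partial D\times[0,t_i])}$ and \eqref{heat_error_eq2} gives $\|R_{tb_{i}}(\cdot,t_{j-1})\|_{L^2(D)}\le\|\hat u_i(\cdot,t_{j-1})\|_{L^2(D)}+\|\hat u_{j-1}(\cdot,t_{j-1})\|_{L^2(D)}$. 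Thus everything reduces to controlling $\hat u_i$ in the interior $H^2$ norm and controlling its spatial- and temporal-slice traces in $L^2$.

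Second, the approximation engine. Since $\partial D$ is $C^k$, I would first extend $u$ from $\widetilde\Omega_i$ to a hypercube by a bounded Sobolev extension operator, preserving the $H^k$ norm up to a constant. On the cube I would invoke the $\tanh$ approximation theorem of De Ryck et al.~\cite{DeRyck2021On}: for $u\in H^k$ there is a two-hidden-layer $\tanh$ network $u^*$ with widths polynomial in $N$ such that $\|u-u^*\|_{H^j}\lesssim N^{-(k-j)}(\ln N)^{j}$ for $j\le 2$, which after restriction back to $\widetilde\Omega_i$ yields $\|\hat u_i\|_{H^2(\widetilde\Omega_i)}\lesssim N^{-k+2}\ln^2 N$ and $\|\hat u_i\|_{H^1(\widetilde\Omega_i)}\lesssim N^{-k+1}\ln N$. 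The crucial step particular to this paper is to realize $u^*$ as a HLConcPINN with $n\ge 2$ hidden layers: writing the HLConcFNN output~\eqref{pre_eq3} as $\sum_{p=1}^{L-1}M_pu_p^{\vartheta}+b_L$, I set the first two hidden layers to carry the $\tanh$ network $u^*$ and take $M_p=0$ for all $p>2$, so that the output is independent of the deeper layers and of their activation functions. The representation-capacity hierarchy of Lemmas~\ref{Ar_5} and~\ref{Ar_6} then guarantees that appending these extra layers (with any sufficiently smooth activation) does not shrink the attainable function set, and this is precisely what licenses arbitrarily many hidden layers and non-$\tanh$ activations beyond the first two.

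Third, assembling the three estimates. The interior bound follows directly from $\|\hat u_i\|_{H^2(\widetilde\Omega_i)}\lesssim N^{-k+2}\ln^2 N$ via the triangle inequality above. For the spatial boundary I would use the trace inequality $\|\hat u_i\|_{L^2(\partial D\times[0,t_i])}\lesssim\|\hat u_i\|_{H^1(\widetilde\Omega_i)}\lesssim N^{-k+1}\ln N$. For the temporal boundary I would use the time-slice trace $\|\hat u_i(\cdot,s)\|_{L^2(D)}\lesssim\|\hat u_i\|_{H^1(\widetilde\Omega_i)}$ (the embedding $H^1(D\times(0,t_i))\hookrightarrow C([0,t_i];L^2(D))$) at $s=t_{j-1}$, applied to both $\hat u_i$ and $\hat u_{j-1}$; since each block network is built to approximate the same $u$ with the same rate, the triangle inequality gives $\|R_{tb_{i}}(\cdot,t_{j-1})\|_{L^2(D)}\lesssim N^{-k+1}\ln N$ uniformly in $1\le j\le i$, with the case $j=1$ using $u_{\theta_0}=u_{in}=u(\cdot,0)$.

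The main obstacle is not the quantitative rate, which is inherited from the cited $\tanh$ estimate, but the bookkeeping that ties it to the HLConcFNN architecture: one must verify that the embedding via $M_p=0$ together with Lemmas~\ref{Ar_5}--\ref{Ar_6} really produces a network in the class $U(\widetilde\Omega_i,\bm l,\sigma)$ of the prescribed depth and mixed activations while preserving all Sobolev norms, and that a single integer $N$ (driving the widths) simultaneously yields the matching log-powers $\ln^2 N$ and $\ln N$ across the interior and boundary residuals. Care is also needed so that the Sobolev extension does not degrade the rate and that the spatial- and time-slice traces are rigorously justified for the (smooth) network error.
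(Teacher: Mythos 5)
Your proposal is correct and takes essentially the same route as the paper's proof: reduce the residuals to Sobolev norms of $\hat u_i$ via the error identities, approximate $u$ by a two-hidden-layer $\tanh$ network embedded into the HLConcFNN architecture through the capacity Lemmas~\ref{Ar_5}--\ref{Ar_6} (this is precisely the content of the paper's Lemma~\ref{Ar_7}, which uses vanishingly small rather than exactly zero output coefficients for the deeper layers), and control the spatial-boundary and time-slice residuals by trace-type estimates. The only cosmetic differences are that the paper bounds the interior time slices $t=t_{j-1}$ by applying the multiplicative trace inequality (Lemma~\ref{Ar_2}) on the sub-cylinders $\widetilde\Omega_{j-1}$ rather than your Bochner embedding $H^1(D\times(0,t_i))\hookrightarrow C([0,t_i];L^2(D))$, and that you make explicit the Sobolev extension to a hypercube which the paper leaves implicit.
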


 Theorem \ref{sec4_Theorem1} implies that one can make the HLConcPINN residuals~\eqref{heat_pinn} arbitrarily small by choosing $N$ to be sufficiently large. It follows that the generalization error $\mathcal{E}_{G_i}(\theta_i)^2$ in~\eqref{heat_G} can be made arbitrarily small.

The next two theorems indicate that the approximation error $\mathcal{E}(\theta_i)^2$ is also small when the generalization error $\mathcal{E}_{G_i}(\theta_i)^2$ is small with the HLConcPINN approximation $u_{\theta_i}$. Moreover,
the approximation error $\mathcal{E}(\theta_i)^2$ can be arbitrarily small, provided that the training error $\mathcal{E}_{T_i}(\theta_i,\mathcal{S}_i)^2$ is sufficiently small and the sample set is sufficiently large. 


\begin{Theorem}\label{sec4_Theorem2} Let $d\in \mathbb{N}$, and $u\in C^1(\widetilde\Omega_i)$ be the classical solution to \eqref{heat}. Let $u_{\theta_{i}}$ be a HLConcPINN with parameter $\theta_i$, $t_{i-1}\leq\tau \leq t_i$, and $\Delta t=t_i-t_{i-1}$  (time block size). Then the following relation holds,
	\begin{equation}\label{lem4.3}
		\int_{D} |\hat{u}_{i}(\bm{x},\tau)|^2\dx
		\leq C_{G_{i}}\exp(\Delta t),
		\qquad
		\int_{t_{i-1}}^{t_{i}}\int_{D} |\hat{u}_{i}(\bm{x},t)|^2\dx\dt\leq C_{G_{i}}\Delta t\exp(\Delta t),
	\end{equation}
	where
	\begin{align*}
		&C_{G_{i}}= \widetilde{C}_{G_{i}}  + 2C_{G_{i-1}}\exp(\Delta t), \qquad C_{G_{0}}=0, \\
		&\widetilde{C}_{G_{i}} = 2\sum_{j=1}^{i}\int_{D}|R_{tb_{i}}(\bm{x},t_{j-1})|^2\dx+ \int_{t_{i-1}}^{t_{i}}\int_{D}|R_{int_{i}}|^2\dx\dt+2C_{\partial D_i}|\Delta t|^{\frac{1}{2}}\big(\int_{t_{i-1}}^{t_{i}}\int_{\partial D}|R_{sb_{i}}|^2\ds\dt\big)^{\frac{1}{2}},
	\end{align*} 
	and $C_{\partial D_i}=|\partial D|^{\frac{1}{2}}\big(\|u\|_{C^1(\widetilde\Omega_{*i})}+\|u_{\theta_{i}}\|_{C^1(\widetilde\Omega_{*i})}\big)$.
\end{Theorem}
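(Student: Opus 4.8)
My plan is to prove both bounds by a single-block energy estimate for the heat equation, followed by Grönwall's inequality and an induction on the block index $i$. The starting point is the error system \eqref{heat_error}, which expresses the residuals in terms of $\hat{u}_i$ and $\hat{u}_{i-1}$. I would set $S_i(t)=\int_D|\hat{u}_i(\bm{x},t)|^2\dx$ and aim to derive a differential inequality for $S_i$ on $[t_{i-1},t_i]$, control the initial value $S_i(t_{i-1})$ via the temporal-boundary residual and the previous block, and then integrate.

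First I would multiply the interior identity $R_{int_i}=\frac{\partial\hat{u}_i}{\partial t}-\Delta\hat{u}_i$ by $\hat{u}_i$ and integrate over $D$, so that the left side becomes $\tfrac12\frac{d}{dt}\int_D|\hat{u}_i|^2\dx$. Integrating the Laplacian term by parts gives $-\int_D|\nabla\hat{u}_i|^2\dx$, which I discard since it is nonpositive, plus a boundary term $\int_{\partial D}R_{sb_i}\frac{\partial\hat{u}_i}{\partial n}\ds$, where I used $\hat{u}_i|_{\partial D}=R_{sb_i}$ from \eqref{heat_error_eq3}. On the right-hand side, Young's inequality gives $\int_D\hat{u}_iR_{int_i}\dx\le\tfrac12\int_D|\hat{u}_i|^2\dx+\tfrac12\int_D|R_{int_i}|^2\dx$, and Cauchy--Schwarz on $\partial D$ together with $\big(\int_{\partial D}|\frac{\partial\hat{u}_i}{\partial n}|^2\ds\big)^{1/2}\le C_{\partial D_i}$, which follows from the $C^1$ regularity of $u$ and $u_{\theta_i}$ and the definition of $C_{\partial D_i}$, controls the boundary term. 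This yields the differential inequality $S_i'(t)\le S_i(t)+\int_D|R_{int_i}|^2\dx+2C_{\partial D_i}\big(\int_{\partial D}|R_{sb_i}|^2\ds\big)^{1/2}$. Integrating from $t_{i-1}$ to $\tau$, bounding the interior term by its integral over the whole block, and applying Cauchy--Schwarz in time to the boundary term produces the factor $|\Delta t|^{1/2}\big(\int_{t_{i-1}}^{t_i}\int_{\partial D}|R_{sb_i}|^2\ds\dt\big)^{1/2}$ seen in $\widetilde{C}_{G_i}$.

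The crux is controlling $S_i(t_{i-1})$ and closing the recursion in $i$. Using the $j=i$ case of \eqref{heat_error_eq2}, namely $\hat{u}_i|_{t_{i-1}}=R_{tb_i}|_{t_{i-1}}+\hat{u}_{i-1}|_{t_{i-1}}$, the elementary inequality $(a+b)^2\le2a^2+2b^2$ gives $S_i(t_{i-1})\le2\int_D|R_{tb_i}(\bm{x},t_{i-1})|^2\dx+2S_{i-1}(t_{i-1})$. The inductive hypothesis, i.e.\ the first bound of the theorem applied to block $i-1$ at its right endpoint, supplies $S_{i-1}(t_{i-1})\le C_{G_{i-1}}\exp(\Delta t)$; since the $j<i$ terms are nonnegative, I may enlarge $\int_D|R_{tb_i}(\bm{x},t_{i-1})|^2\dx$ to the full sum $\sum_{j=1}^i\int_D|R_{tb_i}(\bm{x},t_{j-1})|^2\dx$. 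Collecting terms identifies the integrated right-hand side with $C_{G_i}=\widetilde{C}_{G_i}+2C_{G_{i-1}}\exp(\Delta t)$, after which Grönwall's inequality together with $\tau-t_{i-1}\le\Delta t$ yields $S_i(\tau)\le C_{G_i}\exp(\Delta t)$, the first bound. Integrating this over $\tau\in[t_{i-1},t_i]$ immediately gives the space-time bound with the extra factor $\Delta t$. The base case $i=1$ is clean since $\hat{u}_0\equiv0$ forces $C_{G_0}=0$.

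I expect the main obstacle to be the bookkeeping of the block-to-block recursion rather than the single-block energy estimate itself: one must verify that the pointwise-in-time bound at the right endpoint of block $i-1$ feeds correctly into the initial data for block $i$, and that every term in the accumulated constant matches the stated recursive definition of $C_{G_i}$, including the deliberately loose full sum over $j$ in $\widetilde{C}_{G_i}$, which is included so that $C_{G_i}$ aligns with the generalization-error terms in \eqref{heat_G}--\eqref{heat_Gi}.
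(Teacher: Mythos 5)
Your proposal is correct and follows essentially the same route as the paper's proof: the same energy estimate obtained by testing $R_{int_i}=\partial_t\hat{u}_i-\Delta\hat{u}_i$ with $\hat{u}_i$, the same treatment of the boundary term via $C_{\partial D_i}$ and Cauchy--Schwarz in time, the same splitting $\hat{u}_i|_{t_{i-1}}=R_{tb_i}|_{t_{i-1}}+\hat{u}_{i-1}|_{t_{i-1}}$ with $(a+b)^2\le 2a^2+2b^2$, and the same Gr\"onwall-plus-induction closure of the block recursion. The only cosmetic differences (discarding $-\int_D|\nabla\hat{u}_i|^2\dx$ immediately rather than carrying it on the left, and phrasing the estimate as a differential inequality before integrating) do not change the argument.
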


\begin{Theorem}\label{sec4_Theorem3} Let $d\in \mathbb{N}$ and $T>0$. Let $u\in C^4(\widetilde\Omega_i)$ be the classical solution to \eqref{heat},  and  $u_{\theta_{i}}$ $(1\leq i\leq l)$ be a HLConcPINN with parameter $\theta_i$. Then the total error satisfies
	\begin{align}\label{lem4.4}
		&\int_{t_{i-1}}^{t_{i}}\int_{D} |\hat{u}_{i}(\bm{x},t)|^2\dx\dt\leq C_{T_{i}}\Delta t\exp(\Delta t)
		\nonumber\\
		&\qquad=\mathcal{O}\left(\mathcal{E}_{T_i}(\theta_i,\mathcal{S}_i)^2 + M_{int_i}^{-\frac{2}{d+1}} +M_{tb_i}^{-\frac{2}{d}}+M_{sb_i}^{-\frac{1}{d}}\right),
	\end{align}
	where the constant $C_{T_i}$ is defined by 
	\begin{align}\label{lem4.5}
		&C_{T_{i}}= \widetilde{C}_{T_{i}}  + 2C_{T_{i-1}}\exp(\Delta t), \qquad C_{T_{0}}=0, \\
		&\widetilde{C}_{T_{i}} = 2\sum_{j=1}^{i}\big(C_{({R_{tb_{i}}^2(\bm{x},t_{j-1})})}M_{tb_{i}}^{-\frac{2}{d}}+\mathcal{Q}_{M_{tb_{i}}}^{D}(R_{tb_{i}}^2(\bm{x},t_{j-1}))\big)
		\nonumber\\
		&\qquad+C_{({R_{int_{i}}^2})}M_{int_{i}}^{-\frac{2}{d+1}}+\mathcal{Q}_{M_{int_{i}}}^{\Omega_i}(R_{int_{i}}^2)+2C_{\partial D_i}|\Delta t|^{\frac{1}{2}}\big(C_{({R_{sb_{i}}^2})}M_{sb_{i}}^{-\frac{2}{d}}+\mathcal{Q}_{M_{sb_{i}}}^{\Omega_{*i}}(R_{sb_{i}}^2)\big)^{\frac{1}{2}}.\nonumber
	\end{align} 
\end{Theorem}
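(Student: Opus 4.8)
The plan is to deduce Theorem~\ref{sec4_Theorem3} directly from Theorem~\ref{sec4_Theorem2} by replacing each continuous integral appearing in the constant $C_{G_i}$ with its midpoint-quadrature approximation, and controlling the discrepancy with the quadrature error estimate~\eqref{int1}. Theorem~\ref{sec4_Theorem2} already furnishes $\int_{t_{i-1}}^{t_i}\int_D|\hat u_i|^2\dx\dt \le C_{G_i}\,\Delta t\exp(\Delta t)$, where $C_{G_i}$ is built recursively from $\widetilde C_{G_i}$, a weighted sum of the $L^2$ norms of the residuals $R_{int_i}$, $R_{tb_i}$ and $R_{sb_i}$ over their respective space-time domains. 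Since the target constant $C_{T_i}$ has \emph{exactly} the same recursive structure, with each continuous integral replaced by a quadrature sum plus a quadrature-error bound, it suffices to establish the termwise inequality $\widetilde C_{G_i}\le\widetilde C_{T_i}$ and then propagate it through the block recursion by induction on $i$.

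First I would apply~\eqref{int1} to each residual-squared integral, tracking the dimension of the integration domain. The interior residual lives on $\Omega_i=D\times[t_{i-1},t_i]$, of dimension $d+1$, so $\int_{\Omega_i}R_{int_i}^2 \le \mathcal{Q}_{M_{int_i}}^{\Omega_i}(R_{int_i}^2)+C_{(R_{int_i}^2)}M_{int_i}^{-2/(d+1)}$. The temporal-boundary residuals are integrated over $D$ at each slice $t=t_{j-1}$, a $d$-dimensional domain, yielding the rate $M_{tb_i}^{-2/d}$; summing over $j=1,\dots,i$ reproduces the corresponding sum in $\widetilde C_{T_i}$. The spatial-boundary residual lives on $\Omega_{*i}=\partial D\times[t_{i-1},t_i]$, whose dimension is $(d-1)+1=d$, again giving $M_{sb_i}^{-2/d}$. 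The one nonroutine point is that the spatial-boundary term enters $\widetilde C_{G_i}$ through a square root; here I would use the monotonicity of $t\mapsto t^{1/2}$, so that from the quadrature bound one gets $\big(\int_{\Omega_{*i}}R_{sb_i}^2\big)^{1/2}\le\big(\mathcal{Q}_{M_{sb_i}}^{\Omega_{*i}}(R_{sb_i}^2)+C_{(R_{sb_i}^2)}M_{sb_i}^{-2/d}\big)^{1/2}$, matching precisely the boundary term of $\widetilde C_{T_i}$. Taking the square root of the $M_{sb_i}^{-2/d}$ contribution is exactly what produces the slower $M_{sb_i}^{-1/d}$ rate in the final $\mathcal{O}$ estimate. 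These three bounds together give $\widetilde C_{G_i}\le\widetilde C_{T_i}$.

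Next I would close the recursion. Since $C_{G_0}=C_{T_0}=0$ and $C_{G_i}=\widetilde C_{G_i}+2C_{G_{i-1}}\exp(\Delta t)$ mirrors $C_{T_i}=\widetilde C_{T_i}+2C_{T_{i-1}}\exp(\Delta t)$, the inductive hypothesis $C_{G_{i-1}}\le C_{T_{i-1}}$ combined with $\widetilde C_{G_i}\le\widetilde C_{T_i}$ yields $C_{G_i}\le C_{T_i}$, and hence the first inequality in~\eqref{lem4.4} via Theorem~\ref{sec4_Theorem2}. To reach the $\mathcal{O}$ form I would then identify the quadrature sums inside $C_{T_i}$ with the training-loss components of $\mathcal{E}_{T_i}(\theta_i,\mathcal{S}_i)^2$ through the definitions~\eqref{heat_T}--\eqref{heat_TT}, noting in particular that the square-rooted $sb$ contribution corresponds to $\mathcal{E}_T^{sb_i}$ appearing \emph{unsquared} in $\widetilde{\mathcal{E}}_{T_i}^2$, and collect the quadrature-error pieces into $M_{int_i}^{-2/(d+1)}$, $M_{tb_i}^{-2/d}$ and $M_{sb_i}^{-1/d}$. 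The factor $\Delta t\exp(\Delta t)$ is $\mathcal{O}(1)$ for a fixed block size, and the finitely many block contributions from the recursion do not change the asymptotic order.

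The main obstacle I anticipate is not conceptual but is in verifying the regularity required for the quadrature constants $C_{(R_{int_i}^2)}$, $C_{(R_{tb_i}^2)}$ and $C_{(R_{sb_i}^2)}$ to be finite: estimate~\eqref{int1} needs each integrand in $C^2$, so $R_{int_i}=\partial_t\hat u_i-\Delta\hat u_i$ must be twice continuously differentiable, which is precisely what the hypothesis $u\in C^4(\widetilde\Omega_i)$, together with the smoothness of the HLConcPINN $u_{\theta_i}$ inherited from the smooth activation, guarantees. Some care must also be taken to ensure that the constants absorbed into $\mathcal{O}$ depend only on $\|u\|_{C^4}$ and $\|u_{\theta_i}\|_{C^4}$ and not on the sample counts $M_{int_i}$, $M_{tb_i}$, $M_{sb_i}$, so that the stated decay rates in $M$ are genuine.
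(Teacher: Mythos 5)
Your proposal is correct and follows essentially the same route as the paper: invoke Theorem \ref{sec4_Theorem2}, replace each continuous residual integral in $\widetilde{C}_{G_i}$ by its quadrature sum plus the error bound \eqref{int1} (with the square-root treatment of the $sb$ term producing the $M_{sb_i}^{-1/d}$ rate), and propagate through the block recursion. The regularity concern you flag at the end is exactly what the paper handles explicitly, bounding the quadrature constants such as $C_{(R_{tb_i}^2)}$ via $\|R_{tb_i}^2\|_{C^2}\lesssim\|R_{tb_i}\|_{C^2}^2$ together with Lemma \ref{Ar_3}, so that they depend only on $\|u\|_{C^2}$, the network size, and $\|\sigma\|_{C^2}$, not on the sample counts.
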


	\section{HLConcPINN for Approximating the Burgers' Equation}\label{Burger}

\subsection{Viscous Burgers' Equation}
We consider the 1D viscous Burgers' equation on the domain $D=[a,b] \subset \mathbb{R}$:
\begin{subequations}\label{burger}
	\begin{align}\label{burger_eq1}
		&\frac{\partial u}{\partial t}-\nu\frac{\partial^2 u}{\partial x^2} +u\frac{\partial u}{\partial x} = f(x,t)     \qquad\qquad\  (x,t)\in D\times[0,T],\\
		\label{burger_eq2}
		&u(x,0)=u_{in}(x)\qquad\qquad\qquad\qquad\quad\ \ \, x\in D,\\
		\label{burger_eq3}
		&u(a,t)=g_1(t), \qquad u(b,t)=g_2(t),
	\end{align}
\end{subequations}
where the constant $\nu$ denotes the viscosity, $f$ is a prescribed source term, $g_1(t)$ and $g_2(t)$ denote the boundary data, and $u_{in}(x)$ is the initial distribution.


\subsection{Hidden-Layer Concatenated Physics Informed Neural Networks}

We follow the settings from Section~\ref{sec_setting}, and
seek deep neural networks $u_{\theta_{i}} : D\times [0,t_i] \rightarrow \mathbb{R}$ for $1\leq i\leq l$  ($l$ denoting the number of time blocks) to  approximate the solution $u$ of \eqref{burger}. 
Define the following residual functions, for $1\leq i\leq l$,
\begin{subequations}\label{burger_pinn}
\begin{align}\label{burger_pinn_eq1}
		&R_{int_{i}}[u_{\theta_{i}}](x,t) =\frac{\partial u_{\theta_i}}{\partial t}-\nu\frac{\partial^2 u_{\theta_i}}{\partial x^2} +u_{\theta_i}\frac{\partial u_{\theta_i}}{\partial x} - f,\\
		\label{burger_pinn_eq2}
		&R_{tb_{i}}[u_{\theta_{i}}](x,t_{j-1}) =u_{\theta_{i}}|_{t=t_{j-1}}-u_{\theta_{j-1}}|_{t=t_{j-1}} \qquad1\leq j\leq i,\\
		\label{burger_pinn_eq3}
		&R_{sb1_{i}}[u_{\theta_{i}}](a,t) =u_{\theta_{i}}(a,t)-g_1(t),\qquad R_{sb2_{i}}[u_{\theta_{i}}](b,t) =u_{\theta_{i}}(b,t)-g_2(t).
	\end{align}
\end{subequations}
In these equations $\left.u_{\theta_{0}}\right|_{t=t_0}= u_{in}(x)$. Note that $R_{int_{i}}[u]=R_{tb_{i}}[u]=R_{sb_{i}}[u]=0$ for the exact solution $u$. 
With HLConcPINN we seek $\theta_i$ ($1\leq i\leq l$) to minimize the following quantity,
\begin{align}
	\label{burger_G}
	&\mathcal{E}_{G_i}(\theta_i)^2=\widetilde{\mathcal{E}}_{G_i}(\theta_i)^2+\mathcal{E}_{G_{i-1}}(\theta_{i-1})^2,
 \quad 1\leq i\leq l,\\
	\label{burger_Gi}
	&\widetilde{\mathcal{E}}_{G_i}(\theta_i)^2=\int_{t_{i-1}}^{t_{i}}\int_{D}|R_{int_{i}}[u_{\theta_{i}}](x,t)|^2\dxx\dt
	+\int_{t_{i-1}}^{t_{i}}(|R_{sb1_{i}}[u_{\theta_{i}}](a,t)|^2+|R_{sb2_{i}}[u_{\theta_{i}}](b,t)|^2)\dt
	\nonumber\\
	&\qquad+\left(\int_{t_{i-1}}^{t_{i}}|R_{sb1_{i}}[u_{\theta_{i}}](a,t)|^2\dt\right)^{\frac{1}{2}}+\left(\int_{t_{i-1}}^{t_{i}}|R_{sb2_{i}}[u_{\theta_{i}}](b,t)|^2\dt\right)^{\frac{1}{2}} \nonumber\\
&\qquad+\sum_{j=1}^{i}\int_{D}|R_{tb_{i}}[u_{\theta_{i}}](x,t_{j-1})|^2\dxx,
\end{align}
where $\mathcal{E}_{G_{i-1}}(\theta_{i-1}) =0$ for $i=1$.

The training data set consists of $\mathcal{S} = \bigcup_{i=1}^l\mathcal{S}_{i}$, with $\mathcal{S}_i = \mathcal{S}_{int_{i}} \cup \mathcal{S}_{sb_{i}} \cup \mathcal{S}_{tb_{i}}$. 
The spatial boundary training points are $\mathcal{S}_{sb_{i}}=\{y_n\}$ for $1\leq n \leq N_{sb_{i}}$, with  $y_n= (\bm{x},t)_n \in  \{a, b\} \times (t_{i-1},t_i)$. We approximate the integrals in \eqref{burger_G} by the mid-point rule, leading to the training loss functions,
\begin{align}
	\label{burger_T}
	&\mathcal{E}_{T_i}(\theta_i,\mathcal{S}_i)^2=\widetilde{\mathcal{E}}_{T_i}(\theta_i,\mathcal{S}_i)^2+\mathcal{E}_{T_{i-1}}(\theta_{i-1},\mathcal{S}_{i-1})^2, \quad 1\leq i\leq l,\\
	\label{burger_Ti}
	&\widetilde{\mathcal{E}}_{T_i}(\theta_i,\mathcal{S}_i)^2=\mathcal{E}_T^{int_{i}}(\theta_i,\mathcal{S}_{int_{i}})^2+\mathcal{E}_T^{sb1_{i}}(\theta_i,\mathcal{S}_{sb_{i}})^2+\mathcal{E}_T^{sb2_{i}}(\theta_i,\mathcal{S}_{sb_{i}})^2
	\nonumber\\
	&\qquad+\mathcal{E}_T^{sb1_{i}}(\theta_i,\mathcal{S}_{sb_{i}})+\mathcal{E}_T^{sb2_{i}}(\theta_i,\mathcal{S}_{sb_{i}})+\mathcal{E}_T^{tb_{i}}(\theta_i,\mathcal{S}_{tb_{i}})^2,
\end{align}
where $\mathcal{E}_T^{sb1_{i}}(\theta_i,\mathcal{S}_{sb_{i}})^2 = \sum_{n=1}^{N_{sb_{i}}}\omega_{sb_{i}}^n|R_{sb_{i}}[u_{\theta_{i}}](a,t_{sb_{i}}^n)|^2$, $\mathcal{E}_T^{sb2_{i}}(\theta_i,\mathcal{S}_{sb_{i}})^2 = \sum_{n=1}^{N_{sb_{i}}}\omega_{sb_{i}}^n|R_{sb_{i}}[u_{\theta_{i}}](b,t_{sb_{i}}^n)|^2$, and the remaining terms are defined according to equation \eqref{heat_TT}. 
Note that $\mathcal{E}_{T_{i-1}}(\theta_{i-1},\mathcal{S}_{i-1}) =0$ for $i=1$.

\subsection{Error Analysis}  

Let $\hat{u}_i= u_{\theta_{i}} - u$ denote the error of the HLConcPINN approximation ($u$ denoting the exact solution). 
Applying the Burgers' equation \eqref{burger} and the definitions of the different residuals, we obtain for $1\leq i\leq l$,
\begin{subequations}\label{burger_error}
	\begin{align}\label{burger_error_eq1}
		&R_{int_{i}}=\frac{\partial \hat{u}_i}{\partial t}
		-\nu\frac{\partial^2 \hat{u}_i}{\partial x^2} +u_{\theta_i}\frac{\partial u_{\theta_i}}{\partial x} - u\frac{\partial u}{\partial x},\\
		\label{burger_error_eq2}
		&R_{tb_{i}}|_{t=t_{j-1}}=\hat{u}_{i}|_{t=t_{j-1}} - \hat{u}_{j-1}|_{t=t_{j-1}}\qquad j = 1,2,\cdots, i,\\
		\label{burger_error_eq3}
		&R_{sb1_{i}}(a,t)=\hat{u}_i(a,t),\qquad R_{sb2_{i}}(b,t)=\hat{u}_i(b,t),
	\end{align}
\end{subequations}
where $\hat{u}_{0}|_{t=t_{0}} = 0$. Then, we define the total error of the HLConcPINN approximation as
\begin{equation}\label{burger_total}
	\mathcal{E}(\theta_i)^2=\int_{t_{i-1}}^{t_{i}}\int_{D}|\hat{u}_i(\bm{x},t)|^2\dxx\dt.
\end{equation}

\begin{Theorem}\label{sec4b_Theorem1} 
	Let $\widetilde\Omega_i = D\times [0,t_i]$.  Suppose $n$, $d$, $k \in \mathbb{N}$ with $n\geq2$ and $k\geq 3$, and $u\in H^k(\widetilde\Omega_i)$. For every integer $N>5$, there exists a HLConcPINN $u_{\theta_i}$ such that
	\begin{subequations}
		\begin{align}
			\label{lem4b.1}
			&\|R_{int_{i}}\|_{L^2(\widetilde\Omega_{i})}\lesssim N^{-k+2}{\rm ln}^2N,\\
			\label{lem4b.2}
			&\|R_{tb_{i}}(x,t_{j-1})\|_{L^2(D)}, \|R_{sb1_{i}}\|_{L^2(\{a\}\times [0,t_{i}])}, \|R_{sb2_{i}}\|_{L^2(\{b\}\times [0,t_{i}])}
			\lesssim N^{-k+1}{\rm ln}N,\quad 1\leq j\leq i.
		\end{align}
	\end{subequations}
\end{Theorem}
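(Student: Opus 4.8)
The plan is to reduce the statement to a Sobolev-norm approximation result for HLConcFNNs and then to estimate each residual in \eqref{burger_error} separately. The starting point is a neural-network approximation theorem of the type in~\cite{DeRyck2021On}: since $u\in H^k(\widetilde\Omega_i)$ with $k\geq 3$ and $\widetilde\Omega_i\subset\mathbb{R}^{2}$, there exists a $\tanh$ feedforward network with two hidden layers, of width growing polynomially in $N$, such that the error $\hat u_i=u_{\theta_i}-u$ satisfies
\begin{equation*}
\|\hat u_i\|_{H^j(\widetilde\Omega_i)}\lesssim N^{-k+j}(\ln N)^{j},\qquad j=0,1,2.
\end{equation*}
The two-hidden-layer $\tanh$ network is a special case of the HLConcFNN: retaining only the second concatenation coefficient in \eqref{pre_eq3} (that is, setting $M_i=0$ for every hidden layer except the second) reproduces its output as $M_2u_2^{\vartheta}+b_L$, and by the monotonicity of representation capacity (Lemmas~\ref{Ar_5} and~\ref{Ar_6}) appending further hidden layers with any sufficiently smooth activation cannot shrink this capacity. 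Hence the same estimates are realizable by a HLConcPINN $u_{\theta_i}$ whose first two hidden layers carry the $\tanh$ activation. This is exactly the mechanism used in Theorem~\ref{sec4_Theorem1} for the heat equation; only the treatment of the interior residual changes, owing to the nonlinear convection term.

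For the interior residual I would use the identity in \eqref{burger_error_eq1}, rewriting the nonlinear part as $u_{\theta_i}\partial_x u_{\theta_i}-u\partial_x u = u_{\theta_i}\,\partial_x\hat u_i+\hat u_i\,\partial_x u$, so that
\begin{equation*}
\|R_{int_i}\|_{L^2(\widetilde\Omega_i)}\leq \|\partial_t\hat u_i\|_{L^2}+\nu\|\partial_{xx}\hat u_i\|_{L^2}+\|u_{\theta_i}\|_{L^\infty}\|\partial_x\hat u_i\|_{L^2}+\|\partial_x u\|_{L^\infty}\|\hat u_i\|_{L^2}.
\end{equation*}
The first two terms are controlled by $\|\hat u_i\|_{H^2}\lesssim N^{-k+2}(\ln N)^2$. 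For the nonlinear terms I would first establish a uniform-in-$N$ bound $\|u_{\theta_i}\|_{L^\infty}\leq\|u\|_{L^\infty}+C$: since $\widetilde\Omega_i$ is two-dimensional, the embedding $H^2\hookrightarrow C^0$ gives $\|\hat u_i\|_{L^\infty}\lesssim\|\hat u_i\|_{H^2}\to 0$, while $\|\partial_x u\|_{L^\infty}$ is finite by $H^k\hookrightarrow C^1$ ($k\geq 3$). With these bounds the nonlinear contributions are of order $\|\hat u_i\|_{H^1}\lesssim N^{-k+1}\ln N$ and $\|\hat u_i\|_{L^2}\lesssim N^{-k}$, both strictly subdominant to the second-order linear term, and collecting everything yields \eqref{lem4b.1}.

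For the temporal and spatial boundary residuals, \eqref{burger_error_eq2}--\eqref{burger_error_eq3} express each of them as a trace of $\hat u_i$ (together with $\hat u_{j-1}$ in the temporal piece, where each $u_{\theta_{j-1}}$ is taken to be the analogous approximant of $u$ on $\widetilde\Omega_{j-1}$). Applying the trace inequality on $\widetilde\Omega_i$, which costs one derivative, bounds $\|\hat u_i(\cdot,t_{j-1})\|_{L^2(D)}$, $\|\hat u_i\|_{L^2(\{a\}\times[0,t_i])}$ and $\|\hat u_i\|_{L^2(\{b\}\times[0,t_i])}$ by $\|\hat u_i\|_{H^1(\widetilde\Omega_i)}\lesssim N^{-k+1}\ln N$; a triangle inequality in the temporal case absorbs the $\hat u_{j-1}$ term at the same rate, giving \eqref{lem4b.2}.

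The main obstacle, relative to the purely linear heat-equation case of Theorem~\ref{sec4_Theorem1}, is the convective term: one must verify that it does not spoil the convergence rate. The crux is the uniform $L^\infty$ bound on $u_{\theta_i}$, which is what lets the factor $\|u_{\theta_i}\|_{L^\infty}$ be treated as an $N$-independent constant; this in turn relies on the $H^2$ (not merely $H^1$) approximation estimate together with the two-dimensionality of $\widetilde\Omega_i$. Once this is secured, the nonlinear terms are seen to be lower order, and the remainder of the argument parallels the heat-equation proof.
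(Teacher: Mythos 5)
Your proof is correct and follows essentially the same route as the paper's: the paper's one-line proof invokes precisely the trace inequality (Lemma~\ref{Ar_2}) and the HLConcFNN approximation result (Lemma~\ref{Ar_7}, itself obtained by embedding the two-hidden-layer $\tanh$ network of Lemma~\ref{Ar_4} via the capacity-monotonicity Lemmas~\ref{Ar_5} and~\ref{Ar_6}), which is exactly the machinery you assemble. Your explicit treatment of the convection term---the decomposition $u_{\theta_i}\partial_x\hat u_i+\hat u_i\partial_x u$ together with the uniform $L^\infty$ bound on $u_{\theta_i}$ from $H^2(\widetilde\Omega_i)\hookrightarrow C^0$ in two space-time dimensions, so that the nonlinear contributions are of lower order $N^{-k+1}\ln N$---is a necessary detail that the paper's terse proof leaves implicit, and you handle it correctly.
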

\begin{proof} By applying $u\in H^k(\widetilde\Omega_i)$, Lemmas \ref{Ar_2} and \ref{Ar_7}, we can conclude the proof.
\end{proof}

\begin{Theorem}\label{sec4b_Theorem2} Let $u\in C^1(\widetilde\Omega_i)$ be the classical solution to \eqref{burger}. Let $u_{\theta_{i}}$ $(1\leq i\leq l)$ be a HLConcPINN with parameter $\theta_i$. Then the following relation holds,
	\begin{align}\label{lem4b.3}
		&\int_{D} |\hat{u}_i(x,\tau)|^2\dxx
		\leq C_{G_{i}}\exp((1+C_{D_i})\Delta t), \quad \tau\in[t_{i-1},t_i],
  \\
		\label{lem4b.4}
		&\int_{t_{i-1}}^{t_{i}}\int_{D} |\hat{u}_i(x,t)|^2\dxx\dt\leq C_{G_{i}}\Delta t\exp((1+C_{D_i})\Delta t),
	\end{align}
	where
	\begin{align*}
		&C_{G_{i}}= 2C_{G_{i-1}}\exp((1+C_{D_i})\Delta t) + \widetilde{C}_{G_{i}}, \qquad C_{G_{0}}=0, \\
		&\widetilde{C}_{G_{i}} = 2\sum_{j=1}^{i}\int_{D}|R_{tb_{i}}(x,t_{j-1})|^2\dxx+ \int_{t_{i-1}}^{t_{i}}\int_{D}|R_{int_{i}}|^2\dxx\dt
		+C_{\partial D_{1i}}(\int_{t_{i-1}}^{t_{i}}|R_{sb1_{i}}|^2\dt)^{\frac{1}{2}}
		\nonumber\\
		&\qquad+C_{\partial D_{1i}}(\int_{t_{i-1}}^{t_{i}}|R_{sb2_{i}}|^2\dt)^{\frac{1}{2}}+C_{\partial D_{2i}}\int_{t_{i-1}}^{t_{i}}\big(|R_{sb1_{i}}|^2+|R_{sb2_{i}}|^2\big)\dt,
	\end{align*} 
  $C_{\partial D_{1i}}=2\nu{\Delta t}^{\frac{1}{2}}(\|u\|_{C^1(\widetilde\Omega_{*i})}+\|u_{\theta_{i}}\|_{C^1(\widetilde\Omega_{*i})})$, $C_{D_i}=2\Delta t^{\frac{1}{2}}\big(\|u_{\theta_{i}}\|_{C^1(\widetilde\Omega_{i})}+\frac{1}{2}\|u\|_{C^1(\widetilde\Omega_{i})}\big)$ and $C_{\partial D_{2i}}=\Delta t^{\frac{1}{2}}\|u\|_{C^0(\widetilde\Omega_{*i})}$.
\end{Theorem}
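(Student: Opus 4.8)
The plan is to run a space–time energy estimate for the error $\hat u_i = u_{\theta_i}-u$, closely following the template used for the heat equation in Theorem~\ref{sec4_Theorem2}, but with the additional work forced by the nonlinear convection term. First I would take the interior error equation \eqref{burger_error_eq1}, multiply it by $\hat u_i$, and integrate over the spatial domain $D=[a,b]$ at a fixed time $t$. The time-derivative term gives $\tfrac12\frac{d}{dt}\int_D|\hat u_i|^2\dxx$, and integrating the viscous term by parts produces the nonnegative dissipation $\nu\int_D|\partial_x\hat u_i|^2\dxx$ plus a boundary flux $-\nu[\hat u_i\,\partial_x\hat u_i]_a^b$.

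The crux is the convective difference $u_{\theta_i}\partial_x u_{\theta_i}-u\,\partial_x u$, which I would first rewrite through the algebraic identity $u_{\theta_i}\partial_x u_{\theta_i}-u\,\partial_x u = u\,\partial_x\hat u_i + \hat u_i\,\partial_x u_{\theta_i}$. Testing against $\hat u_i$ and integrating by parts on the first piece, using $\int_D u\,\hat u_i\,\partial_x\hat u_i\dxx = \tfrac12[u|\hat u_i|^2]_a^b - \tfrac12\int_D \partial_x u\,|\hat u_i|^2\dxx$, the interior nonlinear contribution collapses to $\int_D(\partial_x u_{\theta_i}-\tfrac12\partial_x u)\,|\hat u_i|^2\dxx$, whose absolute value is bounded by $(\|u_{\theta_i}\|_{C^1(\widetilde\Omega_i)}+\tfrac12\|u\|_{C^1(\widetilde\Omega_i)})\,\|\hat u_i\|_{L^2(D)}^2$; this is what feeds the $C_{D_i}$ factor in the exponent. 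The leftover boundary contributions $-\nu[\hat u_i\,\partial_x\hat u_i]_a^b$ and $\tfrac12[u|\hat u_i|^2]_a^b$ are handled via the spatial-boundary residual identities $\hat u_i(a,\cdot)=R_{sb1_i}$ and $\hat u_i(b,\cdot)=R_{sb2_i}$ from \eqref{burger_error_eq3}: the trace of $\partial_x\hat u_i$ is bounded crudely by $\|u\|_{C^1(\widetilde\Omega_{*i})}+\|u_{\theta_i}\|_{C^1(\widetilde\Omega_{*i})}$ and the trace of $u$ by $\|u\|_{C^0(\widetilde\Omega_{*i})}$, producing the $C_{\partial D_{1i}}$ and $C_{\partial D_{2i}}$ terms respectively.

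Assembling these bounds, discarding the nonnegative dissipation, and splitting the interior residual term $\int_D\hat u_i R_{int_i}\dxx$ by Young's inequality into $\tfrac12\|\hat u_i\|_{L^2(D)}^2+\tfrac12\|R_{int_i}\|_{L^2(D)}^2$, I obtain a differential inequality of the form $\frac{d}{dt}\|\hat u_i\|_{L^2(D)}^2 \le (1+C_{D_i})\|\hat u_i\|_{L^2(D)}^2 + (\text{residual data})$. I would then integrate from $t_{i-1}$ to $\tau$, applying Cauchy--Schwarz in time to the boundary-residual contributions (this is the source of the $\Delta t^{1/2}$ scalings sitting inside $C_{\partial D_{1i}}$ and $C_{\partial D_{2i}}$), and invoke Gr\"onwall's inequality to reach \eqref{lem4b.3} with the factor $\exp((1+C_{D_i})\Delta t)$; a further integration of that bound over $\tau\in[t_{i-1},t_i]$ yields \eqref{lem4b.4}.

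The last ingredient is the block-to-block coupling. After time integration the initial value $\|\hat u_i(\cdot,t_{i-1})\|_{L^2(D)}^2$ is nonzero for $i>1$; using the temporal-boundary residual at $j=i$, namely $R_{tb_i}|_{t=t_{i-1}}=\hat u_i|_{t=t_{i-1}}-\hat u_{i-1}|_{t=t_{i-1}}$, together with $(a+b)^2\le 2a^2+2b^2$, I would bound it by $2\|R_{tb_i}(\cdot,t_{i-1})\|_{L^2(D)}^2 + 2\|\hat u_{i-1}(\cdot,t_{i-1})\|_{L^2(D)}^2$ and control the last term by the estimate \eqref{lem4b.3} already available for block $i-1$ evaluated at its endpoint. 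This generates the recursion $C_{G_i}=2C_{G_{i-1}}\exp((1+C_{D_i})\Delta t)+\widetilde C_{G_i}$, with base case $C_{G_0}=0$ coming from $\hat u_0|_{t=t_0}=0$. I expect the main obstacle to be the nonlinear convection term: organizing the symmetrization and integration by parts so the interior part reduces to a clean $\|\hat u_i\|_{L^2(D)}^2$ multiple (feeding the Gr\"onwall rate $C_{D_i}$) while the boundary part is expressed purely through the spatial-boundary residuals, and then threading the $\Delta t^{1/2}$ factors consistently through the time integrations so that the per-block constants accumulate in the controlled, recursive manner required for long-time marching.
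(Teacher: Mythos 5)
Your proposal is correct and takes essentially the same route as the paper: the paper rewrites the convective difference as $\hat{u}_i\frac{\partial \hat{u}_i}{\partial x}+\hat{u}_i\frac{\partial u}{\partial x}+u\frac{\partial \hat{u}_i}{\partial x}$ (your grouping $u\frac{\partial \hat{u}_i}{\partial x}+\hat{u}_i\frac{\partial u_{\theta_i}}{\partial x}$ is the identical decomposition with the first two terms merged), highlights exactly the integration-by-parts identity you use for $\int_{D}u\frac{\partial \hat{u}_i}{\partial x}\hat{u}_{i}\dxx$, and then defers to the energy/Gr\"onwall/block-induction machinery of Theorem~\ref{sec4_Theorem2}, which is precisely what you carry out. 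Your writeup in fact fills in the steps the paper leaves implicit (trace bounds for the boundary flux, Young and Cauchy--Schwarz in time, and the recursion $C_{G_i}=2C_{G_{i-1}}\exp((1+C_{D_i})\Delta t)+\widetilde{C}_{G_i}$), with no gaps.
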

\begin{proof} 
Equation \eqref{burger_error_eq1} can be re-written as
	\begin{equation}\label{sec4b_eq0}
		R_{int_{i}}=\frac{\partial \hat{u}_i}{\partial t}-\nu\frac{\partial^2 \hat{u}_i}{\partial x^2} +\hat{u}_i\frac{\partial \hat{u}_i}{\partial x} 
		+\hat{u}_i\frac{\partial u}{\partial x}
		+ u\frac{\partial \hat{u}_i}{\partial x}.
	\end{equation}
	Note the following relation,
	\[
	\int_{D}u\frac{\partial \hat{u}_i}{\partial x}\hat{u}_{i}\dxx=\frac{1}{2}\int_{D}u\frac{\partial \hat{u}_i^2}{\partial x}\dxx
	= \frac{1}{2}u\hat{u}_i^2\big|_{a}^{b} 
	- \frac{1}{2}\int_{D}|\hat{u}_i^2|\frac{\partial u}{\partial x}\dxx.
	\]
 The rest of the proof follows the same approach in the proof of Theorem \ref{sec4_Theorem2}.
\end{proof} 

\begin{Theorem}\label{sec4b_Theorem3} Let $u\in C^4(\widetilde\Omega_i)$ be the classical solution of the Burgers' equation \eqref{burger},  and let $u_{\theta_{i}}$ $(1\leq i\leq l)$ be a HLConcPINN with parameter $\theta_i$. Then the total apporximation error satisfies
	\begin{align}\label{lem4b.5}
		&\int_{t_{i-1}}^{t_{i}}\int_{D} |\hat{u}_i(x,t)|^2\dxx\dt\leq C_{T_{i}}\Delta t\exp((1+C_{D_i})\Delta t)
		\nonumber\\
		&\qquad=\mathcal{O}(\mathcal{E}_{T_i}(\theta_i,\mathcal{S}_i)^2 + M_{int_i}^{-\frac{2}{d+1}} +M_{tb_i}^{-\frac{2}{d}}+M_{sb_i}^{-\frac{1}{d}}),
	\end{align}
	where
	\begin{align}\label{sec4b_eq10}
		&C_{T_{i}}= \widetilde{C}_{T_{i}}  + 2C_{T_{i-1}}\exp((1+C_{D_i})\Delta t), \qquad C_{T_{0}}=0, \\
		&\widetilde{C}_{T_{i}} = 2\sum_{j=1}^{i}\big(C_{({R_{tb_{i}}^2(x,t_{j-1})})}M_{tb_{i}}^{-\frac{2}{d}}+\mathcal{Q}_{M_{tb_{i}}}^{D}(R_{tb_{i}}^2(x,t_{j-1}))\big)+C_{({R_{int_{i}}^2})}M_{int_{i}}^{-\frac{2}{d+1}}+\mathcal{Q}_{M_{int_{i}}}^{\Omega_i}(R_{int_{i}}^2)
		\nonumber\\
		&\qquad+C_{\partial D_{1i}}\big(C_{({R_{sb1_{i}}^2})}M_{sb_{i}}^{-\frac{2}{d}}+\mathcal{Q}_{M_{sb_{i}}}^{\Omega_{*i}}(R_{sb1_{i}}^2)\big)^{\frac{1}{2}}+C_{\partial D_{1i}}\big(C_{({R_{sb2_{i}}^2})}M_{sb_{i}}^{-\frac{2}{d}}+\mathcal{Q}_{M_{sb_{i}}}^{\Omega_{*i}}(R_{sb2_{i}}^2)\big)^{\frac{1}{2}}
		\nonumber\\
		&\qquad+C_{\partial D_{2i}}\big(C_{({R_{sb1_{i}}^2})}M_{sb_{i}}^{-\frac{2}{d}}+\mathcal{Q}_{M_{sb_{i}}}^{\Omega_{*i}}(R_{sb1_{i}}^2)+C_{({R_{sb2_{i}}^2})}M_{sb_{i}}^{-\frac{2}{d}}+\mathcal{Q}_{M_{sb_{i}}}^{\Omega_{*i}}(R_{sb2_{i}}^2)\big).
	\end{align} 
\end{Theorem}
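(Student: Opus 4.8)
The plan is to derive this bound by feeding the continuous stability estimate of Theorem~\ref{sec4b_Theorem2} into the midpoint quadrature error bound~\eqref{int1}, thereby converting each continuous residual integral appearing in the generalization constant $C_{G_i}$ into its discrete training-loss counterpart plus an explicit quadrature remainder. I would start from the conclusion of Theorem~\ref{sec4b_Theorem2}, namely $\int_{t_{i-1}}^{t_i}\int_D|\hat{u}_i|^2\dxx\dt\le C_{G_i}\Delta t\exp((1+C_{D_i})\Delta t)$, and note that $\widetilde{C}_{G_i}$ is built from continuous integrals of the squared residuals $R_{int_i}^2$, $R_{tb_i}^2$, $R_{sb1_i}^2$, $R_{sb2_i}^2$, where the terms carrying the factor $C_{\partial D_{1i}}$ enter through a square root while those carrying $C_{\partial D_{2i}}$ enter directly. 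The target is to replace $C_{G_i}$ by the discretized $C_{T_i}$ defined in~\eqref{sec4b_eq10}.

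Next I would apply~\eqref{int1} on the appropriate domain to each integral, matching the domain dimension to the quadrature exponent. The interior term is integrated over the space-time block $\Omega_i$ of dimension $d+1$, yielding $\bigl|\int_{\Omega_i}R_{int_i}^2-\mathcal{Q}^{\Omega_i}_{M_{int_i}}(R_{int_i}^2)\bigr|\lesssim C_{(R_{int_i}^2)}M_{int_i}^{-2/(d+1)}$; the temporal-boundary terms over $D$ of dimension $d$ give $M_{tb_i}^{-2/d}$; and the spatial-boundary terms over $\Omega_{*i}$ (also dimension $d$) give $M_{sb_i}^{-2/d}$. Substituting these into $\widetilde{C}_{G_i}$ reproduces exactly $\widetilde{C}_{T_i}$, with each continuous integral replaced by a quadrature sum (a training-loss contribution) plus a quadrature error. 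The two boundary contributions multiplied by $C_{\partial D_{1i}}$ sit under a square root, so their remainder $M_{sb_i}^{-2/d}$ becomes $M_{sb_i}^{-1/d}$ after taking the root, which is the source of the $M_{sb_i}^{-1/d}$ term in the big-$\mathcal{O}$ estimate; the direct $C_{\partial D_{2i}}$ terms contribute only $M_{sb_i}^{-2/d}$, which is absorbed into $M_{sb_i}^{-1/d}$ for large $M_{sb_i}$.

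To handle the dependence on preceding blocks I would argue by induction on $i$. Since $C_{G_i}$ depends on $C_{G_{i-1}}$ only through the factor $2C_{G_{i-1}}\exp((1+C_{D_i})\Delta t)$, replacing $C_{G_{i-1}}$ by its discretized analogue $C_{T_{i-1}}$ (available from the inductive hypothesis, with base case $C_{T_0}=0$) reproduces the recursion $C_{T_i}=\widetilde{C}_{T_i}+2C_{T_{i-1}}\exp((1+C_{D_i})\Delta t)$. Collecting all remainder terms and absorbing the fixed block size $\Delta t$ together with the bounded constants $C_{\partial D_{1i}}$, $C_{\partial D_{2i}}$, $C_{D_i}$ into the implicit constant yields the claimed order $\mathcal{O}\bigl(\mathcal{E}_{T_i}(\theta_i,\mathcal{S}_i)^2+M_{int_i}^{-2/(d+1)}+M_{tb_i}^{-2/d}+M_{sb_i}^{-1/d}\bigr)$.

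The main obstacle is verifying that the quadrature constants $C_{(R^2)}$ are finite, i.e.\ that each squared residual lies in $C^2$ of its domain so that~\eqref{int1} is applicable. Because $R_{int_i}$ contains the second spatial derivative $\nu\,\partial_x^2 u_{\theta_i}$ and the nonlinear term $u_{\theta_i}\,\partial_x u_{\theta_i}$, controlling $R_{int_i}^2$ in $C^2$ requires derivatives of $u$ and $u_{\theta_i}$ up to fourth order, which is exactly why the hypothesis $u\in C^4(\widetilde\Omega_i)$ is imposed (the corresponding smoothness of $u_{\theta_i}$ being guaranteed by the smooth activation function). Beyond this regularity bookkeeping, the argument is a direct transcription of the heat-equation proof of Theorem~\ref{sec4_Theorem3}, the only structural change being that the single spatial-boundary residual is replaced by the two endpoint residuals $R_{sb1_i}$ at $x=a$ and $R_{sb2_i}$ at $x=b$.
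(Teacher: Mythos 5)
Your proposal is correct and takes essentially the same route as the paper: the paper's proof likewise combines Theorem~\ref{sec4b_Theorem2}, the midpoint quadrature bound~\eqref{int1} applied to each residual integral in $\widetilde{C}_{G_i}$, and Lemma~\ref{Ar_3} to control the $C^2$-norms of the squared residuals so that the constants $C_{(R^2)}$ are finite. Your points about the square-root terms producing the $M_{sb_i}^{-1/d}$ contribution and the recursive replacement of $C_{G_{i-1}}$ by $C_{T_{i-1}}$ are precisely the steps left implicit in the paper, which defers the details to the analogous heat-equation proof of Theorem~\ref{sec4_Theorem3} in the appendix.
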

\begin{proof} The proof follows from  Lemma \ref{Ar_3}, Theorem \ref{sec4b_Theorem2}, and the quadrature error formula \eqref{int1}.
\end{proof} 
    \section{HLConcPINN for Approximating the Wave Equation}\label{Wave}

\subsection{Wave Equation}

Consider the wave equation on the torus $D=[0,1)^d \subset \mathbb{R}^d$ with periodic boundary conditions:
\begin{subequations}\label{wave}
	\begin{align}
		\label{wave_eq0}
		&\frac{\partial u}{\partial t}- v = 0 \quad\qquad\qquad\qquad \text{in}\ D\times[0,T],\\
		\label{wave_eq1}
		&\frac{\partial v}{\partial t}-\Delta u = f  \qquad\qquad\qquad \text{in}\ D\times[0,T],\\
		\label{wave_eq2}
		&u(\bm{x},0)=\psi_{1}(\bm{x})  \quad\qquad\qquad\, \text{in}\ D,\\
		\label{wave_eq3}
		&v(\bm{x},0)=\psi_{2}(\bm{x})\quad\qquad\qquad\,\text{in}\ D,\\
		\label{wave_eq4}
		&u(\bm{x},t)=u(\bm{x}+1,t), \qquad\ \ \, \text{in}\ \partial D\times[0,T],\\
		\label{wave_eq5}
		&\nabla u(\bm{x},t)=\nabla u(\bm{x}+1,t),  \quad \, \text{in}\ \partial D\times[0,T],
	\end{align}
\end{subequations}
where $(u,v)$ are the field functions to solve, $f$ is a source term, and $(\psi_1,\psi_2)$ denote the initial data for $(u,v)$.


\subsection{Hidden-Layer Concatenated Physics Informed Neural Networks}\label{wave_HLConcPINN}


Following the settings from Section~\ref{sec_setting},
we seek neural networks $u_{\theta_{i}}: D\times [0,t_i] \rightarrow \mathbb{R}$ and $v_{\theta_{i}}: D\times [0,t_i] \rightarrow \mathbb{R}$ with $1\leq i\leq l$, parameterized by $\theta_{i}$, 
that approximate the solutions $u$ and $v$ of \eqref{wave}. We define the residuals (for $1\leq i\leq l$ and $1\leq j \leq i$),
\begin{subequations}\label{wave_pinn}
	\begin{align}
		\label{wave_pinn_eq1}
		&R_{int1_{i}}[u_{\theta_{i}},v_{\theta_{i}}](\bm{x},t) =\frac{\partial u_{\theta_i}}{\partial t}-v_{\theta_i},\quad
		R_{int2_{i}}[u_{\theta_{i}},v_{\theta_{i}}](\bm{x},t) =\frac{\partial v_{\theta_i}}{\partial t}-\Delta u_{\theta_i} - f,\\
		\label{wave_pinn_eq3}
		&R_{tb1_{i}}[u_{\theta_{i}}](\bm{x}, t_{j-1}) =u_{\theta_i}(\bm{x},t_{j-1})-u_{\theta_{j-1}}(\bm{x},t_{j-1}),\quad 
		R_{tb2_{i}}[v_{\theta_{i}}](\bm{x}, t_{j-1}) =v_{\theta_i}(\bm{x},t_{j-1})-v_{\theta_{j-1}}(\bm{x},t_{j-1}),\\
		&R_{sb1_{i}}[v_{\theta_{i}}](\bm{x},t) =v_{\theta_i}(\bm{x},t)-v_{\theta_i}(\bm{x}+1,t),\quad
		\label{wave_pinn_eq6}
		R_{sb2_{i}}[u_{\theta_{i}}](\bm{x},t) =\nabla u_{\theta_i}(\bm{x},t)-\nabla u_{\theta_i}(\bm{x}+1,t),
	\end{align}
\end{subequations}
where $u_{\theta_{0}}(\bm{x},t_{0})= \psi_{1}(\bm{x})$ and $v_{\theta_{0}}(\bm{x},t_{0})= \psi_{2}(\bm{x})$. For the exact solution $(u,v)$, we have  $R_{int1_{i}}[u,v]=R_{int2_{i}}[u,v]=R_{tb1_{i}}[u]=R_{tb2_{i}}[v]=R_{sb1_{i}}[v]=R_{sb2_{i}}[u]=0$. 

With the HLConcPINN algorithm, we minimize the quantity (for $1\leq i\leq l$),
\begin{align}\label{wave_G}
	&\mathcal{E}_{G_i}(\theta_i)^2=\widetilde{\mathcal{E}}_{G_i}(\theta_i)^2+\mathcal{E}_{G_{i-1}}(\theta_{i-1})^2,\\
	\label{wave_Gi}
	&\widetilde{\mathcal{E}}_{G_i}(\theta_i)^2=\int_{\Omega_i}(|R_{int1_{i}}[u_{\theta_{i}},v_{\theta_{i}}](\bm{x},t)|^2+|R_{int2_{i}}[u_{\theta_{i}},v_{\theta_{i}}](\bm{x},t)|^2+|\nabla R_{int1_{i}}[u_{\theta_{i}},v_{\theta_{i}}](\bm{x},t)|^2)\dx\dt
	\nonumber\\
	&\qquad+\sum_{j=1}^i\int_{D}(|R_{tb1_{i}}[u_{\theta_{i}}](\bm{x}, t_{j-1})|^2+|R_{tb2_{i}}[v_{\theta_{i}}](\bm{x}, t_{j-1})|^2+|\nabla R_{tb1_{i}}[u_{\theta_{i}}](\bm{x}, t_{j-1})|^2)\dx
	\nonumber\\
	&\qquad
	+\left(\int_{\Omega_{*i}}|R_{sb1_{i}}[v_{\theta_{i}}](\bm{x},t)|^2\ds\dt\right)^{\frac{1}{2}}+\left(\int_{\Omega_{*i}}|R_{sb2_{i}}[u_{\theta_{i}}](\bm{x},t)|^2\ds\dt\right)^{\frac{1}{2}}.
\end{align}
Here, $\mathcal{E}_{G_{0}}(\theta_{0}) =0$. 

	
	

Adopting the full training set $\mathcal{S} = \bigcup_{i=1}^l\mathcal{S}_{i}$ with $\mathcal{S}_i = \mathcal{S}_{int_{i}} \cup \mathcal{S}_{sb_{i}} \cup \mathcal{S}_{tb_{i}}$ as given in Section~\ref{sec_setting}, 
we approximate the integrals in \eqref{wave_G} by the midpoint rule, 
resulting in the training loss,
\begin{align}
	\label{wave_T}
	&\mathcal{E}_{T_i}(\theta_i,\mathcal{S}_i)^2=\widetilde{\mathcal{E}}_{T_i}(\theta_i,\mathcal{S}_i)^2+\mathcal{E}_{T_{i-1}}(\theta_{i-1},\mathcal{S}_{i-1})^2,\\
	\label{wave_Ti}
	&\widetilde{\mathcal{E}}_{T_i}(\theta_i,\mathcal{S}_i)^2
	=\mathcal{E}_T^{int1_{i}}(\theta_{i},\mathcal{S}_{int_{i}}) ^2
	+\mathcal{E}_T^{int2_{i}}(\theta_{i},\mathcal{S}_{int_{i}}) ^2
	+\mathcal{E}_T^{int3_{i}}(\theta_{i},\mathcal{S}_{int_{i}}) ^2
	+\mathcal{E}_T^{tb1_{i}}(\theta_{i},\mathcal{S}_{tb_{i}}) ^2\nonumber\\
	&\qquad
	+\mathcal{E}_T^{tb2_{i}}(\theta_{i},\mathcal{S}_{tb_{i}}) ^2+\mathcal{E}_T^{tb3_{i}}(\theta_{i},\mathcal{S}_{tb_{i}}) ^2 
	+\mathcal{E}_T^{sb1_{i}}(\theta_{i},\mathcal{S}_{sb_{i}})
	+\mathcal{E}_T^{sb2_{i}}(\theta_{i},\mathcal{S}_{sb_{i}}),
\end{align}
where
\begin{subequations}\label{wave_TT}
	\begin{align}
		\label{wave_T1}
		&\mathcal{E}_T^{int1_{i}}(\theta_{i},\mathcal{S}_{int_{i}}) ^2 = \sum_{n=1}^{N_{int_i}}\omega_{int_i}^n|R_{int1_{i}}[u_{\theta_{i}},v_{\theta_{i}}](\bm{x}_{int_{i}}^n,t_{int_{i}}^n)|^2,\\
		\label{wave_T01}
		&\mathcal{E}_T^{int2_{i}}(\theta_{i},\mathcal{S}_{int_{i}}) ^2 = \sum_{n=1}^{N_{int_i}}\omega_{int_i}^n|R_{int2_{i}}[u_{\theta_{i}},v_{\theta_{i}}]](\bm{x}_{int_{i}}^n,t_{int_{i}}^n)|^2,\\
		\label{wave_T001}
		&\mathcal{E}_T^{int3_{i}}(\theta_{i},\mathcal{S}_{int_{i}}) ^2 = \sum_{n=1}^{N_{int_i}}\omega_{int_i}^n|\nabla R_{int1_{i}}[u_{\theta_{i}},v_{\theta_{i}}](\bm{x}_{int_{i}}^n,t_{int_{i}}^n)|^2,\\
		\label{wave_T2}
		&\mathcal{E}_T^{tb1_{i}}(\theta_{i},\mathcal{S}_{tb_{i}}) ^2 = \sum_{j=1}^{i}\sum_{n=1}^{N_{tb_i}}\omega_{tb_i}^n|R_{tb1_{i}}[u_{\theta_{i}}](\bm{x}_{tb_{i}}^n,t_{j-1})|^2,\\
		\label{wave_T02}
		&\mathcal{E}_T^{tb2_{i}}(\theta_{i},\mathcal{S}_{tb_{i}}) ^2 = \sum_{j=1}^{i}\sum_{n=1}^{N_{tb_i}}\omega_{tb_i}^n|R_{tb2_{i}}[v_{\theta_{i}}](\bm{x}_{tb_{i}}^n,t_{j-1})|^2,\\
		\label{wave_T002}
		&\mathcal{E}_T^{tb3_{i}}(\theta_{i},\mathcal{S}_{tb_{i}}) ^2 = \sum_{j=1}^{i}\sum_{n=1}^{N_{tb_i}}\omega_{tb_i}^n|\nabla R_{tb1_{i}}[u_{\theta_{i}}](\bm{x}_{tb_{i}}^n,t_{j-1})|^2,\\
		\label{wave_T3}
		&\mathcal{E}_T^{sb1_{i}}(\theta_{i},\mathcal{S}_{sb_{i}}) ^2 = \sum_{n=1}^{N_{sb_i}}\omega_{sb_i}^n|R_{sb1_{i}}[v_{\theta_{i}}](\bm{x}_{sb_{i}}^n,t_{sb_{i}}^n)|^2,\\
		\label{wave_T03}
		&\mathcal{E}_T^{sb2_{i}}(\theta_{i},\mathcal{S}_{sb_{i}}) ^2 = \sum_{n=1}^{N_{sb_i}}\omega_{sb_i}^n|R_{sb2_{i}}[u_{\theta_{i}}](\bm{x}_{sb_{i}}^n,t_{sb_{i}}^n)|^2.
	\end{align}
\end{subequations}
Here, the quadrature points in space-time constitute the data sets $\mathcal{S}_{int_i} = \{(\bm{x}_{int_{i}}^n,t_{int_{i}}^n)\}_{n=1}^{N_{int_i}}$, $\mathcal{S}_{tb_i} = \{\bm{x}_{tb_{i}}^n\}_{n=1}^{N_{tb_i}}$ and $\mathcal{S}_{sb_i} = \{(\bm{x}_{sb_{i}}^n,t_{sb_{i}}^n)\}_{n=1}^{N_{sb_i}}$, and $\omega_{\star_i}^n$ are suitable quadrature weights with $\star$ denoting $int$, $tb$ or $sb$. Notice that $\mathcal{E}_{T_{i-1}}(\theta_{i-1},\mathcal{S}_{i-1}) =0$ for $i=1$.

\subsection{Error Analysis}  

Let 
$  
\hat{u}_i = u_{\theta_i}-u, \ \hat{v}_i = v_{\theta_i}-v	
$  
denote the error of the  HLConcPINN approximation of the solution $(u,v)$.
We define the total approximation error  by
\begin{equation}\label{wave_total}
	\mathcal{E}(\theta_i)^2=\int_{t_{i-1}}^{t_{i}}\int_{D}(|\hat{u}_i(\bm{x},t)|^2+|\nabla\hat{u}_i(\bm{x},t)|^2+|\hat{v}_i(\bm{x},t)|^2)\dx\dt.
\end{equation}
In light of the wave equations \eqref{wave} and the definitions of   residuals \eqref{wave_pinn}, we have
\begin{subequations}\label{wave_error}
	\begin{align}
		\label{wave_error_eq1}
		&R_{int1_{i}}=\frac{\partial \hat{u}_i}{\partial t}-\hat{v}_i,\\
		\label{wave_error_eq2}
		&R_{int2_{i}}=\frac{\partial \hat{v}_i}{\partial t}-\Delta \hat{u}_i,\\
		\label{wave_error_eq3}
		&R_{tb1_{i}}|_{t=t_{j-1}}=\hat{u}_{i}|_{t=t_{j-1}} - \hat{u}_{j-1}|_{t=t_{j-1}}\qquad j = 1,2,\cdots, i,\\
		\label{wave_error_eq4}
		&R_{tb2_{i}}|_{t=t_{j-1}}=\hat{v}_{i}|_{t=t_{j-1}} - \hat{v}_{j-1}|_{t=t_{j-1}}\qquad j = 1,2,\cdots, i,\\
		\label{wave_error_eq5}
		&R_{sb1_{i}}=\hat{v}_i(\bm{x},t)-\hat{v}_i(\bm{x}+1,t),\quad
		R_{sb2_{i}}=\nabla\hat{u}_i(\bm{x},t)-\nabla\hat{u}_i(\bm{x}+1,t).
	\end{align}
\end{subequations}

\begin{Theorem}\label{sec5_Theorem1} 
	Let $\widetilde\Omega_i = D\times [0,t_i]$ and $\widetilde\Omega_{*i}= \partial D\times [0,t_i]$ ($1\leq i\leq l$). Let $n$, $d$, $k \in \mathbb{N}$ with $n\geq2$ and $k\geq 3$, $u\in H^k(\widetilde\Omega_i)$ and $v\in H^{k-1}(\widetilde\Omega_i)$. For every integer $N>5$ and $1\leq j\leq i\leq l$, there exist HLConcPINNs $u_{\theta_i}$ and $v_{\theta_i}$, such that
	\begin{subequations}
		\begin{align}
			\label{lem5.1}
			&\|R_{int1_{i}}\|_{L^2(\widetilde\Omega_i)},\|R_{tb1_{i}}(\bm{x},t_{j-1})\|_{L^2(D)}\lesssim N^{-k+1}{\rm ln}N,\\
			\label{lem5.2}
			&\|R_{int2_{i}}\|_{L^2(\widetilde\Omega_i)},\|\nabla R_{int1_{i}}\|_{L^2(\widetilde\Omega_i)}, \|\nabla R_{tb1_{i}}(\bm{x},t_{j-1})\|_{L^2(D)}, \|R_{sb2_{i}}\|_{L^2(\widetilde\Omega_{*i})}\lesssim N^{-k+2}{\rm ln}^2N,\\
			\label{lem5.3}
			&\|R_{tb2_{i}}(\bm{x},t_{j-1})\|_{L^2(D)},\|R_{sb1_{i}}\|_{L^2(\widetilde\Omega_{*i})}\lesssim N^{-k+2}{\rm ln}N.
		\end{align}
	\end{subequations}
\end{Theorem}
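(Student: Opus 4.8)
The plan is to mirror the proofs of Theorems \ref{sec4_Theorem1} and \ref{sec4b_Theorem1}, reducing the entire statement to the simultaneous Sobolev-norm approximation of the pair $(u,v)$ by tanh neural networks and then reading off each residual bound directly from the error identities \eqref{wave_error}. First I would invoke the tanh approximation result (Lemma \ref{Ar_2}): since $u\in H^k(\widetilde\Omega_i)$ and $v\in H^{k-1}(\widetilde\Omega_i)$, for every $N>5$ there exist two-hidden-layer tanh networks with $\|u-u_{\theta_i}\|_{H^s(\widetilde\Omega_i)}\lesssim N^{-(k-s)}(\ln N)^{s}$ for $0\leq s\leq 2$ and $\|v-v_{\theta_i}\|_{H^s(\widetilde\Omega_i)}\lesssim N^{-(k-1-s)}(\ln N)^{s}$ for $0\leq s\leq 1$. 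Because a HLConcFNN with the prescribed arbitrary number of hidden layers $\geq 2$ has representation capacity at least as large as that of the base two-hidden-layer tanh network (Lemmas \ref{Ar_5} and \ref{Ar_6}), these approximants can be realized as HLConcPINNs $u_{\theta_i},v_{\theta_i}$ while preserving all the norm bounds. This embedding is precisely the step that upgrades the known two-layer, tanh-only results to the general HLConcPINN setting claimed here.

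Second, I would substitute these approximants into \eqref{wave_error} and control each interior residual by the triangle inequality, tracking how many Sobolev orders each differential operator consumes. Thus $\|R_{int1_i}\|_{L^2}\leq\|\partial_t\hat u_i\|_{L^2}+\|\hat v_i\|_{L^2}$ is governed by $\|\hat u_i\|_{H^1}$ and $\|\hat v_i\|_{L^2}$, both of order $N^{-k+1}\ln N$, placing it in \eqref{lem5.1}; whereas $\|R_{int2_i}\|_{L^2}$ and $\|\nabla R_{int1_i}\|_{L^2}$ reduce to $\|\hat u_i\|_{H^2}$ and $\|\hat v_i\|_{H^1}$, both of order $N^{-k+2}\ln^2 N$, the extra logarithm arising from the second-order spatial derivative, placing them in \eqref{lem5.2}. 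This portion is essentially bookkeeping.

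Third, the temporal- and spatial-boundary residuals require the trace / time-slice estimate of Lemma \ref{Ar_7}. Because the modified BTM scheme uses the true initial data throughout $[0,t_i]$, the difference $R_{tb1_i}|_{t_{j-1}}=u_{\theta_i}|_{t_{j-1}}-u_{\theta_{j-1}}|_{t_{j-1}}$ telescopes through the exact solution and is bounded by the sum of the time-slice approximation errors of $u_{\theta_i}$ and $u_{\theta_{j-1}}$ at $t_{j-1}$; Lemma \ref{Ar_7} (the restriction costing one Sobolev order) then yields $N^{-k+1}\ln N$ for $R_{tb1_i}$, $N^{-k+2}\ln^2 N$ for $\nabla R_{tb1_i}$, and $N^{-k+2}\ln N$ for $R_{tb2_i}$ (the last reflecting the lower regularity $v\in H^{k-1}$). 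For the periodic conditions I would use $u(\bm x,t)=u(\bm x+1,t)$ and $v(\bm x,t)=v(\bm x+1,t)$ to rewrite $R_{sb1_i}$ and $R_{sb2_i}$ purely as differences of traces of $\hat v_i$ and $\nabla\hat u_i$ on $\partial D\times[0,t_i]$, and bound those traces (again one order lost), giving $N^{-k+2}\ln N$ for $R_{sb1_i}$ and $N^{-k+2}\ln^2 N$ for $R_{sb2_i}$, matching \eqref{lem5.3} and \eqref{lem5.2} respectively.

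The main obstacle I anticipate is not any single estimate but the precise tracking of logarithmic powers and Sobolev orders so that each residual lands in the correct one of the three groups \eqref{lem5.1}--\eqref{lem5.3}. The delicate points are that the second-order quantities ($R_{int2_i}$, $\nabla R_{int1_i}$, $\nabla R_{tb1_i}$, $R_{sb2_i}$) must acquire $\ln^2 N$ while the zeroth/first-order boundary traces ($R_{tb2_i}$, $R_{sb1_i}$) retain only $\ln N$, and that the lower regularity of $v$ (an $H^{k-1}$ rather than $H^k$ function) has to be matched carefully against the order consumed by each derivative and each trace so that $v$-based residuals do not incur a worse exponent than claimed. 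The trace handling for the periodic boundary conditions through Lemma \ref{Ar_7} is where this accounting is most error-prone.
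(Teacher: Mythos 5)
Your proposal is correct and is essentially the paper's own argument: the paper proves this theorem by exactly the reduction you describe---HLConcFNN Sobolev approximation of $(u,v)$, followed by trace estimates on the time slices and spatial boundary, and the Sobolev-order/logarithm bookkeeping through the error identities \eqref{wave_error}---mirroring the proof of Theorem \ref{sec4_Theorem1}. The only corrections needed are citational rather than mathematical: the two-hidden-layer tanh approximation result you attribute to Lemma \ref{Ar_2} is Lemma \ref{Ar_4} (upgraded to concatenated, mixed-activation networks by Lemma \ref{Ar_7}), the trace/time-slice estimate you attribute to Lemma \ref{Ar_7} is Lemma \ref{Ar_2}, and since Lemmas \ref{Ar_5}--\ref{Ar_6} are stated for a single activation function, the step realizing the tanh approximant inside an architecture whose later layers use other smooth activations should invoke Lemma \ref{Ar_7}'s explicit construction (which absorbs the extra layers' contribution as a controlled perturbation) rather than the capacity-hierarchy lemmas alone.
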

\begin{proof} Similar to  Theorem \ref{sec4_Theorem1}, the proof follows by noting $u\in H^k(\widetilde\Omega_i)$, $v\in H^{k-1}(\widetilde\Omega_i)$, Lemmas \ref{Ar_3} and \ref{Ar_7}.
\end{proof}

Theorem \ref{sec5_Theorem1} implies that one can make the HLConcPINN residuals~\eqref{wave_pinn} arbitrarily small by choosing $N$ to be sufficiently large. It follows that the generalization error $\mathcal{E}_{G_i}(\theta_i)^2$ in~\eqref{wave_G} can be made arbitrarily small.  
The next two theorems show that: (i) the total approximation error $\mathcal{E}(\theta_i)^2$ is small when the generalization error $\mathcal{E}_{G_i}(\theta_i)^2$ is small with the HLConcPINN approximation $(u_{\theta_i},v_{\theta_i})$, and (ii) the total approximation error $\mathcal{E}(\theta_i)^2$ can be arbitrarily small if the training error $\mathcal{E}_{T_i}(\theta_i,\mathcal{S}_i)^2$ is sufficiently small and the sample set is sufficiently large. 

\begin{Theorem}\label{sec5_Theorem2} Let $d\in \mathbb{N}$, $u\in C^1(\widetilde\Omega_i)$ and $v\in C^0(\widetilde\Omega_i)$ be the classical solution to  \eqref{wave}. Let $u_{\theta_i}$ and $v_{\theta_i}$ denote the HLConcPINN approximation with parameter $\theta_i$. For all  $1\leq i\leq l$,  the following relation holds,
	\begin{align}\label{lem5.4}
		&\int_{D}(|\hat{u}_i(\bm{x},\tau)|^2+|\nabla\hat{u}_i(\bm{x},\tau)|^2+|\hat{v}_i(\bm{x},\tau)|^2)\dx\leq C_{G_{i}}\exp(2\Delta t), \quad \tau \in [t_{i-1},
	t_i],
  \\
		\label{lem5.5}
		&\int_{t_{i-1}}^{t_{i}}\int_{D}(|\hat{u}_i(\bm{x},t)|^2+|\nabla\hat{u}_i(\bm{x},t)|^2+|\hat{v}_i(\bm{x},t)|^2)\dx\dt\leq C_{G_{i}}\Delta t\exp(2\Delta t),
	\end{align}
	where $\Delta t=t_i-t_{i-1}$ and for $1\leq i\leq l$,
	\begin{align*}
		&C_{G_{i}}= \widetilde{C}_{G_{i}}  + 2C_{G_{i-1}}\exp(2\Delta t), \qquad C_{G_{0}}=0, \nonumber\\
		&\widetilde{C}_{G_{i}} =\int_{\Omega_i}(|R_{int1_{i}}|^2+|R_{int2_{i}}|^2+|\nabla R_{int1_{i}}|^2)\dx\dt
        \nonumber\\
		&\qquad +2\sum_{j=1}^i\int_{D}(|R_{tb1_{i}}(\bm{x},t_{j-1})|^2+|R_{tb2_{i}}(\bm{x},t_{j-1})|^2+|\nabla R_{tb1_{i}}(\bm{x},t_{j-1})|^2)\dx
		\nonumber\\
		&\qquad 
        +2|\Delta t|^{\frac{1}{2}}C_{\partial D_{1i}}\left(\int_{\Omega_{*i}}|R_{sb1_{i}}|^2\ds\dt\right)^{\frac{1}{2}}+2|\Delta t|^{\frac{1}{2}}C_{\partial D_{2i}}\left(\int_{\Omega_{*i}}|R_{sb2_{i}}|^2\ds\dt\right)^{\frac{1}{2}},
	\end{align*} 
 $C_{\partial D_{1i}}=|\partial D|^{\frac{1}{2}}(\|u\|_{C^1(\widetilde\Omega_{*i})}+\|u_{\theta_i}\|_{C^1(\widetilde\Omega_{*i})})$ and
 $C_{\partial D_{2i}}=|\partial D|^{\frac{1}{2}}(\|v\|_{C(\widetilde\Omega_{*i})}+\|v_{\theta_i}\|_{C(\widetilde\Omega_{*i})})$. 
\end{Theorem}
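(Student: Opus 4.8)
The plan is to run an energy estimate for the first-order wave system, following the template of the proof of Theorem~\ref{sec4_Theorem2} but with an energy that additionally controls $\nabla\hat u_i$. Define
\[
E_i(\tau) = \int_D\big(|\hat u_i(\bm x,\tau)|^2+|\nabla\hat u_i(\bm x,\tau)|^2+|\hat v_i(\bm x,\tau)|^2\big)\dx,
\]
so that the two claimed bounds amount to $E_i(\tau)\le C_{G_i}\exp(2\Delta t)$ together with its integral over $[t_{i-1},t_i]$. First I would differentiate $E_i$ in $\tau$ and substitute the error equations \eqref{wave_error}, namely $\partial_t\hat u_i=\hat v_i+R_{int1_i}$, $\partial_t\hat v_i=\Delta\hat u_i+R_{int2_i}$, and $\partial_t\nabla\hat u_i=\nabla\hat v_i+\nabla R_{int1_i}$. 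This generates the two cross terms $2\int_D\hat v_i\Delta\hat u_i\dx$ and $2\int_D\nabla\hat u_i\cdot\nabla\hat v_i\dx$, and it is here that the gradient part of the energy (and the residual $\nabla R_{int1_i}$ appearing in $\widetilde{\mathcal E}_{G_i}$) is needed.

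The crux is the integration-by-parts identity
\[
2\int_D\hat v_i\Delta\hat u_i\dx+2\int_D\nabla\hat u_i\cdot\nabla\hat v_i\dx=2\int_{\partial D}(\nabla\hat u_i\cdot\bm n)\,\hat v_i\ds,
\]
so the interior cross terms cancel and only a surface term survives. On the torus this boundary integral pairs opposite faces; writing $\hat v_i(\bm x+1)=\hat v_i(\bm x)-R_{sb1_i}$ and $\nabla\hat u_i(\bm x+1)=\nabla\hat u_i(\bm x)-R_{sb2_i}$ from \eqref{wave_error_eq5} converts it into terms each carrying a factor $R_{sb1_i}$ (multiplied by $\nabla\hat u_i$, bounded through $C_{\partial D_{1i}}$) or $R_{sb2_i}$ (multiplied by $\hat v_i$, bounded through $C_{\partial D_{2i}}$). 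This is the step I expect to be the main obstacle: correctly handling the periodic face pairing and the outward normals so that the surviving contribution is controlled by $\|R_{sb1_i}\|_{L^2(\partial D)}$ and $\|R_{sb2_i}\|_{L^2(\partial D)}$ alone.

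The remaining terms are routine. Applying Cauchy--Schwarz and Young's inequality to $2\int_D\hat u_i\hat v_i$, $2\int_D\hat u_iR_{int1_i}$, $2\int_D\hat v_iR_{int2_i}$ and $2\int_D\nabla\hat u_i\cdot\nabla R_{int1_i}$ gives
\[
\tfrac{d}{d\tau}E_i(\tau)\le 2E_i(\tau)+\int_D\big(|R_{int1_i}|^2+|R_{int2_i}|^2+|\nabla R_{int1_i}|^2\big)\dx+(\text{boundary terms}),
\]
where the coefficient $2$ is the origin of $\exp(2\Delta t)$. Integrating the boundary term in time and applying Cauchy--Schwarz produces the $|\Delta t|^{1/2}\big(\int_{\Omega_{*i}}|R_{sb}|^2\big)^{1/2}$ form. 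Grönwall's inequality over $[t_{i-1},\tau]$ then bounds $E_i(\tau)$ by $\exp(2\Delta t)$ times $E_i(t_{i-1})$ plus the accumulated residual terms.

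Finally I would close the recursion. Evaluating the temporal-boundary relations \eqref{wave_error_eq3}--\eqref{wave_error_eq4} at $t_{i-1}$ gives $\hat u_i(t_{i-1})=R_{tb1_i}+\hat u_{i-1}(t_{i-1})$, and likewise for $\nabla\hat u_i$ and $\hat v_i$; the inequality $|a+b|^2\le 2|a|^2+2|b|^2$ then bounds $E_i(t_{i-1})$ by $2\sum_{j}\int_D(|R_{tb1_i}|^2+|R_{tb2_i}|^2+|\nabla R_{tb1_i}|^2)\dx$ plus $2E_{i-1}(t_{i-1})$, and $E_{i-1}(t_{i-1})\le C_{G_{i-1}}\exp(2\Delta t)$ follows from the block-$(i-1)$ estimate \eqref{lem5.4}. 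This reproduces exactly $C_{G_i}=\widetilde C_{G_i}+2C_{G_{i-1}}\exp(2\Delta t)$. Integrating $E_i(\tau)\le C_{G_i}\exp(2\Delta t)$ over $\tau\in[t_{i-1},t_i]$ then delivers the second bound \eqref{lem5.5}.
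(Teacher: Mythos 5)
Your proposal is correct and follows essentially the same route as the paper: the paper's proof of Theorem \ref{sec5_Theorem2} is simply a pointer to the techniques of Theorem \ref{sec4_Theorem2} (energy estimate, integration by parts, Gr\"onwall, block-by-block induction) and to Theorem 3.4 of \cite{Qian2303.12245}, and your argument is precisely that technique carried out for the first-order wave system, with the correct energy $\int_D(|\hat u_i|^2+|\nabla\hat u_i|^2+|\hat v_i|^2)\dx$, the correct cancellation of the cross terms leaving only the periodic-face boundary term, and the correct recursion yielding $C_{G_i}=\widetilde C_{G_i}+2C_{G_{i-1}}\exp(2\Delta t)$.
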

\begin{proof} The proof follows the same techniques as in the proof of Theorem \ref{sec4_Theorem2} and in the proof of Theorem 3.4 of~\cite{Qian2303.12245} .
\end{proof} 

\begin{Theorem}\label{sec5_Theorem3} Let $d\in \mathbb{N}$ and $T>0$. Let $u\in C^4(\widetilde\Omega_i)$ and $v\in C^3(\widetilde\Omega_i)$ be the classical solution to \eqref{wave},  and let $(u_{\theta_i},v_{\theta_i})$ denote the HLConcPINN approximation with parameter $\theta_i$. Then the total approximation error satisfies
	\begin{align}\label{lem5.6}
		&\int_{t_{i-1}}^{t_{i}}\int_{D}(|\hat{u}_i(\bm{x},t)|^2+|\nabla \hat{u}_i(\bm{x},t)|^2+|\hat{v}_i(\bm{x},t)|^2)\dx\dt\leq C_{T_{i}}\Delta t\exp(2\Delta t)
		\nonumber\\
		&\quad=\mathcal{O}(\mathcal{E}_{T_i}(\theta_i,\mathcal{S}_i)^2 + M_{int_i}^{-\frac{2}{d+1}} +M_{tb_i}^{-\frac{2}{d}}+M_{sb_i}^{-\frac{1}{d}}),
	\end{align}
	where 
	\begin{align}\label{sec5_eq5}
		&C_{T_{i}}= \widetilde{C}_{T_{i}}  + 2C_{T_{i-1}}\exp(2\Delta t), \qquad C_{T_{0}}=0, \nonumber\\
		&\widetilde{C}_{T_{i}}=
		2\sum_{j=1}^{i}\big(C_{({R_{tb1_{i}}^2(\bm{x},t_{j-1})})}M_{tb_{i}}^{-\frac{2}{d}}+\mathcal{Q}_{M_{tb_{i}}}^{D}(R_{tb1_{i}}^2(\bm{x},t_{j-1}))+C_{({R_{tb2_{i}}^2(\bm{x},t_{j-1})})}M_{tb_{i}}^{-\frac{2}{d}}+\mathcal{Q}_{M_{tb_{i}}}^{D}(R_{tb2_{i}}^2(\bm{x},t_{j-1}))\big)
		\nonumber\\
		&\qquad+2\sum_{j=1}^{i}\big(C_{({|\nabla R_{tb1_{i}}(\bm{x},t_{j-1})|^2})}M_{tb_{i}}^{-\frac{2}{d}}+\mathcal{Q}_{M_{tb_{i}}}^{D}(|\nabla R_{tb1_{i}}(\bm{x},t_{j-1})|^2)\big)
		+C_{({R_{int1_{i}}^2})}M_{int_{i}}^{-\frac{2}{d+1}}+\mathcal{Q}_{M_{int_{i}}}^{\Omega_i}(R_{int1_{i}}^2)
		\nonumber\\
		&\qquad+C_{(R_{int2_{i}}^2)}M_{int_{i}}^{-\frac{2}{d+1}}+\mathcal{Q}_{M_{int_{i}}}^{\Omega_i}(R_{int2_{i}}^2)+C_{(|\nabla R_{int1_{i}}|^2)}M_{int_{i}}^{-\frac{2}{d+1}}+\mathcal{Q}_{M_{int_{i}}}^{\Omega_i}(|\nabla R_{int1_{i}}|^2)
		\nonumber\\
        &\qquad+2|\Delta t|^{\frac{1}{2}}\big(C_{\partial D_{1i}}(C_{({R_{sb1_{i}}^2})}M_{sb_{i}}^{-\frac{2}{d}}+\mathcal{Q}_{M_{sb_{i}}}^{\Omega_{*i}}(R_{sb1_{i}}^2))^{\frac{1}{2}}+C_{\partial D_{2i}}(C_{({R_{sb2_{i}}^2})}M_{sb_{i}}^{-\frac{2}{d}}+\mathcal{Q}_{M_{sb_{i}}}^{\Omega_{*i}}(R_{sb2_{i}}^2))^{\frac{1}{2}}\big).
	\end{align}
\end{Theorem}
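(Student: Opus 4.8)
The plan is to mirror the strategy used for the parabolic cases (Theorems \ref{sec4_Theorem3} and \ref{sec4b_Theorem3}) and to promote the \emph{generalization-error} bound of Theorem \ref{sec5_Theorem2} into a \emph{training-error} bound, by replacing every continuous integral appearing in the constant $\widetilde{C}_{G_i}$ with its midpoint-quadrature counterpart plus a controlled remainder. Theorem \ref{sec5_Theorem2} already supplies
$$\int_{t_{i-1}}^{t_i}\int_D \big(|\hat{u}_i|^2+|\nabla\hat{u}_i|^2+|\hat{v}_i|^2\big)\dx\dt \le C_{G_i}\,\Delta t\,\exp(2\Delta t),$$
where $C_{G_i}$ is built recursively from $\widetilde{C}_{G_i}$, itself a finite sum of $L^2$-type residual integrals over $\Omega_i$, over $D$ at the instants $t_{j-1}$, and over $\Omega_{*i}$. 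So the remaining task is purely to convert each such integral into the corresponding term of the training loss $\widetilde{\mathcal{E}}_{T_i}$ in \eqref{wave_Ti}--\eqref{wave_TT}, which is exactly the content of Lemma \ref{Ar_3} together with the quadrature formula \eqref{int1}.

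First I would apply the quadrature estimate \eqref{int1} term by term. For the interior contributions $\int_{\Omega_i}R_{int1_{i}}^2$, $\int_{\Omega_i}R_{int2_{i}}^2$ and $\int_{\Omega_i}|\nabla R_{int1_{i}}|^2$, the domain $\Omega_i\subset\mathbb{R}^{d+1}$ has dimension $d+1$, so \eqref{int1} gives
$$\int_{\Omega_i}R_{int1_{i}}^2\dx\dt \le \mathcal{Q}_{M_{int_{i}}}^{\Omega_i}(R_{int1_{i}}^2)+C_{({R_{int1_{i}}^2})}M_{int_{i}}^{-\frac{2}{d+1}},$$
and likewise for the other two interior integrands, producing the $M_{int_i}^{-2/(d+1)}$ terms of $\widetilde{C}_{T_i}$ in \eqref{sec5_eq5}. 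For the temporal-boundary integrals over $D\subset\mathbb{R}^d$ I would apply \eqref{int1} to $R_{tb1_{i}}^2$, $R_{tb2_{i}}^2$ and $|\nabla R_{tb1_{i}}|^2$ at each node $t_{j-1}$ and sum over $1\le j\le i$, giving the $M_{tb_i}^{-2/d}$ terms. For the two spatial-boundary integrals over $\Omega_{*i}$ (dimension $d$), \eqref{int1} yields an $M_{sb_i}^{-2/d}$ remainder inside the square roots of $\widetilde{C}_{G_i}$; taking those square roots produces the $M_{sb_i}^{-1/d}$ scaling visible both in \eqref{sec5_eq5} and in the big-$\mathcal{O}$ statement. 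Collecting these substitutions turns $\widetilde{C}_{G_i}$ into $\widetilde{C}_{T_i}$ and, through the same recursion, $C_{G_i}$ into $C_{T_i}$; the surviving $\mathcal{Q}$-sums reassemble precisely into $\widetilde{\mathcal{E}}_{T_i}(\theta_i,\mathcal{S}_i)^2$, which delivers the big-$\mathcal{O}$ identity.

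The main technical point, and the sole place where the strengthened hypotheses $u\in C^4(\widetilde\Omega_i)$ and $v\in C^3(\widetilde\Omega_i)$ are consumed, is to guarantee that each integrand to which \eqref{int1} is applied actually lies in $C^2$, so that the quadrature constants $C_{(\cdot)}\lesssim\|\cdot\|_{C^2}$ are finite. Since $R_{int2_{i}}=\partial_t\hat{v}_i-\Delta\hat{u}_i$ carries second spatial derivatives of $u$, controlling $\|R_{int2_{i}}^2\|_{C^2}$ demands four spatial derivatives of $u$ and three of $v$; similarly $|\nabla R_{int1_{i}}|^2$ and $|\nabla R_{tb1_{i}}|^2$ each cost one derivative beyond the parabolic case, which is exactly why $C^4/C^3$ rather than $C^2$ is required here. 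I would invoke Lemma \ref{Ar_3} (as in the proof of Theorem \ref{sec4b_Theorem3}) to certify these $C^2$ bounds on the squared residuals and their gradients, after which the term-by-term replacement closes the argument. The hardest bookkeeping is merely keeping the two coupled error fields $(\hat{u}_i,\hat{v}_i)$ and their gradient residuals paired with the correct quadrature rates and dimensions; no analytic ingredient beyond Theorem \ref{sec5_Theorem2} and \eqref{int1} is needed.
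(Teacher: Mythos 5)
Your proposal is correct and follows essentially the same route as the paper, whose proof of Theorem \ref{sec5_Theorem3} consists precisely of combining Lemma \ref{Ar_3}, Theorem \ref{sec5_Theorem2}, and the quadrature error formula \eqref{int1}, exactly as in the detailed argument given for Theorem \ref{sec4_Theorem3}. Your additional bookkeeping (dimension $d+1$ versus $d$ for the respective quadrature rates, the square root producing $M_{sb_i}^{-1/d}$, and the $C^4/C^3$ regularity needed so the squared residuals have finite $C^2$ norms) matches the paper's treatment of the parabolic case.
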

\begin{proof} Using Lemma \ref{Ar_3}, Theorem \ref{sec5_Theorem2} and the quadrature error formula \eqref{int1} leads to this result.
\end{proof} 
    \section{HLConcPINN for Approximating the Nonlinear Klein-Gordon Equation}\label{Klein-Gordon}
\subsection{Nonlinear Klein-Gordon Equation}

Let $D\subset \mathbb{R}^d$ be  an open connected bounded set with  boundary $\partial D$. We consider the  nonlinear Klein-Gordon equation:
\begin{subequations}\label{SG}
	\begin{align}
		\label{SG_eq0}
		&\frac{\partial u}{\partial t} - v = 0 
		\ \qquad\qquad\qquad\qquad\qquad\qquad\qquad \text{in}\ D\times[0,T],\\
		\label{SG_eq1}
		&\varepsilon^2\frac{\partial v}{\partial t} = a^2\Delta u - \varepsilon_1^2u-g(u)+f   \  \ \quad\qquad\qquad \text{in}\ D\times[0,T],\\
		\label{SG_eq2}
		&u(\bm{x},0)=\psi_{1}(\bm{x})\qquad\qquad\qquad\qquad\qquad\qquad \ \, \text{in}\ D,\\
		\label{SG_eq3}
		&v(\bm{x},0)=\psi_{2}(\bm{x}) \qquad\qquad\qquad\qquad\qquad\qquad \ \, \text{in}\ D,\\
		\label{SG_eq4}
		&u(\bm{x},t)|_{\partial D}=u_{d}(\bm{x},t)   \qquad\qquad\qquad\qquad\qquad\ \text{in}\ \partial D\times[0,T],
	\end{align}
\end{subequations}
where $u$ and $v$ are the field functions to be solved for, $f$ is a source term, and $u_d$, $\psi_1$ and $\psi_2$ denote the boundary/initial conditions. $\varepsilon>0$, $a>0$ and $\varepsilon_1\geq 0$ are  constants.
$g(u)$ is a nonlinear term. We assume that $g$ is globally Lipschitz,
i.e.~there exists a constant $L$ (independent of $v$ and $w$) such that
\begin{equation}\label{non2}
	|g(v) - g(w)|\leq L|v-w|, \qquad \forall v, \, w \in \mathbb{R}.
\end{equation} 


\subsection{Hidden-Layer Concatenated Physics Informed Neural Networks}

Following the settings in Section~\ref{sec_setting},
we define the following residuals for the HLConcPINN approximation $u_{\theta_{i}}: D\times [0,t_i] \rightarrow \mathbb{R}$ and $v_{\theta_{i}}:$ $D\times$ $[0,t_i]$ $\rightarrow \mathbb{R}$ (for $1\leq j \leq i\leq l$) of the equations in \eqref{SG}:
\begin{subequations}\label{SG_pinn}
	\begin{align}
		\label{SG_pinn_eq1}
		&R_{int1_i}[u_{\theta_i},v_{\theta_i}](\bm{x},t) =\frac{\partial u_{\theta_i}}{\partial t}-v_{\theta_i},\quad
		R_{int2_i}[u_{\theta_i},v_{\theta_i}](\bm{x},t) =\varepsilon^2\frac{\partial v_{\theta_i}}{\partial t}-a^2\Delta u_{\theta_i} +\varepsilon_1^2u_{\theta_i}+g(u_{\theta_i})-f,\\
		\label{SG_pinn_eq3}
		&R_{tb1_i}[u_{\theta_i}](\bm{x},t_{j-1}) =u_{\theta_i}(\bm{x},t_{j-1})-u_{\theta_{j-1}}(\bm{x},t_{j-1}),\quad
		R_{tb2_i}[v_{\theta_i}](\bm{x},t_{j-1}) =v_{\theta_i}(\bm{x},t_{j-1})-v_{\theta_{j-1}}(\bm{x},t_{j-1}),\\
		\label{SG_pinn_eq5}
		&R_{sb_i}[v_{\theta_i}](\bm{x},t) =v_{\theta_i}(\bm{x},t)|_{\partial D}-u_{dt}(\bm{x},t),
	\end{align}
\end{subequations}
where $u_{dt}=\frac{\partial u_d}{\partial t}$, $u_{\theta_0}(\bm{x},t_0)=\psi_{1}(\bm{x})$ and $v_{\theta_0}(\bm{x},t_0)$ $=\psi_{2}(\bm{x})$. Notice that $R_{int1_i}[u,v]=R_{int2_i}[u,v]=R_{tb1_i}[u]=R_{tb2_i}[v]=R_{sb_i}[v]=0$ for the exact solution $(u,v)$. We minimize the following generalization error (for $1\leq i\leq l$),
\begin{align}\label{SG_G}
	&\mathcal{E}_{G_i}(\theta_i)^2=\widetilde{\mathcal{E}}_{G_i}(\theta_i)^2+\mathcal{E}_{G_{i-1}}(\theta_{i-1})^2,\\
	\label{SG_Gi}
	&\widetilde{\mathcal{E}}_{G_i}(\theta_i)^2=\int_{\Omega_{i}}\left(|R_{int1_i}[u_{\theta_i},v_{\theta_i}](\bm{x},t)|^2+|R_{int2_i}[u_{\theta_i},v_{\theta_i}](\bm{x},t)|^2+|\nabla R_{int1_i}[u_{\theta_i},v_{\theta_i}](\bm{x},t)|^2\right)\dx\dt
	\nonumber\\
	&\qquad\qquad+\sum_{j=1}^i\int_{D}\left(|R_{tb1_i}[u_{\theta_i}](\bm{x},t_{j-1})|^2+|R_{tb2_i}[v_{\theta_i}](\bm{x},t_{j-1})|^2+|\nabla R_{tb1_i}[u_{\theta_i}](\bm{x},t_{j-1})|^2\right)\dx
	\nonumber\\
	&\qquad\qquad
	+\left(\int_{\Omega_{*i}}|R_{sb_i}[v_{\theta_i}](\bm{x},t)|^2\ds\dt\right)^{\frac{1}{2}},
\end{align}
where $\mathcal{E}_{G_0}(\theta_0)=0$. 

Employing the  training set $\mathcal{S} = \bigcup_{i=1}^l\mathcal{S}_{i}$ with $\mathcal{S}_i = \mathcal{S}_{int_{i}} \cup \mathcal{S}_{sb_{i}} \cup \mathcal{S}_{tb_{i}}$ as given in Section~\ref{sec_setting} and the midpoint rule for approximating the residuals, we arrive at the training loss 
as follows (for $1\leq i\leq l$),
\begin{align}\label{SG_T}
	&\mathcal{E}_{T_i}(\theta_i,\mathcal{S}_i)^2=\widetilde{\mathcal{E}}_{T_i}(\theta_i,\mathcal{S}_i)^2+\mathcal{E}_{T_{i-1}}(\theta_{i-1},\mathcal{S}_{i-1})^2,\\
	\label{SG_Ti}
	&\widetilde{\mathcal{E}}_{T_i}(\theta_i,\mathcal{S}_i)^2
	=\mathcal{E}_T^{int1_i}(\theta_i,\mathcal{S}_{int_i})^2
	+\mathcal{E}_T^{int2_i}(\theta_i,\mathcal{S}_{int_i})^2
	+\mathcal{E}_T^{int3_i}(\theta_i,\mathcal{S}_{int_i})^2
	+\mathcal{E}_T^{tb1_i}(\theta_i,\mathcal{S}_{tb_i})^2
	\nonumber\\
	&\qquad\qquad+\mathcal{E}_T^{tb2_i}(\theta_i,\mathcal{S}_{tb_i})^2 +\mathcal{E}_T^{tb3_i}(\theta_i,\mathcal{S}_{tb_i})^2 
	+\mathcal{E}_T^{sb_i}(\theta_i,\mathcal{S}_{sb_i}),
\end{align}
where $\mathcal{E}_T^{sb_i}(\theta_i,\mathcal{S}_{sb_i})^2= \sum_{n=1}^{N_{sb_i}}\omega_{sb_i}^n|R_{sb_i}[v_{\theta_i}](\bm{x}_{sb_i}^n,t_{sb_i}^n)|^2$ and the other terms are defined in \eqref{wave_TT}. Notice that $\mathcal{E}_{T_0}(\theta_0,\mathcal{S}_0)=0$. 

\subsection{Error Analysis}  

Let $\hat{u}_i = u_{\theta_i}-u$ and $\hat{v}_i = v_{\theta_i}-v$ denote the errors of HLConcPINN approximation,
where $(u,v)$ are the exact solutions. We define the total approximation error of  HLConcPINN as, 
\begin{equation}\label{SG_total}
	\mathcal{E}(\theta_i)^2=\int_{t_{i-1}}^{t_{i}}\int_{D}(|\hat{u}_i(\bm{x},t)|^2+a^2|\nabla \hat{u}_i(\bm{x},t)|^2+\varepsilon^2|\hat{v}_i(\bm{x},t)|^2)\dx\dt.
\end{equation}
 Subtracting the equations \eqref{SG} from the residual equations~\eqref{SG_pinn} leads to,
\begin{subequations}\label{SG_error}
	\begin{align}
		\label{SG_error_eq1}
		&R_{int1_i}=\frac{\partial \hat{u}_i}{\partial t}-\hat{v}_i,\\
		\label{SG_error_eq2}
		&R_{int2_i}=\varepsilon^2\frac{\partial \hat{v}_i}{\partial t}-a^2\Delta \hat{u}_i +\varepsilon_1^2\hat{u}_i+g(u_{\theta_i})-g(u),\\
		\label{SG_error_eq3}
		&R_{tb1_{i}}|_{t=t_{j-1}}=\hat{u}_{i}|_{t=t_{j-1}} - \hat{u}_{j-1}|_{t=t_{j-1}}\qquad j = 1,2,\cdots, i,\\
		\label{SG_error_eq4}
		&R_{tb2_{i}}|_{t=t_{j-1}}=\hat{v}_{i}|_{t=t_{j-1}} - \hat{v}_{j-1}|_{t=t_{j-1}}\qquad j = 1,2,\cdots, i,\\
		\label{SG_error_eq5}
		&R_{sb_i}=\hat{v}_i|_{\partial D}.
	\end{align}
\end{subequations}

The following theorems summarize the results of the HLConcPINN approximation for the nonlinear Klein-Gordon equation.


\begin{Theorem}\label{sec6_Theorem1} 
	Let $n\geq2$, $d$, $k \in \mathbb{N}$ with $k\geq 3$. Suppose that $g(u)$ is Lipschitz continuous, $u \in C^k(D\times[0,t_i])$ and $v \in C^{k-1}(D\times[0,t_i])$ ($1\leq i\leq l$). Then for every integer $N>5$, there exist HLConcPINNs $u_{\theta_i}$ and $v_{\theta_i}$, such that
	\begin{subequations}
		\begin{align}
			\label{lem6.1}
			&\|R_{int1_i}\|_{L^2(D\times[0,t_i])},\|R_{tb1_i}\|_{L^2(D)}\lesssim N^{-k+1}{\rm ln}N,\\
			\label{lem6.2}
			&\|R_{int2_i}\|_{L^2(D\times[0,t_i])},\|\nabla R_{int1_i}\|_{L^2(D\times[0,t_i])}, \|\nabla R_{tb1_i}\|_{L^2(D)}\lesssim N^{-k+2}{\rm ln}^2N,\\
			\label{lem6.3}
			&\|R_{tb2_i}\|_{L^2(D)},\|R_{sb_i}\|_{L^2(\partial D\times [0,t_i])}\lesssim N^{-k+2}{\rm ln}N.
		\end{align}
	\end{subequations}
\end{Theorem}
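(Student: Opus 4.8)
The plan is to follow the same strategy as in the proof of Theorem~\ref{sec5_Theorem1} for the wave equation, since the residual structure here is identical (two interior residuals, two temporal-boundary residuals, and one spatial-boundary residual), the only genuinely new ingredient being the nonlinear term $g$, which will be controlled through its global Lipschitz property~\eqref{non2}. The role of the parameter $n\geq 2$ is that the approximation is carried by the first two $\tanh$ hidden layers, while any additional hidden layers (with other admissible activations) cannot decrease the representation capacity, by Lemmas~\ref{Ar_5} and~\ref{Ar_6}.

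First I would invoke the neural-network approximation machinery. Since $u\in C^k(D\times[0,t_i])\subset H^k(\widetilde\Omega_i)$ and $v\in C^{k-1}(D\times[0,t_i])\subset H^{k-1}(\widetilde\Omega_i)$, Lemma~\ref{Ar_3} supplies, for each time block $i$, $\tanh$ neural networks approximating $u$ and $v$ on $\widetilde\Omega_i=D\times[0,t_i]$ together with their derivatives; these approximants are realizable as HLConcFNNs because a HLConcFNN has representation capacity at least that of the two-hidden-layer $\tanh$ FNN embedded in it. Concretely I would record the derivative-norm rates of De~Ryck et al., namely $\|u-u_{\theta_i}\|_{H^2(\widetilde\Omega_i)}\lesssim N^{-k+2}\ln^2 N$ and $\|u-u_{\theta_i}\|_{H^1(\widetilde\Omega_i)}\lesssim N^{-k+1}\ln N$ for $u$, and $\|v-v_{\theta_i}\|_{H^1(\widetilde\Omega_i)}\lesssim N^{-k+2}\ln^2 N$, $\|v-v_{\theta_i}\|_{L^2(\widetilde\Omega_i)}\lesssim N^{-k+1}\ln N$ for $v$.

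Next I would bound each residual using the error identities~\eqref{SG_error}. For $R_{int1_i}=\partial_t\hat u_i-\hat v_i$ the triangle inequality together with the $H^1$ rate for $u$ and the $L^2$ rate for $v$ gives the $N^{-k+1}\ln N$ bound, while differentiating and using the $H^2$ rate for $u$ and $H^1$ rate for $v$ yields $\|\nabla R_{int1_i}\|\lesssim N^{-k+2}\ln^2 N$. For $R_{int2_i}=\varepsilon^2\partial_t\hat v_i-a^2\Delta\hat u_i+\varepsilon_1^2\hat u_i+(g(u_{\theta_i})-g(u))$ the dominant contributions are the second-order terms $\partial_t\hat v_i$ and $\Delta\hat u_i$, both at the $N^{-k+2}\ln^2 N$ level, whereas by~\eqref{non2} one has $\|g(u_{\theta_i})-g(u)\|_{L^2}\le L\|\hat u_i\|_{L^2}$, a strictly lower-order term; hence $\|R_{int2_i}\|_{L^2}\lesssim N^{-k+2}\ln^2 N$. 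The temporal-boundary residuals are handled by telescoping across blocks: writing $R_{tb1_i}(\cdot,t_{j-1})=(u_{\theta_i}-u)(\cdot,t_{j-1})-(u_{\theta_{j-1}}-u)(\cdot,t_{j-1})$, which is legitimate precisely because the modified BTM enforces the true initial data so that every block network targets the same exact solution on $[0,t_i]$, I would bound each piece by the trace estimates of Lemma~\ref{Ar_7}, giving $N^{-k+1}\ln N$ for $R_{tb1_i}$, $N^{-k+2}\ln^2 N$ for $\nabla R_{tb1_i}$, and $N^{-k+2}\ln N$ for $R_{tb2_i}$. Finally, since $v|_{\partial D}=\partial_t u|_{\partial D}=\partial_t u_d=u_{dt}$, the spatial-boundary residual reduces to $R_{sb_i}=\hat v_i|_{\partial D}$, whose $L^2(\partial D\times[0,t_i])$ norm is bounded via the boundary trace of $\hat v_i$ at the $N^{-k+2}\ln N$ rate.

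I expect the delicate points to be bookkeeping rather than conceptual: carefully tracking which Sobolev order and which logarithmic power each residual inherits, so that the three-tier rate structure ($N^{-k+1}\ln N$, $N^{-k+2}\ln^2 N$, $N^{-k+2}\ln N$) emerges correctly, and justifying the cross-block telescoping of the temporal-boundary residuals through the trace estimates of Lemma~\ref{Ar_7}. The nonlinearity $g$, by contrast, poses no real obstacle: its global Lipschitz continuity immediately reduces $g(u_{\theta_i})-g(u)$ to a multiple of $\hat u_i$, which is of lower order than the second-derivative terms already present in $R_{int2_i}$.
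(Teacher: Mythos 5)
Your proposal is correct and follows essentially the same route as the paper's own (very terse) proof: invoke the HLConcFNN approximation results (Lemmas \ref{Ar_4} and \ref{Ar_7}) for $u\in C^k(D\times[0,t_i])$ and $v\in C^{k-1}(D\times[0,t_i])$, bound each residual in \eqref{SG_error} by the triangle inequality together with the multiplicative trace inequality, and absorb the nonlinearity via the global Lipschitz condition \eqref{non2}, exactly as in the proofs of Theorems \ref{sec4_Theorem1} and \ref{sec5_Theorem1}. Two bookkeeping corrections are needed, though: first, the trace estimates you attribute to Lemma \ref{Ar_7} actually come from Lemma \ref{Ar_2} (Lemma \ref{Ar_7} is purely an approximation result); second, Lemma \ref{Ar_7} applied to $v\in H^{k-1}$ gives $\|\hat v_i\|_{H^1(\widetilde\Omega_i)}\lesssim N^{-k+2}\ln N$ (the logarithmic power equals the Sobolev order), not $N^{-k+2}\ln^2 N$ as you state --- and this sharper rate is what you actually need, since with your $\ln^2 N$ bound the trace argument would only yield $\|R_{tb2_i}\|_{L^2(D)},\ \|R_{sb_i}\|_{L^2(\partial D\times[0,t_i])}\lesssim N^{-k+2}\ln^2 N$, falling short of the rate claimed in \eqref{lem6.3}.
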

\begin{proof} Similar to that of Theorem \ref{sec4_Theorem1}, the proof follows by noting  $u \in C^k(D\times[0,t_{i}])$, $v \in C^{k-1}(D\times[0,t_{i}])$, Lemmas \ref{Ar_3}, \ref{Ar_4} and \ref{Ar_7},  and the globally Lipschitz condition \eqref{non2}.
\end{proof} 

This theorem implies that the HLConcPINN residuals in \eqref{SG_pinn} can be made arbitrarily small by choosing a sufficiently large $N$. Therefore, the generalization error $\mathcal{E}_{G_i}(\theta_i)^2$ can be made arbitrarily small. We next show that the HLConcPINN total approximation error $\mathcal{E}(\theta_i)^2$ can be controlled by the generalization error $\mathcal{E}_{G_i}(\theta_i)^2$ (Theorem \ref{sec6_Theorem2} below), and by the training error~$\mathcal{E}_{T_i}(\theta_i,\mathcal{S}_i)^2$ (Theorem \ref{sec6_Theorem3} below). 

\begin{Theorem}\label{sec6_Theorem2} Let $d\in \mathbb{N}$, $u\in C^1(D\times[0,t_i])$ and $v\in C^0(D\times[0,t_i])$ be the classical solution of \eqref{SG}. Let $(u_{\theta_i},v_{\theta_i})$ denote the  HLConcPINN approximation with parameter $\theta_i$. For 
 $1\leq i\leq l$,  the following relation holds,	
	\begin{align}\label{lem6.4}
		&\int_{D}(|\hat{u}_i(\bm{x},t)|^2+a^2|\nabla \hat{u}_i(\bm{x},t)|^2+\varepsilon^2|\hat{v}_i(\bm{x},t)|^2)\dx\leq C_{G_i}\exp\left((2+\varepsilon_1^2+L+a^2)\Delta t\right),\\
        \label{lem6.5}
		&\int_{t_{i-1}}^{t_{i}}\int_{D}(|\hat{u}_i(\bm{x},t)|^2+a^2|\nabla \hat{u}_i(\bm{x},t)|^2+\varepsilon^2|\hat{v}_i(\bm{x},t)|^2)\dx\dt
		\leq C_{G_i}\Delta t\exp\left((2+\varepsilon_1^2+L+a^2)\Delta t\right),
	\end{align}
	where
	\begin{align*}
		&C_{G_{i}}= \widetilde{C}_{G_{i}}  + 2C_{G_{i-1}}\exp((2+\varepsilon_1^2+L+a^2)\Delta t), \qquad C_{G_{0}}=0, \nonumber\\
		&\widetilde{C}_{G_{i}} =\int_{\Omega_i}(|R_{int1_{i}}|^2+|R_{int2_{i}}|^2+a^2|\nabla R_{int1_{i}}|^2)\dx\dt+2C_{\partial D_i}|\Delta t|^{\frac{1}{2}}\left(\int_{\Omega_{*i}}|R_{sb_i}|^2\ds\dt\right)^{\frac{1}{2}}
		\nonumber\\
		&\qquad +2\sum_{j=1}^i\int_{D}(|R_{tb1_{i}}(\bm{x},t_{j-1})|^2+\varepsilon^2|R_{tb2_{i}}(\bm{x},t_{j-1})|^2+a^2|\nabla R_{tb1_{i}}(\bm{x},t_{j-1})|^2)\dx,
	\end{align*} 
	and $C_{\partial D_i}=a^2|\partial D|^{\frac{1}{2}}(\|u\|_{C^1(\partial D\times[0,t_i])}+||u_{\theta_i}||_{C^1(\partial D\times[0,t_i])})$.
\end{Theorem}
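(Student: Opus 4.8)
The plan is to run a weighted energy estimate for the first-order-in-time system \eqref{SG_error}, mirroring the argument for the wave equation (Theorem~\ref{sec5_Theorem2}) but now accounting for the zeroth-order term $\varepsilon_1^2u$, the nonlinearity $g$, and a genuine Dirichlet boundary. I introduce the energy
\begin{equation*}
E_i(\tau)=\int_D\big(|\hat{u}_i(\bm{x},\tau)|^2+a^2|\nabla\hat{u}_i(\bm{x},\tau)|^2+\varepsilon^2|\hat{v}_i(\bm{x},\tau)|^2\big)\dx,
\end{equation*}
which is precisely the spatial integrand of the total error \eqref{SG_total}. Differentiating $E_i$ in $\tau$ and substituting $\partial_t\hat{u}_i=\hat{v}_i+R_{int1_i}$ from \eqref{SG_error_eq1} and $\varepsilon^2\partial_t\hat{v}_i=R_{int2_i}+a^2\Delta\hat{u}_i-\varepsilon_1^2\hat{u}_i-(g(u_{\theta_i})-g(u))$ from \eqref{SG_error_eq2}, together with $\partial_t\nabla\hat{u}_i=\nabla\hat{v}_i+\nabla R_{int1_i}$, produces the standard hyperbolic structure to be exploited.

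The crucial step is integration by parts. The term $2a^2\int_D\hat{v}_i\Delta\hat{u}_i\dx$ coming from $R_{int2_i}$ equals $-2a^2\int_D\nabla\hat{v}_i\cdot\nabla\hat{u}_i\dx+2a^2\int_{\partial D}\hat{v}_i\,\partial_n\hat{u}_i\ds$, and its interior part cancels exactly against the $2a^2\int_D\nabla\hat{u}_i\cdot\nabla\hat{v}_i\dx$ arising from $\frac{d}{d\tau}\int_Da^2|\nabla\hat{u}_i|^2\dx$. On the boundary I use $\hat{v}_i|_{\partial D}=R_{sb_i}$ (from \eqref{SG_error_eq5}), so the leftover $2a^2\int_{\partial D}R_{sb_i}\,\partial_n\hat{u}_i\ds$ is controlled by $2C_{\partial D_i}(\int_{\partial D}|R_{sb_i}|^2\ds)^{1/2}$ with $C_{\partial D_i}=a^2|\partial D|^{1/2}(\|u\|_{C^1}+\|u_{\theta_i}\|_{C^1})$, the a priori $C^1$-regularity supplying the bound on $\partial_n\hat{u}_i$. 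The remaining cross terms are handled by Cauchy--Schwarz and Young's inequalities; in particular $2a^2\int_D\nabla\hat{u}_i\cdot\nabla R_{int1_i}\dx\le a^2\int_D|\nabla\hat{u}_i|^2\dx+a^2\int_D|\nabla R_{int1_i}|^2\dx$ feeds $a^2\int_D|\nabla\hat{u}_i|^2$ back into $E_i$, while the nonlinear term is linearized through the global Lipschitz bound \eqref{non2}, $2\int_D|\hat{v}_i|\,|g(u_{\theta_i})-g(u)|\dx\le L\int_D(|\hat{u}_i|^2+|\hat{v}_i|^2)\dx$. Collecting the coefficients of $E_i$ gives the differential inequality $\frac{d}{d\tau}E_i(\tau)\le(2+\varepsilon_1^2+L+a^2)E_i(\tau)+\widetilde{G}_i(\tau)$, where $\widetilde{G}_i$ gathers the interior residuals $|R_{int1_i}|^2+|R_{int2_i}|^2+a^2|\nabla R_{int1_i}|^2$ and the boundary contribution.

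Grönwall's inequality on $[t_{i-1},\tau]$ then yields \eqref{lem6.4}, and a further integration over $[t_{i-1},t_i]$ yields \eqref{lem6.5}. It remains to bound the initial energy $E_i(t_{i-1})$: evaluating \eqref{SG_error_eq3}--\eqref{SG_error_eq4} at $t_{i-1}$ (the $j=i$ terms) gives $\hat{u}_i(t_{i-1})=R_{tb1_i}(\cdot,t_{i-1})+\hat{u}_{i-1}(t_{i-1})$ and $\hat{v}_i(t_{i-1})=R_{tb2_i}(\cdot,t_{i-1})+\hat{v}_{i-1}(t_{i-1})$, so that $|p+q|^2\le 2|p|^2+2|q|^2$ splits $E_i(t_{i-1})$ into the temporal-boundary residuals $|R_{tb1_i}|^2,\varepsilon^2|R_{tb2_i}|^2,a^2|\nabla R_{tb1_i}|^2$ (which, together with the interior and boundary residuals, assemble $\widetilde{C}_{G_i}$, matched to the generalization error $\widetilde{\mathcal E}_{G_i}^2$) plus twice the terminal energy of the previous block. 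Bounding the latter by $C_{G_{i-1}}\exp((2+\varepsilon_1^2+L+a^2)\Delta t)$ via the same estimate applied on block $i-1$ produces the recursion $C_{G_i}=\widetilde{C}_{G_i}+2C_{G_{i-1}}\exp((2+\varepsilon_1^2+L+a^2)\Delta t)$ with $C_{G_0}=0$. The main obstacle is the boundary term: unlike the periodic wave problem, no residual controls $\partial_n\hat{u}_i$, so the Dirichlet boundary must be absorbed through the a priori constant $C_{\partial D_i}$ rather than into the energy, and one must keep the $\varepsilon^2$ and $a^2$ weights consistent so that both the $a^2\Delta\hat{u}_i$ cancellation and the Lipschitz estimate go through; the inter-block recursion is the secondary bookkeeping hurdle.
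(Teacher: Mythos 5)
Your proposal is correct and takes essentially the same approach as the paper, whose proof of this theorem is just a one-line reference to the heat-equation argument (Theorem \ref{sec4_Theorem2}) together with the Lipschitz condition \eqref{non2}: your weighted energy estimate with the $a^2\nabla\hat{u}_i\cdot\nabla\hat{v}_i$ cancellation after integration by parts, boundary control via $\hat{v}_i|_{\partial D}=R_{sb_i}$ and the a priori constant $C_{\partial D_i}$, Lipschitz linearization of $g$, Gr\"onwall, and the block-wise induction through the temporal residuals is precisely that argument written out in detail. The only imprecision --- that the $(2+\varepsilon_1^2+L)$ coefficient multiplying $\int_D|\hat{v}_i|^2\dx$ must be absorbed into the $\varepsilon^2$-weighted energy, which tacitly requires $\varepsilon$ not to be small --- is inherent in the paper's own statement of the theorem and is not a defect introduced by your reconstruction.
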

\begin{proof} The proof is similar to that of Theorem \ref{sec4_Theorem2},  by noting \eqref{non2}.
\end{proof} 

\begin{Theorem}\label{sec6_Theorem3} Let $d\in \mathbb{N}$ and $T>0$, and let $u\in C^4(D\times[0,t_i])$ and $v\in C^3(D\times[0,t_i])$ be the classical solution to  \eqref{SG}. Let $(u_{\theta_i},v_{\theta_i})$ denote the HLConcPINN approximation with parameter $\theta_i$. Then the following relation holds ($1\leq i\leq l$), 
	\begin{align}\label{lem6.6}
		&\int_{\Omega_{i}}(|\hat{u}_i(\bm{x},t)|^2+a^2|\nabla \hat{u}_i(\bm{x},t)|^2+\varepsilon^2|\hat{v}_i(\bm{x},t)|^2)\dx\dt\leq C_{T_i}\Delta t\exp\left((2+\varepsilon_1^2+L+a^2)\Delta t\right)
		\nonumber\\
		&\quad=\mathcal{O}(\mathcal{E}_{T_i}(\theta_i,\mathcal{S}_i)^2 + M_{int_i}^{-\frac{2}{d+1}} +M_{tb_i}^{-\frac{2}{d}}+M_{sb_i}^{-\frac{1}{d}}),                       
	\end{align}
	where 
	\begin{align*}
		&C_{T_{i}}= \widetilde{C}_{T_{i}}  + 2C_{T_{i-1}}\exp\left((2+\varepsilon_1^2+L+a^2)\Delta t\right), \qquad C_{T_{0}}=0, \\
		&\widetilde{C}_{T_{i}}=2\sum_{j=1}^{i}\big(C_{({R_{tb1_{i}}^2(\bm{x},t_{j-1})})}M_{tb_{i}}^{-\frac{2}{d}}+\mathcal{Q}_{M_{tb_{i}}}^{D}(R_{tb1_{i}}^2(\bm{x},t_{j-1}))+\varepsilon^2C_{({R_{tb2_{i}}^2(\bm{x},t_{j-1})})}M_{tb_{i}}^{-\frac{2}{d}}+\varepsilon^2\mathcal{Q}_{M_{tb_{i}}}^{D}(R_{tb2_{i}}^2(\bm{x},t_{j-1}))\big)
		\nonumber\\
		&\qquad+2a^2\sum_{j=1}^{i}\big(C_{({|\nabla R_{tb1_{i}}(\bm{x},t_{j-1})|^2})}M_{tb_{i}}^{-\frac{2}{d}}+\mathcal{Q}_{M_{tb_{i}}}^{D}(|\nabla R_{tb1_{i}}(\bm{x},t_{j-1})|^2)\big)
		+C_{({R_{int1_{i}}^2})}M_{int_{i}}^{-\frac{2}{d+1}}+\mathcal{Q}_{M_{int_{i}}}^{\Omega_i}(R_{int1_{i}}^2)
		\nonumber\\
		&\qquad+C_{({R_{int2_i}^2})}M_{int}^{-\frac{2}{d+1}}+\mathcal{Q}_{M_{int}}^{\Omega_{i}}(R_{int2_i}^2)
		+a^2\left(C_{(|\nabla R_{int1_i}|^2)}M_{int}^{-\frac{2}{d+1}}+\mathcal{Q}_{M_{int}}^{\Omega_{i}}(|\nabla R_{int1_i}|^2)\right),\\
		&\qquad+2C_{\partial D_i}|\Delta t|^{\frac{1}{2}}\left(C_{({R_{sb_i}^2})}M_{sb}^{-\frac{2}{d}}+\mathcal{Q}_{M_{sb}}^{\Omega_{*i}}(R_{sb_i}^2)\right)^{\frac{1}{2}}.
	\end{align*}
\end{Theorem}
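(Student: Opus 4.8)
The plan is to mirror the proofs of Theorems~\ref{sec4b_Theorem3} and~\ref{sec5_Theorem3}, taking Theorem~\ref{sec6_Theorem2} as the starting point and converting its continuous-integral constant $C_{G_i}$ into the quadrature-based constant $C_{T_i}$ by means of the midpoint-rule error estimate~\eqref{int1}. Theorem~\ref{sec6_Theorem2} already supplies $\int_{\Omega_i}(|\hat u_i|^2+a^2|\nabla\hat u_i|^2+\varepsilon^2|\hat v_i|^2)\dx\dt\le C_{G_i}\Delta t\exp((2+\varepsilon_1^2+L+a^2)\Delta t)$, where $\widetilde C_{G_i}$ is assembled from the continuous integrals $\int_{\Omega_i}|R_{int1_i}|^2$, $\int_{\Omega_i}|R_{int2_i}|^2$, $a^2\int_{\Omega_i}|\nabla R_{int1_i}|^2$, the temporal-boundary integrals over $D$ of $R_{tb1_i}^2$, $\varepsilon^2 R_{tb2_i}^2$, $a^2|\nabla R_{tb1_i}|^2$, and the square-rooted spatial-boundary term $(\int_{\Omega_{*i}}|R_{sb_i}|^2)^{1/2}$. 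The task is thus reduced to discretizing each of these integrals and tracking the resulting errors.

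First I would invoke Lemma~\ref{Ar_3}, together with the regularity hypotheses $u\in C^4(D\times[0,t_i])$ and $v\in C^3(D\times[0,t_i])$ and the Lipschitz property~\eqref{non2} of $g$, to certify that every residual squared entering $\widetilde C_{G_i}$ — namely $R_{int1_i}^2$, $R_{int2_i}^2$, $|\nabla R_{int1_i}|^2$, $R_{tb1_i}^2$, $R_{tb2_i}^2$, $|\nabla R_{tb1_i}|^2$ and $R_{sb_i}^2$ — lies in $C^2$ of the relevant domain, so that the quadrature constants $C_{(R^2)}\lesssim\|R^2\|_{C^2}$ in~\eqref{int1} are finite. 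The prescribed smoothness is precisely calibrated for this: $R_{int2_i}$ contains $\Delta u_{\theta_i}$ and $g(u_{\theta_i})$, so $C^4$ control of the field and sufficient smoothness of $g$ keep $R_{int2_i}^2$ in $C^2$, while $\nabla R_{int1_i}$ and $\nabla R_{tb1_i}$ cost one derivative beyond $R_{int1_i}$ and $R_{tb1_i}$.

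Next I would apply the midpoint error bound~\eqref{int1} to each integral, matching the domain dimension to the decay exponent through $\int_{\Lambda}R^2\le\mathcal{Q}_{M}^{\Lambda}(R^2)+C_{(R^2)}M^{-2/\dim\Lambda}$: the interior integrals over $\Omega_i$ have $\dim\Omega_i=d+1$ and hence error $M_{int_i}^{-2/(d+1)}$; the temporal-boundary integrals over $D$ have dimension $d$ and error $M_{tb_i}^{-2/d}$; and the spatial-boundary integral over $\Omega_{*i}=\partial D\times[t_{i-1},t_i]$ has dimension $d$ (since $\dim\partial D=d-1$) and error $M_{sb_i}^{-2/d}$. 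Substituting these into $\widetilde C_{G_i}$ reproduces the stated expression for $\widetilde C_{T_i}$ term by term, and the recursion $C_{T_i}=\widetilde C_{T_i}+2C_{T_{i-1}}\exp((2+\varepsilon_1^2+L+a^2)\Delta t)$ with $C_{T_0}=0$ is inherited directly from the matching recursion for $C_{G_i}$, closing by induction on $i$. The $\mathcal{O}$-statement then follows by collecting the $\mathcal{Q}_M(R^2)$-pieces into $\mathcal{E}_{T_i}(\theta_i,\mathcal{S}_i)^2$ and the quadrature errors into the decay rates.

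I expect the delicate points to be twofold. First, the spatial-boundary contribution enters under a square root in both $\widetilde C_{G_i}$ and $\widetilde C_{T_i}$; bounding $(\int_{\Omega_{*i}}|R_{sb_i}|^2)^{1/2}\le(\mathcal{Q}_{M_{sb_i}}^{\Omega_{*i}}(R_{sb_i}^2)+C_{(R_{sb_i}^2)}M_{sb_i}^{-2/d})^{1/2}$ and then exploiting subadditivity of the square root is what degrades the $M_{sb_i}^{-2/d}$ rate to the $M_{sb_i}^{-1/d}$ rate reported in the $\mathcal{O}$-bound, and this is the one place where the algebra departs from the interior and temporal-boundary terms. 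Second, the nonlinear term demands that $g(u_{\theta_i})$ be twice differentiable so that $R_{int2_i}^2\in C^2$, which is slightly stronger than the global Lipschitz condition~\eqref{non2} used in the $L^2$-stability estimate of Theorem~\ref{sec6_Theorem2}; this smoothness is what Lemmas~\ref{Ar_3} and~\ref{Ar_4} are relied upon to supply.
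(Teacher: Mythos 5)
Your proposal is correct and follows essentially the same route as the paper, whose proof of this theorem is exactly the combination you describe: Theorem~\ref{sec6_Theorem2} for the generalization-error bound, the midpoint quadrature estimate~\eqref{int1} applied term by term to convert $\widetilde{C}_{G_i}$ into $\widetilde{C}_{T_i}$, and Lemma~\ref{Ar_3} to control the quadrature constants $C_{(R^2)}\lesssim\|R^2\|_{C^2}$ (the same machinery spelled out in detail in the appendix proof of Theorem~\ref{sec4_Theorem3}). Your two "delicate points" — the square root on the spatial-boundary term producing the $M_{sb_i}^{-1/d}$ rate, and the need for $C^2$ smoothness of $g(u_{\theta_i})$ beyond the Lipschitz condition — are accurate refinements consistent with, indeed slightly more careful than, the paper's terse argument.
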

\begin{proof} The proof follows from  Lemma \ref{Ar_3}, Theorem \ref{sec6_Theorem2} and the quadrature error formula \eqref{int1}.
\end{proof} 

It follows from Theorem \ref{sec6_Theorem3} that the HLConcPINN approximation error $\mathcal{E}(\theta_i)^2$ can be arbitrarily small, provided that the training error $\mathcal{E}_{T_i}(\theta_i,\mathcal{S}_i)^2$ is sufficiently small  and the sample set is sufficiently large.

\begin{table}[tb]\small
\centering\small
\begin{tabular}{c|c|c|c}
\hline
$N_c$                    & Hidden layers                                    & Activation functions                 & Figures/Tables    \\ \hline
\multirow{2}{*}{varied}  & {[}90, 90{]}                               & {[}tanh, tanh{]}                     & Table \ref{num_heat_tab2}                \\ \cline{2-4} 
                      & {[}90, 90, 10{]}                            & {[}tanh, tanh, sine{]}               & Tables \ref{tab_Burger_err_1}, \ref{tab_wave_err_1}, \ref{tab_KG_err_1}   \\ \hline
\multirow{6}{*}{2000} & varied                                      & all tanh                             & Table \ref{num_heat_tab1}                \\ \cline{2-4} 
                      & {[}90, 90{]}                              & {[}tanh, tanh{]}                     & Figure \ref{PINN_num_heat_fig4_a}  \\ \cline{2-4} 
                      & {[}90, 90, 10{]}                          & {[}tanh, tanh, tanh{]}               & Figures \ref{PINN_num_heat_fig1_1}$-$\ref{PINN_num_heat_fig3_1}, \ref{PINN_num_heat_fig4_b}  \\ \cline{2-4} 
                      & \multirow{3}{*}{{[}90, 90, 10, 10{]}} & {[}tanh, tanh, tanh, tanh{]}         & Table \ref{num_heat_tab3}                \\ \cline{3-4} 
                      &                                           & {[}tanh, tanh, Gaussian, Gaussian{]}       & Figure \ref{PINN_num_heat_fig4_c}\\ \cline{3-4} 
                      &                                           & {[}tanh, tanh, softplus, softplus{]} & Figure \ref{PINN_num_heat_fig4_d}\\ \hline
\multirow{2}{*}{2500} & {[}90, 90, 10{]}                          & varied                                  & Tables \ref{tab_Burger_err_2}, \ref{tab_wave_err_3}, \ref{tab_KG_err_3} \\ \cline{2-4} 
                      & {[}90, 90, 10{]}                          & {[}tanh, tanh, sine{]}               & Figures \ref{PINN_num_Burger_fig1}$-$\ref{PINN_num_KG_fig5_1}        \\ \hline
\end{tabular}
\caption{Summary of neural network settings (network architecture, activation functions, training data points) for the test problems in Section~\ref{numerical_examples}. Shown in the second column are the nodes in the hidden layers only.
}
\label{num_table0}
\end{table}


\section{Computational Examples}\label{numerical_examples}

We next present a set of numerical examples  to test the performance of the HLConcPINN method developed herein. This method  has several distinctive features,  distinguishing it from the standard PINN and  recent neural networks with theoretical guarantees. Specifically, these include:
\begin{itemize}
\item The method is based on hidden-layer concatenated FNNs (HLConcFNN), in which the output nodes and all the hidden nodes are logically connected. This architecture is critical to the theoretical analyses, and it endows the method with the subsequent properties. 

\item The current error analyses hold for network architectures with  two or more hidden layers, and with essentially any activation function having a sufficient regularity for all hidden layers beyond the first two.  This generalized capability contrasts starkly with the recent PINN methods that have a theoretical guarantee for solving PDEs but are confined to  network architectures having two hidden layers and the $\tanh$ activation function (see e.g.~\cite{2023_IMA_Mishra_NS,Qian2303.12245}).

\item The method espouses a modified block time marching (ExBTM) strategy for long-time dynamic simulations. In the modified scheme, the ``initial condition" for a particular time block is informed by the  approximations from all previous time blocks evaluated at a set of discrete time instants. The modified BTM scheme is crucial for the error analyses. In contrast, the original BTM formulation as given in e.g.~\cite{2021_CMAME_LEMDD} is not amenable to theoretical analysis.

\end{itemize}

We consider the heat, Burgers', wave and the nonlinear Klein-Gordon equations in one spacial dimension plus time, with the following common settings in the numerical tests.
We partition the temporal dimension into five uniform time blocks. Within each time block, we utilize $N_c$ collocation points sampled from a uniform random distribution within the spatial-temporal domain. Additionally, $N_c$ uniform random points are selected along each spatial boundary and the initial boundary. Simulations were performed by systematically varying $N_c$ between $1500$ and $3000$. After the neural networks are trained, we compare the HLConcPINN solution with the exact solution on a set of $N_{ev} = 1000 \times 1000$ uniform grid points (test/evaluation points) within each time block that encompasses the problem domain and its boundaries.

The HLConcPINN errors reported below have been calculated as follows. Suppose $ z_n = (\bm x, t)_n \in D\times[0,T]$ ($n=1,\cdots, N_{ev} $) denote the set of test points. The  errors are then defined by 
\begin{align}\label{BTM_num_eq0}
	  &l^2\text{-error} =
          \frac{\sqrt{\sum_{n=1}^{N_{ev}} |u(z_n) - u_{\theta}(z_n)|^2}}{\sqrt{\sum_{n=1}^{N_{ev}} u(z_n)^2}},
          \quad
		l^\infty\text{-error}= \frac{\max\{|u(z_n) - u_{\theta}(z_n)|\}_{n=1}^{N_{ev}} }{\sqrt{\left(\sum_{n=1}^{N_{ev}} u(z_n)^2\right)/N_{ev}}},
\end{align}
where $ u_\theta $ denotes the HLConcPINN solution and $u$ denotes the exact solution. 

Following the  analyses in previous sections, we employ network architectures with two or more hidden layers in the numerical tests, with the $\tanh$ activation function for the first two hidden layers. For the subsequent hidden layers, we have tested a range of activation functions. Table~\ref{num_table0} provides an overview of the neural network settings for the test results reported in the subsequent subsections.
%

As discussed in Remark~\ref{heat_Remark1}, we adopt a modified block time marching scheme in this work.
This is different from the original block time marching scheme of~\cite{2021_CMAME_LEMDD}, which uses the solution data of the preceding time block at the last time instant as the initial condition. Both the original and the modified block time marching schemes have been tested in the simulations, with their results marked by ``HLConcPINN-BTM" and ``HLConcPINN-ExBTM" in the following discussions, respectively.



In the following simulations, the neural network has been trained by a combination of the Adam optimizer and the L-BFGS optimizer.
Within each time block, the network is trained first by Adam for 100 epochs. The training then continues with the L-BFGS optimizer for another 30,000 iterations. Our application code is implemented in Python with the PyTorch library.

\begin{figure}[tb]
	\centering
	\subfloat[$u$]{\includegraphics[width=0.15\linewidth]{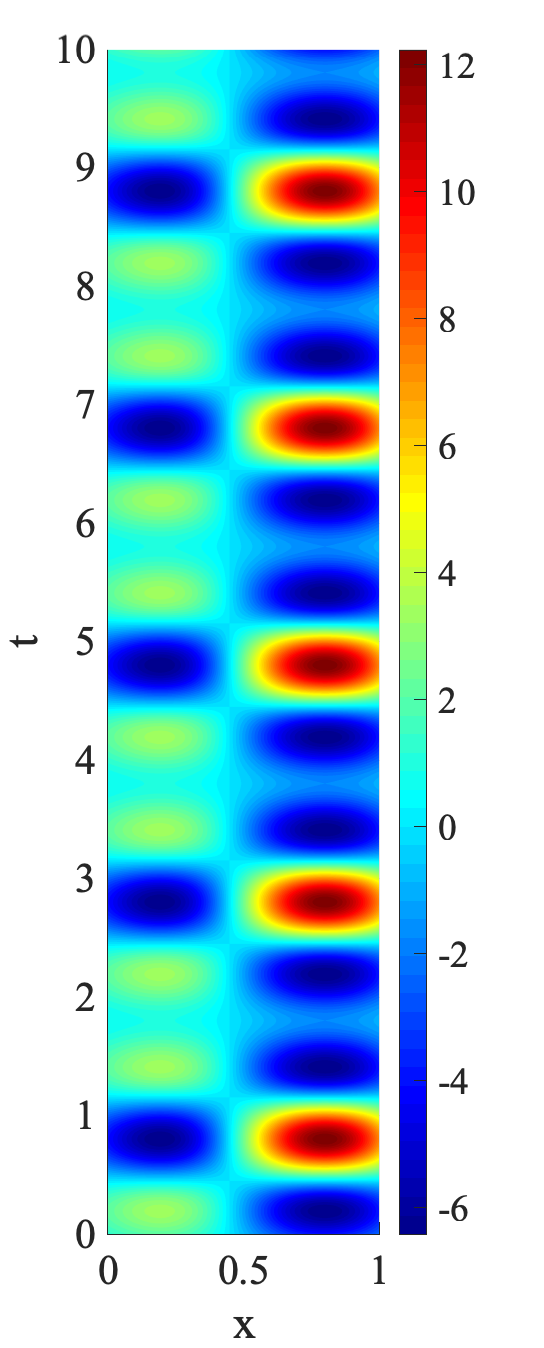}}
    \subfloat[$u_\theta$]{\includegraphics[width=0.15\linewidth]{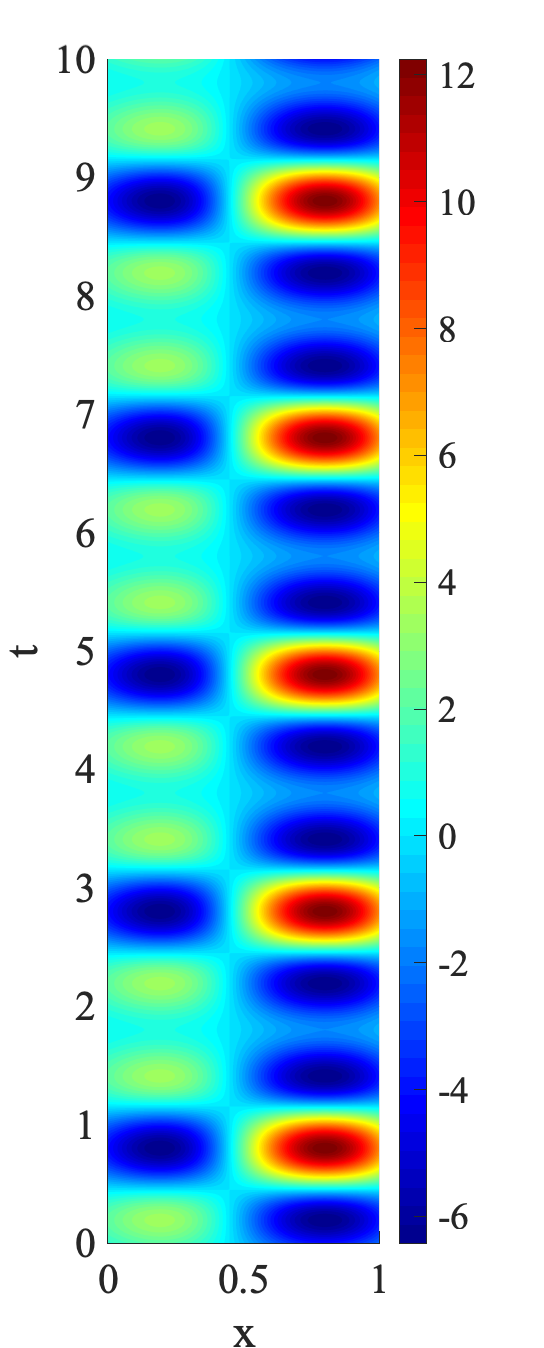}}
    \subfloat[$|u-u_\theta|$]{\includegraphics[width=0.15\linewidth]{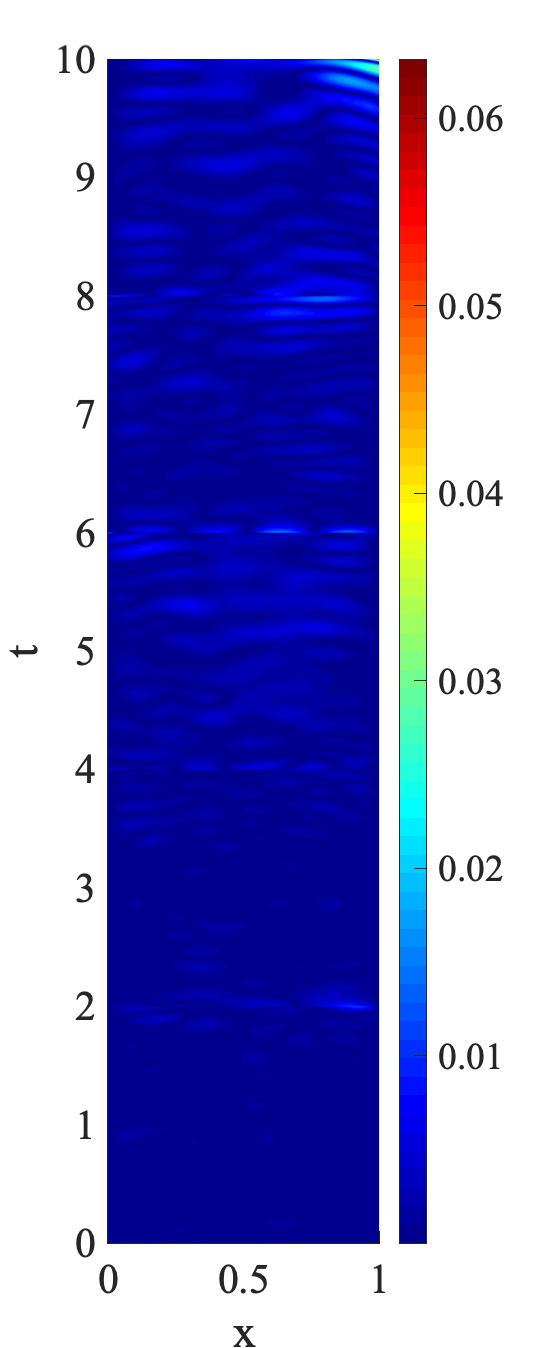}}
    \subfloat[$u^*_\theta$]{\includegraphics[width=0.15\linewidth]{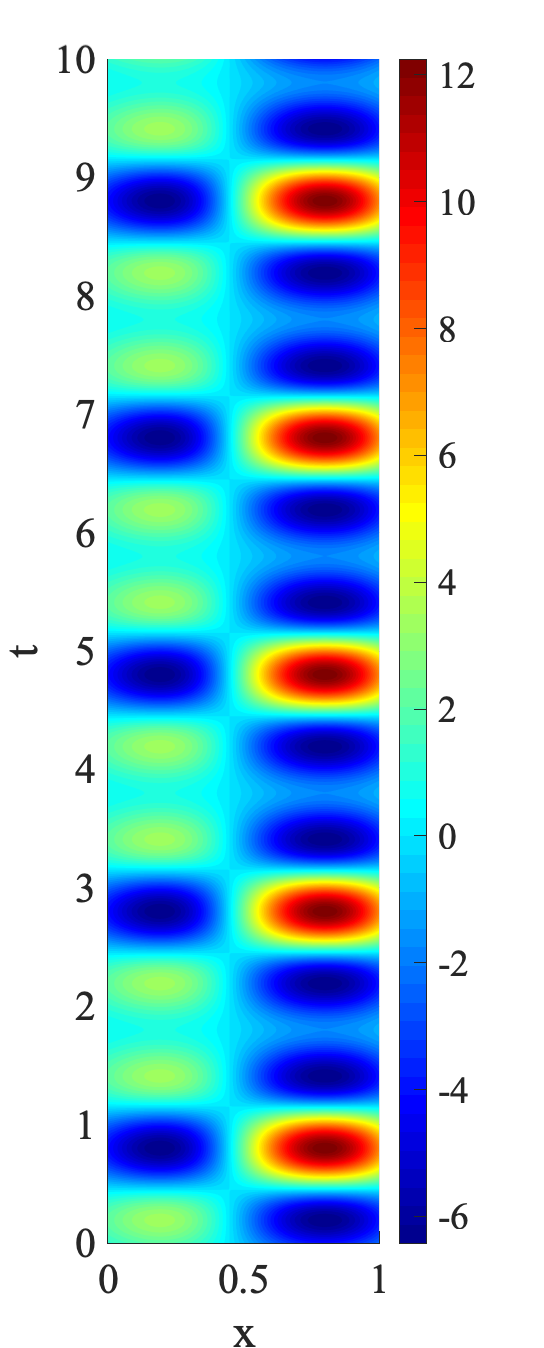}}
    \subfloat[$|u - u^*_\theta|$]{\includegraphics[width=0.15\linewidth]{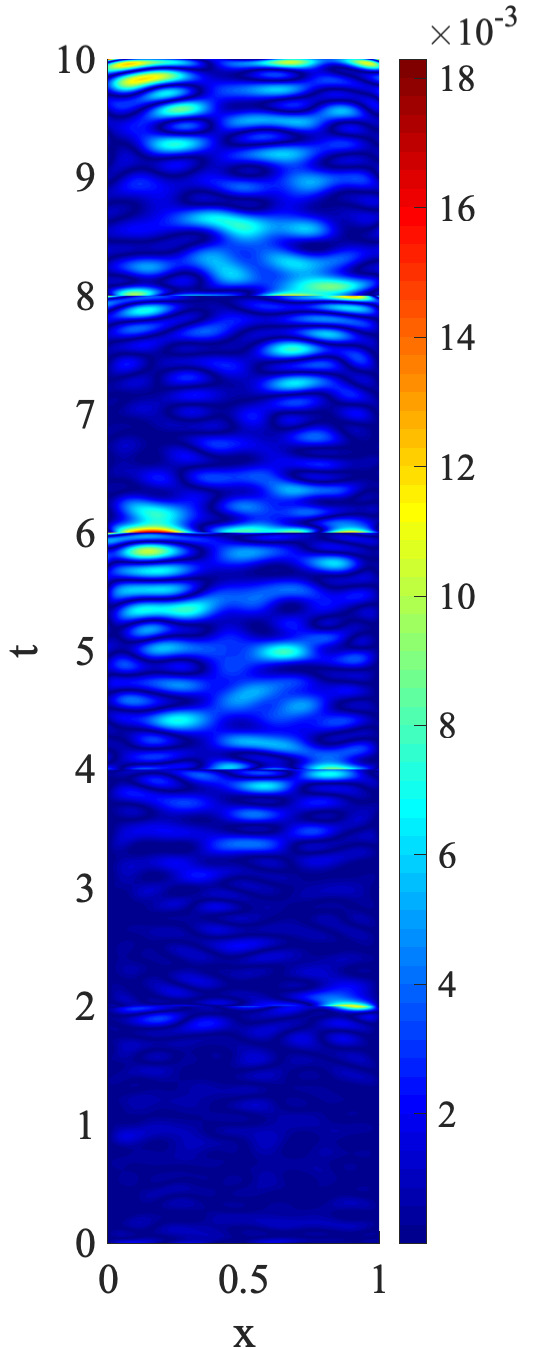}}
	\caption{Heat equation: Distributions of the true solution (a), the HLConcPINN-ExBTM solution (b) and its point-wise absolute error (c), the HLConcPINN-BTM solution (d) (denoted by $u^*_\theta$) and its point-wise absolute error (e), in the spacial-temporal domain.
 NN architecture: [2, 90, 90, 10, 1], with the $\tanh$ activation function; $N_c=2000$ for the collocation points.
  }
	\label{PINN_num_heat_fig1_1}
\end{figure}

\begin{figure}[tb]
	\centering
	\subfloat[$ t=2.5 $]{
		\begin{minipage}[b]{0.25\textwidth}
			\includegraphics[scale=0.25]{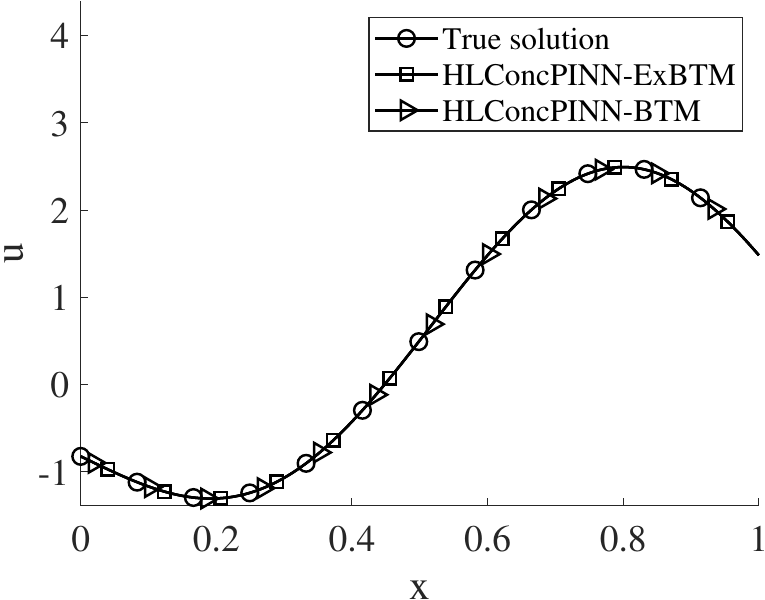}\\
			\includegraphics[scale=0.25]{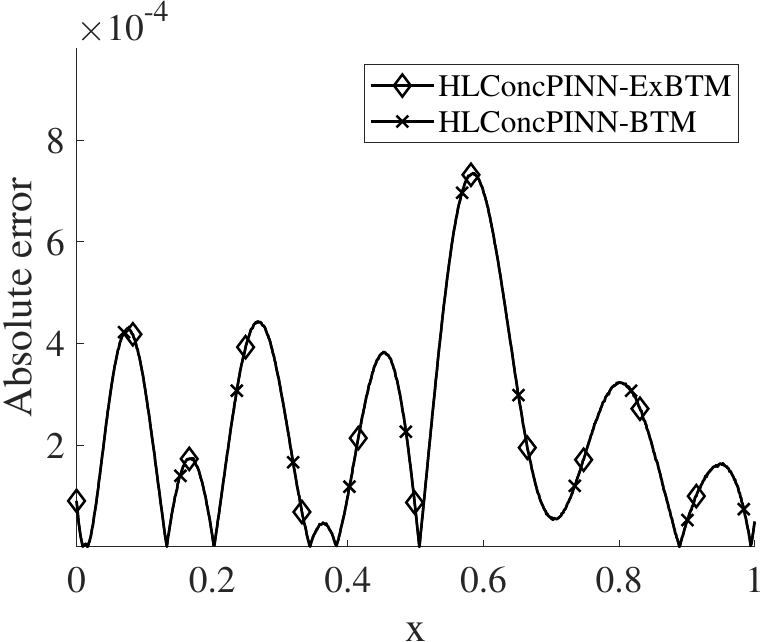}
		\end{minipage}
	}
	\subfloat[$ t=5 $]{
		\begin{minipage}[b]{0.25\textwidth}
			\includegraphics[scale=0.25]{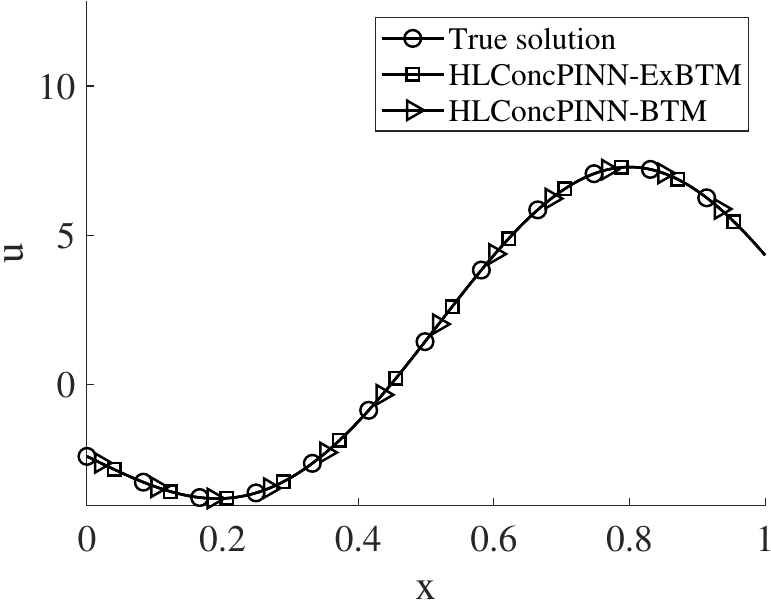}\\
			\includegraphics[scale=0.25]{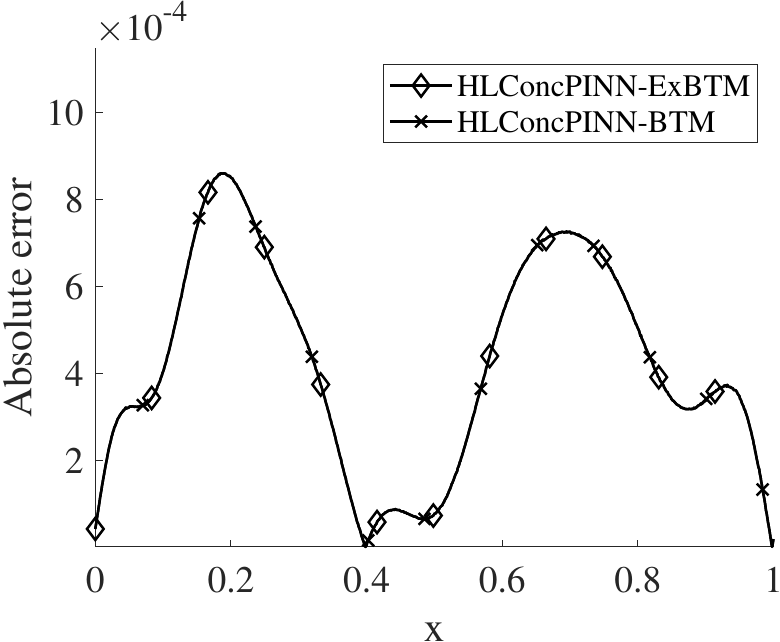}
		\end{minipage}
	}
	\subfloat[$ t=9.5 $]{
		\begin{minipage}[b]{0.25\textwidth}
			\includegraphics[scale=0.25]{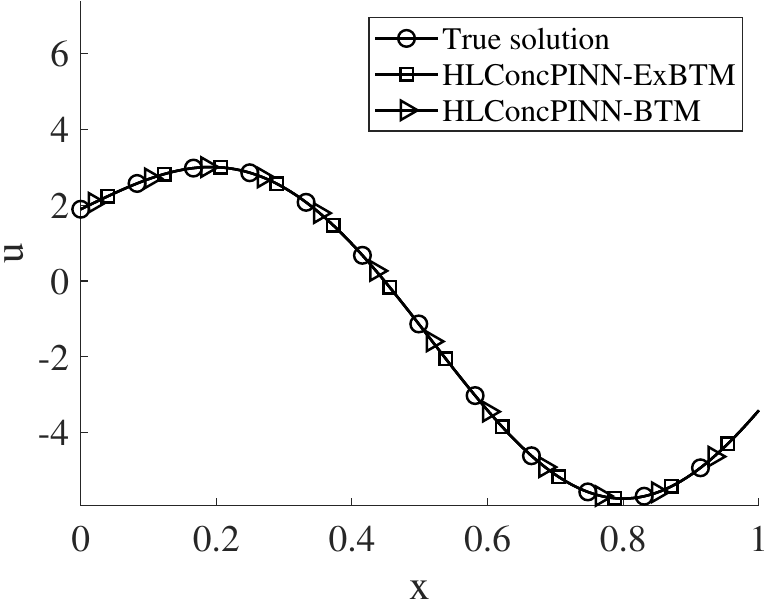}\\
			\includegraphics[scale=0.25]{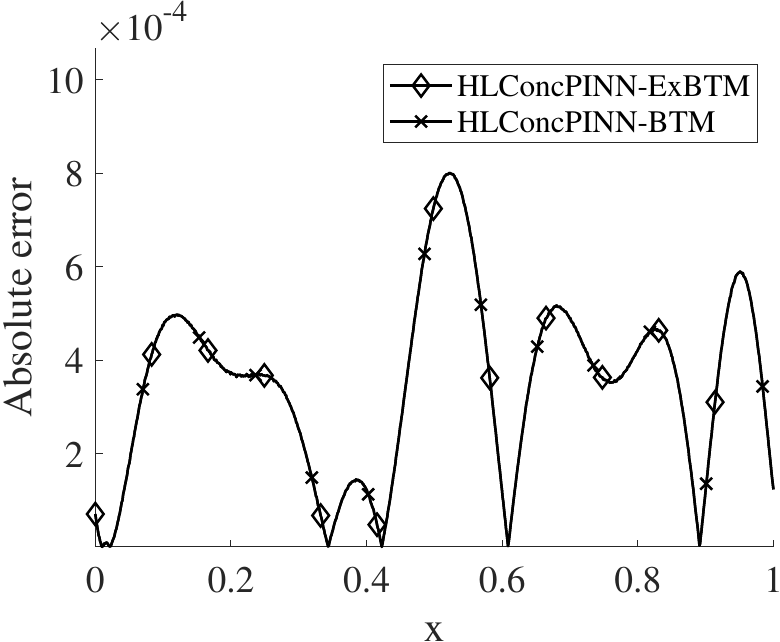}
		\end{minipage}
	}
	\caption{Heat equation: Top row, comparison of profiles of the true solution,  HLConcPINN-ExBTM solution, and HLConcPINN-BTM solution at several time instants. Bottom row, profiles of the absolute error of the HLConcPINN-ExBTM and HLConcPINN-BTM solutions. 
 NN architecture: [2, 90, 90, 10, 1] with $\tanh$ activation function;
$N_c=2000$ for the training collocation points. 
 }
	\label{PINN_num_heat_fig2_1}
\end{figure}

\begin{figure}[tb]
	\centering
	\subfloat[HLConcPINN-ExBTM]{\includegraphics[width=0.33\linewidth]{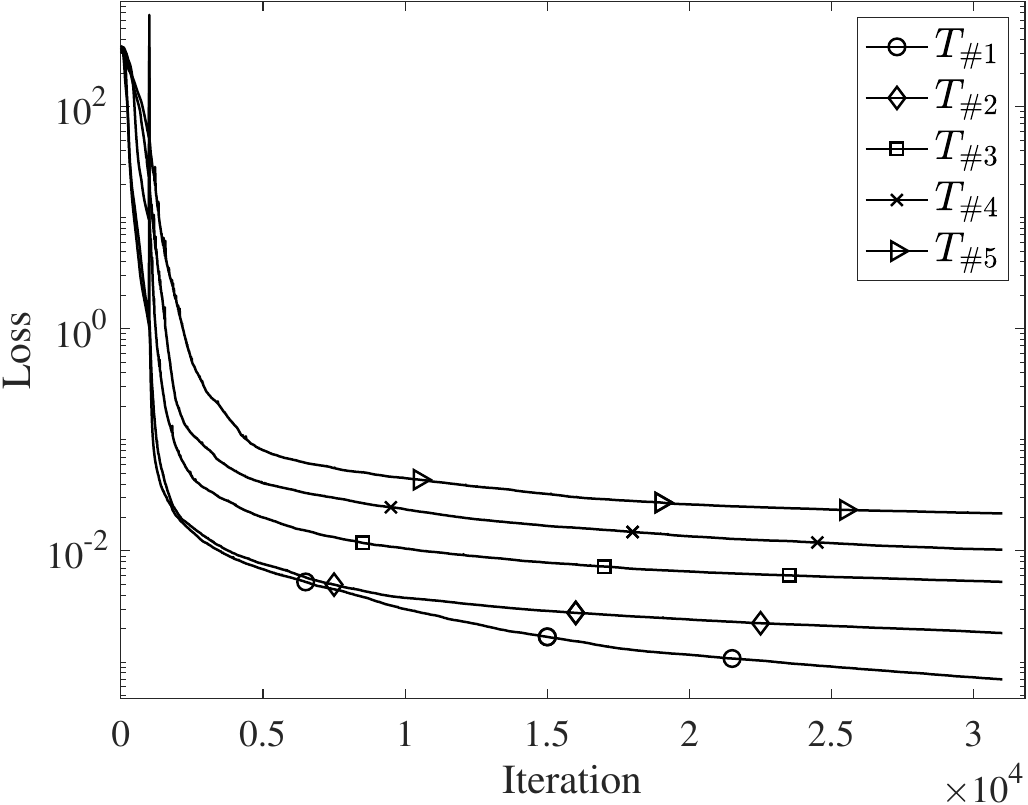}}\quad
	\subfloat[HLConcPINN-BTM]{\includegraphics[width=0.33\linewidth]{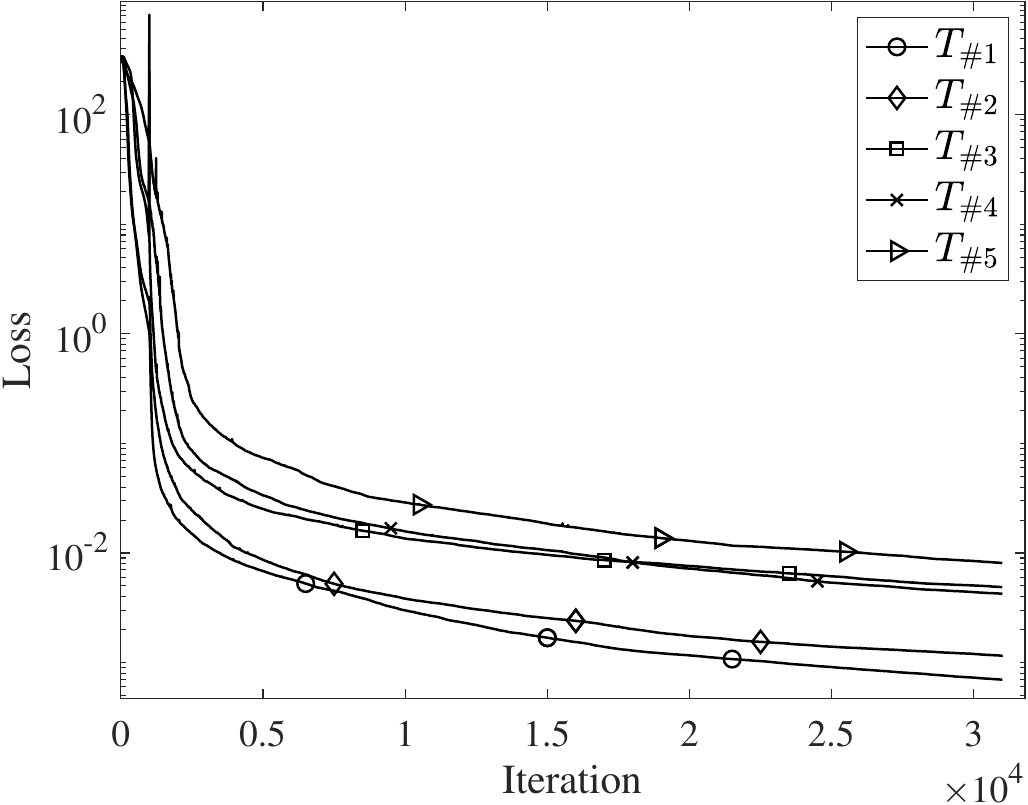}}\hspace{0.1em}
	\caption{Heat equation: Training loss versus the training iterations for different time blocks with the (a) HLConcPINN-ExBTM and (b) HLConcPINN-BTM methods.
 NN architecture: [2, 90, 90, 10, 1], $\tanh$ activation function; $N_c=2000$ for the training collocation points. The legend shows the time block index, with e.g.~$T_{\#2}$ denoting the second time block.
 }
	\label{PINN_num_heat_fig3_1}
\end{figure}

\subsection{Heat Equation}

We test the HLConcPINN scheme for solving the heat equation in one spatial dimension (plus time).
Consider the spatial-temporal domain $\Omega=\{ (x,t) | x\in [0, 1], t\in[0, 10] \}$, and the following initial/boundary-value problem,
\begin{subequations}\label{num_heat_eq1}
    \begin{align}
    &\frac{\partial u}{\partial t} - \nu \frac{\partial^2 u}{\partial x^2} = f(x,t),\\
    &u(0, t)=g_1(t), \quad u(1,t)=g_2(t),\\
    &u(x,0) = h(x),
\end{align}
\end{subequations}
where $u(x,t)$ is the field function to be solved for, $f(x,t)$ is a source term, and $\nu=0.1$ is the diffusion coefficient. $g_1(x)$ and $g_2(x)$ are the boundary conditions, and $h(x)$ is the initial field distribution. 
In this test, we choose the source term $f$ such that the following field function satisfies \eqref{num_heat_eq1},
\begin{align}\label{num_heat_eq2}
    u(x,t) = \Big( 2\cos(\pi x + \frac{\pi}{5}) + \frac{3}{2}\cos(2\pi x-\frac{3\pi}{5})\Big) \Big( 2\cos(\pi t + \frac{\pi}{5}) + \frac{3}{2}\cos(2\pi t-\frac{3\pi}{5})\Big),
\end{align}
and we choose the initial/boundary conditions by restricting \eqref{num_heat_eq2} to the corresponding boundaries.


We consider two forms for the HLConcPINN loss function, corresponding to the original block time marching (BTM) scheme from~\cite{2021_CMAME_LEMDD} and the modified BTM scheme (denoted by ExBTM) developed in this work. The loss function for the current ExBTM scheme is given by, for time block $i$ ($1\leq i\leq l$),
\begin{align}\label{num_heat_eq_loss1}
    Loss_i^{I} = &\frac{W_1}{N_c}\sum_{n=1}^{N_c}\left[\frac{\partial u_{\theta_i}}{\partial t}(x_{int}^n,t_{int}^n) - \Delta u_{\theta_i}(x_{int}^n,t_{int}^n) - f(x_{int}^n,t_{int}^n)\right]^2 \notag \\
    & + \frac{W_2}{N_c} \sum_{j=1}^i \sum_{n=1}^{N_c}\left[ u_{\theta_i}(x_{tb}^n, t_{j-1}) - u_{\theta_{j-1}}(x_{tb}^n, t_{j-1}) \right]^2 \notag\\
    & + W_3\Big(\frac{1}{N_c}\sum_{n=1}^{N_c}\left[ (u_{\theta_i}(0, t_{sb}^n) - g_1(t_{sb}^n))^2 + (u_{\theta_i}(1, t_{sb}^n) - g_2(t_{sb}^n))^2 \right]  \Big)^{1/2} \notag\\
    & + Loss_{i-1}^{I},
\end{align}
where $Loss_{0}^{I}=0$, and we have added a set of penalty coefficients $W_k>0$ ($k=1,2,3$) for different loss terms. Note also that in the simulations we have approximated the integral by averaging over the collocation points in the domain,  while in the analysis the mid-point rule has been adopted.
The loss form corresponding to the original BTM scheme is, for time block $i$ ($1\leq i\leq l$),
\begin{align}\label{num_heat_eq_loss2}
    Loss_i^{II}=&\frac{W_1}{N_c}\sum_{n=1}^{N_c}\left[\frac{\partial u_{\theta_i}}{\partial t}(x_{int}^n,t_{int}^n) - \Delta u_{\theta_i}(x_{int}^n,t_{int}^n) - f(x_{int}^n,t_{int}^n)\right]^2 \notag \\
    & + \frac{W_2}{N_c}\sum_{n=1}^{N_c}\left[ u_{\theta_i}(x_{tb}^n, t_{i-1}) - u_{\theta_{i-1}}(x_{tb}^n, t_{i-1}) \right]^2 \notag\\
    & + W_3\Big(\frac{1}{N_c}\sum_{n=1}^{N_c}\left[ (u_{\theta_i}(0, t_{sb}^n) - g_1(t_{sb}^n))^2 + (u_{\theta_i}(1, t_{sb}^n) - g_2(t_{sb}^n))^2 \right]  \Big)^{1/2},
\end{align}
where $u_{\theta_{0}}(x,t_0)= h(x)$.
In subsequent simulations the penalty coefficients are fixed to $(W_1, W_2, W_3)=(0.8, 0.9, 0.9)$ in both $Loss_i^{I}$ and $Loss_i^{II}$, and $5$ uniform time blocks are employed in block time marching. The HLConcPINN schemes employing these two distinctive loss functions will be designated as HLConcPINN-ExBTM ($Loss_i^{I}$) and HLConcPINN-BTM ($Loss_i^{II}$), respectively.


An overview of the solution field and the training histories is provided by Figures~\ref{PINN_num_heat_fig1_1},~\ref{PINN_num_heat_fig2_1}, and~\ref{PINN_num_heat_fig3_1} for the HLConcPINN-ExBTM and the HLConcPINN-BTM methods. 
Figure~\ref{PINN_num_heat_fig1_1} shows distributions in the space-time domain of the true solution, the HLConcPINN-ExBTM solution, and the HLConcPINN-BTM solution, as well as the point-wise absolute errors of the HLConcPINN-ExBTM and HLConcPINN-BTM solutions. Figure~\ref{PINN_num_heat_fig2_1} compares profiles of the true solution, the HLConcPINN-ExBTM and HLConcPINN-BTM solutions at three time instants ($t=2.5$, $5$ and $9.5$), and also shows the error profiles of the HLConcPINN-ExBTM and HLConcPINN-BTM methods. Figure~\ref{PINN_num_heat_fig3_1} depicts the training loss histories for each of the $5$ time blocks with the HLConcPINN-ExBTM and HLConcPINN-BTM methods. In this set of simulations, three hidden layers and the $\tanh$ activation function are employed in the neural network. The specific parameter values are provided in the captions of these figures; see also Table \ref{num_table0}. The HLConcPINN-ExBTM and the HLConcPINN-BTM methods are able to capture the solution quite accurately, with the HLConcPINN-BTM solution appearing slightly better.


\begin{table}[tb]\small
	\centering\small
\begin{tabular}{c|@{}c@{}|cc|cc|cc|cc}
\hline
\multirow{2}{*}{Error}    & \multirow{2}{*}{Time block} & \multicolumn{2}{c|}{$N_c=1500$} & \multicolumn{2}{c|}{$N_c=2000$} & \multicolumn{2}{c|}{$N_c=2500$} & \multicolumn{2}{c}{$N_c=3000$} \\ \cline{3-10}
                      &  & ExBTM  & BTM  & ExBTM  & BTM   & ExBTM  & BTM  & ExBTM  & BTM   \\ \hline
\multirow{5}{*}{$l^2$} & $T_{\#1}$  & 3.63e-04  & 3.63e-04  & 4.16e-04  & 4.16e-04  & 2.58e-04  & 2.58e-04  & 3.88e-04  & 3.88e-04  \\ \cline{3-10} 
                      & $T_{\#2}$  & 1.00e-03  & 5.93e-04  & 5.73e-04  & 5.93e-04  & 9.14e-04  & 7.85e-04  & 4.42e-04  & 5.51e-04  \\ \cline{3-10}
                      & $T_{\#3}$  & 7.29e-04  & 6.37e-04  & 7.75e-04  & 2.93e-04  & 9.89e-04  & 1.00e-03  & 7.03e-04  & 4.86e-04  \\ \cline{3-10} 
                      & $T_{\#4}$  & 5.21e-04  & 7.84e-04  & 6.93e-04  & 5.52e-04   & 8.38e-04  & 6.03e-04  & 7.49e-04  & 9.14e-04  \\ \cline{3-10} 
                      & $T_{\#5}$  & 9.14e-04  & 1.03e-03  & 1.77e-03  & 1.40e-03  & 6.04e-04  & 6.01e-04  & 1.21e-03  & 1.21e-03  \\ \hline
\multirow{5}{*}{$l^\infty$} & $T_{\#1}$  & 1.64e-03  & 1.64e-03  & 2.49e-03  & 2.49e-03  & 1.31e-03  & 1.31e-03  & 2.34e-03  & 2.34e-03  \\ \cline{3-10} 
                      & $T_{\#2}$  & 5.74e-03  & 5.22e-03  & 3.02e-03  & 3.43e-03  & 6.72e-03  & 5.04e-03  & 2.09e-03  & 4.82e-03  \\ \cline{3-10}
                      & $T_{\#3}$  & 4.77e-03  & 3.67e-03  & 4.16e-03  & 2.27e-03  & 4.39e-03  & 4.75e-03  & 8.61e-03  & 2.02e-03  \\ \cline{3-10} 
                      & $T_{\#4}$  & 3.09e-03  & 1.45e-02  & 3.81e-03  & 5.86e-03  & 9.06e-03  & 3.84e-03  & 4.23e-03  & 5.23e-03  \\ \cline{3-10} 
                      & $T_{\#5}$  & 4.70e-03  & 1.08e-02  & 3.22e-02  & 2.29e-02  & 5.23e-03  & 1.90e-03  & 6.52e-03  & 1.86e-02  \\ \hline
\end{tabular}
\caption{Heat equation:  $l^2$ and $l^\infty$ errors in different time blocks corresponding to a range of training collocation points $N_c$ for the HLConcPINN-ExBTM and HLConcPINN-BTM methods. NN architecture: $[2, 90, 90, 1]$, with $\tanh$ activation function. 
}
	\label{num_heat_tab2}
\end{table}

Table~\ref{num_heat_tab2} shows a study of the effect of training collocation points on the results of the HLConcPINN-ExBTM and HLConcPINN-BTM methods. The $l^2$ and $l^{\infty}$ errors of these methods in different time blocks obtained with collocation points ranging from $N_c=1500$ to $N_c=3000$ are listed in the table. Here the neural network has an architecture $[2, 90, 90, 1]$, with the $\tanh$ activation function for all hidden layers. The data indicate that the errors of these methods are not sensitive to the number of training collocation points. In most of subsequent tests we employ a fixed $N_c=2000$ for the training collocation points.

\begin{table}[tb]\small
	\centering\small
\begin{tabular}{c|@{}c@{}|cc|cc|cc|cc}
\hline
\multirow{2}{*}{Error}    & \multirow{2}{*}{Time block} & \multicolumn{2}{c|}{{[}2,90,90,1{]}} & \multicolumn{2}{c|}{{[}2,90,90,10,1{]}} & \multicolumn{2}{c|}{{[}2,90,90,10,10,1{]}} & \multicolumn{2}{c}{{[}2,90,90,10,10,10,1{]}} \\ \cline{3-10}
                      &  & ExBTM  & BTM  & ExBTM  & BTM   & ExBTM  & BTM  & ExBTM  & BTM   \\ \hline
\multirow{5}{*}{$l^2$} & $T_{\#1}$  & 4.16e-04  & 4.16e-04 & 1.79e-04  & 1.79e-04  & 1.99e-04  & 1.99e-04  & 2.44e-04  & 2.44e-04  \\ \cline{3-10}
                      & $T_{\#2}$  & 5.73e-04  & 5.93e-04 & 2.40e-04  & 3.46e-04  & 3.13e-04  & 2.03e-04  & 3.95e-04  & 3.02e-04  \\ \cline{3-10}
                      & $T_{\#3}$  & 7.75e-04  & 2.93e-04 & 5.33e-04  & 7.24e-04  & 8.17e-04  & 7.28e-04  & 8.91e-04  & 9.68e-04  \\ \cline{3-10}
                      & $T_{\#4}$  & 6.93e-04  & 5.52e-04 & 5.92e-04  & 6.30e-04  & 1.70e-03  & 7.86e-04  & 1.01e-03  & 1.44e-03  \\ \cline{3-10}
                      & $T_{\#5}$  & 1.77e-03  & 1.40e-03 & 9.06e-04  & 8.46e-04  & 1.49e-03  & 9.15e-04  & 1.59e-03  & 2.29e-03  \\ \hline
\multirow{5}{*}{$l^\infty$} & $T_{\#1}$  & 2.49e-03  & 2.49e-03 & 2.97e-03  & 2.97e-03  & 9.11e-04  & 9.11e-04  & 1.47e-03  & 1.47e-03  \\ \cline{3-10}
                      & $T_{\#2}$  & 3.02e-03  & 3.43e-03 & 2.62e-03  & 3.05e-03  & 1.43e-03  & 1.82e-03  & 1.89e-03  & 1.92e-03  \\ \cline{3-10}
                      & $T_{\#3}$  & 4.16e-03  & 2.27e-03 & 3.90e-03  & 3.94e-03  & 4.05e-03  & 4.08e-03  & 4.79e-03  & 4.92e-03  \\ \cline{3-10}
                      & $T_{\#4}$  & 3.81e-03  & 5.86e-03 & 4.24e-03  & 4.45e-03  & 1.29e-02  & 8.33e-03  & 1.04e-02  & 2.49e-02  \\ \cline{3-10}
                      & $T_{\#5}$  & 3.22e-02  & 2.29e-02 & 1.62e-02  & 4.70e-03  & 8.92e-03  & 7.20e-03  & 1.29e-02  & 1.62e-02  \\ \hline
\end{tabular}
\caption{Heat equation: $l^2$ and $l^\infty$ errors in different time blocks corresponding to a series of  network architectures with varying number of hidden layers for the HLConcPINN-ExBTM and HLConcPINN-BTM methods. 
$\tanh$ activation function; $N_c=2000$ for the collocation points. NN architectural vectors are specified in row one of the table.
}
	\label{num_heat_tab1}
\end{table}

A salient feature of the current method lies in that the theoretical analyses are applicable to neural network architectures with more than two hidden layers.
Table~\ref{num_heat_tab1} shows a test of the network depth (number of hidden layers) on the HLConcPINN-ExBTM and HLConcPINN-BTM results for the heat equation. It lists the $l^2$ and $l^{\infty}$ errors in different time blocks obtained by these methods using network architectures with $2$ to $5$ hidden layers. The network architectural vectors are given in the table. We employ the $\tanh$ activation function for all hidden layers, and a fixed $N_c=2000$ for the training collocation points in these tests. We can make several observations. First, the errors grow over time with both methods. For example, the $l^2$ errors increase from around $10^{-4}$ in time block \#1 to around $10^{-3}$ in  time block \#5. Second, increasing the number of hidden layers only slightly influences the accuracy of results. The errors in general appear to decrease from two to three hidden layers. As the number of hidden layers further increases to three to five, the errors tend to increase slightly compared with those of two hidden layers. Third, the errors obtained with HLConcPINN-ExBTM and HLConcPINN-BTM are generally comparable, with one slightly better than the other in different cases.

\begin{table}[tb]\small
	\centering\small
\begin{tabular}{c|@{}c@{}|cc|cc|cc|cc}
\hline
\multirow{2}{*}{Error}    & \multirow{2}{*}{Time block} & \multicolumn{2}{c|}{sine} & \multicolumn{2}{c|}{Gaussian} & \multicolumn{2}{c|}{swish} & \multicolumn{2}{c}{softplus} \\ \cline{3-10}
                      &  & ExBTM  & BTM  & ExBTM  & BTM   & ExBTM  & BTM  & ExBTM  & BTM   \\ \hline
\multirow{5}{*}{$l^2$} & $T_{\#1}$  & 1.34e-04  & 1.34e-04  & 1.03e-04  & 1.03e-04  & 2.61e-04  & 2.61e-04  & 2.96e-04  & 2.96e-04  \\ \cline{3-10} 
                      & $T_{\#2}$  & 1.53e-04  & 1.91e-04  & 1.86e-04  & 2.28e-04  & 2.56e-04  & 2.38e-04  & 3.68e-04  & 3.48e-04  \\ \cline{3-10}
                      & $T_{\#3}$  & 3.06e-04  & 2.96e-04  & 4.41e-04  & 4.11e-04  & 5.27e-04  & 4.09e-04  & 3.28e-04  & 3.90e-04  \\ \cline{3-10} 
                      & $T_{\#4}$  & 6.03e-04  & 4.98e-04  & 5.80e-04  & 8.05e-04  & 7.75e-04  & 6.49e-04  & 8.40e-04  & 1.02e-03  \\ \cline{3-10} 
                      & $T_{\#5}$  & 6.94e-04  & 6.30e-04  & 7.35e-04  & 8.98e-04  & 7.45e-04  & 3.22e-03  & 1.50e-03  & 1.13e-03  \\ \hline
\multirow{5}{*}{$l^\infty$} & $T_{\#1}$  & 8.18e-04  & 8.18e-04  & 1.02e-03  & 1.02e-03  & 1.44e-03  & 1.44e-03  & 1.73e-03  & 1.73e-03  \\ \cline{3-10} 
                      & $T_{\#2}$  & 8.95e-04  & 1.29e-03  & 1.53e-03  & 1.52e-03  & 1.73e-03  & 1.17e-03  & 2.08e-03  & 1.33e-03  \\ \cline{3-10}
                      & $T_{\#3}$  & 8.82e-04  & 2.39e-03  & 4.22e-03  & 2.65e-03  & 3.41e-03  & 1.97e-03  & 2.86e-03  & 3.96e-03  \\ \cline{3-10} 
                      & $T_{\#4}$  & 3.97e-03  & 3.06e-03  & 4.68e-03  & 3.86e-03  & 4.47e-03  & 3.35e-03  & 3.87e-03  & 4.19e-03  \\ \cline{3-10} 
                      & $T_{\#5}$  & 4.03e-03  & 4.53e-03  & 2.73e-03  & 5.57e-03  & 7.40e-03  & 1.90e-02  & 6.22e-03  & 5.66e-03  \\ \hline
\end{tabular}
\caption{Heat equation:  $l^2$ and $l^\infty$ errors in different time blocks of the HLConcPINN-ExBTM and HLConcPINN-BTM methods obtained with several activation functions. 
NN architecture: $[2, 90, 90, 10, 10, 1]$; $N_c=2000$ for training collocation points.
The activation function is fixed to $\tanh$ in the first two hidden layers, and is varied among sine, Gaussian, swish, and softplus in the subsequent hidden layers.
}
	\label{num_heat_tab3}
\end{table}

The current HLConcPINN-ExBTM method admits theoretical analyses in cases where more general activation functions are employed.
Table~\ref{num_heat_tab3} provides a study of the effect of the activation functions on the simulation results of the HLConcPINN-ExBTM and HLConcPINN-BTM methods. We employ a neural network architecture $[2, 90, 90, 10, 10, 1]$, and $N_c=2000$ for the training collocation points. The activation function in the first two hidden layers is fixed to $\tanh$. For the subsequent hidden layers we vary the activation function among the sine, Gaussian, swish, or softplus functions.  The $l^2$ and $l^{\infty}$ errors of HLConcPINN-ExBTM and HLConcPINN-BTM in different time blocks corresponding to these activation functions are provided in the table. These results can be compared with those in Table~\ref{num_heat_tab1} for the same network architecture, where the $\tanh$ activation function has been used for all hidden layers. Overall, the sine activation function appears to produce the best results for HLConcPINN-ExBTM and HLConcPINN-BTM. The results obtained with the Gaussian, $\tanh$,  swish and softplus functions seem comparable to one another in terms of the accuracy. 

\begin{figure}[tb]
	\centering
	\subfloat[NN: {[2,90,90,1]}. Activation: $\tanh$ in all hidden layers.]{\label{PINN_num_heat_fig4_a}
    \includegraphics[width=0.4\linewidth]{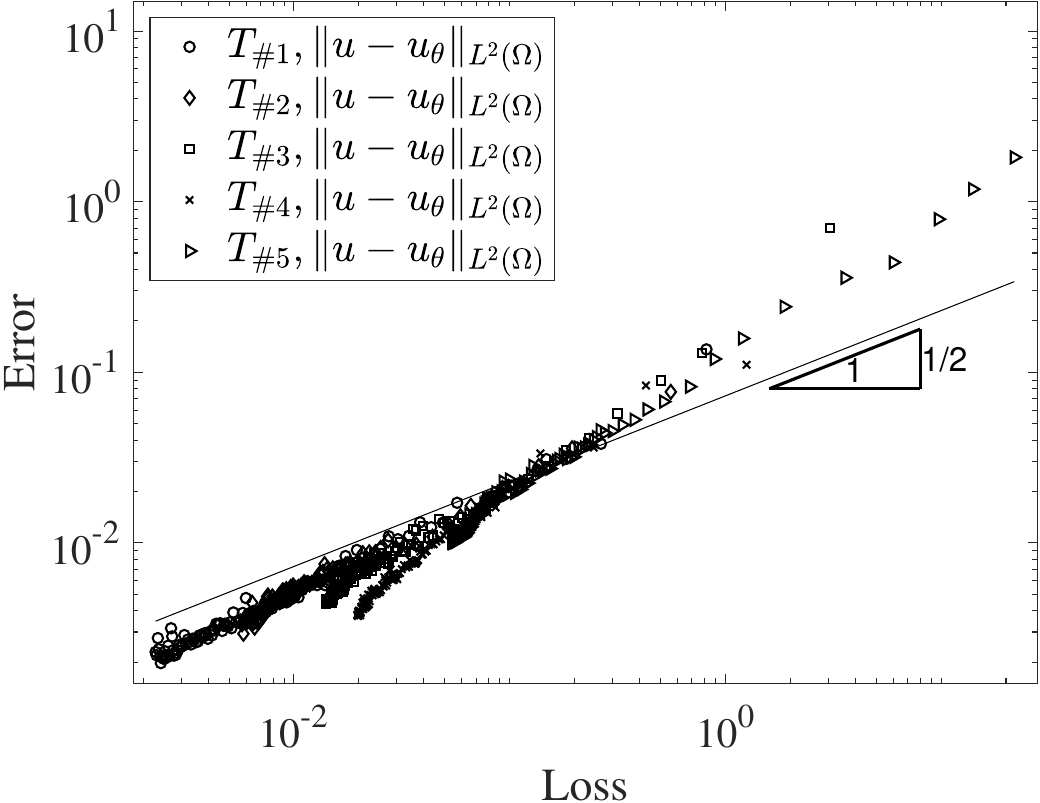}} \qquad
    \subfloat[NN: {[2,90,90,10,1]}. Activation: $\tanh$ in all hidden layers.]{\label{PINN_num_heat_fig4_b}
    \includegraphics[width=0.4\linewidth]{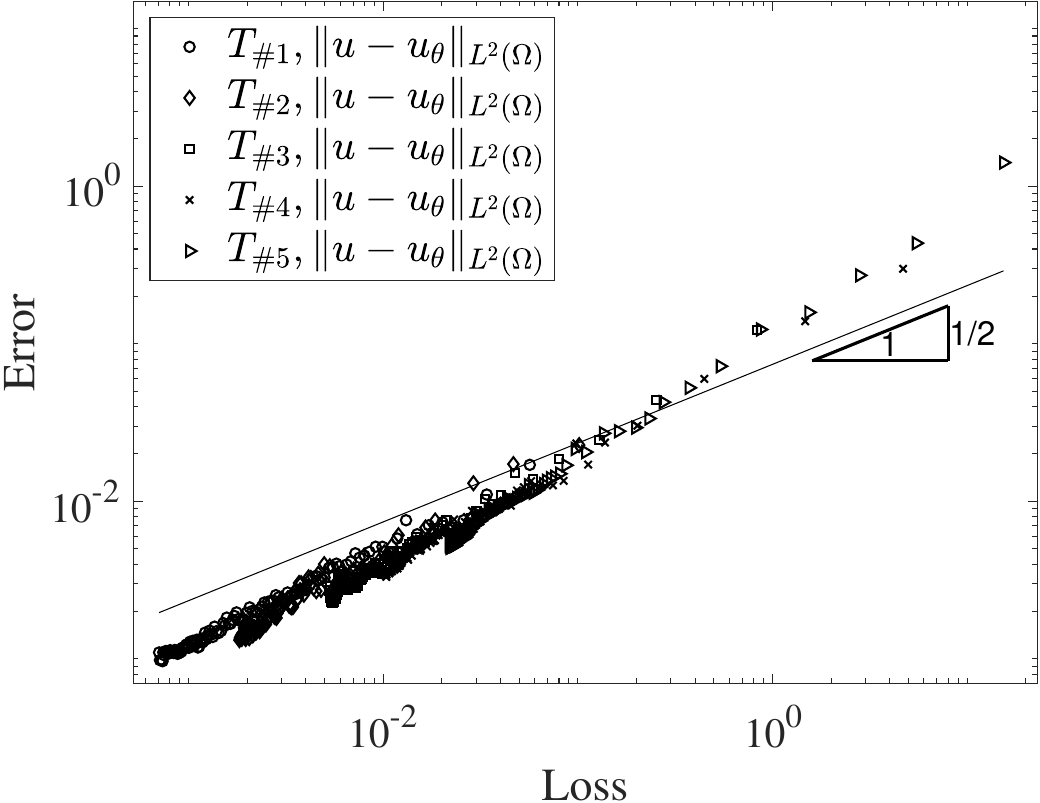}}\\
    \subfloat[NN: {[2,90,90,10,10,1]}. Activation: $\tanh$ in first two and Gaussian in subsequent hidden layers.]{\label{PINN_num_heat_fig4_c}
    \includegraphics[width=0.4\linewidth]{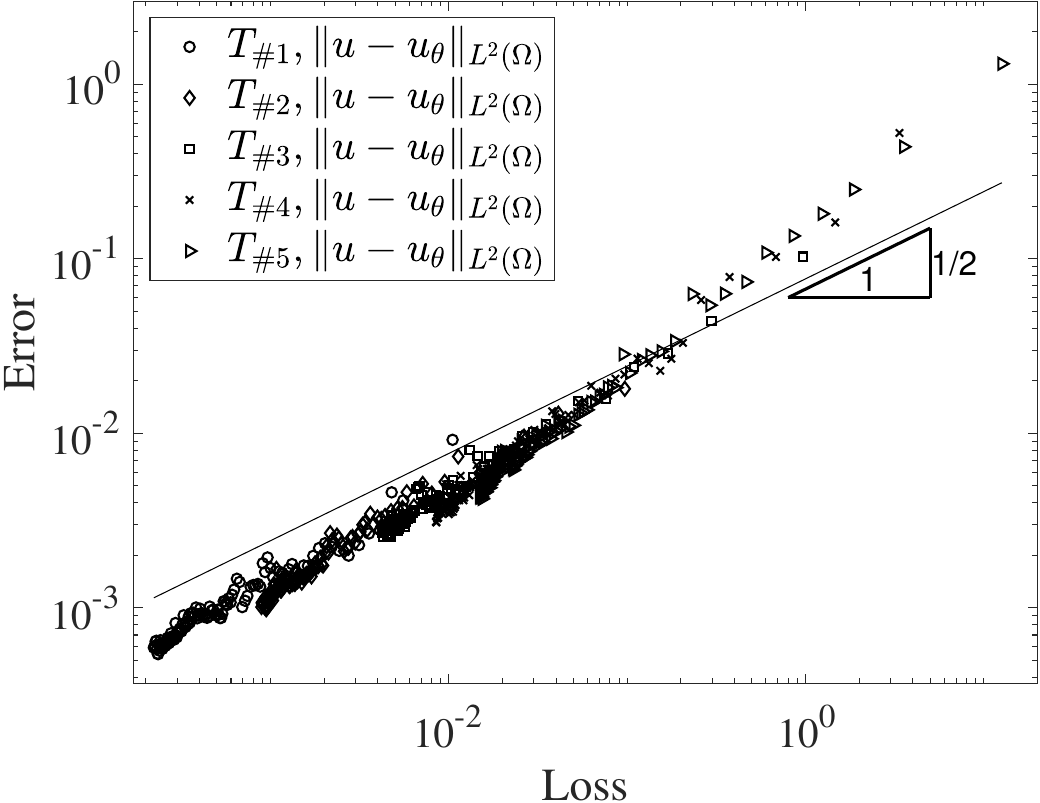}}\qquad
    \subfloat[NN: {[2,90,90,10,10,1]}. Activation: $\tanh$ in first two and softplus in subsequent hidden layers.]{\label{PINN_num_heat_fig4_d}
    \includegraphics[width=0.4\linewidth]{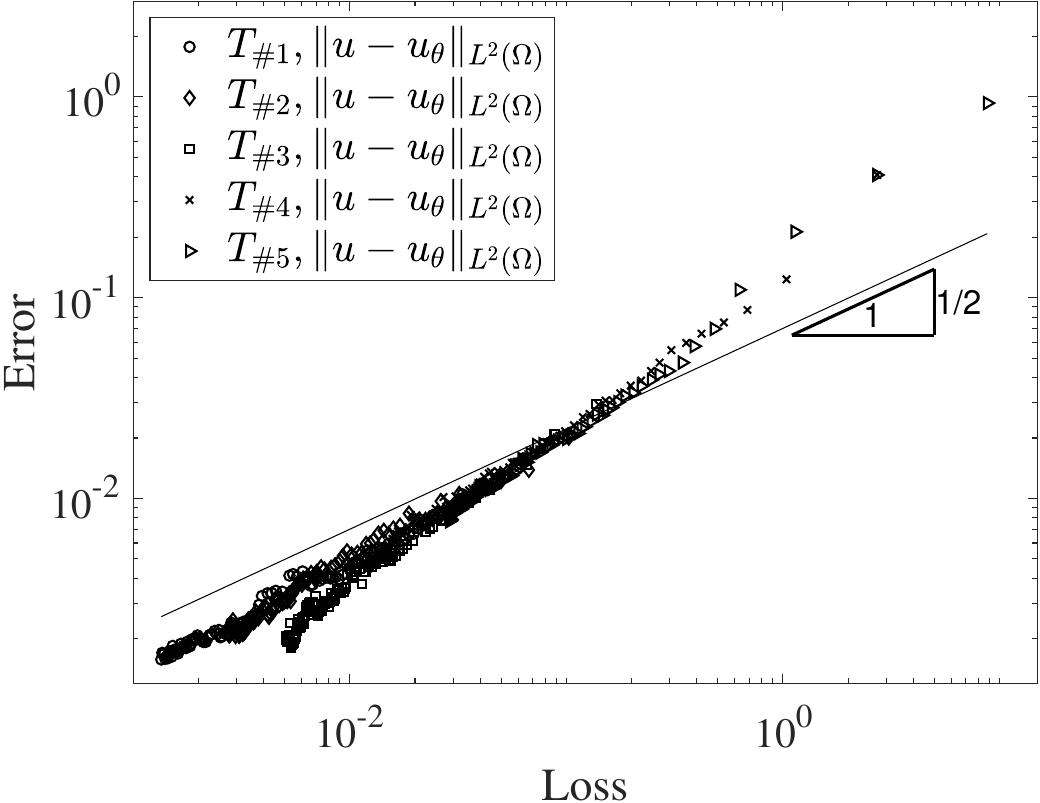}}
	\caption{Heat equation:  $l^2$ errors of HLConcPINN-ExBTM as a function of the training loss values, obtained with different network architectures and activation functions. $N_c=2000$ for the training collocation points.
 }
	\label{PINN_num_heat_fig4}
\end{figure}

Theorem \ref{sec4_Theorem3} indicates that the approximation error of the solution to the heat equation obtained with the HLConcPINN-ExBTM method scales as the square root of the training loss  for all time blocks. Figure \ref{PINN_num_heat_fig4} provides numerical evidence corroborating this statement. Here we  plot the $l^2$ errors of the solution as a function of the training loss value (in logarithmic scale) for HLConcPINN-ExBTM from our simulations. The number of hidden layers varies from two to four in these tests, with  $\tanh$ as the activation functions for the first two hidden layers and Gaussian or softplus for the subsequent hidden layers. We have used $N_c=2000$ for the collocation points. The data generally signify a square root scaling consistent with the theoretical analysis, with some deviation (faster than square root) toward larger training loss values.

\comment{
\begin{figure}[tb]
	\centering
	\subfloat[$u$]{\includegraphics[width=0.19\linewidth]{./figures/Heat_BTM_2000_90,90_u}}
    \subfloat[$u_\theta$]{\includegraphics[width=0.19\linewidth]{./figures/Heat_BTM_2000_90,90_unn_Ex}}
    \subfloat[$|u-u_\theta|$]{\includegraphics[width=0.19\linewidth]{./figures/Heat_BTM_2000_90,90_unn_Ex_err}}
    \subfloat[$u^*_\theta$]{\includegraphics[width=0.19\linewidth]{./figures/Heat_BTM_2000_90,90_unn}}
    \subfloat[$|u - u^*_\theta|$]{\includegraphics[width=0.19\linewidth]{./figures/Heat_BTM_2000_90,90_unn_err}}
	\caption{Heat equation: Solutions ($u_\theta$ is the solution of PINN-ExBTM, $u_\theta^*$ is the solution of PINN-BTM). The two hidden layers with $[90, 90]$ neurons. [All $\tanh$, $N=2000$ for training, $N=1000$ for prediction]
 }
	\label{PINN_num_heat_fig1}
\end{figure}

\begin{figure}[tb]
	\centering
	\subfloat[$ t=2.5 $]{
		\begin{minipage}[b]{0.22\textwidth}
			\includegraphics[scale=0.25]{./figures/Heat_BTM_2000_90,90_TBlock2_T2.5_u}\\
			\includegraphics[scale=0.25]{./figures/Heat_BTM_2000_90,90_TBlock2_T2.5_err}
		\end{minipage}
	}\qquad
	\subfloat[$ t=5 $]{
		\begin{minipage}[b]{0.22\textwidth}
			\includegraphics[scale=0.25]{./figures/Heat_BTM_2000_90,90_TBlock3_T5_u}\\
			\includegraphics[scale=0.25]{./figures/Heat_BTM_2000_90,90_TBlock3_T5_err}
		\end{minipage}
	}\qquad
	\subfloat[$ t=9.5 $]{
		\begin{minipage}[b]{0.22\textwidth}
			\includegraphics[scale=0.25]{./figures/Heat_BTM_2000_90,90_TBlock5_T9.5_u}\\
			\includegraphics[scale=0.25]{./figures/Heat_BTM_2000_90,90_TBlock5_T9.5_err}
		\end{minipage}
	}
	\caption{Heat equation: Top row, comparison of profiles between the exact solution and PINN-ExBTM/PINN-BTM solutions for $u$ at several time instants. Bottom row, profiles of the absolute error of the PINN-ExBTM and PINN-BTM solutions for $u$. $N=2000$ training collocation points. The two hidden layers with $[90, 90]$ neurons. [All $\tanh$, $N=2000$ for training, $N=1000$ for prediction]}
	\label{PINN_num_heat_fig2}
\end{figure}

\begin{figure}[tb]
	\centering
	\subfloat[PINN-ExBTM]{\includegraphics[width=0.5\linewidth]{./figures/Heat_BTM_2000_90,90_lossHistory_Ex}}
	\subfloat[PINN-BTM]{\includegraphics[width=0.5\linewidth]{./figures/Heat_BTM_2000_90,90_lossHistory}}\hspace{0.1em}
	\caption{Heat equation: Loss histories of (a) PINN-ExBTM and (b) PINN-BTM corresponding to various numbers of training collocation points. The two hidden layers with $[90, 90]$ neurons. [All $\tanh$, $N=2000$ for training, $N=1000$ for prediction]}
	\label{PINN_num_heat_fig3}
\end{figure}

} 

\comment{
\begin{figure}[tb]
	\centering
	\subfloat[PINN-ExBTM]
 { 
    \includegraphics[width=0.5\linewidth]{./figures/Heat_BTM_2000_90,90_TBlock5_errorRatio_Ex}}
	\subfloat[PINN-BTM]
 { 
    \includegraphics[width=0.5\linewidth]{./figures/Heat_BTM_2000_90,90_TBlock5_errorRatio}}\\
    \subfloat[PINN-ExBTM]{ 
    \includegraphics[width=0.5\linewidth]{./figures/Heat_BTM_2000_90,90,10_TBlock5_errorRatio_Ex}}
	\subfloat[PINN-BTM]{ 
    \includegraphics[width=0.5\linewidth]{./figures/Heat_BTM_2000_90,90,10_TBlock5_errorRatio}}\hspace{0.5em}
    \hspace{0.5em}
	\caption{Heat equation: The $l^2$ errors of $u$ as a function of the training loss value. [(a)$-$(b) The two hidden layers with $[90, 90]$ neurons. All $\tanh$, $N_c=2000$ for training, $N=1000$ for prediction] [(c)$-$(d) The three hidden layers with $[90, 90, 10]$ neurons. All $\tanh$, $N_c=2000$ for training, $N_{ev}=1000$ for prediction]}
	\label{PINN_num_heat_fig4}
\end{figure}

\begin{figure}[tb]
	\centering
	\subfloat[PINN-ExBTM]{\label{PINN_num_heat_fig5_a}
    \includegraphics[width=0.5\linewidth]{./figures/Heat_BTM_2000_90,90,10,10_gauss_TBlock5_errorRatio_Ex}}
	\subfloat[PINN-BTM]{\label{PINN_num_heat_fig5_b}
    \includegraphics[width=0.5\linewidth]{./figures/Heat_BTM_2000_90,90,10,10_gauss_TBlock5_errorRatio}}\\
    \subfloat[PINN-ExBTM]{\label{PINN_num_heat_fig5_c}
    \includegraphics[width=0.5\linewidth]{./figures/Heat_BTM_2000_90,90,10,10_softplus_TBlock5_errorRatio_Ex}}
	\subfloat[PINN-BTM]{\label{PINN_num_heat_fig5_d}
    \includegraphics[width=0.5\linewidth]{./figures/Heat_BTM_2000_90,90,10,10_softplus_TBlock5_errorRatio}}\hspace{0.5em}
	\caption{Heat equation: The $l^2$ errors of $u$ as a function of the training loss value. The layers with $[90, 90, 10, 10]$ neurons. [(a)$-$(b) Last two layers: Gauss, $N_c=2000$ for training, $N_{ev}=1000$ for prediction] [(c)$-$(d) Last two layers: Softplus, $N_c=2000$ for training, $N_{ev}=1000$ for prediction]}
	\label{PINN_num_heat_fig5}
\end{figure}

} 

\begin{figure}[!tb]
	\centering
	\subfloat[$u$]{\includegraphics[width=0.2\linewidth]{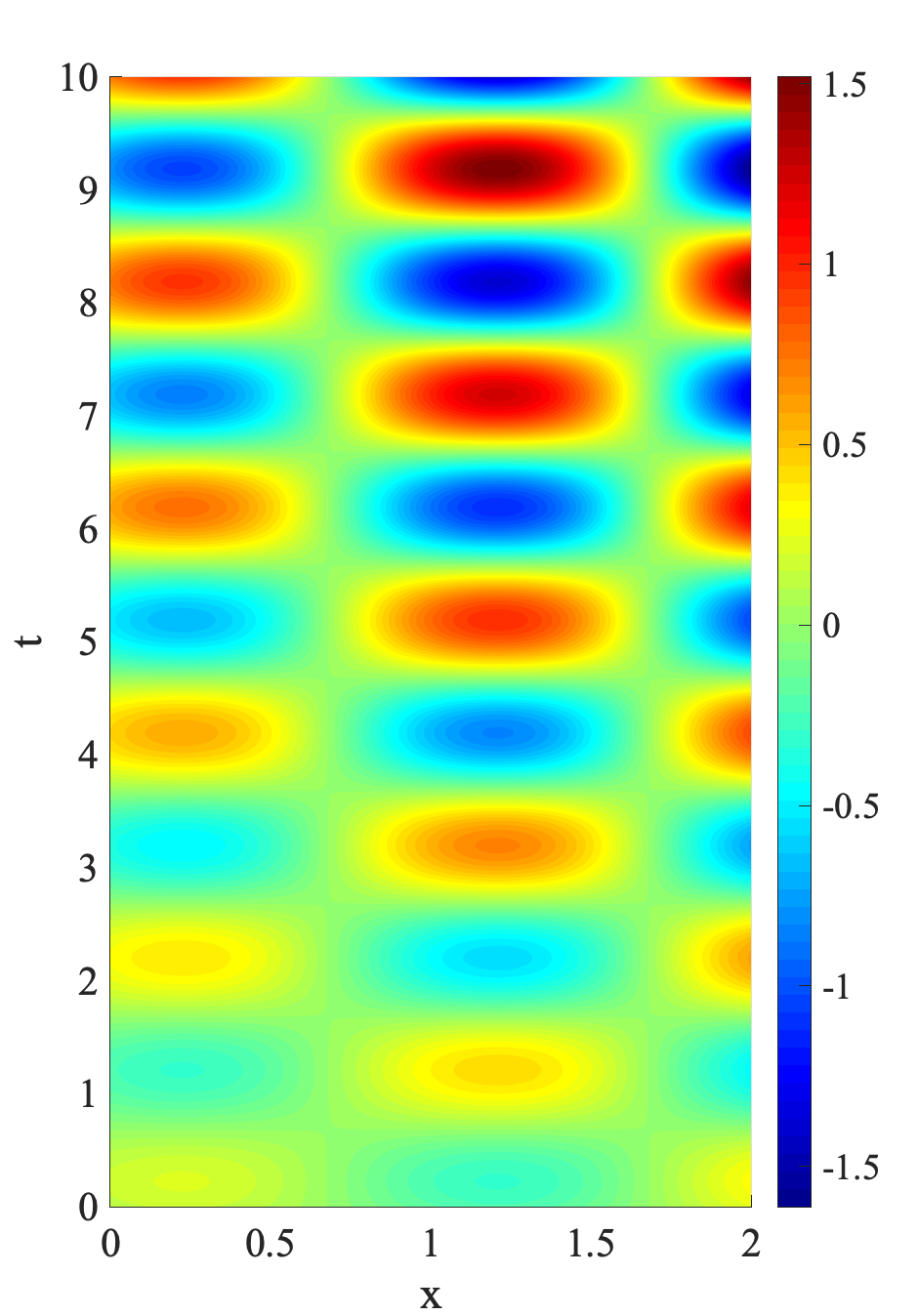}}
    \subfloat[$u_\theta$]{\includegraphics[width=0.2\linewidth]{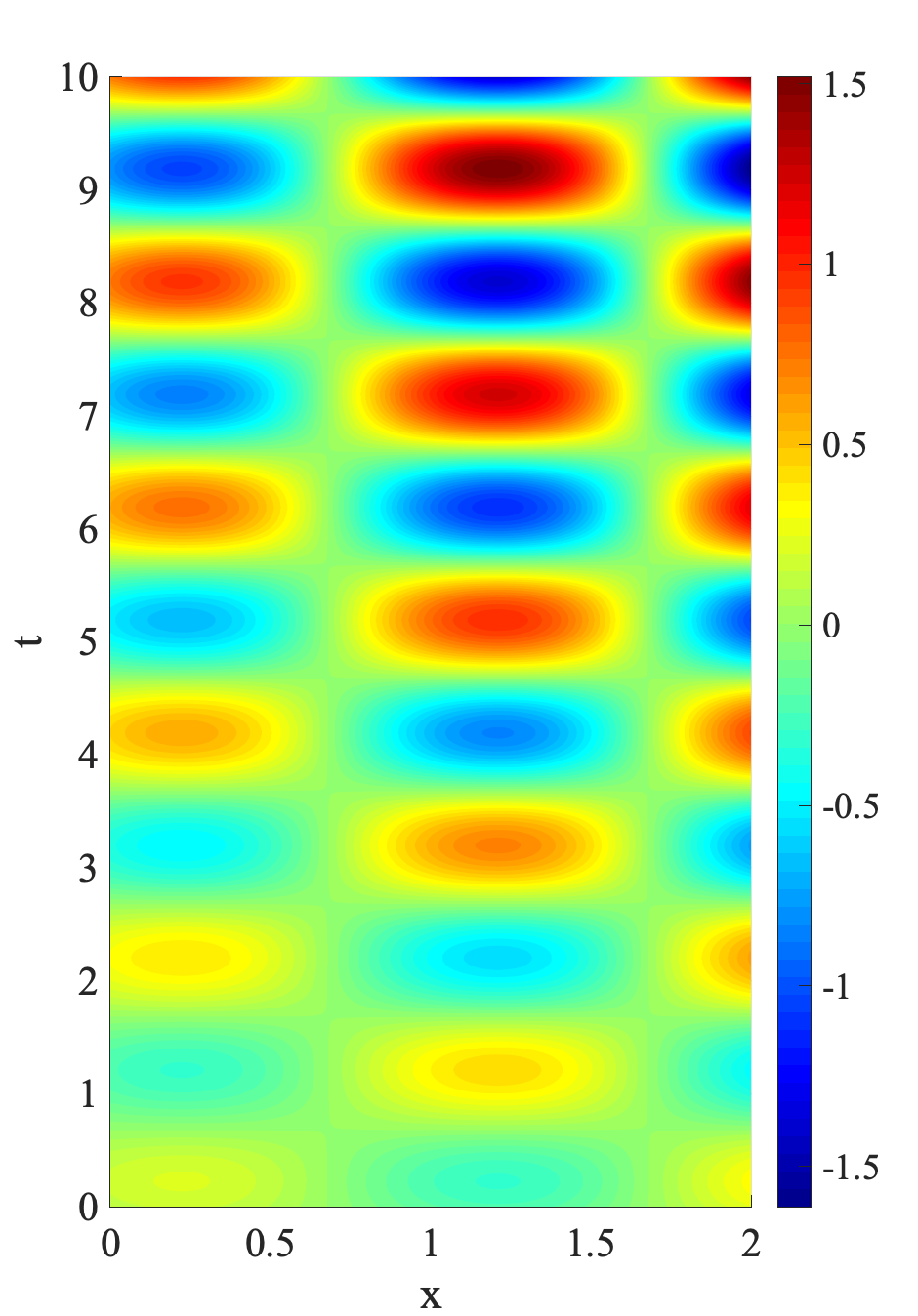}}
    \subfloat[$|u-u_\theta|$]{\includegraphics[width=0.2\linewidth]{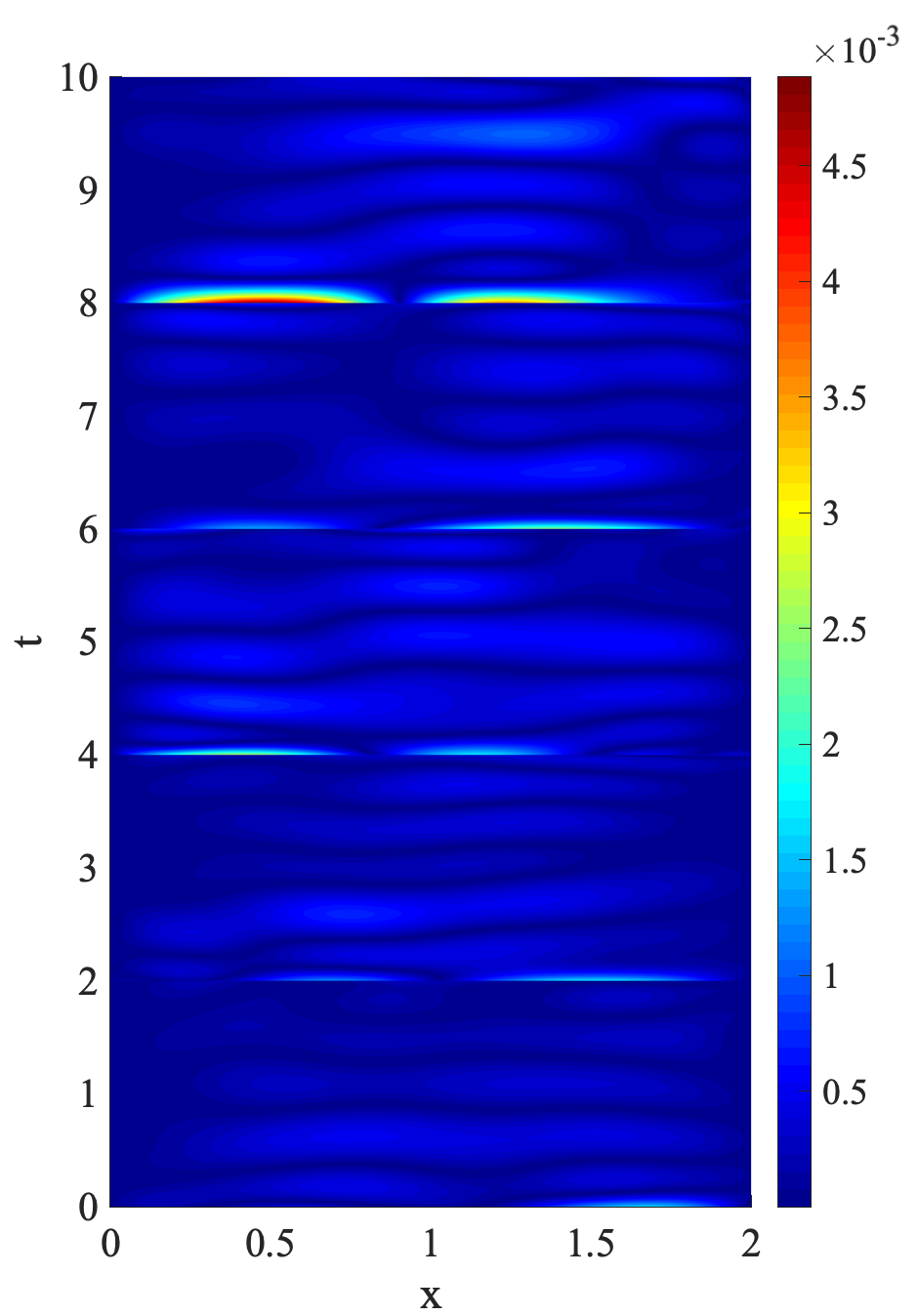}}
    \subfloat[$u^*_\theta$]{\includegraphics[width=0.2\linewidth]{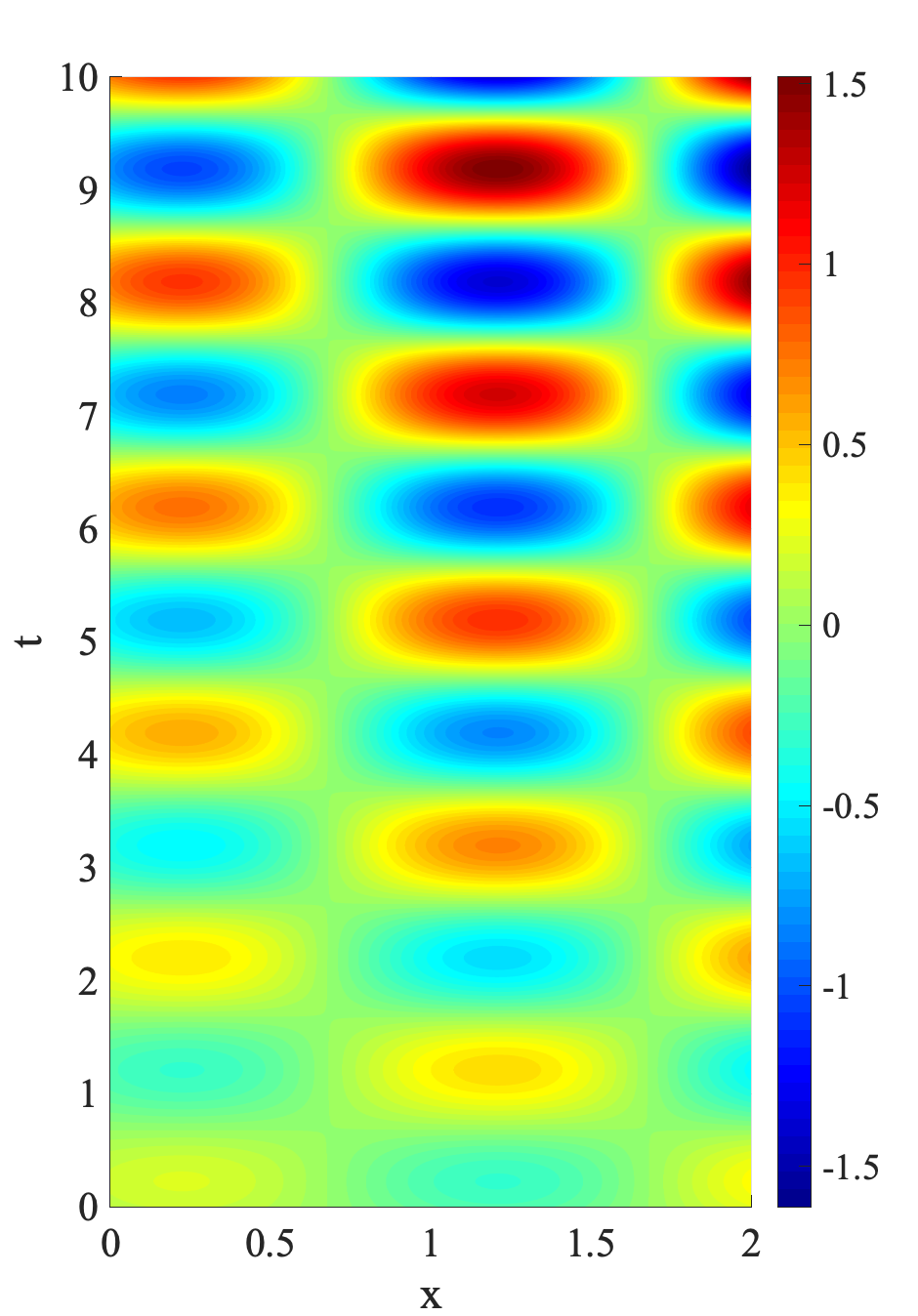}}
    \subfloat[$|u - u^*_\theta|$]{\includegraphics[width=0.2\linewidth]{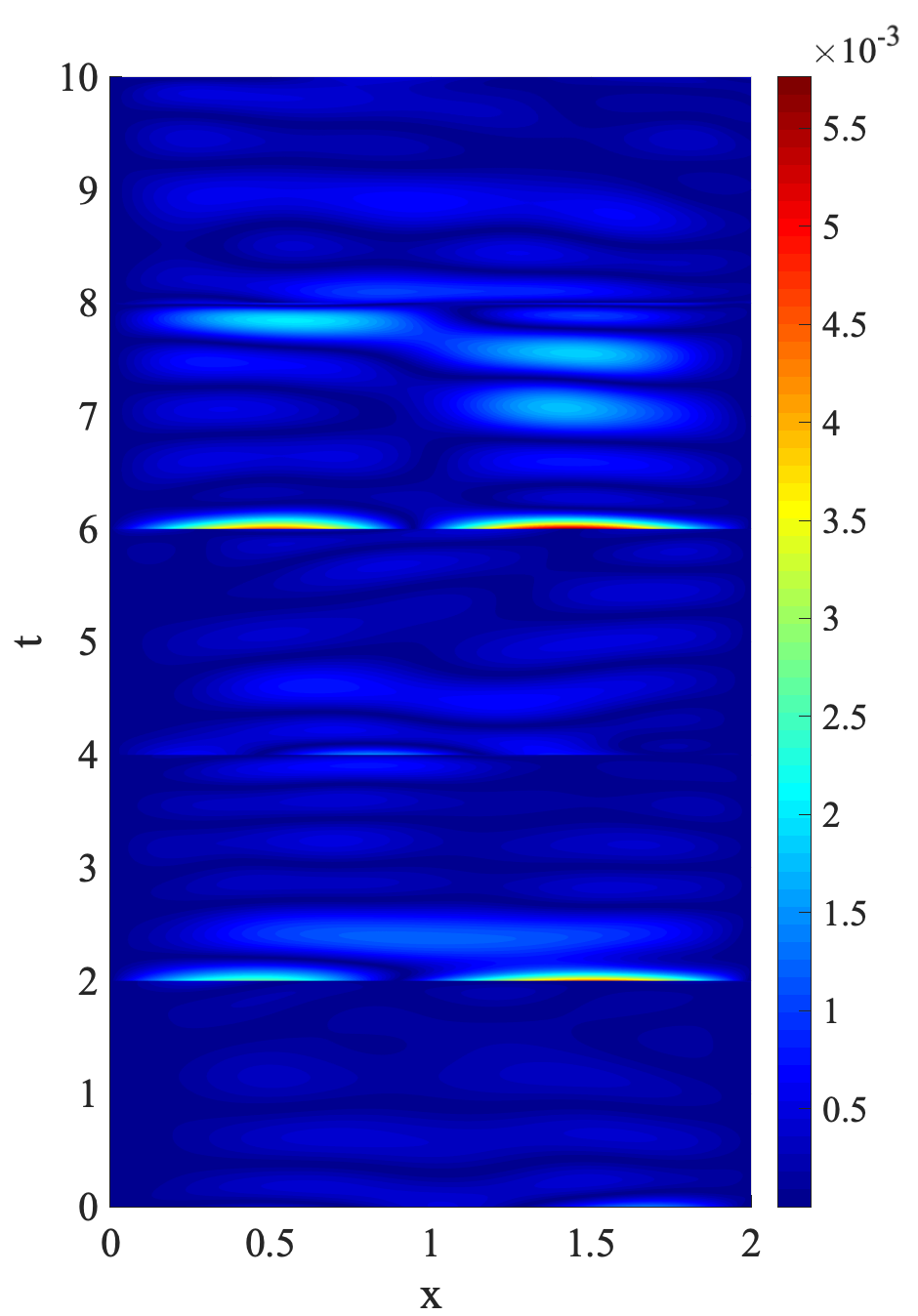}}
	\caption{Burgers' equation: Distributions of the exact solution (a), the HLConcPINN-ExBTM solution and its point-wise error (b,c), and the HLConcPINN-BTM solution and its point-wise error (d,e). NN: [2,90,90,10,1], with $\tanh$, $\tanh$ and sine activation functions for the three hidden layers; $N_c=2500$ for the training collocation points.
 }
	\label{PINN_num_Burger_fig1}
\end{figure}

\begin{figure}[tb]
	\centering
	\subfloat[HLConcPINN-ExBTM]
 {\includegraphics[width=0.35\linewidth]{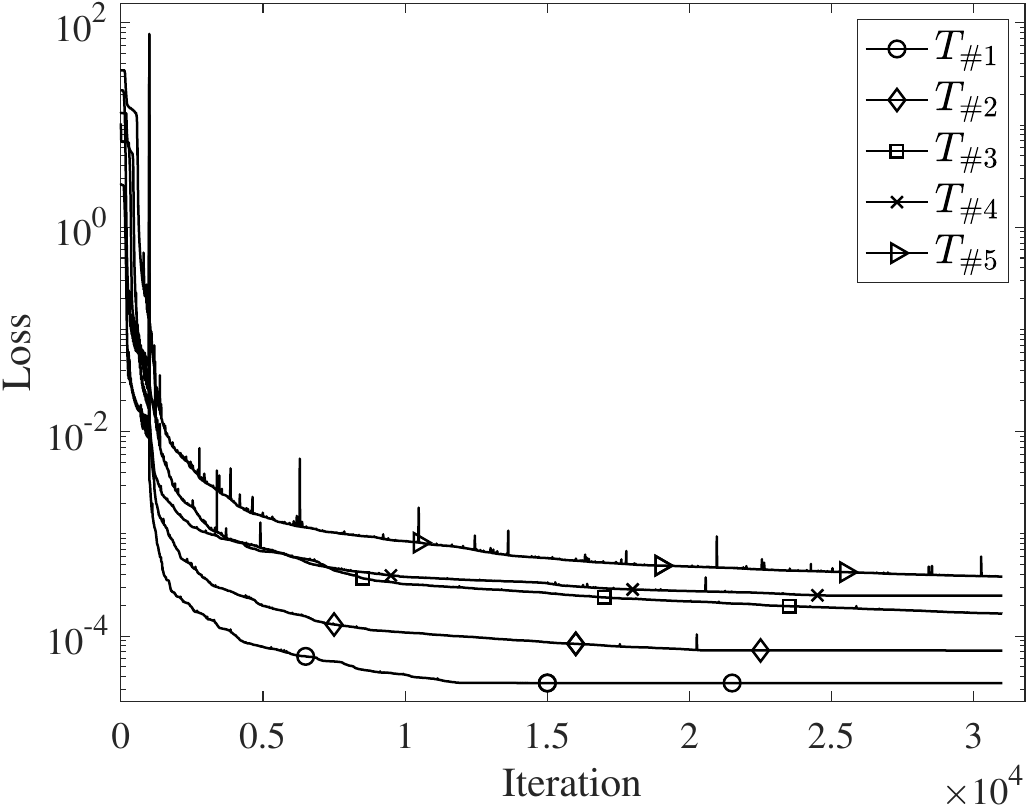}}\qquad
	\subfloat[HLConcPINN-BTM]{\includegraphics[width=0.35\linewidth]{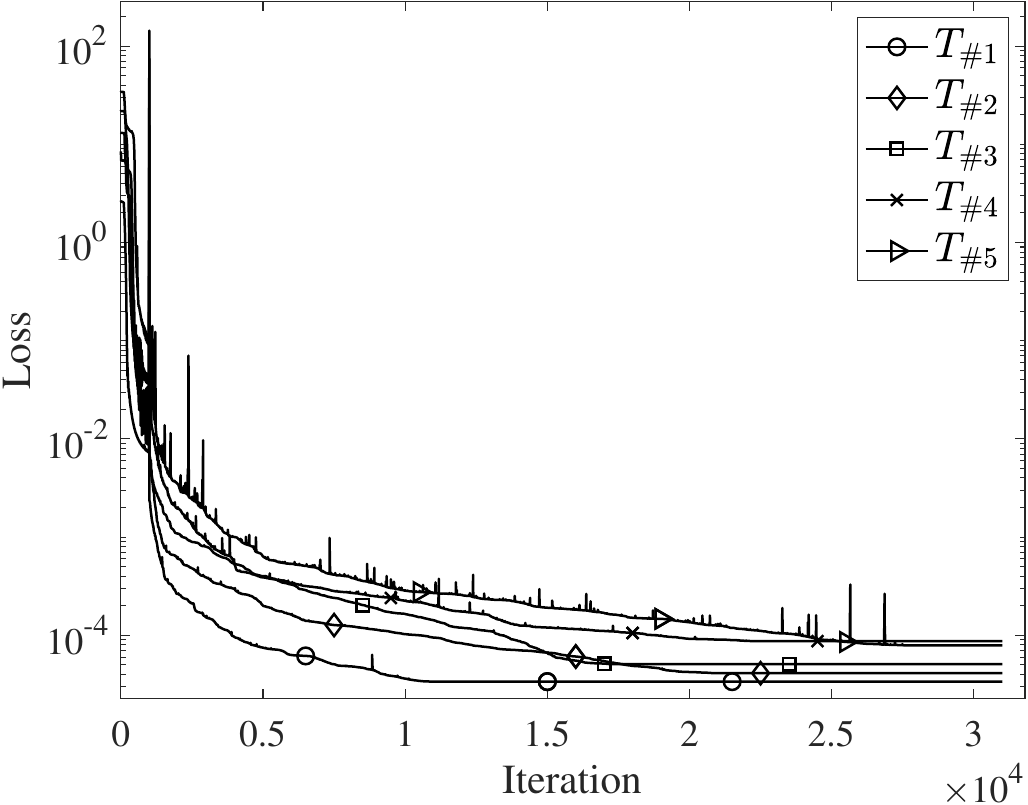}}\hspace{0.1em}
	\caption{Burgers' equation: Loss histories of HLConcPINN-ExBTM and HLConcPINN-BTM in different time blocks. NN settings and simulation parameters follow those of Figure~\ref{PINN_num_Burger_fig1}.
 }
	\label{PINN_num_Burger_fig4_1}
\end{figure}

\begin{figure}[tb]
	\centering
 \includegraphics[width=0.35\linewidth]{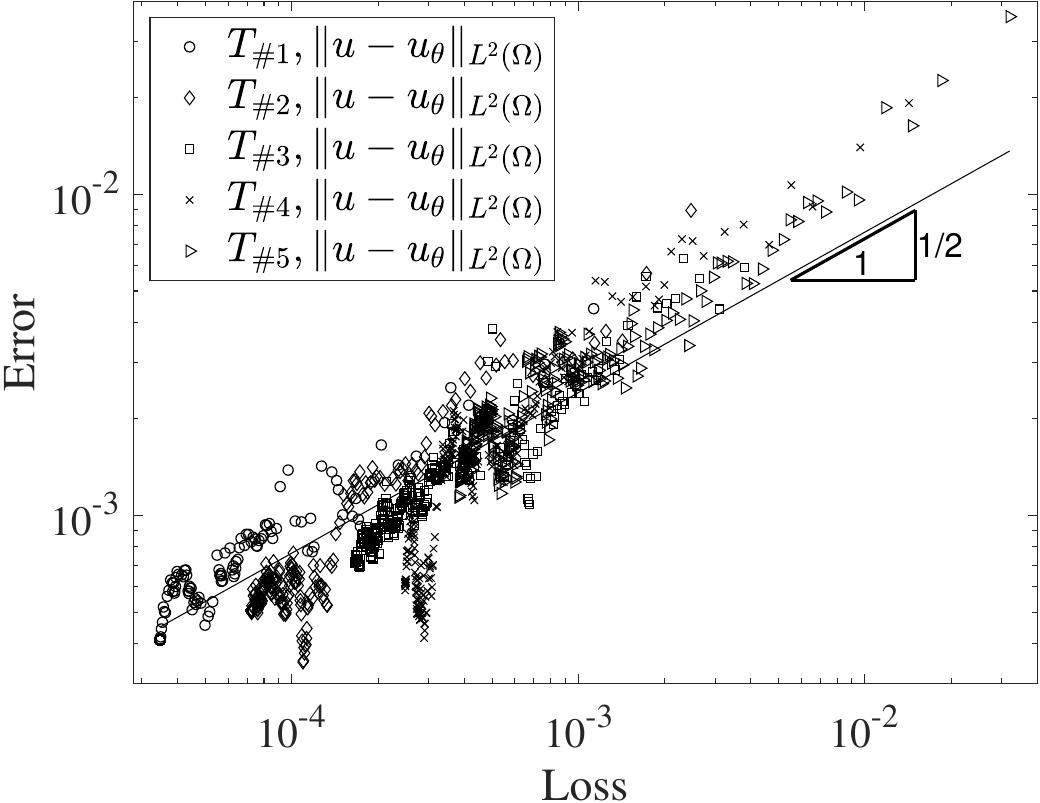}
    \hspace{0.5em}
	\caption{Burgers' equation: The $l^2$ errors of $u$ as a function of the training loss for the HLConcPINN-ExBTM method. 
 The NN settings and simulation parameters follow those of Figure~\ref{PINN_num_Burger_fig1}.
 }
	\label{PINN_num_Burger_fig5_1}
\end{figure}

\subsection{Burgers' Equation}

We next consider the viscous Burgers' equation on the spatial-temporal domain
$(x,t)\in\Omega= D\times [0, T] = [0, 2] \times [0, 10]$,
\begin{subequations}\label{num_Burgers_eq0}
\begin{align}
	& \frac{\partial u}{\partial t} -\nu\frac{\partial^2 u}{\partial x^2} + u\frac{\partial u}{\partial x} = f(x,t), \\
 &u({0},t)=\phi_1(t),  \quad u({2},t)=\phi_2(t), \quad
	u({x},0)=\psi({x}).
\end{align}
\end{subequations}
Here $u(x,t)$ is the field to be solved for,   $\nu$ denotes the viscosity, $f(x,t)$ is a source term, $\phi_1(t)$ and $\phi_2(t)$ denote the boundary data, and $\psi({x})$ is the initial distribution. 
We take $\nu = 1$ and choose the source term and the boundary/initial condition such that the function 
\begin{align}
    u(x,t) = \left(\frac{1}{5} + \frac{x}{10}\right)\left(\frac{1}{5} + \frac{t}{10}\right) \left[ 2\sin\left(\pi x + \frac{2\pi}{5}\right) + \frac{1}{2}\cos\left(\pi x - \frac{3\pi}{5}\right)\right] \left[ 2\sin\left(\pi t + \frac{2\pi}{5}\right) + \frac{1}{2}\cos\left(\pi t - \frac{3\pi}{5}\right) \right], \nonumber
\end{align}
solves the problem \eqref{num_Burgers_eq0}.

\begin{table}[tb]\small
	\centering\small
\begin{tabular}{c|@{}c@{}|cc|cc|cc|cc}
\hline
\multirow{2}{*}{Error}    & \multirow{2}{*}{Time block} & \multicolumn{2}{c|}{$N_c=1500$} & \multicolumn{2}{c|}{$N_c=2000$} & \multicolumn{2}{c|}{$N_c=2500$} & \multicolumn{2}{c}{$N_c=3000$} \\ \cline{3-10}
                      &  & ExBTM  & BTM  & ExBTM  & BTM   & ExBTM  & BTM  & ExBTM  & BTM   \\ \hline
\multirow{5}{*}{$l^2$} & $T_{\#1}$  & 2.26e-03 & 2.41e-03 & 1.19e-03 & 1.29e-03 & 1.14e-03 & 1.08e-03 & 1.75e-03 & 1.62e-03 \\ \cline{3-10} 
                      & $T_{\#2}$  & 6.31e-04 & 1.72e-03 & 6.92e-04 & 6.40e-04 & 8.35e-04 & 1.80e-03 & 6.50e-04 & 4.54e-04 \\ \cline{3-10}
                      & $T_{\#3}$  & 7.05e-04 & 7.63e-04 & 7.72e-04 & 7.59e-04 & 8.32e-04 & 6.74e-04 & 8.69e-04 & 9.56e-04 \\ \cline{3-10} 
                      & $T_{\#4}$  & 1.60e-03 & 8.04e-04 & 6.76e-04 & 6.78e-04 & 5.61e-04 & 1.48e-03 & 7.45e-04 & 8.76e-04 \\ \cline{3-10} 
                      & $T_{\#5}$  & 1.74e-03 & 1.81e-03 & 4.77e-04 & 1.32e-03 & 8.50e-04 & 4.81e-04 & 8.23e-04 & 1.17e-03 \\ \hline
\multirow{5}{*}{$l^\infty$} & $T_{\#1}$  & 2.01e-02 & 1.97e-02 & 1.03e-02 & 1.17e-02 & 8.77e-03 & 7.49e-03 & 1.23e-02 & 4.28e-03 \\ \cline{3-10} 
                      & $T_{\#2}$  & 4.43e-03 & 9.01e-03 & 3.68e-03 & 2.84e-03 & 5.71e-03 & 1.46e-02 & 4.61e-03 & 3.63e-03 \\ \cline{3-10}
                      & $T_{\#3}$  & 4.58e-03 & 6.60e-03 & 7.41e-03 & 4.13e-03 & 6.32e-03 & 3.58e-03 & 5.27e-03 & 7.47e-03 \\ \cline{3-10} 
                      & $T_{\#4}$  & 5.06e-03 & 3.36e-03 & 6.37e-03 & 4.61e-03 & 4.52e-03 & 1.05e-02 & 4.13e-03 & 6.44e-03 \\ \cline{3-10} 
                      & $T_{\#5}$  & 1.28e-02 & 1.09e-02 & 2.36e-03 & 3.40e-03 & 7.24e-03 & 1.52e-03 & 3.64e-03 & 3.31e-03 \\ \hline
\end{tabular}
\caption{ Burgers' equation: the $l^2$ and $l^\infty$ errors corresponding to different training collocation points $N_c$. NN: [2,90,90,10,1], with tanh, tanh and sine activation functions for the three hidden layers.
}
	\label{tab_Burger_err_1}
\end{table}

\begin{table}[tb]\small
	\centering\small
\begin{tabular}{c|@{}c@{}|cc|cc|cc|cc}
\hline
\multirow{2}{*}{Error}    & \multirow{2}{*}{Time block} & \multicolumn{2}{c|}{tanh} & \multicolumn{2}{c|}{Gaussian} & \multicolumn{2}{c|}{swish} & \multicolumn{2}{c}{softplus} \\ \cline{3-10}
                      &  & ExBTM  & BTM  & ExBTM  & BTM   & ExBTM  & BTM  & ExBTM  & BTM   \\ \hline
\multirow{5}{*}{$l^2$} & $T_{\#1}$ & 1.44e-03 & 1.04e-03 & 1.59e-03 & 2.23e-03 & 1.60e-03 & 2.23e-03 & 2.10e-03 & 1.80e-03 \\ \cline{3-10} 
                      & $T_{\#2}$  & 9.66e-04 & 1.19e-03 & 1.01e-03 & 2.15e-03 & 2.35e-03 & 3.96e-03 & 8.69e-04 & 7.42e-04 \\ \cline{3-10}
                      & $T_{\#3}$  & 1.69e-03 & 8.65e-04 & 1.39e-03 & 5.70e-04 & 1.87e-03 & 1.12e-03 & 1.77e-03 & 1.37e-03 \\ \cline{3-10} 
                      & $T_{\#4}$  & 1.05e-03 & 1.26e-03 & 1.05e-03 & 1.07e-03 & 1.42e-03 & 1.22e-03 & 2.31e-03 & 1.04e-03 \\ \cline{3-10} 
                      & $T_{\#5}$  & 1.66e-03 & 2.13e-03 & 1.32e-03 & 2.89e-03 & 2.55e-03 & 1.53e-03 & 2.41e-03 & 1.39e-03 \\ \hline
\multirow{5}{*}{$l^\infty$} & $T_{\#1}$  & 9.19e-03 & 6.56e-03 & 1.69e-02 & 2.26e-02 & 1.02e-02 & 2.00e-02 & 1.87e-02 & 1.31e-02 \\ \cline{3-10} 
                      & $T_{\#2}$  & 8.05e-03 & 1.19e-02 & 4.97e-03 & 1.83e-02 & 3.22e-02 & 4.87e-02 & 4.43e-03 & 2.78e-03 \\ \cline{3-10}
                      & $T_{\#3}$  & 2.23e-02 & 7.64e-03 & 1.68e-02 & 5.14e-03 & 2.20e-02 & 1.11e-02 & 9.10e-03 & 1.34e-02 \\ \cline{3-10} 
                      & $T_{\#4}$  & 5.90e-03 & 1.11e-02 & 9.88e-03 & 6.82e-03 & 1.58e-02 & 6.79e-03 & 2.00e-02 & 6.14e-03 \\ \cline{3-10} 
                      & $T_{\#5}$  & 1.33e-02 & 1.82e-02 & 1.14e-02 & 9.61e-03 & 1.47e-02 & 5.31e-03 & 9.31e-03 & 4.58e-03 \\ \hline
\end{tabular}
\caption{Burgers' equation: the $l^2$ and $l^\infty$ errors obtained with several different activation functions. NN: [2,90,90,10,1], with tanh activation function for the first two hidden layers, while the activation function for the last hidden layer is varied as given in the table. $N_c=2500$ training collocation points.
}
	\label{tab_Burger_err_2}
\end{table}

The loss function for the HLConcPINN-ExBTM method is given by,
\begin{align}\label{num_Burger_eq_loss0}
    Loss_i^{I} = &\frac{W_1}{N_c}\sum_{n=1}^{N_c}\left[\frac{\partial u_{\theta_i}}{\partial t}(x_{int}^n,t_{int}^n) - \nu \frac{\partial^2 u_{\theta_i}}{\partial x^2}(x_{int}^n,t_{int}^n) + u_{\theta_i}(x_{int}^n,t_{int}^n)\frac{ \partial u_{\theta_i}}{\partial x}(x_{int}^n,t_{int}^n) - f(x_{int}^n,t_{int}^n)\right]^2 \notag \\
    & + \frac{W_2}{N_c} \sum_{j=1}^i \sum_{n=1}^{N_c}\left[ u_{\theta_i}(x_{tb}^n, t_{j-1}) - u_{\theta_{j-1}}(x_{tb}^n, t_{j-1}) \right]^2 \notag\\
    & + \frac{W_3}{N_c}\sum_{n=1}^{N_c}\left[ (u_{\theta_i}(0, t_{sb}^n) - g_1(t_{sb}^n))^2 + (u_{\theta_i}(2, t_{sb}^n) - g_2(t_{sb}^n))^2 \right] \notag\\
    & + W_4\Big(\frac{1}{N_c}\sum_{n=1}^{N_c} (u_{\theta_i}(0, t_{sb}^n) - g_1(t_{sb}^n))^2 \Big)^{1/2} + W_5\Big(\frac{1}{N_c}\sum_{n=1}^{N_c} (u_{\theta_i}(2, t_{sb}^n) - g_2(t_{sb}^n))^2 \Big)^{1/2} \notag\\
    & + Loss_{i-1}^{I},
\end{align}
where $Loss_{0}^{I}=0$, and we have added a set of penalty coefficients $W_k>0$ ($k=1,\dots,5$) for different loss terms. 
%
The loss function for  HLConcPINN-BTM is,
\begin{align}\label{num_Burger_eq_loss1}
    Loss_i^{II} = &\frac{W_1}{N_c}\sum_{n=1}^{N_c}\left[\frac{\partial u_{\theta_i}}{\partial t}(x_{int}^n,t_{int}^n) - \nu \frac{\partial^2 u_{\theta_i}}{\partial x^2}(x_{int}^n,t_{int}^n) + u_{\theta_i}(x_{int}^n,t_{int}^n)\frac{\partial u_{\theta_i}}{\partial x}(x_{int}^n,t_{int}^n) - f(x_{int}^n,t_{int}^n)\right]^2 \notag \\
    & + \frac{W_2}{N_c} \sum_{n=1}^{N_c}\left[ u_{\theta_i}(x_{tb}^n, t_{i-1}) - u_{\theta_{j-1}}(x_{tb}^n, t_{i-1}) \right]^2 \notag\\
    & + \frac{W_3}{N_c}\sum_{n=1}^{N_c}\left[ (u_{\theta_i}(0, t_{sb}^n) - g_1(t_{sb}^n))^2 + (u_{\theta_i}(2, t_{sb}^n) - g_2(t_{sb}^n))^2 \right] \notag\\
    & + W_4\Big(\frac{1}{N_c}\sum_{n=1}^{N_c} (u_{\theta_i}(0, t_{sb}^n) - g_1(t_{sb}^n))^2 \Big)^{1/2} + W_5\Big(\frac{1}{N_c}\sum_{n=1}^{N_c} (u_{\theta_i}(2, t_{sb}^n) - g_2(t_{sb}^n))^2 \Big)^{1/2}.
\end{align}
For both methods, we employ $(W_1, ..., W_5) = (0.6, 0.4, 0.4, 0.4, 0.4)$ in the following simulations. Five uniform time blocks are employed in block time marching.

Figure \ref{PINN_num_Burger_fig1} shows  distributions of the true solution, the HLConcPINN-ExBTM and HLConcPINN-BTM solutions and their absolute errors. The neural network structure and other parameters are provided in the figure caption. 
%
%
The histories of the training loss functions for HLConcPINN-ExBTM and HLConcPINN-BTM are shown in Figure \ref{PINN_num_Burger_fig4_1}. Both methods have captured the solution well. 

Tables \ref{tab_Burger_err_1} and \ref{tab_Burger_err_2} illustrate the effects of the training collocation points and the activation function on the simulation points. In these simulations the neural network structure is characterized by $[2,90,90,10,1]$, with the activation function $\tanh$ for the first two hidden layers. In Table~\ref{tab_Burger_err_1}, the activation function for the last hidden layer is set to sine, and the number of training collocation points is varied systematically. In Table~\ref{tab_Burger_err_2} the activation function for the last hidden layer is varied (tanh, Gaussian, swish, or softplus), with fixed training collocation points $N_c=2500$. The simulation results appear not sensitive to the training collocation points, similar to observations with the previous test problem. Among the activation functions tested, the sine function appears to produce the best result.


Figure \ref{PINN_num_Burger_fig5_1} illustrates the relation between the $l^2$ error of $u$  and the training loss value for different time blocks obtained with the HLConcPINN-ExBTM method in our simulations. The scaling manifested in the data is consistent with Theorem \ref{sec4b_Theorem3} from our analyses.


\comment{
\begin{figure}[tb]
	\centering
	\subfloat[$ t=2.5 $]{
		\begin{minipage}[b]{0.22\textwidth}
			\includegraphics[scale=0.25]{./figures/Burger_BTM_2500_90,90,10_sin_TBlock2_T2.5_u}\\
			\includegraphics[scale=0.25]{./figures/Burger_BTM_2500_90,90,10_sin_TBlock2_T2.5_uerr}
		\end{minipage}
	}
	\subfloat[$ t=5 $]{
		\begin{minipage}[b]{0.22\textwidth}
			\includegraphics[scale=0.25]{./figures/Burger_BTM_2500_90,90,10_sin_TBlock3_T5_u}\\
			\includegraphics[scale=0.25]{./figures/Burger_BTM_2500_90,90,10_sin_TBlock3_T5_uerr}
		\end{minipage}
	}
	\subfloat[$ t=9.5 $]{
		\begin{minipage}[b]{0.22\textwidth}
			\includegraphics[scale=0.25]{./figures/Burger_BTM_2500_90,90,10_sin_TBlock5_T9.5_u}\\
			\includegraphics[scale=0.25]{./figures/Burger_BTM_2500_90,90,10_sin_TBlock5_T9.5_uerr}
		\end{minipage}
	}
	\caption{Burger's equation: Top row, comparison of profiles between the exact solution and PINN-ExBTM/PINN-BTM solutions for $u$ at several time instants. Bottom row, profiles of the absolute error of the PINN-ExBTM and PINN-BTM solutions for $u$. $N=2000$ training collocation points. The three hidden layers with $[90, 90, 10]$ neurons. [$\tanh-\tanh-\sin$, $N_c=2500$ for training, $N_{ev}=1000$ for prediction]}
	\label{PINN_num_Burger_fig3_1}
\end{figure}
} 



\begin{figure}[tb]
	\centering
	\subfloat[$u$]{\includegraphics[width=0.2\linewidth]{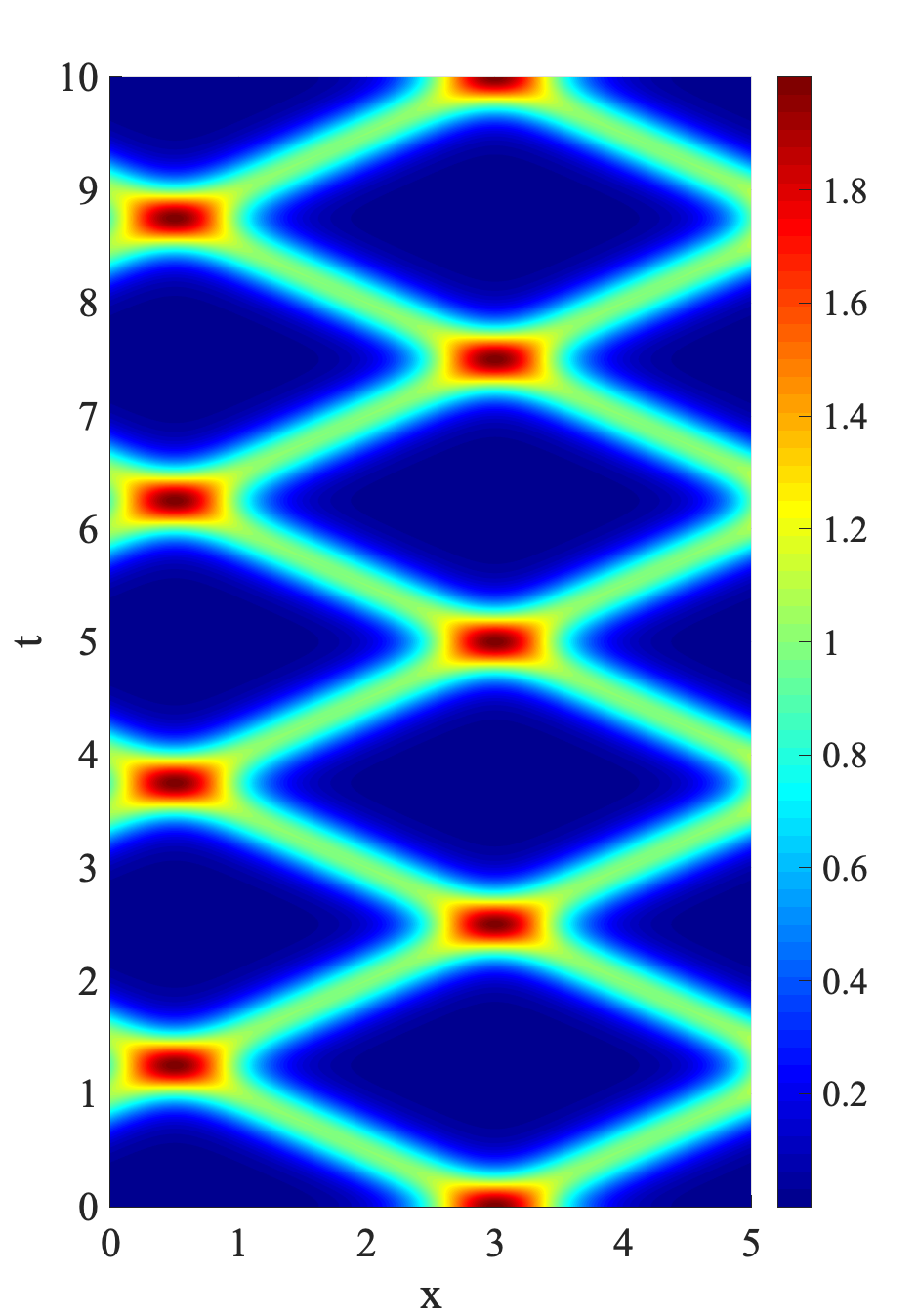}}
    \subfloat[$u_\theta$]{\includegraphics[width=0.2\linewidth]{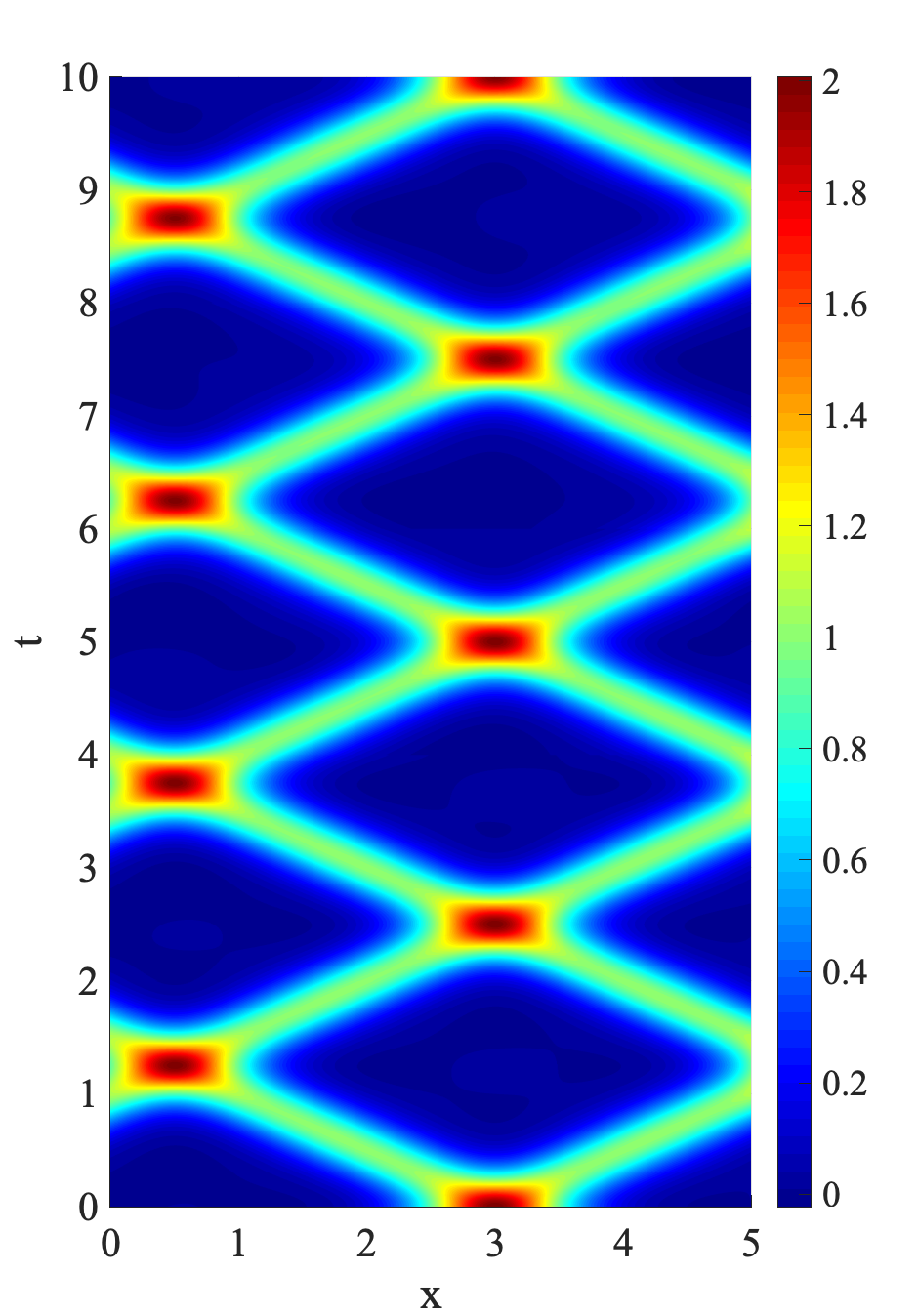}}
    \subfloat[$|u-u_\theta|$]{\includegraphics[width=0.2\linewidth]{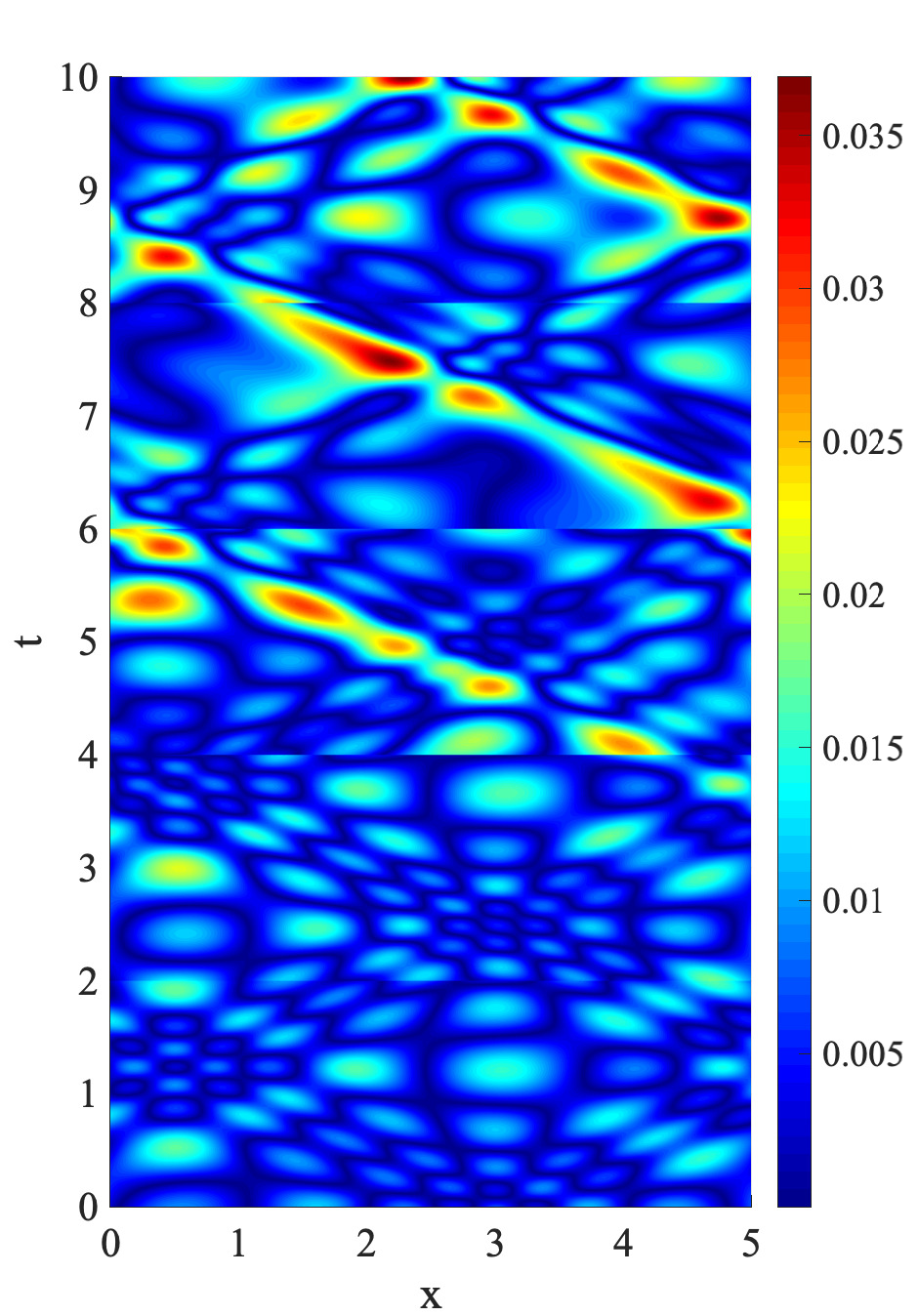}}
    \subfloat[$u^*_\theta$]{\includegraphics[width=0.2\linewidth]{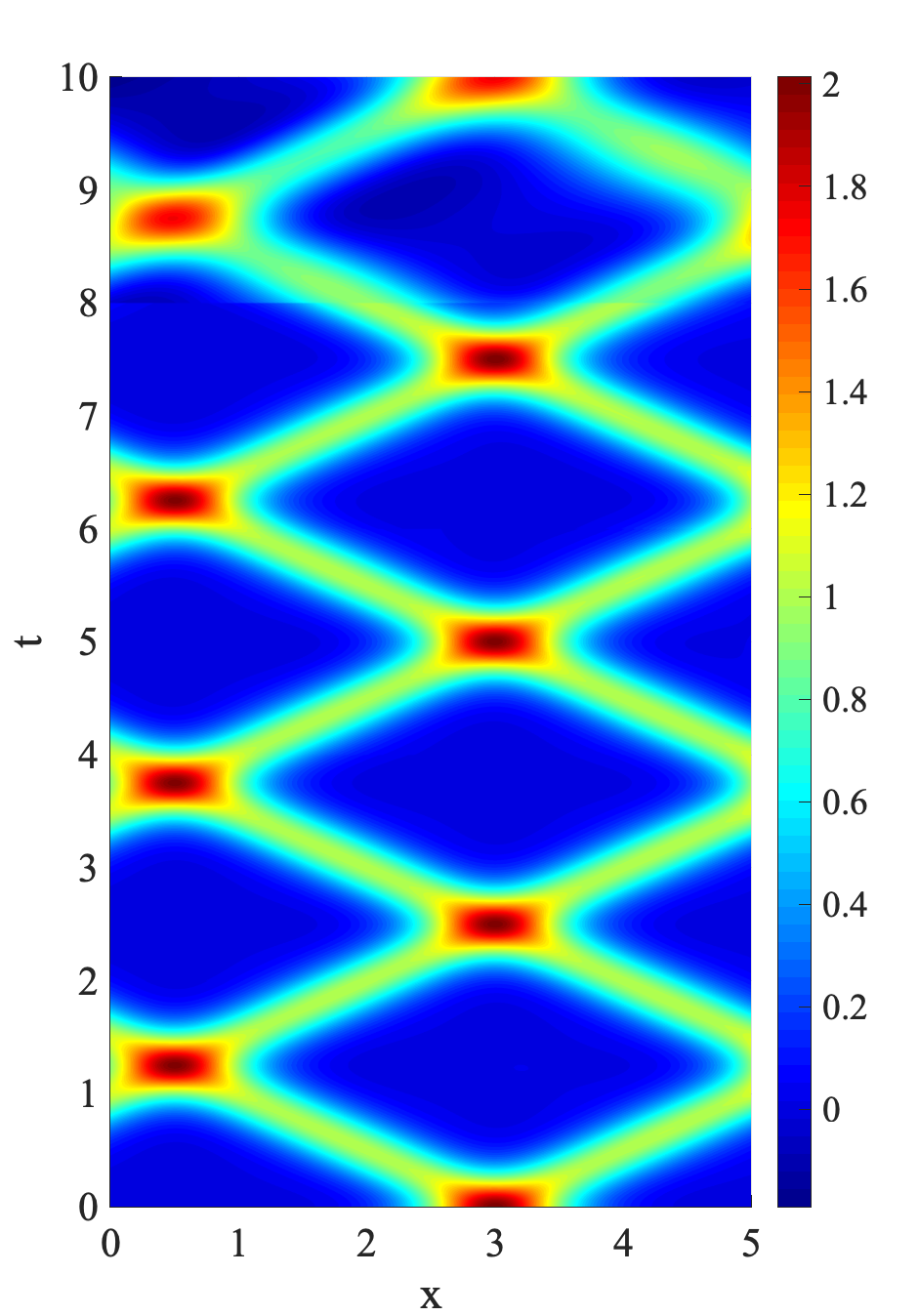}}
    \subfloat[$|u - u^*_\theta|$]{\includegraphics[width=0.2\linewidth]{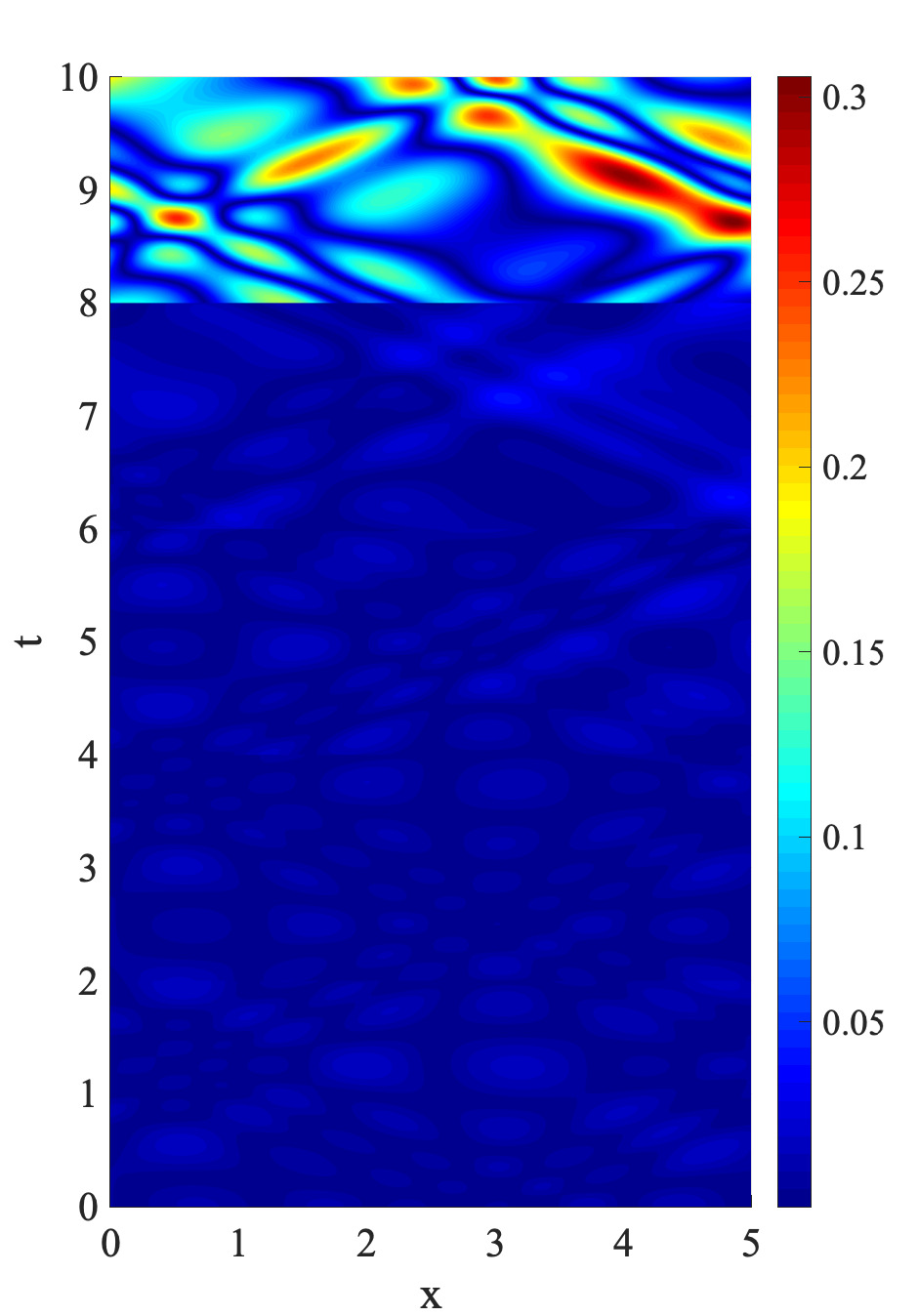}}
	\caption{Wave equation: Solution distributions ($u$: true solution; $u_\theta$: HLConcPINN-ExBTM solution; $u_\theta^*$ HLConcPINN-BTM solution). 
 NN: $[2,90,90,10,2]$; activation function: $\tanh$ for the first two hidden layers, sine for the last hidden layer; $N_c=2500$ for  training collocation points.
 }
	\label{PINN_num_wave_fig1}
\end{figure}

\begin{figure}[tb]
	\centering
	\subfloat[$v$]{\includegraphics[width=0.2\linewidth]{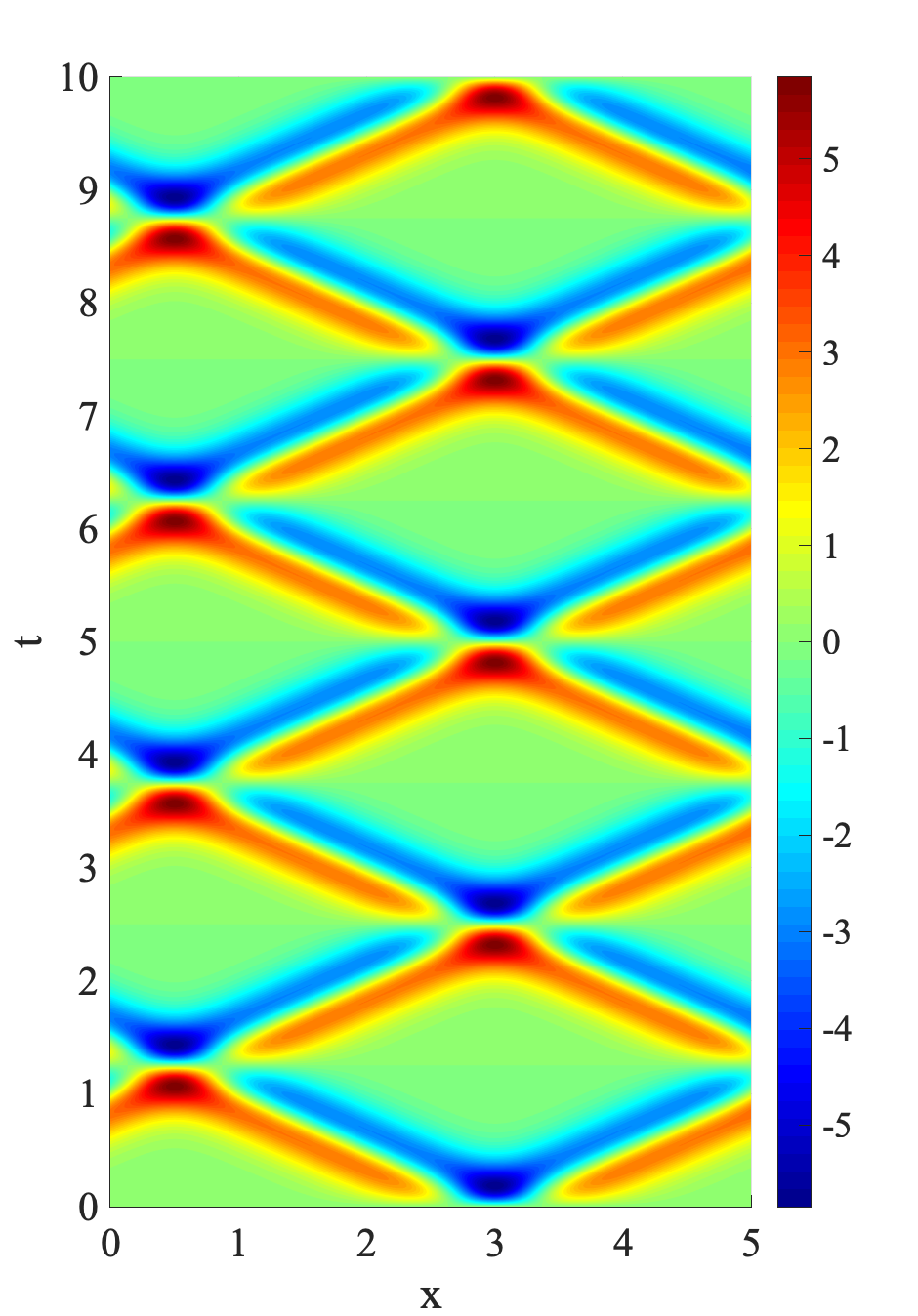}}
    \subfloat[$v_\theta$]{\includegraphics[width=0.2\linewidth]{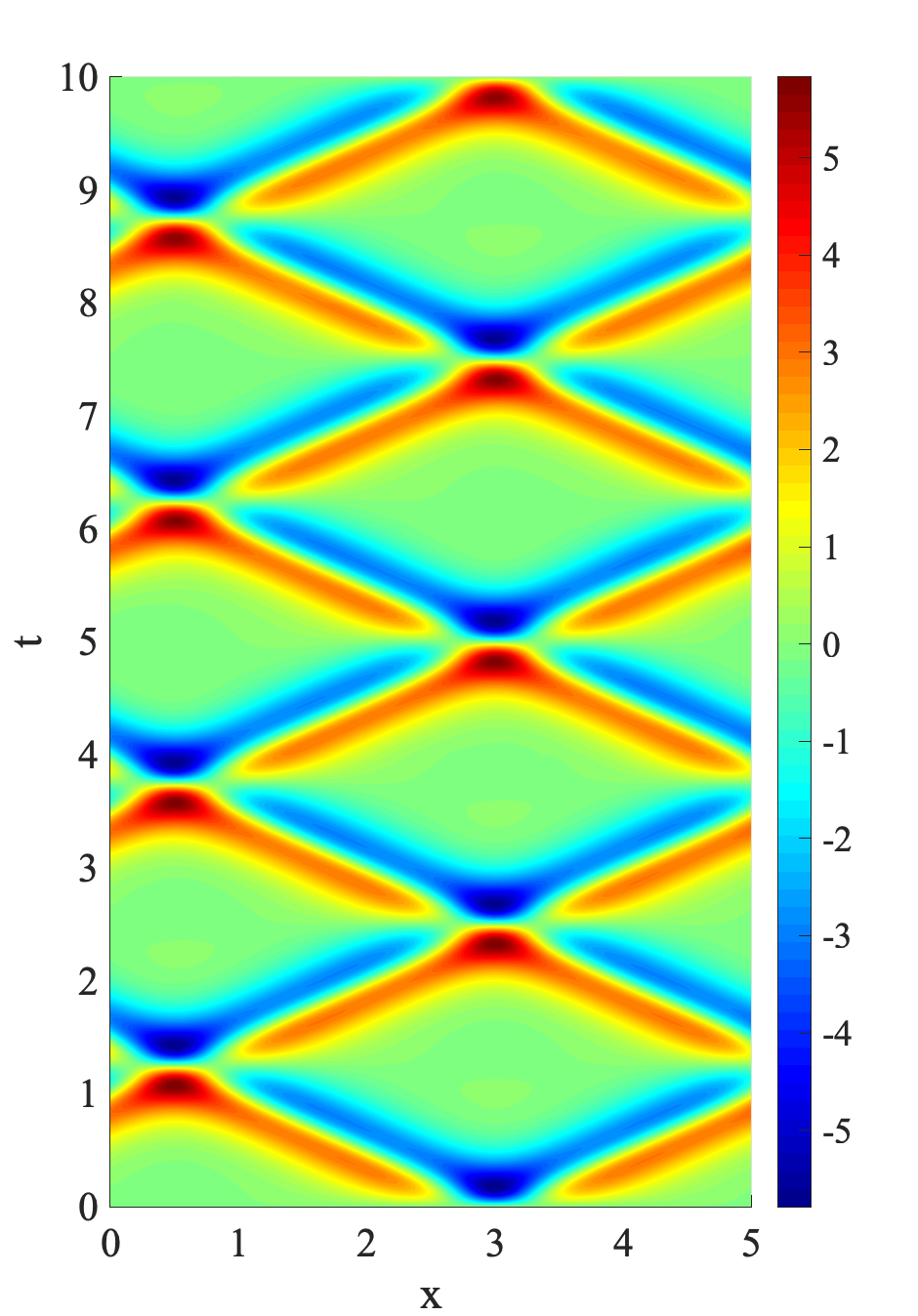}}
    \subfloat[$|v-v_\theta|$]{\includegraphics[width=0.2\linewidth]{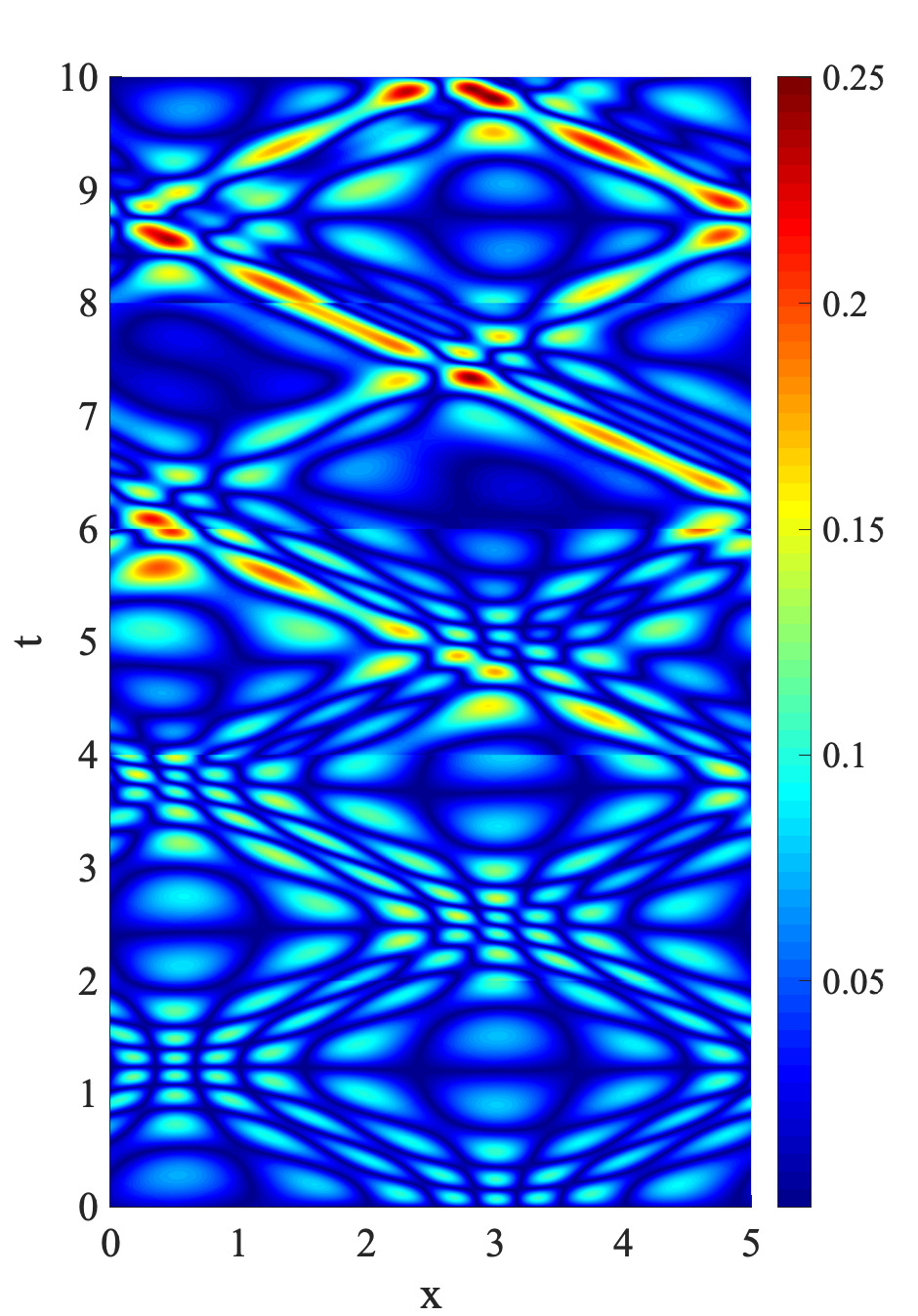}}
    \subfloat[$v^*_\theta$]{\includegraphics[width=0.2\linewidth]{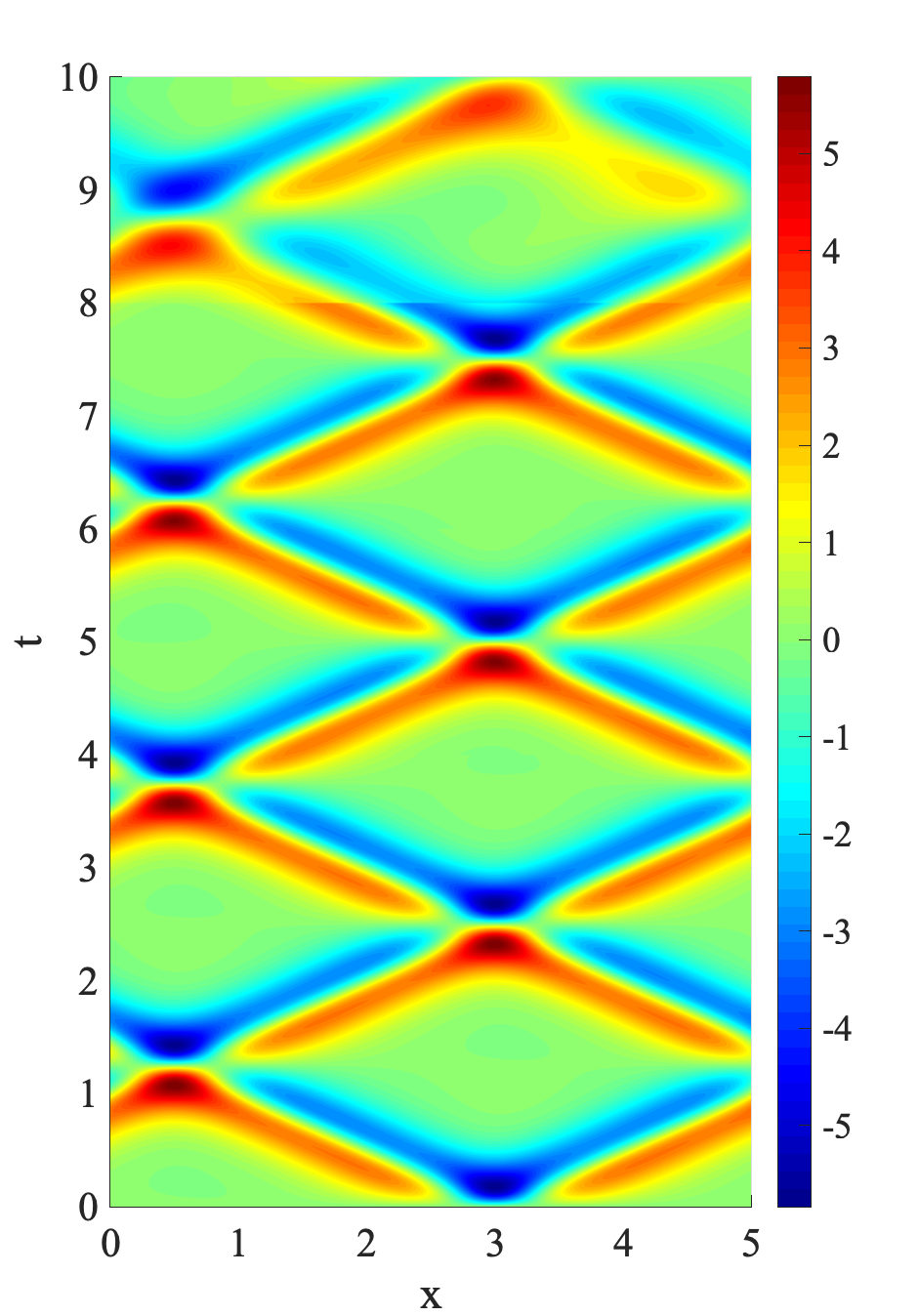}}
    \subfloat[$|v - v^*_\theta|$]{\includegraphics[width=0.2\linewidth]{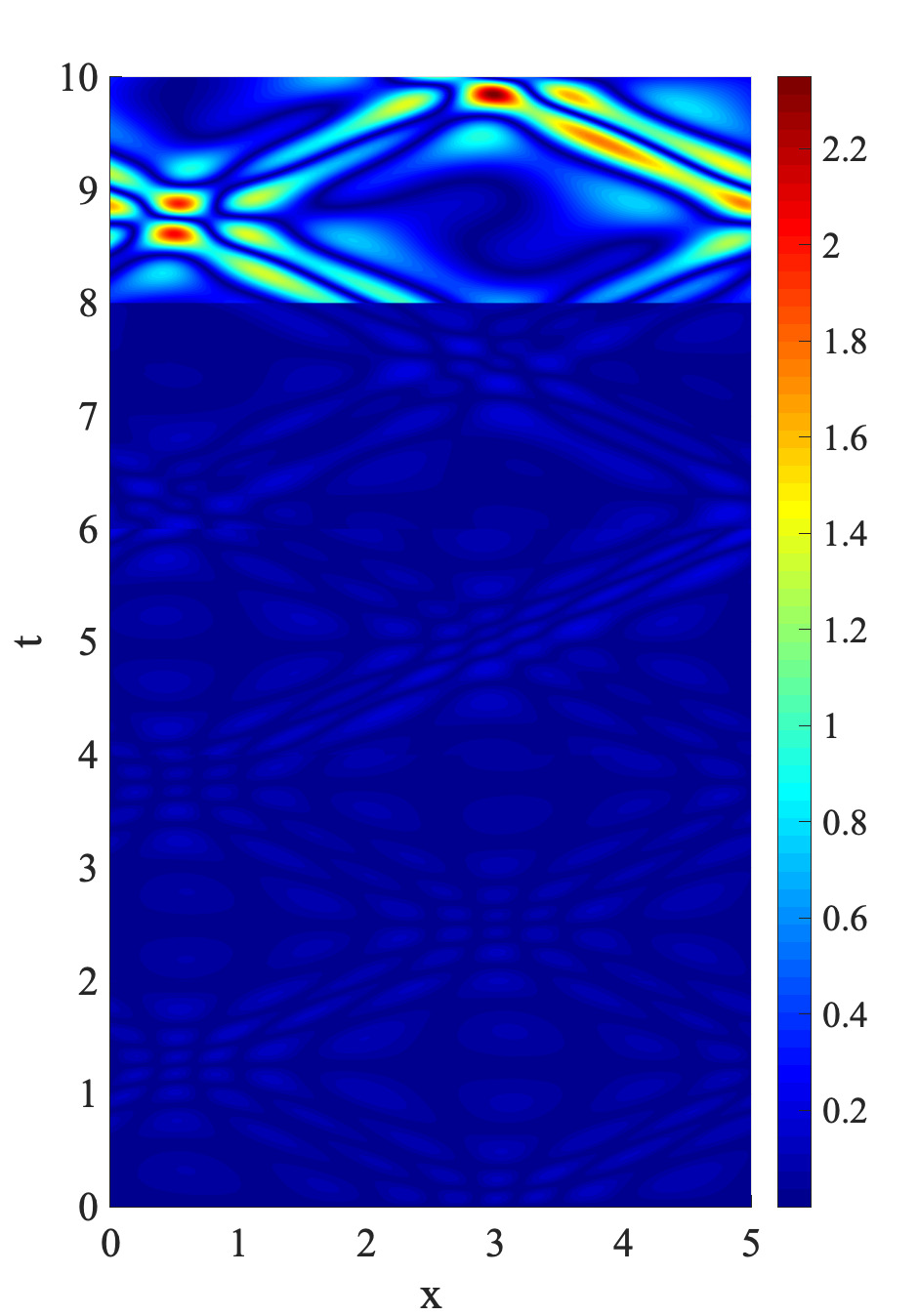}}
	\caption{Wave equation: wave speed distributions ($v=\frac{\partial u}{\partial t}$: true solution; $v_\theta$: HLConcPINN-ExBTM solution; $v_\theta^*$: HLConcPINN-BTM solution).
 Simulation parameters follow those of Figure~\ref{PINN_num_wave_fig1}.
 }
	\label{PINN_num_wave_fig2}
\end{figure}

\begin{figure}[tb]
	\centering
	\subfloat[$ t=2.5 $]{
		\begin{minipage}[b]{0.22\textwidth}
			\includegraphics[scale=0.25]{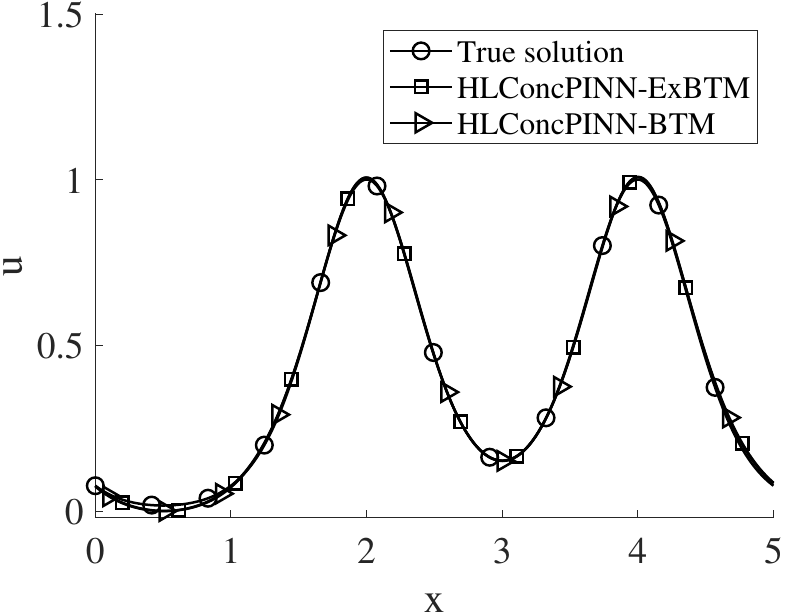}\\
			\includegraphics[scale=0.25]{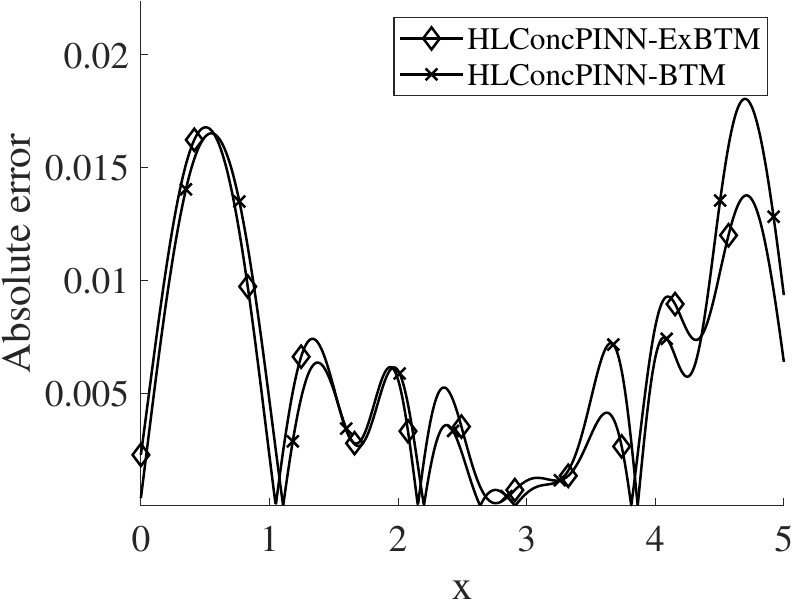}
		\end{minipage}
	}
	\subfloat[$ t=5 $]{
		\begin{minipage}[b]{0.22\textwidth}
			\includegraphics[scale=0.25]{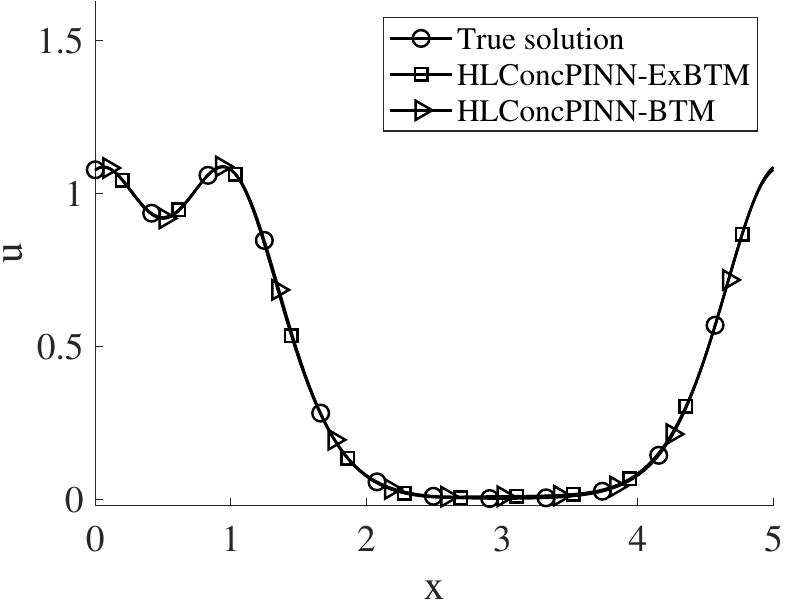}\\
			\includegraphics[scale=0.25]{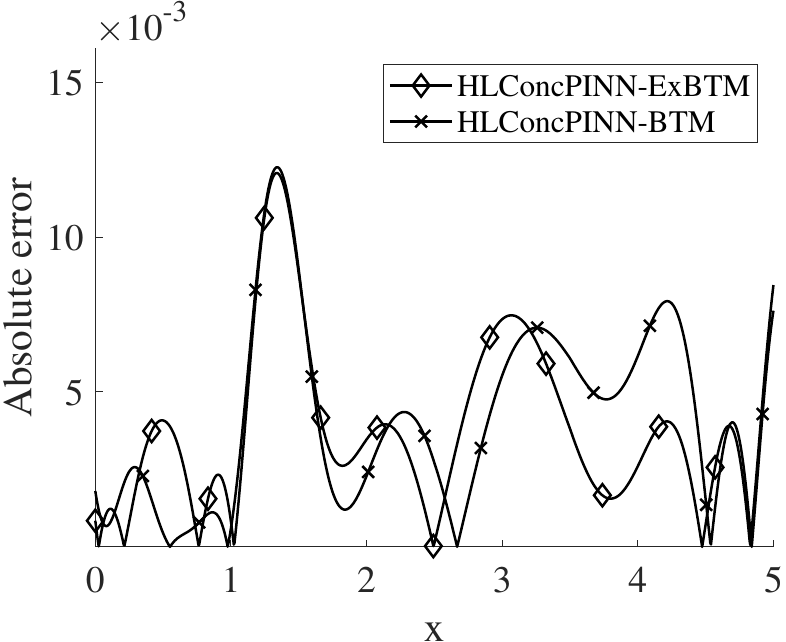}
		\end{minipage}
	}
	\subfloat[$ t=9.5 $]{
		\begin{minipage}[b]{0.22\textwidth}
			\includegraphics[scale=0.25]{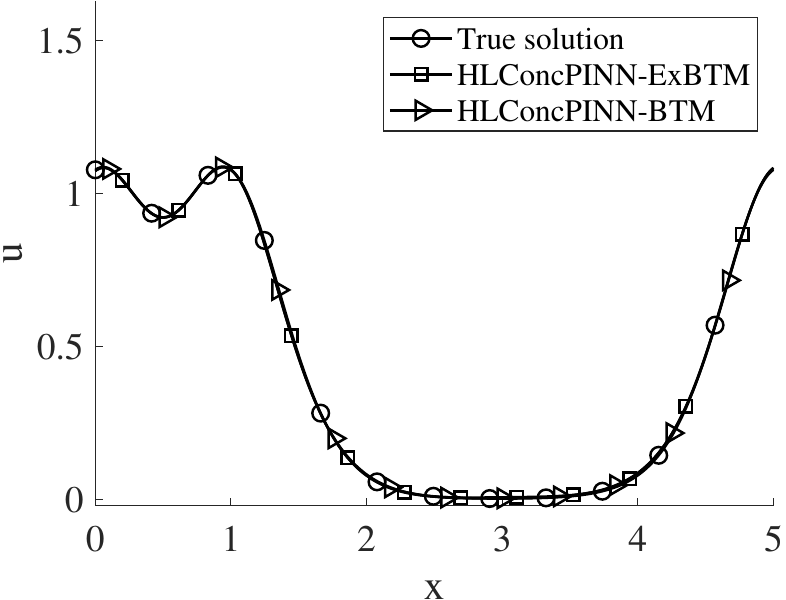}\\
			\includegraphics[scale=0.25]{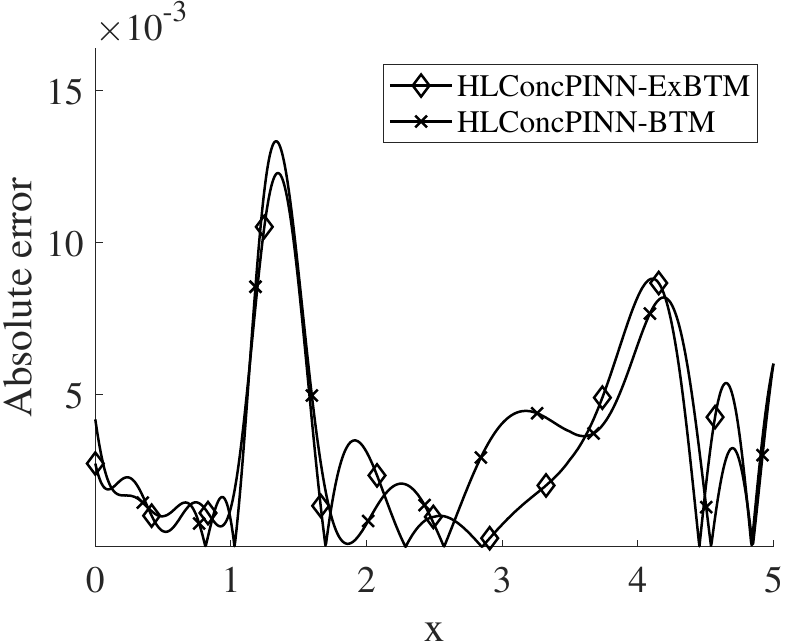}
		\end{minipage}
	}
	\caption{Wave equation: Top row, comparison of wave profiles between the true solution and the HLConcPINN-ExBTM/-BTM solutions at several time instants. Bottom row, absolute-error profiles of HLConcPINN-ExBTM/-BTM. Simulation parameters follow those of Figure~\ref{PINN_num_wave_fig1}.
 }
	\label{PINN_num_wave_fig3_1}
\end{figure}

\begin{figure}[tb]
	\centering
	\subfloat[$ t=2.5 $]{
		\begin{minipage}[b]{0.22\textwidth}
			\includegraphics[scale=0.25]{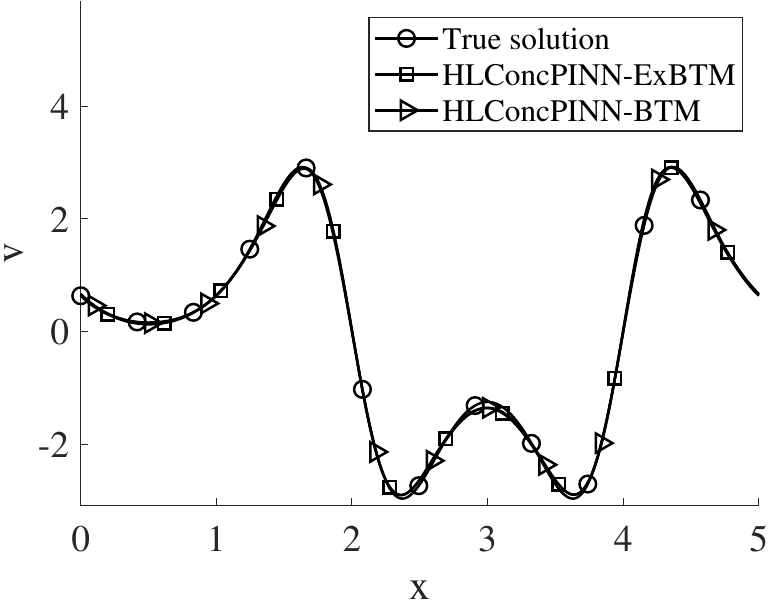}\\
			\includegraphics[scale=0.25]{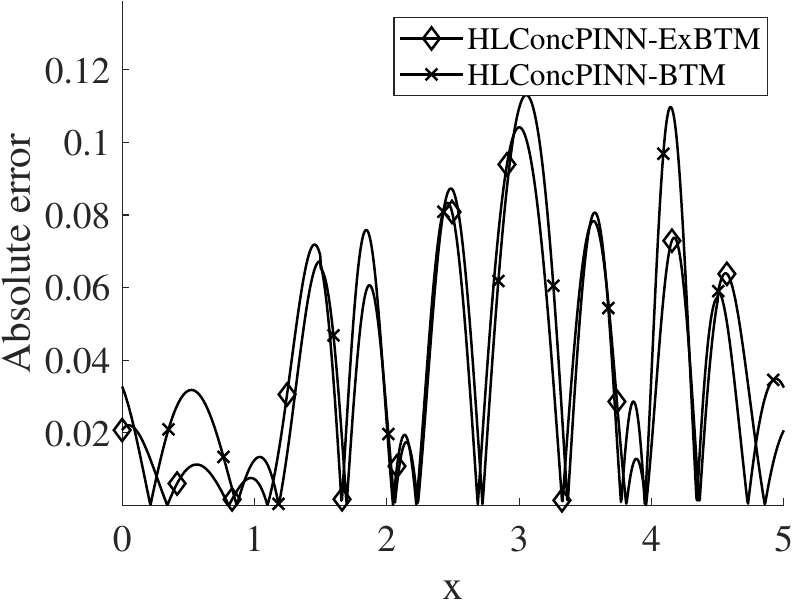}
		\end{minipage}
	}
	\subfloat[$ t=5 $]{
		\begin{minipage}[b]{0.22\textwidth}
			\includegraphics[scale=0.25]{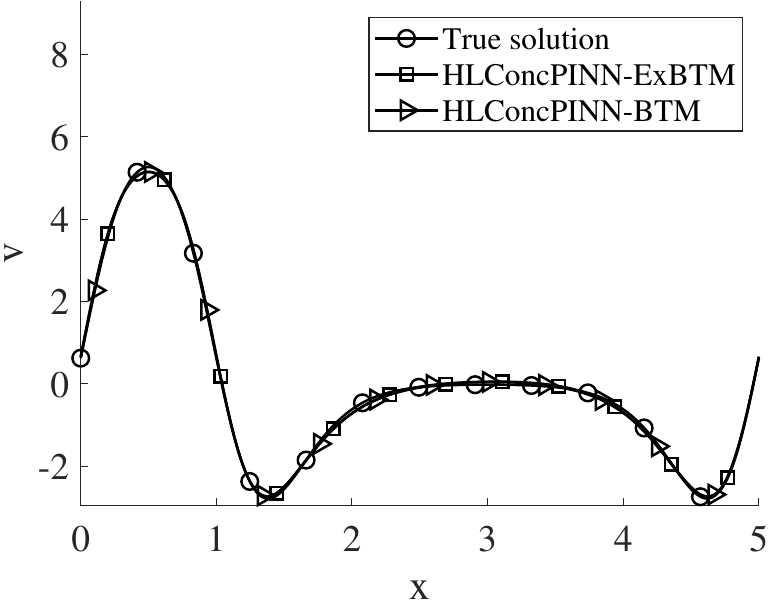}\\
			\includegraphics[scale=0.25]{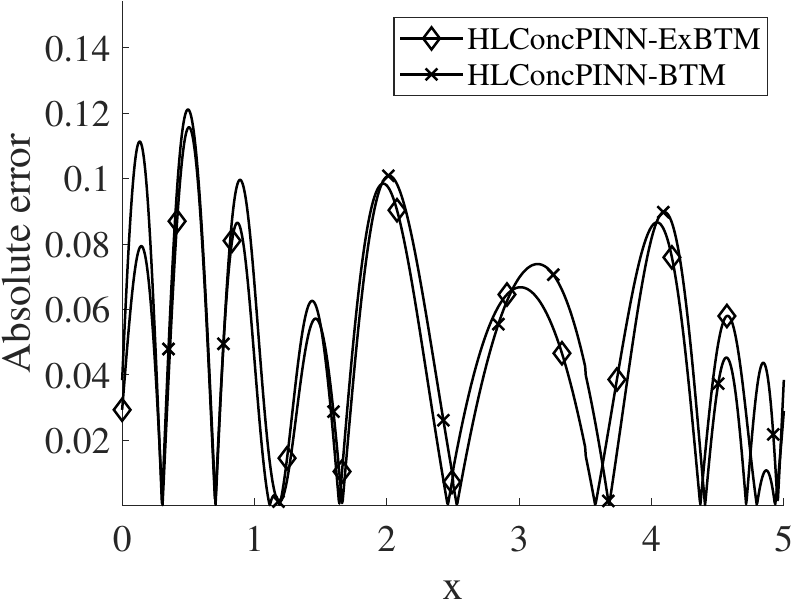}
		\end{minipage}
	}
	\subfloat[$ t=9.5 $]{
		\begin{minipage}[b]{0.22\textwidth}
			\includegraphics[scale=0.25]{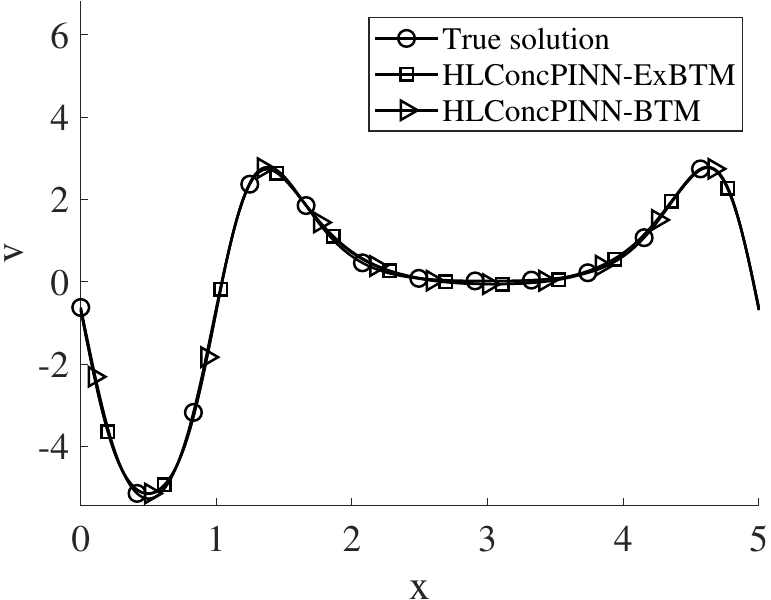}\\
			\includegraphics[scale=0.25]{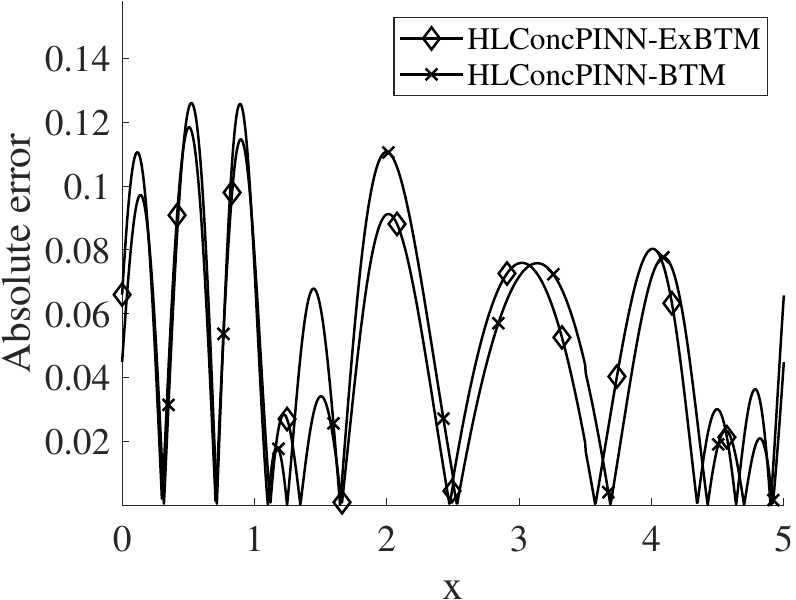}
		\end{minipage}
	}
	\caption{Wave equation: Top row, comparison of wave speed ($v$) profiles between the true solution and the HLConcPINN-ExBTM/BTM solutions at several time instants. Bottom row, profiles of the absolute error of HLConcPINN-ExBTM/-BTM for $v$. Simulation parameters follow those of Figure~\ref{PINN_num_wave_fig1}.
 }
	\label{PINN_num_wave_fig3_2}
\end{figure}

\subsection{Wave Equation}

We next simulate the wave equation in one spatial dimension (plus time) using the current method, following a configuration from~\cite{2021_JCP_Dong_modifiedbatch}. Consider the spatial-temporal domain, $ (x,t)\in D\times[0, T] = [0, 5] \times [0, 10] $, and the following initial-boundary value problem on this domain,
\begin{subequations}\label{num_wave_eq1}
	\begin{align}
		&\frac{\partial^2 u}{\partial t^2} - c^2 \frac{\partial^2 u}{\partial x^2} = 0, \\
		& u(0, t) = u(5, t), \quad
		\frac{\partial u}{\partial x} (0, t) = \frac{\partial u}{\partial x} (5,t), \quad
		u(x,0) = 2\, {\rm sech}^3 \left(\frac{3}{\delta_0}(x-x_0)\right),\quad
		\frac{\partial u}{\partial t}(x,0) = 0,
	\end{align}
\end{subequations}
where $u(x,t)$ is the wave field  to be solved for, $ c $ is the wave speed, $ x_0 $ is the initial peak location of the wave, $ \delta_0 $ is a constant that controls the width of the wave profile, and  periodic boundary conditions are imposed on $ x=0 $ and $ 5 $. We employ $ c=2 $, $ \delta_0=2 $, and $ x_0 = 3 $ for this problem. This  problem has the following solution 
\begin{equation*}\left\{
\begin{split}
	&u(x, t) = {\rm sech}^3\left(\frac{3}{\delta_0}\left(-2.5 +\xi\right)\right) + {\rm sech}^3\left(\frac{3}{\delta_0}\left(-2.5 +\eta\right)\right),\\
	& \xi = {\rm mod}\left(x-x_0 + ct + 2.5, 5\right),\quad
	\eta = {\rm mod}\left(x-x_0 - ct + 2.5, 5\right),
 \end{split}
 \right.
\end{equation*}
where mod refers to the modulo operation. 
In the simulations we introduce the auxiliary field $v(x,t)$ and rewrite \eqref{num_wave_eq1} into
\begin{subequations}\label{num_wave_eq1_1}
\begin{align}
&
\frac{\partial u}{\partial t} - v=0,\qquad
\frac{\partial v}{\partial t} -c^2\frac{\partial^2 u}{\partial x^2}=0, \label{num_wave_eq1_1a}
\\
& u(0,t) = u(5,t), \quad
\frac{\partial u}{\partial x}(0,t) = \frac{\partial u}{\partial x}(5,t), \quad
u(x,0) = 2\, {\rm sech}^3 \left(\frac{3}{\delta_0}(x-x_0)\right),\quad
		v(x, 0) = 0,
\end{align}
\end{subequations}
where $v(x,t)$ is defined by the first equation in \eqref{num_wave_eq1_1a}.

To simulate the system \eqref{num_wave_eq1_1}, the training error in \eqref{wave_T}$-$\eqref{wave_Ti} leads to the following loss function with the HLConcPINN-ExBTM method for the $i$-th time block ($1\leq i\leq l$), 
\begin{align}\label{num_wave_eq2}
    Loss_{i}^I = & \frac{W_1}{N_c} \sum_{n=1}^{N_c}\left[\frac{\partial u_{\theta_i}}{\partial t}(x_{int}^n, t_{int}^n) - v_{\theta_i}(x_{int}^n, t_{int}^n)\right]^2 + \frac{W_2}{N_c} \sum_{n=1}^{N_c} \left[\frac{\partial v_{\theta_i}}{\partial t}(x_{int}^n, t_{int}^n) - 4\frac{\partial^2 u_{\theta_i}}{\partial x^2}(x_{int}^n, t_{int}^n)\right]^2 \nonumber \\
	&+ \frac{W_3}{N_c} \sum_{n=1}^{N_c}\left[\frac{\partial^2u_{\theta_i}}{\partial t\partial x}(x_{int}^n, t_{int}^n) - \frac{\partial v_{\theta_i}}{\partial x}(x_{int}^n, t_{int}^n)\right]^2 + \frac{W_4}{N_c} \sum_{j=1}^{i} \sum_{n=1}^{N_c}\left[ u_{\theta_i}(x_{tb}^n, t_{j-1}) - u_{\theta_{j-1}}(x_{tb}^n, t_{j-1})\right]^2 \nonumber \\
	&+ \frac{W_5}{N_c} \sum_{j=1}^{i} \sum_{n=1}^{N_c} \left[v_{\theta_i}(x_{tb}^n, t_{j-1})-v_{\theta_{j-1}}(x_{tb}^n, t_{j-1}) \right]^2 +  \frac{W_6}{N_c} \sum_{j=1}^{i}\sum_{n=1}^{N_c}\left[\frac{\partial u_{\theta_i}}{\partial x}(x_{tb}^n, t_{j-1}) - \frac{\partial u_{\theta_{j-1}}}{\partial x}(x_{tb}^n, t_{j-1}) \right]^2 \nonumber \\ 
	& + W_7 \Big( \frac{1}{N_c} \sum_{n=1}^{N_c} \left[v_{\theta}(0, t_{sb}^n) - v_{\theta}(5, t_{sb}^n)\right]^2 \Big)^{1/2}
 + W_8 \Big( \frac{1}{N_c} \sum_{n=1}^{N_c} \left[\frac{\partial u_{\theta_i}}{\partial x}(0, t_{sb}^n) - \frac{\partial u_{\theta_i}}{\partial x}(5, t_{sb}^n)\right]^2\Big)^{1/2} \nonumber\\
 &+ Loss_{i-1}^I,
\end{align}
where $Loss_{0}^I=0$, and $W_k>0$ ($1\leq k \leq 8$) are the penalty coefficients added for different loss terms. 
The loss function with the HLConcPINN-BTM method is,
\begin{align}\label{num_wave_eq2_1}
    Loss_{i}^{II} = & \frac{W_1}{N_c} \sum_{n=1}^{N_c}\left[\frac{\partial u_{\theta_i}}{\partial t}(x_{int}^n, t_{int}^n) - v_{\theta_i}(x_{int}^n, t_{int}^n)\right]^2 \notag 
 + \frac{W_2}{N_c} \sum_{n=1}^{N_c} \left[\frac{\partial v_{\theta_i}}{\partial t}(x_{int}^n, t_{int}^n) - 4\frac{\partial^2 u_{\theta_i}}{\partial x^2}(x_{int}^n, t_{int}^n)\right]^2 \nonumber \\
	&+ \frac{W_3}{N_c} \sum_{n=1}^{N_c}\left[\frac{\partial^2u_{\theta_i}}{\partial t\partial x}(x_{int}^n, t_{int}^n) - \frac{\partial v_{\theta_i}}{\partial x}(x_{int}^n, t_{int}^n)\right]^2 \notag 
 + \frac{W_4}{N_c} \sum_{n=1}^{N_c}\left[ u_{\theta_i}(x_{tb}^n, t_{i-1}) - u_{\theta_{i-1}}(x_{tb}^n, t_{i-1})\right]^2 \nonumber \\
	&+ \frac{W_5}{N_c}\sum_{n=1}^{N_c} \left[v_{\theta_i}(x_{tb}^n, t_{i-1})-v_{\theta_{i-1}}(x_{tb}^n, t_{i-1}) \right]^2 +  \frac{W_6}{N_c} \sum_{n=1}^{N_c}\left[\frac{\partial u_{\theta_i}}{\partial x}(x_{tb}^n, t_{i-1}) - \frac{\partial u_{\theta_{i-1}}}{\partial x}(x_{tb}^n, t_{i-1}) \right]^2 \nonumber \\ 
	& + W_7 \Big( \frac{1}{N_c} \sum_{n=1}^{N_c} \left[v_{\theta}(0, t_{sb}^n) - v_{\theta}(5, t_{sb}^n)\right]^2 \Big)^{1/2}
 + W_8 \Big( \frac{1}{N_c} \sum_{n=1}^{N_c} \left[\frac{\partial u_{\theta_i}}{\partial x}(0, t_{sb}^n) - \frac{\partial u_{\theta_i}}{\partial x}(5, t_{sb}^n)\right]^2\Big)^{1/2}.
\end{align}

In the simulations, we employ  neural network architectures with two output nodes, representing the wave field $u$ and the wave speed $v=\frac{\partial u}{\partial t}$, respectively. The penalty coefficients in the loss functions are taken to be $(W_1,...,W_8)=(0.9, 0.9, 0.9, 0.1, 0.1, 0.1, 0.1, 0.1)$. We employ $5$ uniform time blocks in block time marching. The neural network parameters (network depth/width, and activation functions) and the training collocation points are varied in the tests. The adopted neural network structures are listed in Table~\ref{num_table0}.

An overview of the HLConcPINN-ExBTM and HLConcPINN-BTM solutions to the wave equation and their accuracy is provided in Figures~\ref{PINN_num_wave_fig1} to~\ref{PINN_num_wave_fig4_1}.
Figures~\ref{PINN_num_wave_fig1} and \ref{PINN_num_wave_fig2} show  distributions of the wave field $u$ and  the wave speed $v$, corresponding to the true solution,  the HLConcPINN-ExBTM and HLConcPINN-BTM solutions, as well as their point-wise absolute errors, in the spatial-temporal domain. The neural network architecture is specified in the caption of Figure~\ref{PINN_num_wave_fig1}, consisting of three hidden layers, with the $\tanh$ activation function for the first two hidden layers and the sine function for the last hidden layer. $N_c=2500$ has been employed for the training collocation points.
The HLConcPINN-ExBTM method is observed to produce more accurate results than HLConcPINN-BTM, especially toward later time instants. The errors of both methods are observed to grow over time. In particular, the accuracy of HLConcPINN-BTM in the last time block becomes quite poor, with pronounced deviations from the true solution in the wave speed distribution.

\begin{figure}[!ht]
	\centering
	\subfloat[PINN-ExBTM]{\includegraphics[width=0.35\linewidth]{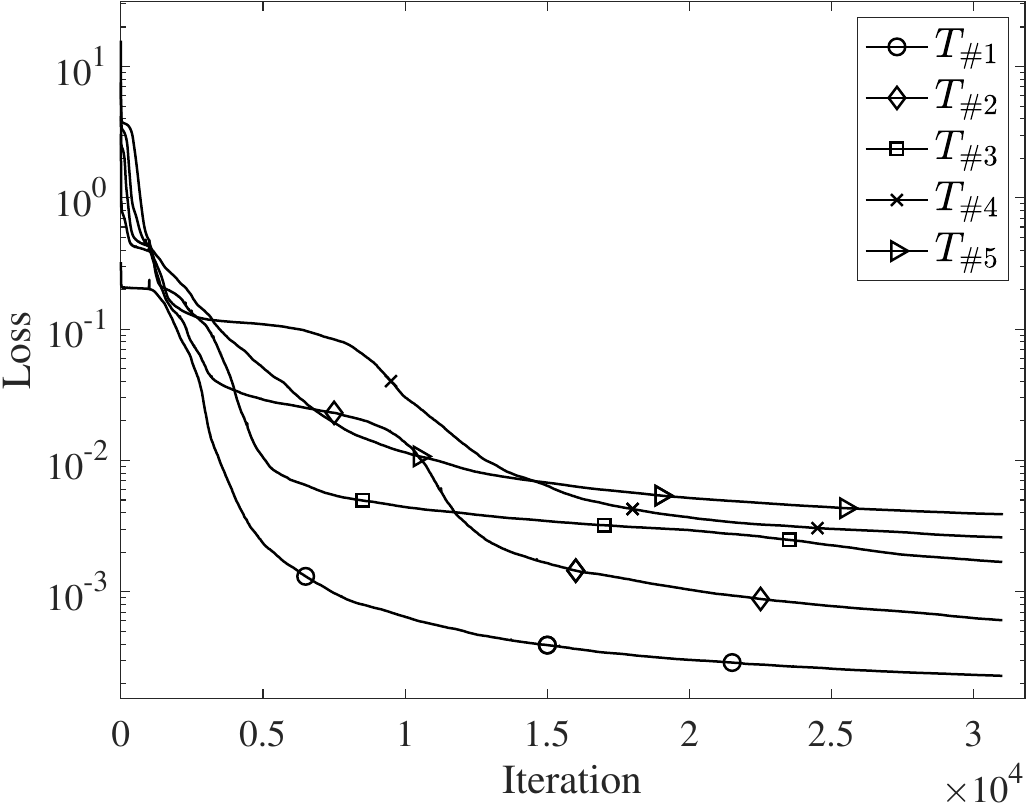}}\qquad
	\subfloat[PINN-BTM]{\includegraphics[width=0.35\linewidth]{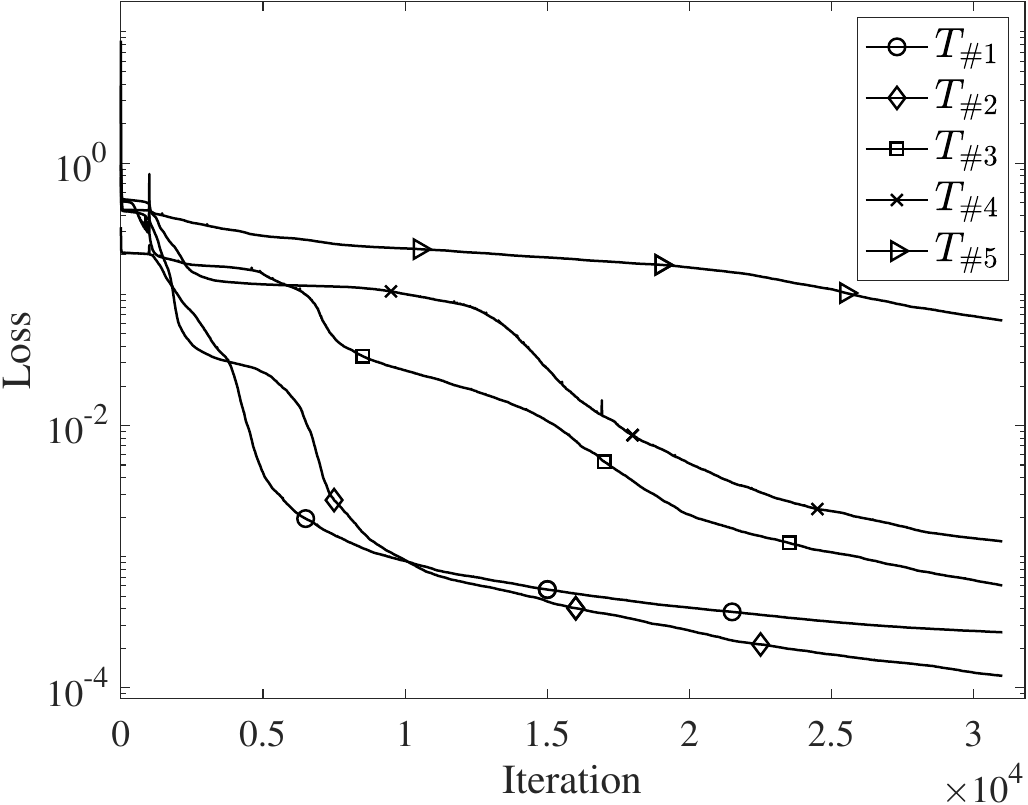}}\hspace{0.1em}
	\caption{Wave equation: Training loss histories in different time blocks of (a) HLConcPINN-ExBTM and (b) HLConcPINN-BTM. Simulation parameters follow those of Figure~\ref{PINN_num_wave_fig1}.
 }
	\label{PINN_num_wave_fig4_1}
\end{figure}

Figures \ref{PINN_num_wave_fig3_1} and \ref{PINN_num_wave_fig3_2} illustrate the solution profiles of the wave field $u$ and the wave speed $v$ obtained using HLConcPINN-ExBTM and HLConcPINN-BTM at three time instants ($t=2.5, 5, 9.5$), accompanied by their corresponding absolute errors. The simulation parameters here follow those of Figure~\ref{PINN_num_wave_fig1}. The  error of HLConcPINN-ExBTM  is generally observed to be  smaller than that of HLConcPINN-BTM.
The training loss histories with this group of tests for HLConcPINN-ExBTM and HLConcPINN-BTM are shown in Figure~\ref{PINN_num_wave_fig4_1}. It can be generally observed that the training process results in higher loss values in later time blocks, implying a growth in the errors over time consistent with what is observed in Figures~\ref{PINN_num_wave_fig1} and~\ref{PINN_num_wave_fig2}.


\begin{table}[tb]\small
	\centering\small
\begin{tabular}{c|@{}c@{}|cc|cc|cc|cc}
\hline
\multirow{2}{*}{Error}    & \multirow{2}{*}{Time block} & \multicolumn{2}{c|}{$N_c=1500$} & \multicolumn{2}{c|}{$N_c=2000$} & \multicolumn{2}{c|}{$N_c=2500$} & \multicolumn{2}{c}{$N_c=3000$} \\ \cline{3-10}
                      &  & ExBTM  & BTM  & ExBTM  & BTM   & ExBTM  & BTM  & ExBTM  & BTM   \\ \hline
\multirow{5}{*}{$l^2$} & $T_{\#1}$  & 9.86e-03 & 1.05e-02 & 9.76e-03 & 1.01e-02 & 1.09e-02 & 1.19e-02 & 9.87e-03 & 9.31e-03 \\ \cline{3-10} 
                      & $T_{\#2}$  & 1.21e-02 & 1.30e-02 & 1.14e-02 & 1.15e-02 & 1.17e-02 & 9.53e-03 & 1.01e-02 & 9.49e-03 \\ \cline{3-10}
                      & $T_{\#3}$  & 1.21e-02 & 3.73e-02 & 1.39e-02 & 1.27e-02 & 1.71e-02 & 1.52e-02 & 1.38e-02 & 1.39e-02 \\ \cline{3-10} 
                      & $T_{\#4}$  & 1.75e-02 & 2.85e-01 & 5.91e-02 & 2.44e-02 & 1.74e-02 & 2.17e-02 & 5.26e-02 & 3.23e-01 \\ \cline{3-10} 
                      & $T_{\#5}$  & 3.34e-02 & 3.85e-01 & 1.25e-01 & 2.15e-01 & 1.88e-02 & 1.76e-01 & 5.70e-02 & 7.36e-01 \\ \hline
\multirow{5}{*}{$l^\infty$} & $T_{\#1}$  & 3.13e-02 & 2.73e-02 & 2.86e-02 & 2.76e-02 & 2.85e-02 & 3.13e-02 & 2.75e-02 & 2.52e-02 \\ \cline{3-10} 
                      & $T_{\#2}$  & 3.13e-02 & 3.57e-02 & 3.22e-02 & 3.43e-02 & 3.46e-02 & 2.64e-02 & 2.85e-02 & 2.72e-02 \\ \cline{3-10}
                      & $T_{\#3}$  & 3.43e-02 & 1.00e-01 & 6.94e-02 & 3.98e-02 & 5.47e-02 & 4.59e-02 & 4.28e-02 & 4.90e-02 \\ \cline{3-10} 
                      & $T_{\#4}$  & 5.31e-02 & 8.33e-01 & 1.40e-01 & 7.40e-02 & 5.85e-02 & 6.56e-02 & 1.44e-01 & 1.13e+00 \\ \cline{3-10} 
                      & $T_{\#5}$  & 8.55e-02 & 1.14e+00 & 2.28e-01 & 6.42e-01 & 6.09e-02 & 5.04e-01 & 1.65e-01 & 2.35e+00 \\ \hline
\end{tabular}
\caption{Wave equation: $l^2$ and $l^\infty$ errors of wave field $u$ in different time blocks obtained with HLConcPINN-ExBTM and HLConcPINN-BTM for a range of training data points $N_c$. NN: [2,90,90,10,2], $\tanh$ activation in first two hidden layers and sine activation in the last hidden layer.
}
	\label{tab_wave_err_1}
\end{table}

\begin{table}[htb]\small
	\centering\small
\begin{tabular}{c|@{}c@{}|cc|cc|cc|cc}
\hline
\multirow{2}{*}{Error}    & \multirow{2}{*}{Time block} & \multicolumn{2}{c|}{tanh} & \multicolumn{2}{c|}{Gaussian} & \multicolumn{2}{c|}{swish} & \multicolumn{2}{c}{softplus} \\ \cline{3-10}
                      &  & ExBTM  & BTM  & ExBTM  & BTM   & ExBTM  & BTM  & ExBTM  & BTM   \\ \hline
\multirow{5}{*}{$l^2$} & $T_{\#1}$  & 1.94e-02 & 2.08e-02 & 1.34e-02 & 8.44e-03 & 2.03e-02 & 1.88e-02 & 1.22e-02 & 1.51e-02 \\ \cline{3-10} 
                      & $T_{\#2}$  & 2.24e-02 & 2.12e-02 & 6.07e-02 & 3.18e-02 & 2.65e-02 & 3.20e-02 & 1.66e-02 & 2.21e-02 \\ \cline{3-10}
                      & $T_{\#3}$  & 1.22e+00 & 7.27e-01 & 9.41e-01 & 4.80e-02 & 5.47e-02 & 1.65e+00 & 3.64e-02 & 3.73e-02 \\ \cline{3-10} 
                      & $T_{\#4}$  & 2.84e+00 & 1.52e+00 & 2.21e+00 & 4.06e-01 & 8.17e-02 & 4.18e+00 & 9.31e-02 & 3.95e-01 \\ \cline{3-10} 
                      & $T_{\#5}$  & 4.91e+00 & 2.34e+00 & 3.74e+00 & 5.42e-01 & 1.42e-01 & 7.25e+00 & 2.82e-01 & 5.83e-01 \\ \hline
\multirow{5}{*}{$l^\infty$} & $T_{\#1}$  & 5.46e-02 & 5.68e-02 & 3.74e-02 & 2.40e-02 & 5.57e-02 & 5.12e-02 & 3.54e-02 & 4.17e-02 \\ \cline{3-10} 
                      & $T_{\#2}$  & 6.13e-02 & 6.77e-02 & 2.38e-01 & 8.23e-02 & 7.70e-02 & 8.74e-02 & 4.92e-02 & 6.82e-02 \\ \cline{3-10}
                      & $T_{\#3}$  & 2.66e+00 & 1.83e+00 & 2.34e+00 & 1.35e-01 & 1.27e-01 & 3.91e+00 & 1.09e-01 & 1.06e-01 \\ \cline{3-10} 
                      & $T_{\#4}$  & 4.65e+00 & 2.63e+00 & 3.23e+00 & 1.41e+00 & 2.35e-01 & 6.33e+00 & 2.44e-01 & 1.43e+00 \\ \cline{3-10} 
                      & $T_{\#5}$  & 7.40e+00 & 3.70e+00 & 5.16e+00 & 1.80e+00 & 4.23e-01 & 1.04e+01 & 7.86e-01 & 1.98e+00 \\ \hline
\end{tabular}
\caption{Wave equation: $l^2$ and $l^\infty$ errors of the wave field $u$ in different time blocks obtained using HLConcPINN-ExBTM and HLConcPINN-BTM with different activation functions in the last hidden layer. NN: [2,90,90,10,2], with $\tanh$ activation in the first two hidden layers. The activation function in the last hidden layer is varied, as listed in the first row of the table. $N_c=2500$ for the training collocation points.
}
	\label{tab_wave_err_3}
\end{table}

Table~\ref{tab_wave_err_1} shows a study of the effect of the training data points on the simulation accuracy of the HLConcPINN-ExBTM and HLConcPINN-BTM methods. Here we list the $l^2$ and $l^{\infty}$ errors of HLConcPINN-ExBTM and HLConcPINN-BTM in different time blocks obtained with training collocation points ranging from $N_c=1500$ to $N_c=3000$ in the simulations. The neural network architecture is given by $[2,90,90,10,2]$, with the $\tanh$ activation function for the first two hidden layers and sine function for the last hidden layer. The data suggest  little sensitivity with respect to number of  training data points in the range tested here.

Table~\ref{tab_wave_err_3} illustrates a test of the effect of different activation functions on the simulation results. The network architecture is characterized by $[2,90,90,10,2]$, with $\tanh$ as the activation function for the first two hidden layers, while the activation function for the last hidden layer is varied among $\tanh$, Gaussian, swish, and softplus functions. The training collocation points are set to $N_c=2500$. The table provides the $l^2$ and $l^{\infty}$ errors of the wave field in different time blocks computed using HLConcPINN-ExBTM and HLConcPINN-BTM corresponding to different activation functions for the last hidden layer. These data can be compared with that of Table~\ref{tab_wave_err_1} corresponding to $N_c=2500$, where the sine activation function has been used. Overall the sine function appears to yield the best results among the activation functions tested here.

\begin{figure}[!ht]
	\centering
	\subfloat[$u$]
 { 
 \includegraphics[width=0.4\linewidth]{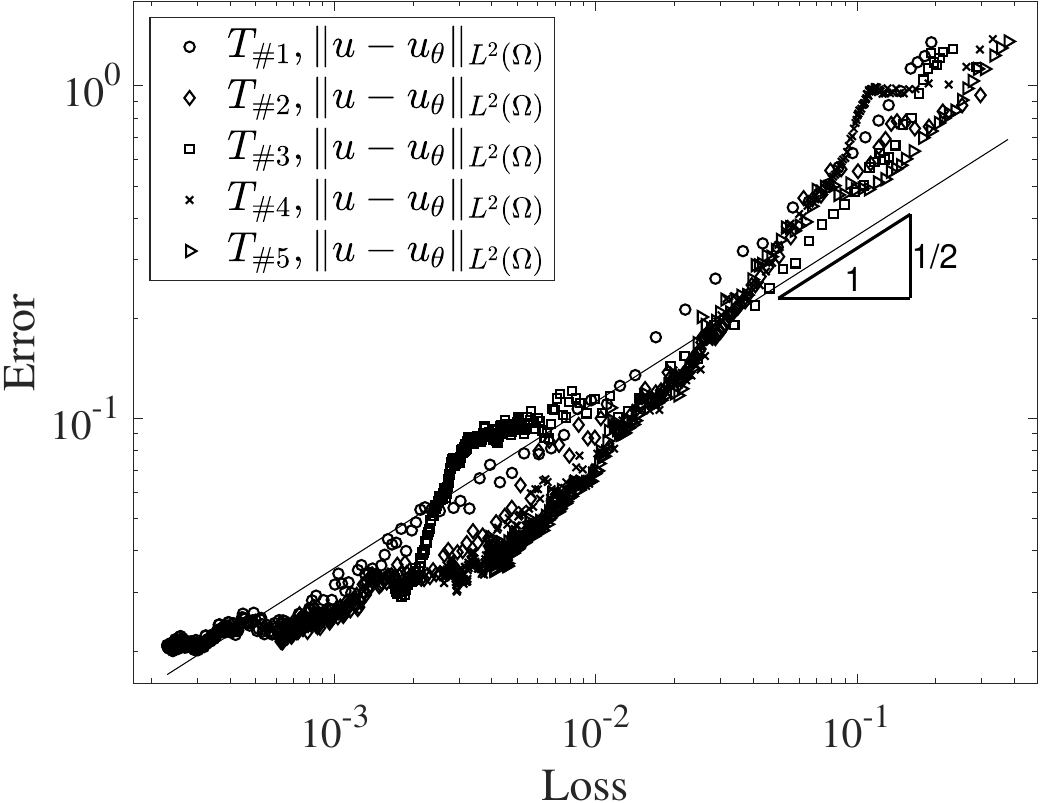}}\qquad
    \subfloat[$v$]
    { 
    \includegraphics[width=0.4\linewidth]{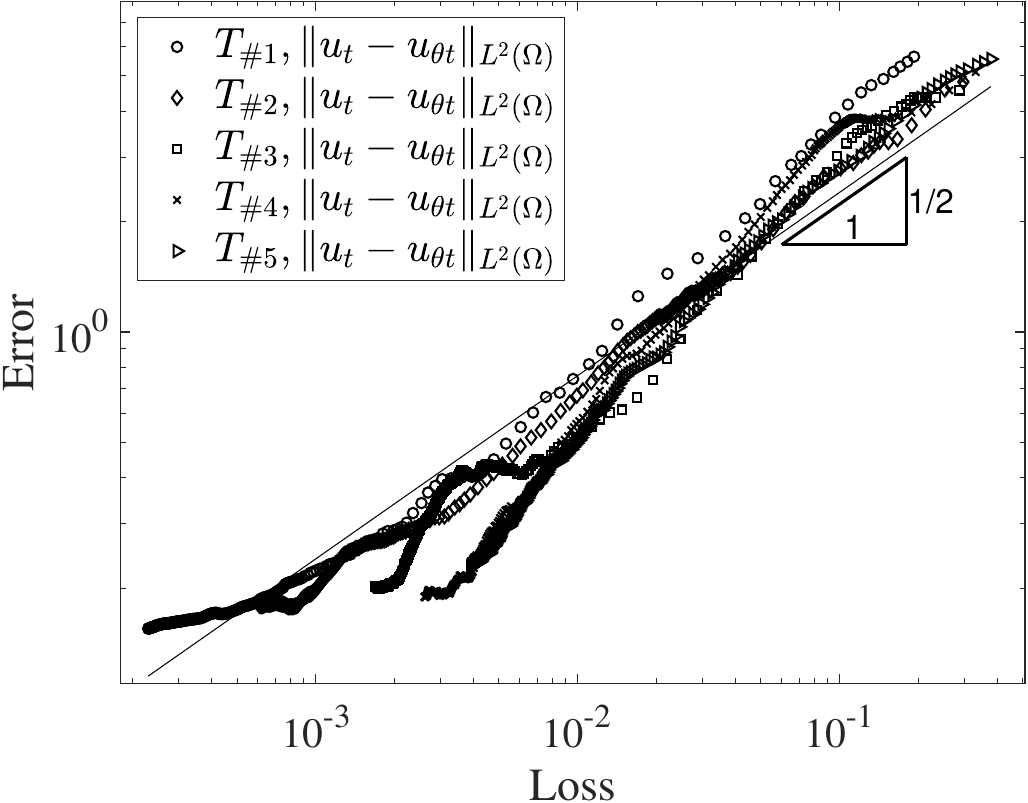}}
    \hspace{0.5em}
	\caption{Wave equation: $l^2$ errors of (a) the wave field $u$, and (b) the wave speed $v=\partial u/\partial t$,  as a function of the training loss for HLConcPINN-ExBTM. NN: [2,90,90,10,2], with $\tanh$ activation for the first two hidden layers and sine activation for the last hidden layer; $N_c=2500$ for the training data points.
 }
	\label{PINN_num_wave_fig5_1}
\end{figure}

Figure \ref{PINN_num_wave_fig5_1} illustrates the relation (in logarithmic scale) between the $l^2$ errors of the wave field $u$ and the wave speed $v=\frac{\partial u}{\partial t}$ as a function of the training loss value for the HLConcPINN-ExBTM method. The neural network architecture, the activation functions, and the training data points are provided in the figure caption. The simulation data approximately exhibit a scaling power of $1/2$, roughly consistent with the conclusion of Theorem~\ref{sec5_Theorem3}.



\comment{
\begin{figure}[tb]
	\centering
	\subfloat[PINN-ExBTM]
 { 
 \includegraphics[width=0.5\linewidth]{./figures/Wave_BTM_2500_90,90,10_sin_TBlock5_errorRatio_Ex_u}}
	\subfloat[PINN-BTM]
 { 
    \includegraphics[width=0.5\linewidth]{./figures/Wave_BTM_2500_90,90,10_sin_TBlock5_errorRatio_u}}\\
    \subfloat[PINN-ExBTM]
    { 
    \includegraphics[width=0.5\linewidth]{./figures/Wave_BTM_2500_90,90,10_sin_TBlock5_errorRatio_Ex_ut}}
	\subfloat[PINN-BTM]
 { 
    \includegraphics[width=0.5\linewidth]{./figures/Wave_BTM_2500_90,90,10_sin_TBlock5_errorRatio_ut}}\hspace{0.5em}
    \hspace{0.5em}
	\caption{Wave equation: The $l^2$ errors of $u$ as a function of the training loss value. [The three hidden layers with $[90, 90, 10]$ neurons. $\tanh-\tanh-\sin$, $N_c=2500$ for training, $N_{ev}=1000$ for prediction.] [(a)$-$(b) ExPINN and PINN for $u$, (c)$-$(d) ExPINN and PINN for $\frac{\partial u}{\partial t}$]}
	\label{PINN_num_wave_fig5_1}
\end{figure}
} 


\comment{
\begin{table}[tb]\small
\caption{Wave equation: For $\frac{\partial u}{\partial t}$, the $l_2$ and $l_\infty$ errors versus the number of training data points $N_c$ and the hidden layers' size. [$\tanh-\tanh-\sin$, layers: [90, 90, 10] for training, $N_{ev}=1000$ for prediction]}
	\label{tab_wave_err_2}
	\centering\small
\begin{tabular}{c|@{}c@{}|cc|cc|cc|cc}
\hline
\multirow{2}{*}{Error}    & \multirow{2}{*}{Time block} & \multicolumn{2}{c|}{$N_c=1500$} & \multicolumn{2}{c|}{$N_c=2000$} & \multicolumn{2}{c|}{$N_c=2500$} & \multicolumn{2}{c}{$N_c=3000$} \\ \cline{3-10}
                      &  & ExBTM  & BTM  & ExBTM  & BTM   & ExBTM  & BTM  & ExBTM  & BTM   \\ \hline
\multirow{5}{*}{$l_2$} & $T_{\#1}$  & 2.89e-02 & 2.33e-02 & 2.64e-02 & 2.34e-02 & 2.70e-02 & 2.85e-02 & 2.52e-02 & 2.39e-02 \\ \cline{3-10} 
                      & $T_{\#2}$  & 2.93e-02 & 3.43e-02 & 3.26e-02 & 2.80e-02 & 3.18e-02 & 2.63e-02 & 2.71e-02 & 2.45e-02 \\ \cline{3-10}
                      & $T_{\#3}$  & 2.85e-02 & 4.92e-02 & 2.69e-02 & 2.47e-02 & 3.43e-02 & 3.41e-02 & 3.43e-02 & 3.08e-02 \\ \cline{3-10} 
                      & $T_{\#4}$  & 3.12e-02 & 5.25e-01 & 4.98e-02 & 3.68e-02 & 3.35e-02 & 3.65e-02 & 1.22e-01 & 5.67e-01 \\ \cline{3-10} 
                      & $T_{\#5}$  & 3.02e-02 & 6.57e-01 & 4.07e-02 & 4.34e-01 & 3.96e-02 & 3.40e-01 & 1.33e-01 & 7.62e-01 \\ \hline
\multirow{5}{*}{$l_\infty$} & $T_{\#1}$  & 8.98e-02 & 6.60e-02 & 7.34e-02 & 6.57e-02 & 6.69e-02 & 7.81e-02 & 6.46e-02 & 6.14e-02 \\ \cline{3-10} 
                      & $T_{\#2}$  & 7.75e-02 & 9.49e-02 & 9.55e-02 & 7.85e-02 & 8.62e-02 & 7.27e-02 & 6.63e-02 & 7.07e-02 \\ \cline{3-10}
                      & $T_{\#3}$  & 8.84e-02 & 1.37e-01 & 2.75e-01 & 9.86e-02 & 1.12e-01 & 1.18e-01 & 1.13e-01 & 1.14e-01 \\ \cline{3-10} 
                      & $T_{\#4}$  & 9.29e-02 & 1.63e+00 & 1.45e-01 & 1.06e-01 & 1.38e-01 & 1.20e-01 & 3.99e-01 & 2.04e+00 \\ \cline{3-10} 
                      & $T_{\#5}$  & 1.04e-01 & 1.84e+00 & 1.36e-01 & 1.78e+00 & 1.38e-01 & 1.29e+00 & 4.83e-01 & 2.48e+00 \\ \hline
\end{tabular}
\end{table}

} 

\comment{
\begin{table}[tb]\small
\caption{Wave equation: For $\frac{\partial u}{\partial t}$, the $l_2$ and $l_\infty$ errors versus the activation functions. [Layers: [90, 90, 10], $N_c=2500$ for training, $N_{ev}=1000$ for prediction]}
	\label{tab_wave_err_4}
	\centering\small
\begin{tabular}{c|@{}c@{}|cc|cc|cc|cc}
\hline
\multirow{2}{*}{Error}    & \multirow{2}{*}{Time block} & \multicolumn{2}{c|}{tanh} & \multicolumn{2}{c|}{Gaussian} & \multicolumn{2}{c|}{swish} & \multicolumn{2}{c}{softplus} \\ \cline{3-10}
                      &  & ExBTM  & BTM  & ExBTM  & BTM   & ExBTM  & BTM  & ExBTM  & BTM   \\ \hline
\multirow{5}{*}{$l_2$} & $T_{\#1}$  & 5.11e-02 & 5.65e-02 & 3.27e-02 & 1.84e-02 & 5.37e-02 & 4.89e-02 & 3.10e-02 & 3.97e-02 \\ \cline{3-10} 
                      & $T_{\#2}$  & 6.58e-02 & 6.01e-02 & 1.80e-01 & 7.55e-02 & 5.91e-02 & 7.17e-02 & 3.85e-02 & 5.17e-02 \\ \cline{3-10}
                      & $T_{\#3}$  & 6.38e-01 & 7.66e-01 & 7.25e-01 & 7.37e-02 & 6.56e-02 & 8.25e-01 & 6.56e-02 & 7.29e-02 \\ \cline{3-10} 
                      & $T_{\#4}$  & 7.91e-01 & 8.46e-01 & 8.05e-01 & 6.44e-01 & 1.79e-01 & 9.73e-01 & 1.98e-01 & 6.48e-01 \\ \cline{3-10} 
                      & $T_{\#5}$  & 8.03e-01 & 8.20e-01 & 8.25e-01 & 7.69e-01 & 2.98e-01 & 9.57e-01 & 5.48e-01 & 7.66e-01 \\ \hline
\multirow{5}{*}{$l_\infty$} & $T_{\#1}$  & 1.45e-01 & 1.58e-01 & 8.45e-02 & 4.94e-02 & 1.43e-01 & 1.26e-01 & 8.49e-02 & 1.09e-01 \\ \cline{3-10} 
                      & $T_{\#2}$  & 2.21e-01 & 2.03e-01 & 8.73e-01 & 1.95e-01 & 1.78e-01 & 2.30e-01 & 1.27e-01 & 1.60e-01 \\ \cline{3-10}
                      & $T_{\#3}$  & 2.32e+00 & 2.68e+00 & 2.62e+00 & 2.24e-01 & 2.32e-01 & 3.13e+00 & 2.73e-01 & 2.62e-01 \\ \cline{3-10} 
                      & $T_{\#4}$  & 2.64e+00 & 2.74e+00 & 2.68e+00 & 2.12e+00 & 6.81e-01 & 3.31e+00 & 6.72e-01 & 2.13e+00 \\ \cline{3-10} 
                      & $T_{\#5}$  & 2.53e+00 & 2.42e+00 & 2.57e+00 & 2.27e+00 & 1.10e+00 & 3.12e+00 & 1.66e+00 & 2.23e+00 \\ \hline
\end{tabular}
\end{table}

} 

\begin{figure}[!ht]
	\centering
	\subfloat[$u$]{\includegraphics[width=0.2\linewidth]{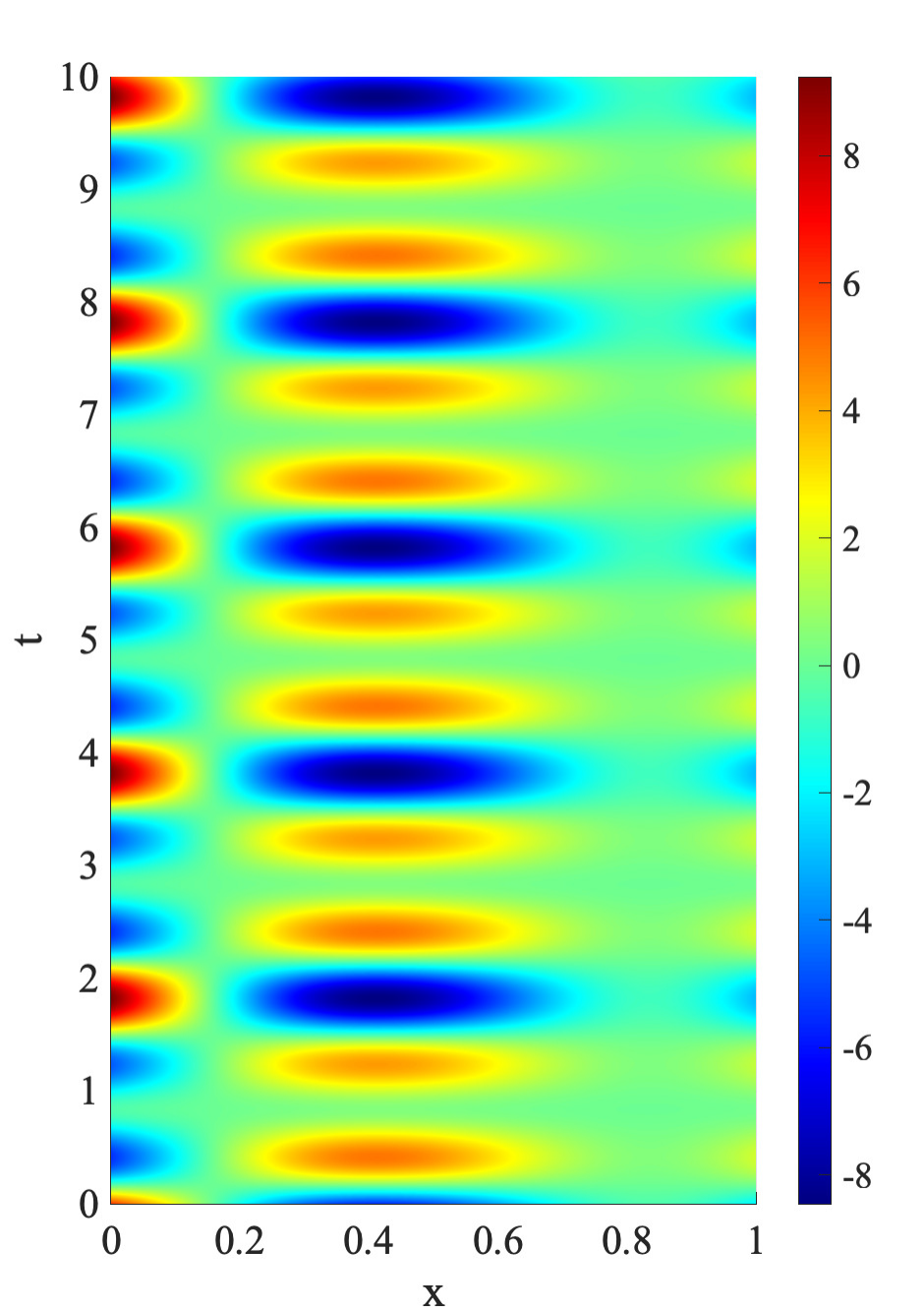}}
    \subfloat[$u_\theta$]{\includegraphics[width=0.2\linewidth]{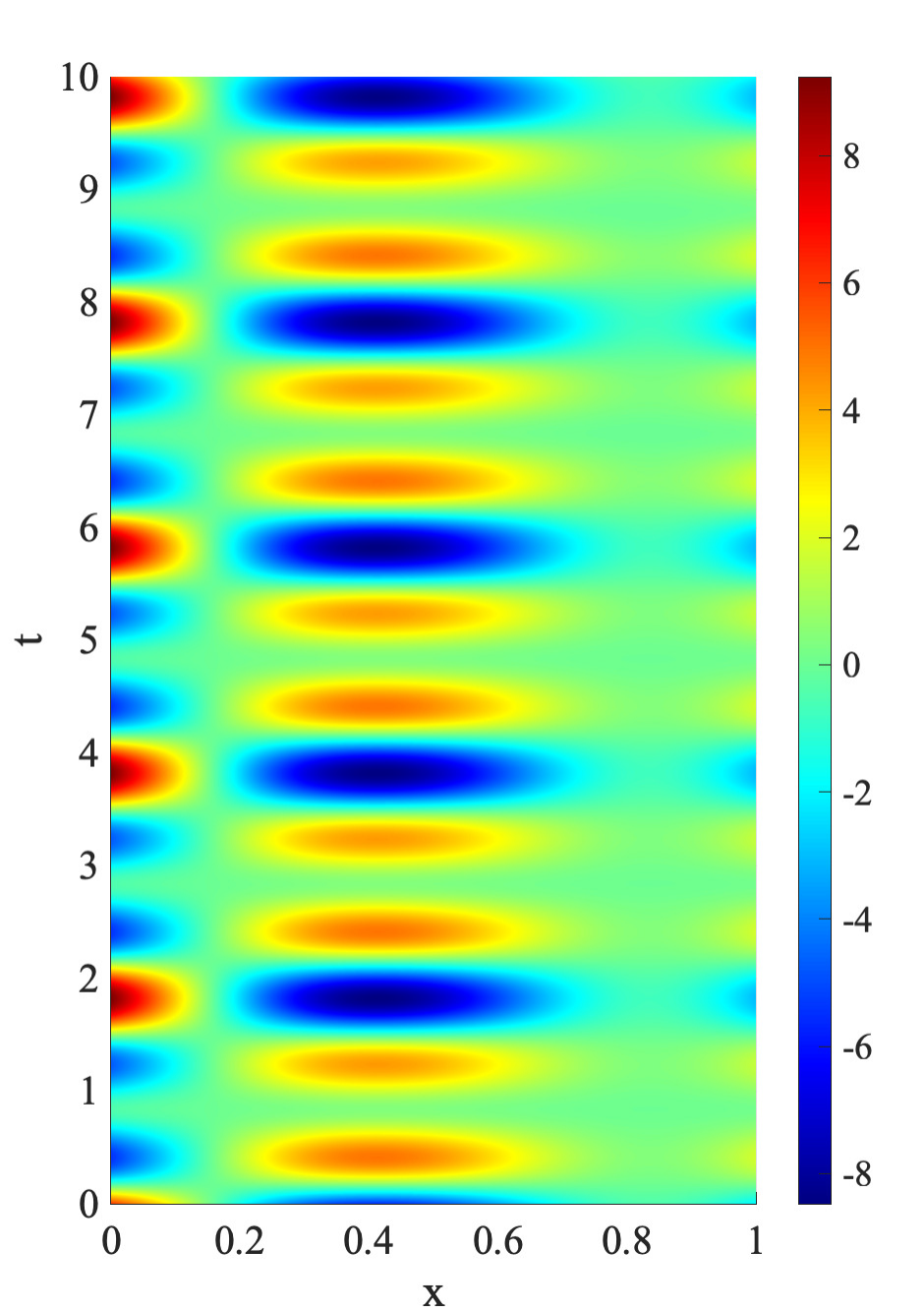}}
    \subfloat[$|u-u_\theta|$]{\includegraphics[width=0.2\linewidth]{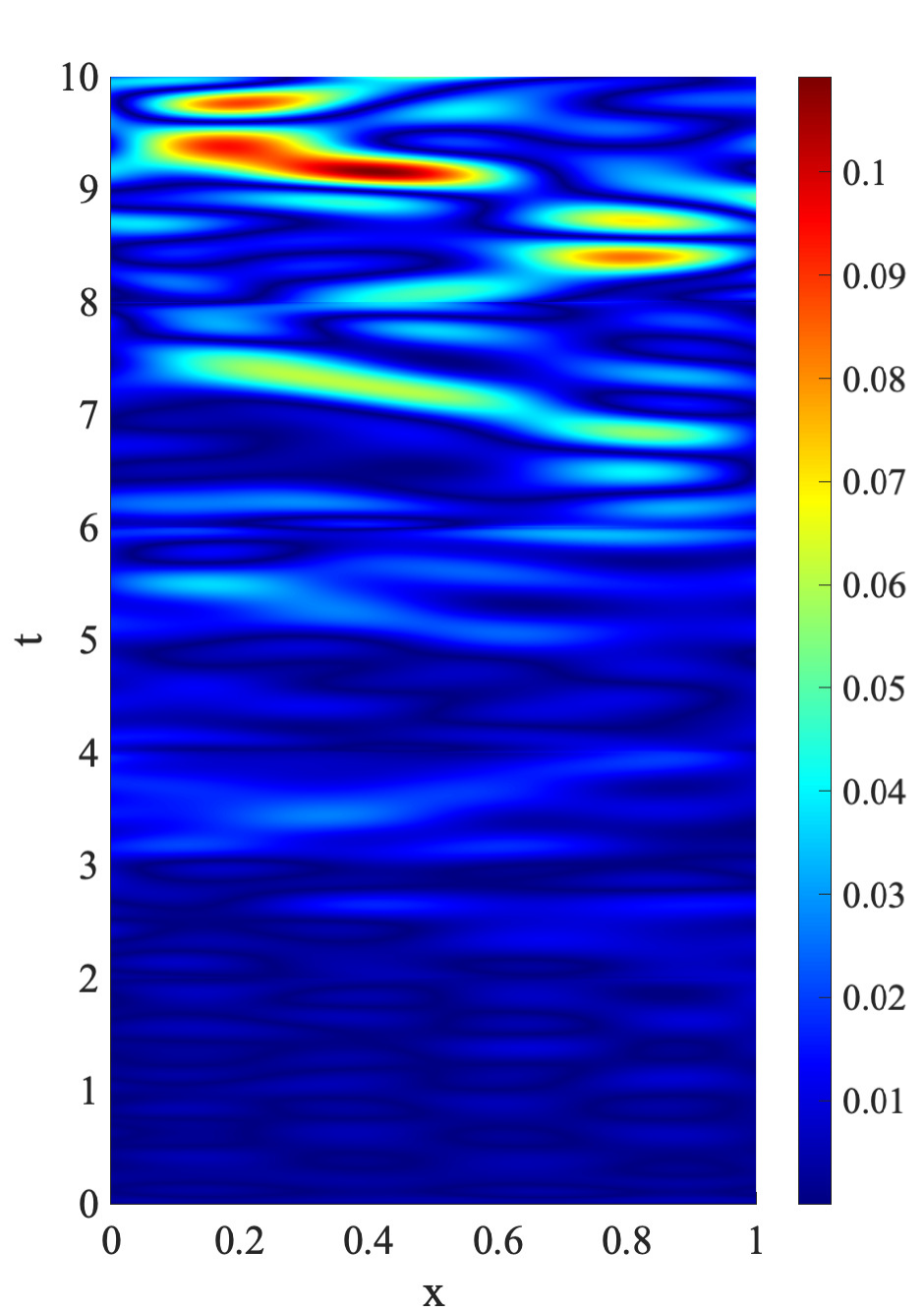}}
    \subfloat[$u^*_\theta$]{\includegraphics[width=0.2\linewidth]{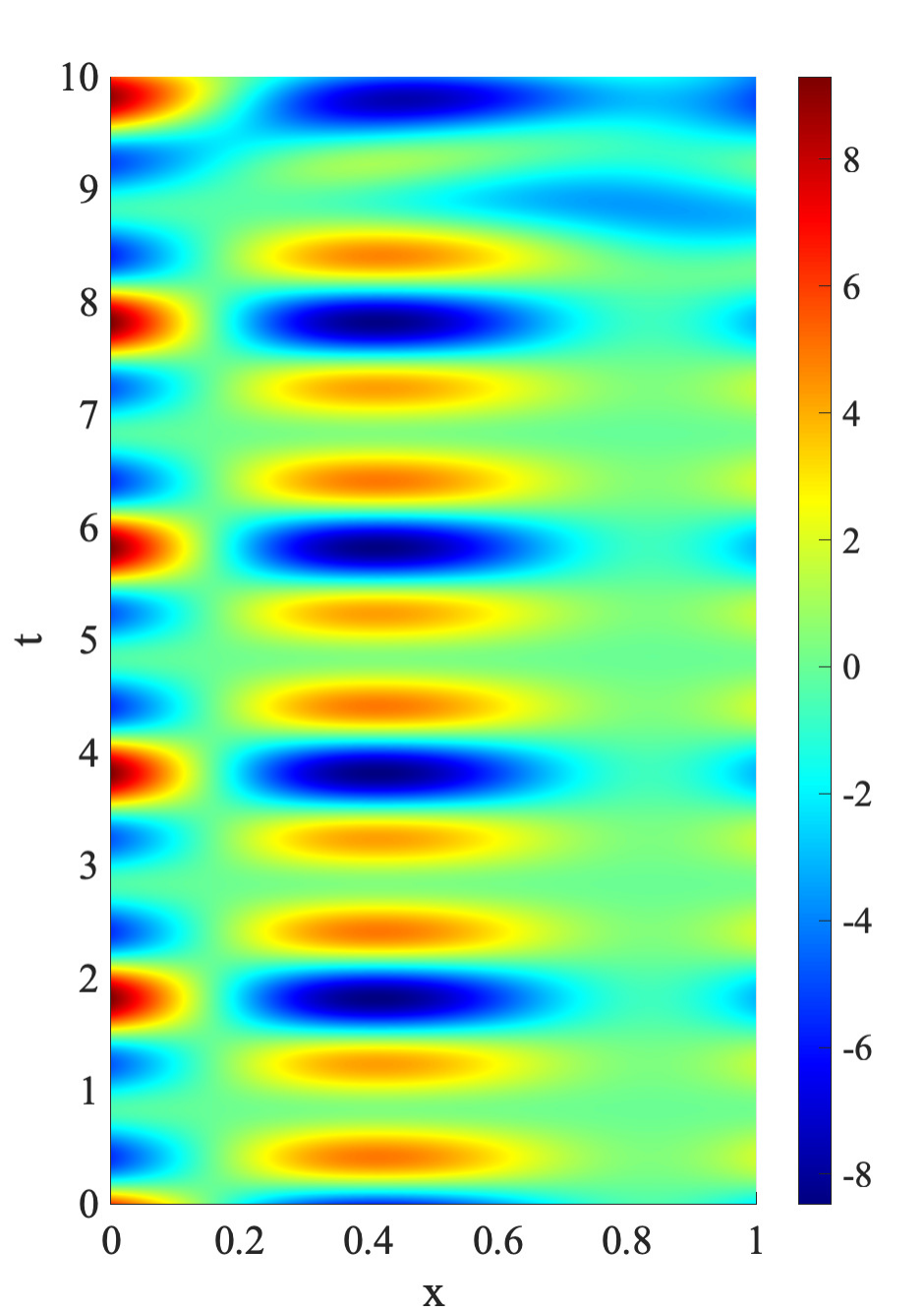}}
    \subfloat[$|u - u^*_\theta|$]{\includegraphics[width=0.2\linewidth]{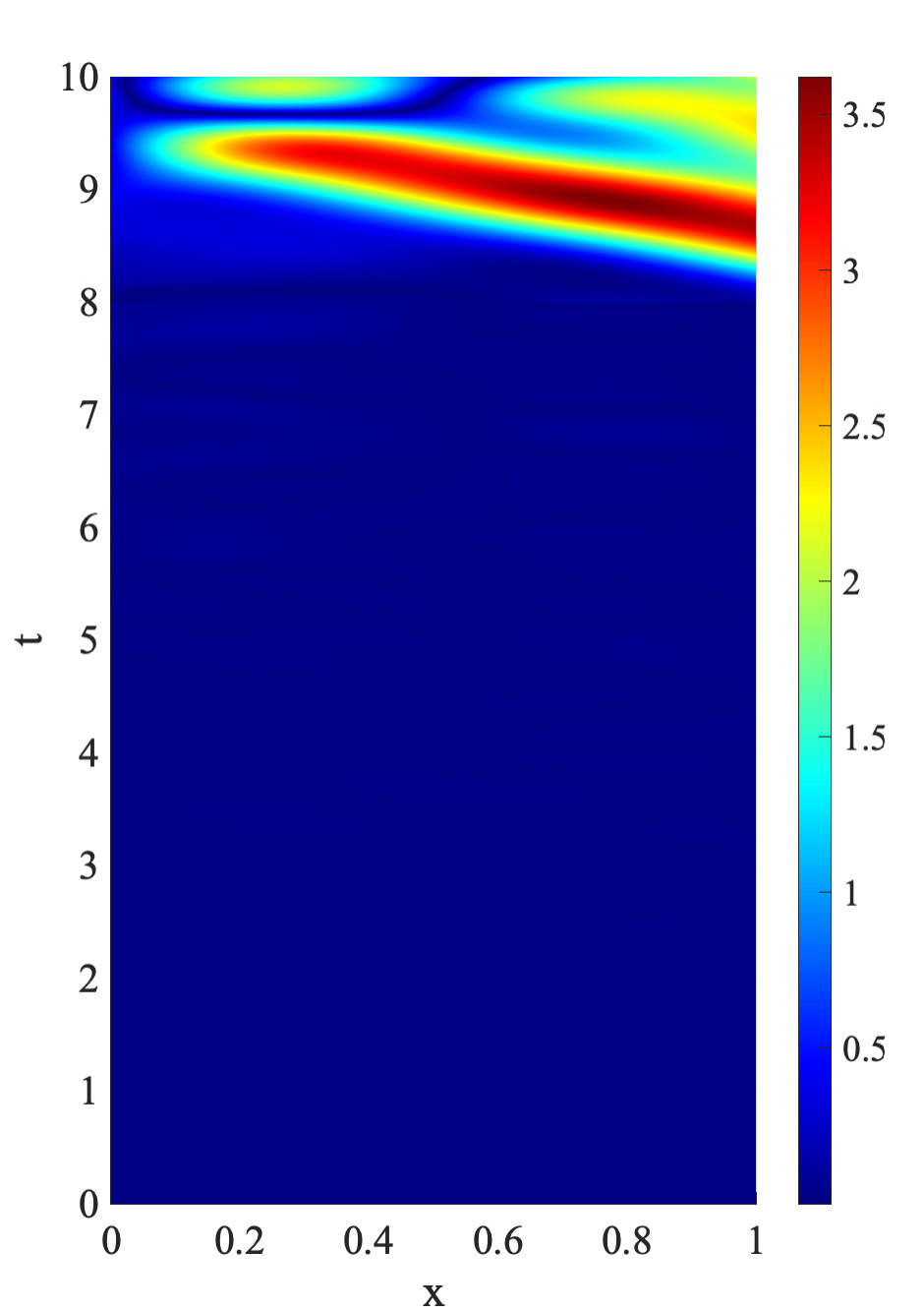}} \\
    \subfloat[$v$]{\includegraphics[width=0.2\linewidth]{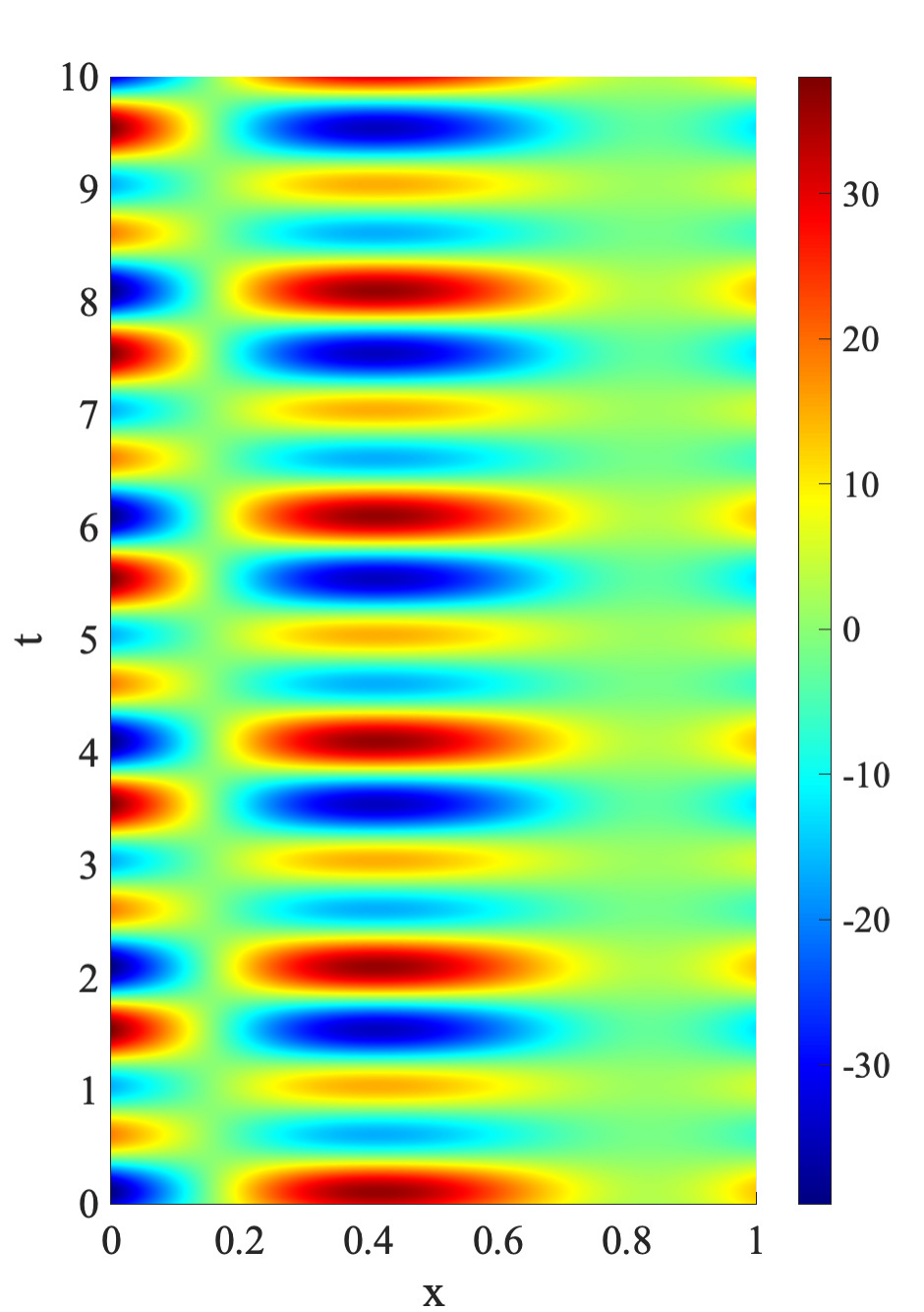}}
    \subfloat[$v_\theta$]{\includegraphics[width=0.2\linewidth]{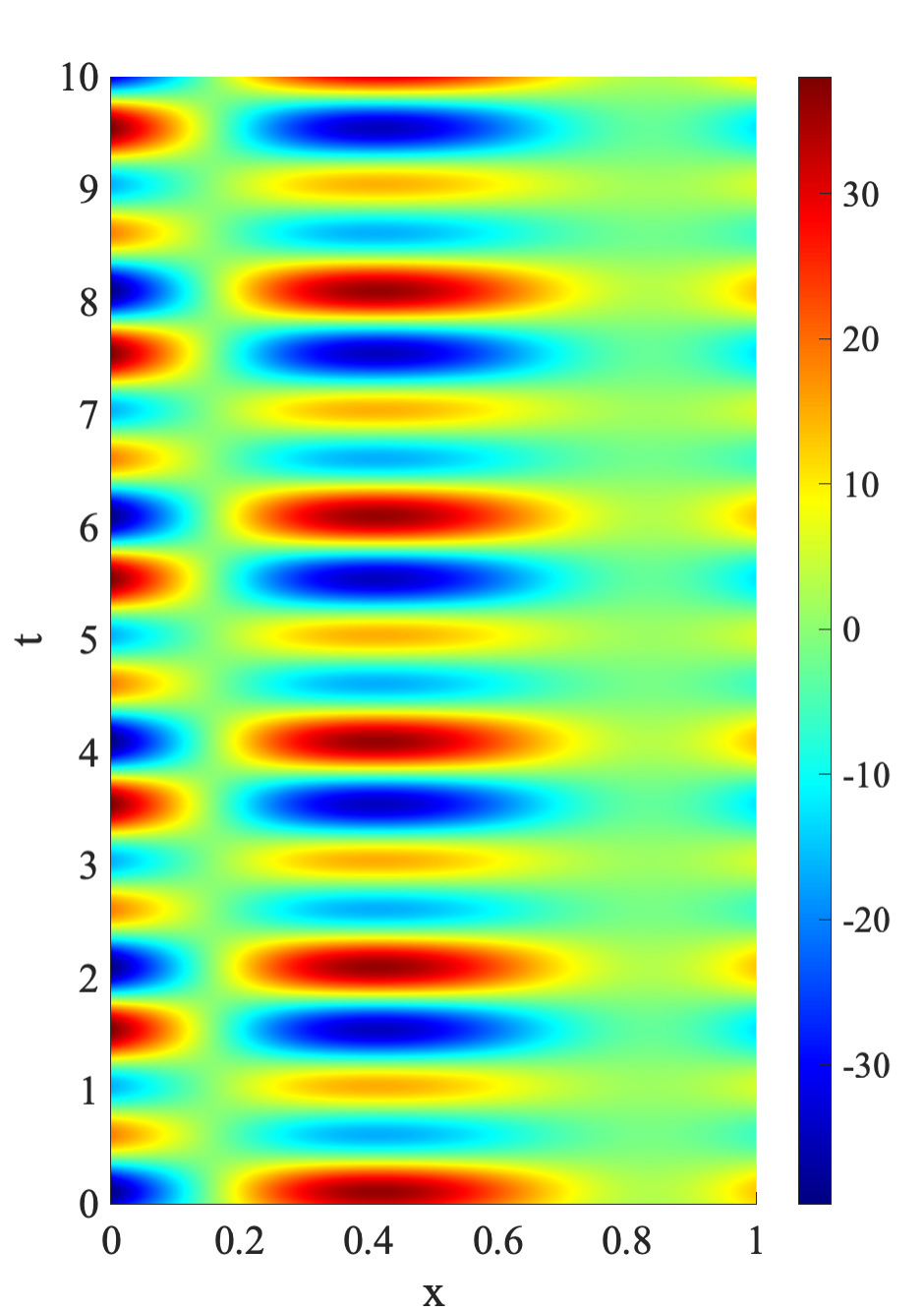}}
    \subfloat[$|v-v_\theta|$]{\includegraphics[width=0.2\linewidth]{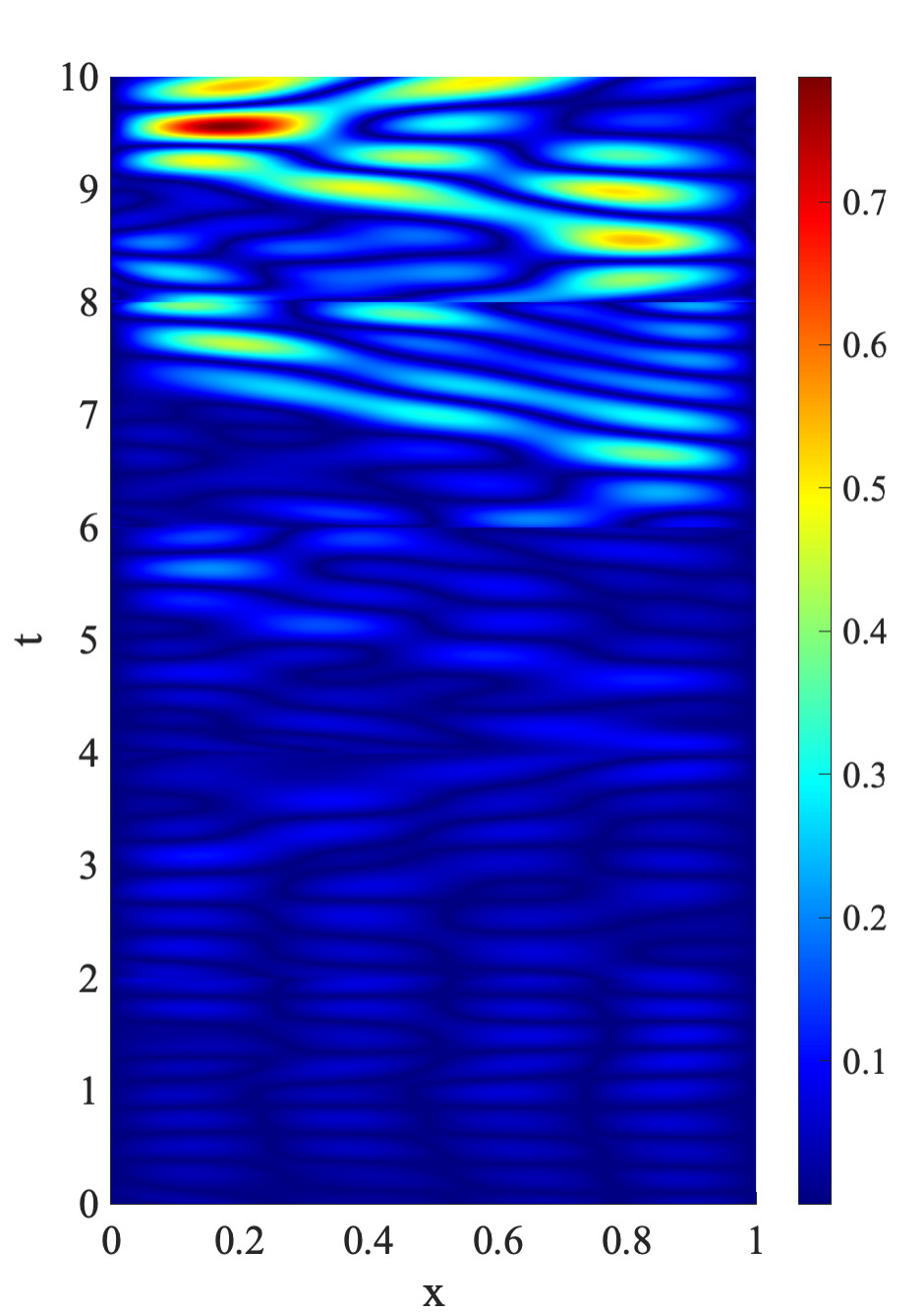}}
    \subfloat[$v^*_\theta$]{\includegraphics[width=0.2\linewidth]{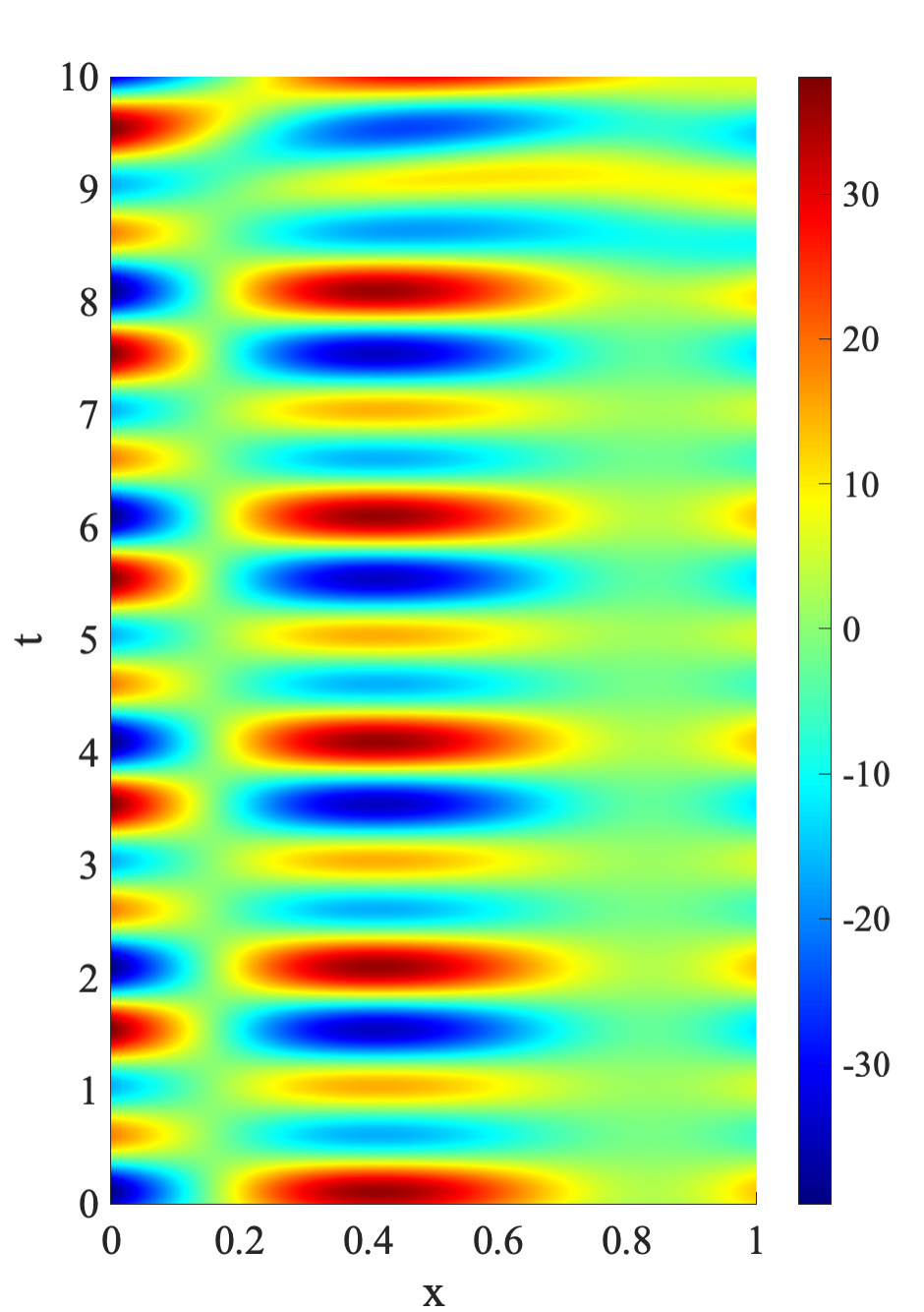}}
    \subfloat[$|v - v^*_\theta|$]{\includegraphics[width=0.2\linewidth]{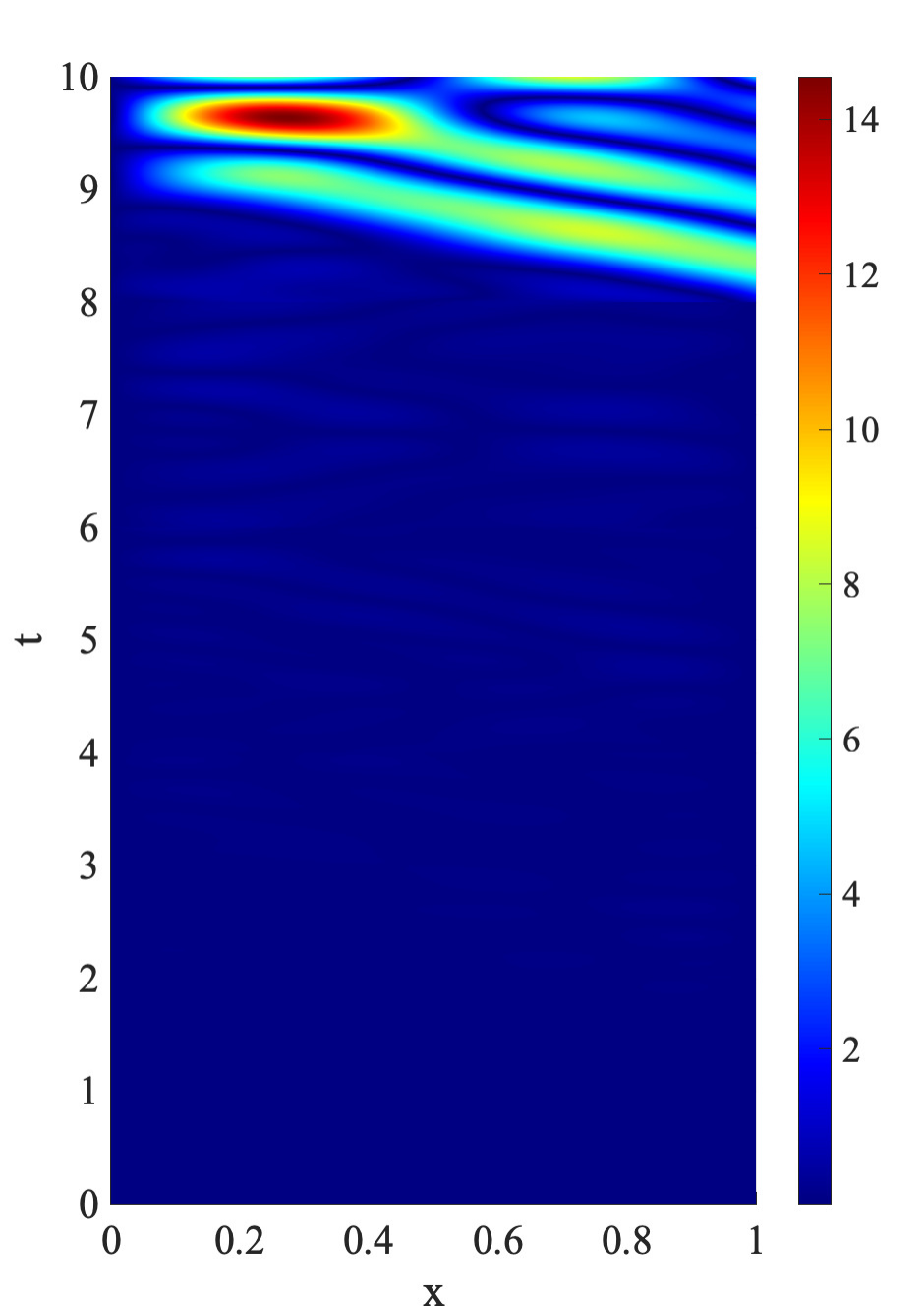}}
	\caption{Nonlinear Klein-Gordon equation: Distributions of the exact solution (a,f), the HLConcPINN-ExBTM solution and its point-wise error (b,g and c,h), and the HLConcPINN-BTM solution and its point-wise error (d,i and e,j), for $u$ (top row) and $v$ (bottom row). 
 NN: [2,90,90,10,2], with $\tanh$ activation function for the first two hidden layers and sine activation function in the last hidden layer; $N_c=2500$ for the training collocation points.
 }
	\label{PINN_num_KG_fig1}
\end{figure}

\begin{figure}[tb]
	\centering
	\subfloat[HLConcPINN-ExBTM]{\includegraphics[width=0.35\linewidth]{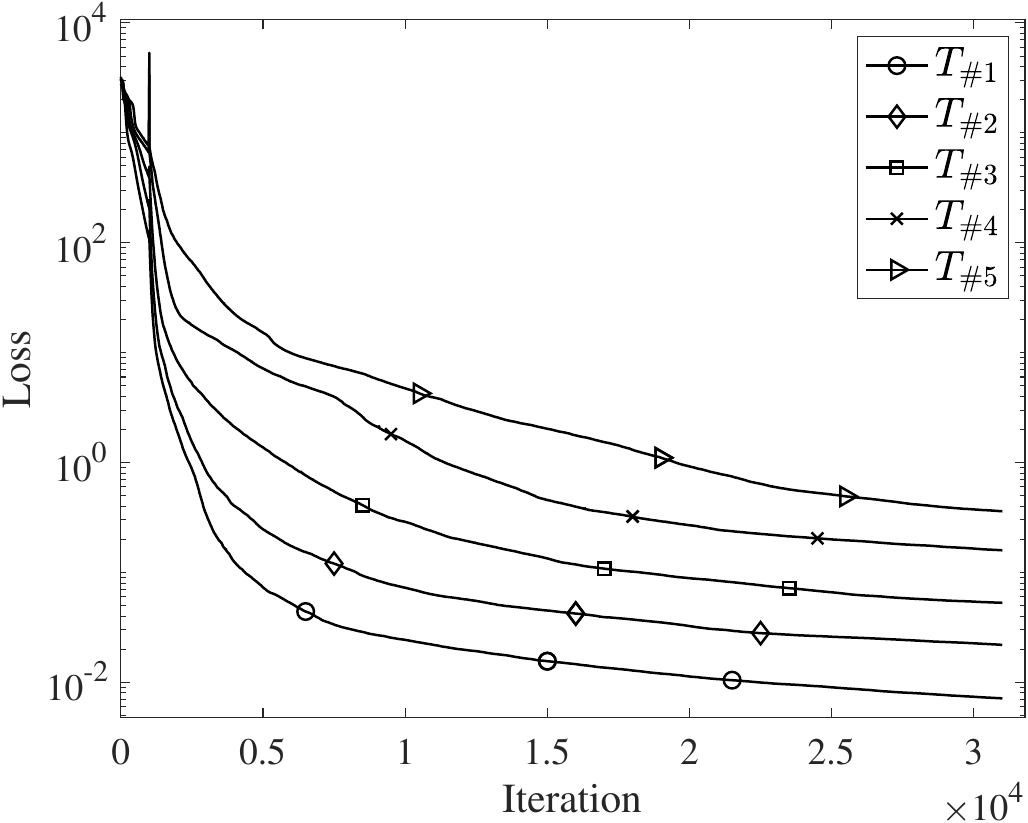}} \qquad\qquad
	\subfloat[HLConcPINN-BTM]{\includegraphics[width=0.35\linewidth]{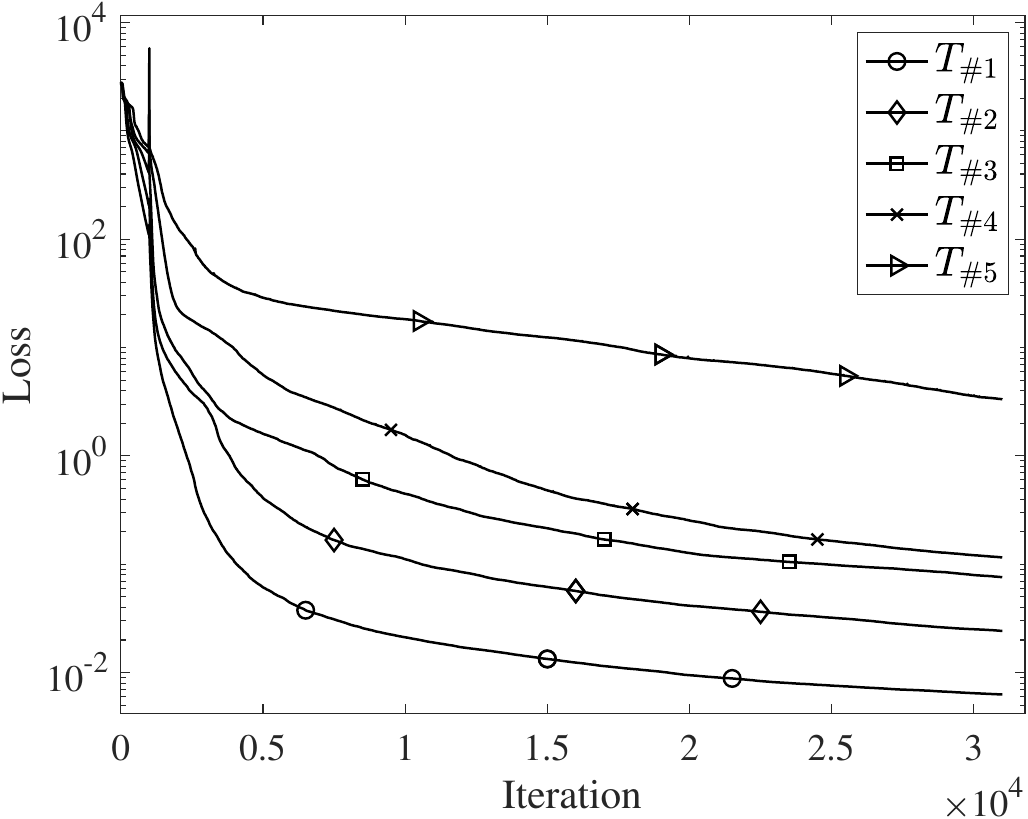}}\hspace{0.1em}
	\caption{Nonlinear Klein-Gordon equation: Histories of training loss for (a) HLConcPINN-ExBTM and (b) HLConcPINN-BTM in different time blocks. NN architecture and simulation parameters follow those of Figure~\ref{PINN_num_KG_fig1}.
 }
	\label{PINN_num_KG_fig4_1}
\end{figure}

\begin{figure}[tb]
	\centering
	\subfloat[$u$]{\includegraphics[width=0.35\linewidth]{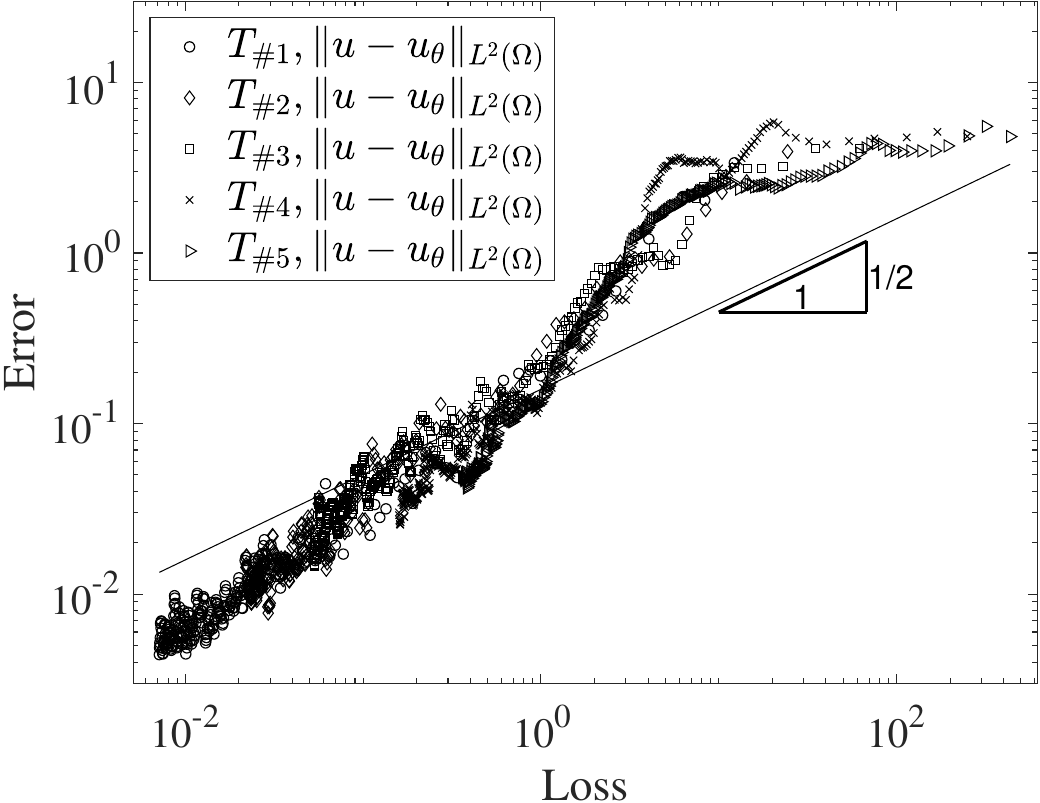}} \qquad\qquad
    \subfloat[$v=\partial u/\partial t$]{\includegraphics[width=0.35\linewidth]{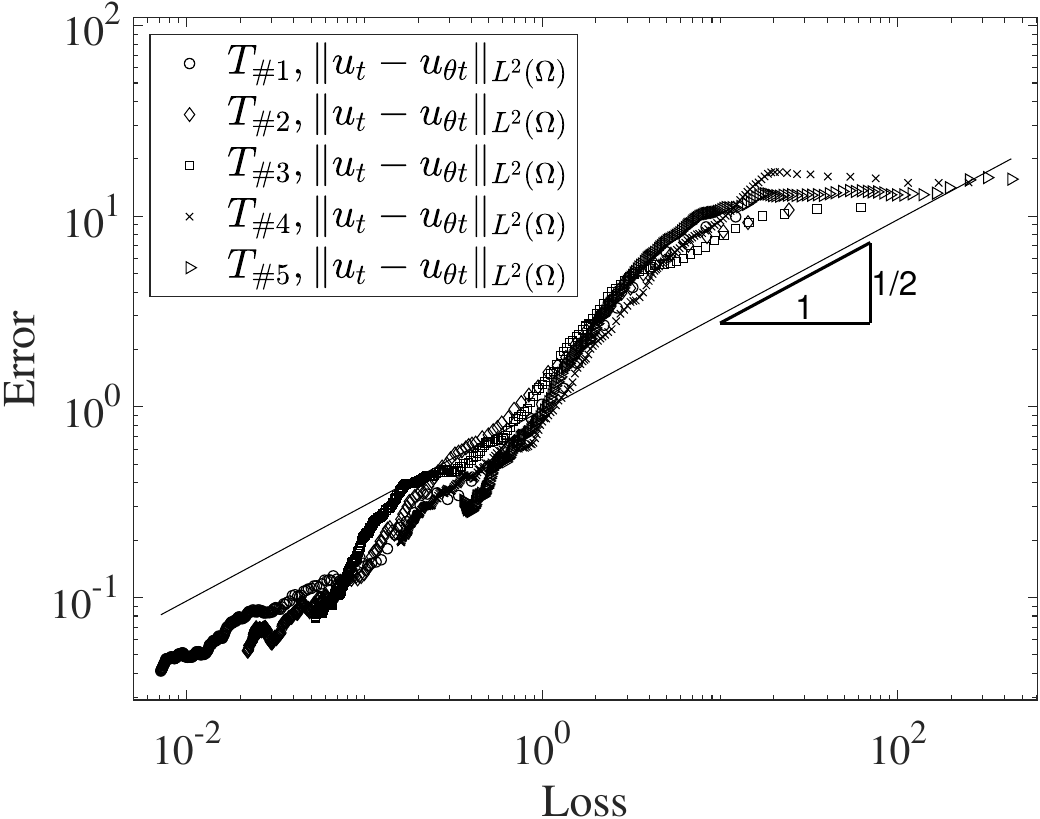}}
    \hspace{0.5em}
	\caption{Nonlinear Klein-Gordon equation: The $l^2$ errors of (a) $u$ and (b) $v$ as a function of the training loss value for the HLConcPINN-ExBTM method. The NN architecture and simulation parameters follow those of Figure~\ref{PINN_num_KG_fig1}.
 }
	\label{PINN_num_KG_fig5_1}
\end{figure}

\subsection{Nonlinear Klein-Gordon Equation}

We consider the spatial-temporal domain
$(x,t)\in\Omega= D\times [0, T] = [0, 1] \times [0, 10]$, and the following initial/boundary value problem  on this domain,
\begin{subequations}\label{num_SG_eq0}
\begin{align}
	& \frac{\partial^2u}{\partial t^2} - \frac{\partial^2 u}{\partial x^2} + u + \sin(u) = f(x,t), \label{eq_70a} \\
 &u({0},t)=\phi_1(t),  \quad u({1},t)=\phi_2(t), \quad
	u({x},0)=\psi_{1}({x}),\quad \frac{\partial u}{\partial t}({x},0)=\psi_{2}({x}).
\end{align}
\end{subequations}
In these equations, $ u(x,t) $ is the field function to be solved for, $ f(x,t) $ is a source term, $ \psi_{1} $ and $ \psi_2 $ are the initial conditions, and $ \phi_1 $ and $ \phi_2 $ are the boundary conditions.
Note that a nonlinear term, $g(u)=\sin u$, has been used, leading to the Sine-Gordon equation in~\eqref{eq_70a}. The source term, initial and boundary conditions are appropriately  chosen such that the problem has the following exact solution,  
\begin{align}\label{num_SG_eq1}
	u(x, t) = \left[2\cos\left(\pi x + \frac{\pi}{5}\right) + \frac{9}{5}\cos\left(2\pi x + \frac{7\pi}{20}\right)\right]\left[ 2\cos\left(\pi t + \frac{\pi}{5}\right) + \frac{9}{5}\cos\left(2\pi t + \frac{7\pi}{20}\right) \right].
\end{align}


To simulate this problem, we reformulate it  as follows,
\begin{subequations}\label{num_SG_eq2}
\begin{align}
    \label{num_SG_eq2a}
	&\frac{\partial u}{\partial t} - v = 0, \quad \frac{\partial v}{\partial t} - \frac{\partial^2 u}{\partial x^2} + u + \sin(u) = f(x,t), \\
    \label{num_SG_eq2b}
    &u({0},t)=\phi_1(t),  \quad u({1},t)=\phi_2(t),\quad
	u({x},0)=\psi_{1}({x}),\quad v({x},0)=\psi_{2}({x}),
\end{align}
\end{subequations}
where $v$ is defined by equation \eqref{num_SG_eq2a}.
In light of \eqref{SG_T}$-$\eqref{SG_Ti}, we set the loss function for HLConcPINN-ExBTM as follows,
\begin{align}\label{num_SG_eq3}
    Loss_i^{I}=&\frac{W_1}{N_c} \sum_{n=1}^{N_c}\left[\frac{\partial u_{\theta_i}}{\partial t}(x_{int}^n, t_{int}^n) - v_{\theta_i}(x_{int}^n, t_{int}^n)\right]^2 \notag\\
    &+ \frac{W_2}{N_c} \sum_{n=1}^{N_c} \left[\frac{\partial v_{\theta_i}}{\partial t}(x_{int}^n, t_{int}^n) - \frac{\partial^2 u_{\theta_i}}{\partial x^2}(x_{int}^n, t_{int}^n) + u_{\theta_i}(x_{int}^n, t_{int}^n) + \sin(u_{\theta_i}(x_{int}^n, t_{int}^n)) - f(x_{int}^n, t_{int}^n) \right]^2 \nonumber \\
    &+ \frac{W_3}{N_c} \sum_{n=1}^{N_c}\left[\frac{\partial^2u_{\theta_i}}{\partial t\partial x}(x_{int}^n, t_{int}^n) - \frac{\partial v_{\theta_i}}{\partial x}(x_{int}^n, t_{int}^n)\right]^2 + \frac{W_4}{N_c} \sum_{j=1}^{i} \sum_{n=1}^{N_c}\left[ u_{\theta_i}(x_{tb}^n, t_{j-1}) - u_{\theta_{j-1}}(x_{tb}^n, t_{j-1})\right]^2 \nonumber \\
    &+ \frac{W_5}{N_c} \sum_{j=1}^{i} \sum_{n=1}^{N_c} \left[v_{\theta_i}(x_{tb}^n, t_{j-1})-v_{\theta_{j-1}}(x_{tb}^n, t_{j-1}) \right]^2 +  \frac{W_6}{N_c} \sum_{j=1}^{i}\sum_{n=1}^{N_c}\left[\frac{\partial u_{\theta_i}}{\partial x}(x_{tb}^n, t_{j-1}) - \frac{\partial u_{\theta_{j-1}}}{\partial x}(x_{tb}^n, t_{j-1}) \right]^2 \nonumber \\ 
    & + W_7 \Big( \frac{1}{N_c} \sum_{n=1}^{N_c} \left[ (v_{\theta}(0, t_{sb}^n) - \frac{\partial \phi_1}{\partial t}(t_{sb}^n) )^2 +  (v_{\theta}(1, t_{sb}^n) - \frac{\partial \phi_2}{\partial t}(t_{sb}^n) )^2   \right] \Big)^{1/2} + Loss_{i-1}^I,
\end{align}
where $W_i$ ($i=1,\dots,7$) are the penalty coefficients for different loss terms.
The loss function for HLConcPINN-BTM is set to, 
\begin{align}\label{num_SG_eq3_1}
    Loss_i^{II}=&\frac{W_1}{N_c} \sum_{n=1}^{N_c}\left[\frac{\partial u_{\theta_i}}{\partial t}(x_{int}^n, t_{int}^n) - v_{\theta_i}(x_{int}^n, t_{int}^n)\right]^2 \notag\\
    &+ \frac{W_2}{N_c} \sum_{n=1}^{N_c} \left[\frac{\partial v_{\theta_i}}{\partial t}(x_{int}^n, t_{int}^n) - \frac{\partial^2 u_{\theta_i}}{\partial x^2}(x_{int}^n, t_{int}^n) + u_{\theta_i}(x_{int}^n, t_{int}^n) + \sin(u_{\theta_i}(x_{int}^n, t_{int}^n)) - f(x_{int}^n, t_{int}^n) \right]^2 \nonumber \\
    &+ \frac{W_3}{N_c} \sum_{n=1}^{N_c}\left[\frac{\partial^2u_{\theta_i}}{\partial t\partial x}(x_{int}^n, t_{int}^n) - \frac{\partial v_{\theta_i}}{\partial x}(x_{int}^n, t_{int}^n)\right]^2 + \frac{W_4}{N_c} \sum_{n=1}^{N_c}\left[ u_{\theta_i}(x_{tb}^n, t_{i-1}) - u_{\theta_{i-1}}(x_{tb}^n, t_{i-1})\right]^2 \nonumber \\
    &+ \frac{W_5}{N_c} \sum_{n=1}^{N_c} \left[v_{\theta_i}(x_{tb}^n, t_{i-1})-v_{\theta_{i-1}}(x_{tb}^n, t_{i-1}) \right]^2 +  \frac{W_6}{N_c}\sum_{n=1}^{N_c}\left[\frac{\partial u_{\theta_i}}{\partial x}(x_{tb}^n, t_{i-1}) - \frac{\partial u_{\theta_{i-1}}}{\partial x}(x_{tb}^n, t_{i-1}) \right]^2 \nonumber \\ 
    & + W_7 \Big( \frac{1}{N_c} \sum_{n=1}^{N_c} \left[ (v_{\theta}(0, t_{sb}^n) - \frac{\partial \phi_1}{\partial t}(t_{sb}^n) )^2 +  (v_{\theta}(1, t_{sb}^n) - \frac{\partial \phi_2}{\partial t}(t_{sb}^n) )^2   \right] \Big)^{1/2}.
\end{align}
We employ the following values for the penalty coefficients, $(W_1,\dots, W_7) = (0.4, 0.4, 0.4, 0.6, 0.6, 0.6, 0.6)$, for this problem. Five uniform time blocks are used in block time marching.


\begin{table}[tb]\small
	\centering\small
\begin{tabular}{c|@{}c@{}|cc|cc|cc|cc}
\hline
\multirow{2}{*}{Error}    & \multirow{2}{*}{Time block} & \multicolumn{2}{c|}{$N_c=1500$} & \multicolumn{2}{c|}{$N_c=2000$} & \multicolumn{2}{c|}{$N_c=2500$} & \multicolumn{2}{c}{$N_c=3000$} \\ \cline{3-10}
                      &  & ExBTM  & BTM  & ExBTM  & BTM   & ExBTM  & BTM  & ExBTM  & BTM   \\ \hline
\multirow{5}{*}{$l^2$} & $T_{\#1}$  & 1.14e-03 & 1.52e-03 & 1.37e-03 & 1.64e-03 & 1.21e-03 & 1.52e-03 & 1.88e-03 & 1.56e-03 \\ \cline{3-10} 
                      & $T_{\#2}$  & 3.08e-03 & 3.99e-03 & 3.28e-03 & 2.97e-03 & 3.94e-03 & 3.53e-03 & 6.14e-03 & 2.55e-03 \\ \cline{3-10}
                      & $T_{\#3}$  & 5.44e-03 & 9.56e-03 & 7.79e-03 & 6.89e-03 & 4.89e-03 & 6.96e-03 & 6.61e-03 & 7.75e-03 \\ \cline{3-10} 
                      & $T_{\#4}$  & 9.69e-03 & 1.67e-02 & 1.90e-02 & 7.19e-03 & 8.31e-03 & 1.44e-02 & 1.01e-02 & 1.36e-02 \\ \cline{3-10} 
                      & $T_{\#5}$  & 7.03e-02 & 9.11e-02 & 3.23e-02 & 1.95e-02 & 1.33e-02 & 6.49e-01 & 2.93e-02 & 7.43e-02 \\ \hline
\multirow{5}{*}{$l^\infty$} & $T_{\#1}$  & 3.33e-03 & 3.24e-03 & 5.01e-03 & 4.75e-03 & 3.62e-03 & 4.21e-03 & 4.40e-03 & 4.54e-03 \\ \cline{3-10} 
                      & $T_{\#2}$  & 8.52e-03 & 8.47e-03 & 9.59e-03 & 1.07e-02 & 1.06e-02 & 9.18e-03 & 1.27e-02 & 7.50e-03 \\ \cline{3-10}
                      & $T_{\#3}$  & 1.93e-02 & 2.74e-02 & 1.77e-02 & 1.78e-02 & 1.44e-02 & 2.25e-02 & 1.89e-02 & 2.10e-02 \\ \cline{3-10} 
                      & $T_{\#4}$  & 2.34e-02 & 4.90e-02 & 5.88e-02 & 1.57e-02 & 2.38e-02 & 4.62e-02 & 2.76e-02 & 3.37e-02 \\ \cline{3-10} 
                      & $T_{\#5}$  & 1.89e-01 & 2.40e-01 & 9.73e-02 & 4.58e-02 & 4.25e-02 & 1.41e+00 & 8.66e-02 & 2.03e-01 \\ \hline
\end{tabular}
\caption{Nonlinear Klein-Gordon equation:  $l^2$ and $l^\infty$ errors of $u$ for HLConcPINN-ExBTM and HLConcPINN-BTM obtained with different training collocation points $N_c$. The NN architecture and activation functions follow those of Figure~\ref{PINN_num_KG_fig1}.
}
	\label{tab_KG_err_1}
\end{table}

\begin{table}[tb]\small
	\centering\small
\begin{tabular}{c|@{}c@{}|cc|cc|cc|cc}
\hline
\multirow{2}{*}{Error}    & \multirow{2}{*}{Time block} & \multicolumn{2}{c|}{tanh} & \multicolumn{2}{c|}{Gaussian} & \multicolumn{2}{c|}{swish} & \multicolumn{2}{c}{softplus} \\ \cline{3-10}
                      &  & ExBTM  & BTM  & ExBTM  & BTM   & ExBTM  & BTM  & ExBTM  & BTM   \\ \hline
\multirow{5}{*}{$l^2$} & $T_{\#1}$ & 2.28e-03 & 2.86e-03 & 2.52e-03 & 3.41e-03 & 9.61e-04 & 1.98e-03 & 1.81e-03 & 1.40e-03 \\ \cline{3-10} 
                      & $T_{\#2}$  & 5.24e-03 & 5.83e-03 & 1.82e-03 & 6.29e-03 & 4.54e-03 & 3.62e-03 & 6.21e-03 & 7.67e-03 \\ \cline{3-10}
                      & $T_{\#3}$  & 8.22e-03 & 1.68e-02 & 1.11e-02 & 1.97e-02 & 7.75e-03 & 5.16e-03 & 8.18e-03 & 1.01e-02 \\ \cline{3-10} 
                      & $T_{\#4}$  & 1.73e-02 & 2.52e-02 & 3.25e-01 & 2.86e-01 & 1.13e-02 & 1.17e-02 & 1.72e-02 & 8.91e-03 \\ \cline{3-10} 
                      & $T_{\#5}$  & 1.33e-01 & 8.68e-02 & 5.56e-01 & 7.53e-01 & 1.11e-01 & 1.82e-02 & 2.98e-02 & 1.43e-02 \\ \hline
\multirow{5}{*}{$l^\infty$} & $T_{\#1}$  & 5.73e-03 & 8.85e-03 & 5.89e-03 & 8.22e-03 & 3.30e-03 & 5.00e-03 & 5.54e-03 & 4.63e-03 \\ \cline{3-10} 
                      & $T_{\#2}$  & 1.64e-02 & 1.74e-02 & 5.34e-03 & 1.94e-02 & 1.15e-02 & 1.61e-02 & 1.99e-02 & 2.67e-02 \\ \cline{3-10}
                      & $T_{\#3}$  & 2.24e-02 & 4.67e-02 & 2.87e-02 & 4.05e-02 & 1.49e-02 & 1.60e-02 & 2.20e-02 & 2.91e-02 \\ \cline{3-10} 
                      & $T_{\#4}$  & 4.27e-02 & 7.18e-02 & 1.05e+00 & 7.84e-01 & 2.53e-02 & 3.32e-02 & 4.77e-02 & 2.33e-02 \\ \cline{3-10} 
                      & $T_{\#5}$  & 3.65e-01 & 2.47e-01 & 1.43e+00 & 1.80e+00 & 3.42e-01 & 6.28e-02 & 8.76e-02 & 4.46e-02 \\ \hline
\end{tabular}
\caption{Nonlinear Klein-Gordon equation:  $l^2$ and $l^\infty$ errors of $u$ for HLConcPINN-ExBTM and HLConcPINN-BTM obtained with different  activation functions for the last hidden layer.
NN: [2,90,90,10,2], with $\tanh$ activation function for the first two hidden layers and the activation function in the last hidden layer varied; $N_c=2500$ for the training collocation points.
}
	\label{tab_KG_err_3}
\end{table}

Figures~\ref{PINN_num_KG_fig1} and~\ref{PINN_num_KG_fig4_1} provide an overview of the simulation results obtained by HLConcPINN-ExBTM and HLConcPINN-BTM for the nonlinear Klein-Gordon equation. Here the distributions of the HLConcPINN-ExBTM and HLConcPINN-BTM solutions for $u$ and $v=\frac{\partial u}{\partial t}$, their point-wise absolute errors, as well as the exact solution field, have been shown. The loss histories for different time blocks obtained using these methods are shown in Figure~\ref{PINN_num_KG_fig4_1}. The network architecture (consisting of three hidden layers), the activation functions, and the training collocation points are given in the caption of Figure~\ref{PINN_num_KG_fig1}. The simulation results obtained with HLConcPINN-ExBTM are markedly more accurate than those of HLConcPINN-BTM for this problem, especially at later time (the last time block). It is also noted that the solution accuracy for $\frac{\partial u}{\partial t}$ is notably lower than that of $u$.




Table~\ref{tab_KG_err_1} summarizes a study of the training collocation points on the PINN solutions. We list the $l^2$ and $l^{\infty}$ errors of both HLConcPINN-ExBTM and HLConcPINN-BTM in different time blocks obtained with a range of training collocation points between $N_c=1500$ and $N_c=3000$. The neural network architecture and activation functions follow those of Figure~\ref{PINN_num_KG_fig1}. The results are in general not sensitive to the number of collocation points, similar to what has been obtained with other test problems in previous subsections.

Table~\ref{tab_KG_err_3} compares the simulation results of HLConcPINN-ExBTM and HLConcPINN-BTM obtained with different activation functions (tanh, Gaussian, swish, softplus) for the last hidden layer. Three hidden layers are employed in the neural network, with $\tanh$ activation for the first two hidden layers and the activation function of the last hidden layer varied. The  network architecture and other simulation parameters are specified in the table caption. These results can be compared with that of Table~\ref{tab_KG_err_1} corresponding to $N_c=2500$, where the sine activation function has been used for the last hidden layer. Among the activation functions tested, the sine function appears to yield the best simulation results.

Finally Figure \ref{PINN_num_KG_fig5_1} illustrates the relation between the errors for $u$ and $v$  and the training loss for the HLConcPINN-ExBTM method from our simulations. The simulation data signify a scaling with a power of approximately $1/2$, which is roughly consistent with the conclusion of Theorem \ref{sec6_Theorem3}. 


\comment{
\begin{figure}[tb]
	\centering
	\subfloat[$v$]{\includegraphics[width=0.22\linewidth]{./figures/KG_BTM_2500_90,90,10_sin_v}}
    \subfloat[$v_\theta$]{\includegraphics[width=0.22\linewidth]{./figures/KG_BTM_2500_90,90,10_sin_vnn_Ex}}
    \subfloat[$|v-v_\theta|$]{\includegraphics[width=0.22\linewidth]{./figures/KG_BTM_2500_90,90,10_sin_vnn_Ex_err}}
    \subfloat[$v^*_\theta$]{\includegraphics[width=0.22\linewidth]{./figures/KG_BTM_2500_90,90,10_sin_vnn}}
    \subfloat[$|v - v^*_\theta|$]{\includegraphics[width=0.22\linewidth]{./figures/KG_BTM_2500_90,90,10_sin_vnn_err}}
	\caption{Nonlinear Klein-Gordon equation: Solutions ($v_\theta$ is the solution of PINN-ExBTM, $v_\theta^*$ is the solution of PINN-BTM). The three hidden layers with $[90, 90, 10]$ neurons. [$\tanh-\tanh-\sin$, $N_c=2500$ for training, $N_{ev}=1000$ for prediction]}
	\label{PINN_num_KG_fig2}
\end{figure}
}  

\comment{
\begin{table}[tb]\small
\caption{KG equation: For $\frac{\partial u}{\partial t}$, the $l_2$ and $l_\infty$ errors versus the number of training data points $N_c$ and the hidden layers' size. [$\tanh-\tanh-\sin$, layers: [90, 90, 10] for training, $N_{ev}=1000$ for prediction]}
	\label{tab_KG_err_2}
	\centering\small
\begin{tabular}{c|@{}c@{}|cc|cc|cc|cc}
\hline
\multirow{2}{*}{Error}    & \multirow{2}{*}{Time block} & \multicolumn{2}{c|}{$N_c=1500$} & \multicolumn{2}{c|}{$N_c=2000$} & \multicolumn{2}{c|}{$N_c=2500$} & \multicolumn{2}{c}{$N_c=3000$} \\ \cline{3-10}
                      &  & ExBTM  & BTM  & ExBTM  & BTM   & ExBTM  & BTM  & ExBTM  & BTM   \\ \hline
\multirow{5}{*}{$l_2$} & $T_{\#1}$  & 1.16e-03 & 6.66e-04 & 1.76e-03 & 1.55e-03 & 2.35e-03 & 1.29e-03 & 2.54e-03 & 3.87e-03 \\ \cline{3-10} 
                      & $T_{\#2}$  & 3.66e-03 & 3.21e-03 & 4.52e-03 & 3.40e-03 & 3.00e-03 & 3.85e-03 & 3.05e-03 & 2.87e-03 \\ \cline{3-10}
                      & $T_{\#3}$  & 6.29e-03 & 1.08e-02 & 5.73e-03 & 6.30e-03 & 4.46e-03 & 8.18e-03 & 7.64e-03 & 8.46e-03 \\ \cline{3-10} 
                      & $T_{\#4}$  & 9.65e-03 & 2.66e-02 & 2.17e-02 & 7.66e-03 & 1.13e-02 & 1.49e-02 & 8.25e-03 & 1.32e-02 \\ \cline{3-10} 
                      & $T_{\#5}$  & 8.90e-02 & 9.41e-02 & 3.89e-02 & 1.89e-02 & 1.85e-02 & 3.73e-01 & 3.25e-02 & 7.39e-02 \\ \hline
\multirow{5}{*}{$l_\infty$} & $T_{\#1}$  & 4.90e-03 & 2.78e-03 & 5.20e-03 & 4.31e-03 & 6.99e-03 & 5.86e-03 & 7.26e-03 & 1.15e-02 \\ \cline{3-10} 
                      & $T_{\#2}$  & 1.22e-02 & 1.11e-02 & 1.35e-02 & 1.16e-02 & 8.98e-03 & 1.29e-02 & 9.89e-03 & 8.99e-03 \\ \cline{3-10}
                      & $T_{\#3}$  & 2.47e-02 & 3.18e-02 & 1.60e-02 & 1.95e-02 & 1.70e-02 & 2.84e-02 & 2.64e-02 & 2.18e-02 \\ \cline{3-10} 
                      & $T_{\#4}$  & 2.94e-02 & 8.01e-02 & 6.45e-02 & 2.28e-02 & 3.61e-02 & 4.23e-02 & 2.39e-02 & 3.89e-02 \\ \cline{3-10} 
                      & $T_{\#5}$  & 3.15e-01 & 2.56e-01 & 1.22e-01 & 5.04e-02 & 6.36e-02 & 1.18e+00 & 1.09e-01 & 2.19e-01 \\ \hline
\end{tabular}
\end{table}
} 

\comment{
\begin{table}[tb]\small
\caption{KG equation: For $\frac{\partial u}{\partial t}$, the $l_2$ and $l_\infty$ errors versus the activation functions. [Layers: [90, 90, 10], $N_c=2500$ for training, $N_{ev}=1000$ for prediction]}
	\label{tab_KG_err_4}
	\centering\small
\begin{tabular}{c|@{}c@{}|cc|cc|cc|cc}
\hline
\multirow{2}{*}{Error}    & \multirow{2}{*}{Time block} & \multicolumn{2}{c|}{tanh} & \multicolumn{2}{c|}{Gaussian} & \multicolumn{2}{c|}{swish} & \multicolumn{2}{c}{softplus} \\ \cline{3-10}
                      &  & ExBTM  & BTM  & ExBTM  & BTM   & ExBTM  & BTM  & ExBTM  & BTM   \\ \hline
\multirow{5}{*}{$l_2$} & $T_{\#1}$  & 1.79e-03 & 2.32e-03 & 2.20e-03 & 2.31e-03 & 1.81e-03 & 1.82e-03 & 2.31e-03 & 2.51e-03 \\ \cline{3-10} 
                      & $T_{\#2}$  & 6.01e-03 & 7.00e-03 & 3.18e-03 & 5.70e-03 & 2.67e-03 & 7.01e-03 & 8.62e-03 & 1.44e-02 \\ \cline{3-10}
                      & $T_{\#3}$  & 1.03e-02 & 1.15e-02 & 9.93e-03 & 1.09e-02 & 4.99e-03 & 6.40e-03 & 1.09e-02 & 1.64e-02 \\ \cline{3-10} 
                      & $T_{\#4}$  & 1.90e-02 & 3.05e-02 & 3.58e-01 & 3.02e-01 & 8.88e-03 & 1.45e-02 & 1.46e-02 & 1.22e-02 \\ \cline{3-10} 
                      & $T_{\#5}$  & 1.57e-01 & 1.16e-01 & 6.11e-01 & 6.99e-01 & 1.38e-01 & 2.52e-02 & 3.75e-02 & 2.23e-02 \\ \hline
\multirow{5}{*}{$l_\infty$} & $T_{\#1}$  & 6.56e-03 & 1.10e-02 & 9.74e-03 & 8.92e-03 & 6.55e-03 & 5.44e-03 & 8.05e-03 & 8.25e-03 \\ \cline{3-10} 
                      & $T_{\#2}$  & 2.49e-02 & 2.14e-02 & 1.13e-02 & 2.19e-02 & 1.05e-02 & 2.84e-02 & 2.82e-02 & 4.55e-02 \\ \cline{3-10}
                      & $T_{\#3}$  & 3.01e-02 & 3.56e-02 & 3.25e-02 & 4.09e-02 & 1.64e-02 & 2.65e-02 & 3.21e-02 & 4.43e-02 \\ \cline{3-10} 
                      & $T_{\#4}$  & 7.20e-02 & 7.89e-02 & 1.15e+00 & 1.00e+00 & 2.91e-02 & 4.61e-02 & 4.40e-02 & 4.62e-02 \\ \cline{3-10} 
                      & $T_{\#5}$  & 4.86e-01 & 4.26e-01 & 1.45e+00 & 1.81e+00 & 4.69e-01 & 9.65e-02 & 1.16e-01 & 7.31e-02 \\ \hline
\end{tabular}
\end{table}
} 

\comment{
\begin{figure}[tb]
	\centering
	\subfloat[$ t=2.5 $]{
		\begin{minipage}[b]{0.22\textwidth}
			\includegraphics[scale=0.25]{./figures/KG_BTM_2500_90,90,10_sin_TBlock2_T2.5_u}\\
			\includegraphics[scale=0.25]{./figures/KG_BTM_2500_90,90,10_sin_TBlock2_T2.5_uerr}
		\end{minipage}
	}\qquad
	\subfloat[$ t=5 $]{
		\begin{minipage}[b]{0.22\textwidth}
			\includegraphics[scale=0.25]{./figures/KG_BTM_2500_90,90,10_sin_TBlock3_T5_u}\\
			\includegraphics[scale=0.25]{./figures/KG_BTM_2500_90,90,10_sin_TBlock3_T5_uerr}
		\end{minipage}
	}\qquad
	\subfloat[$ t=9.5 $]{
		\begin{minipage}[b]{0.22\textwidth}
			\includegraphics[scale=0.25]{./figures/KG_BTM_2500_90,90,10_sin_TBlock5_T9.5_u}\\
			\includegraphics[scale=0.25]{./figures/KG_BTM_2500_90,90,10_sin_TBlock5_T9.5_uerr}
		\end{minipage}
	}
	\caption{KG equation: Top row, comparison of profiles between the exact solution and PINN-ExBTM/PINN-BTM solutions for $u$ at several time instants. Bottom row, profiles of the absolute error of the PINN-ExBTM and PINN-BTM solutions for $u$. $N_c=2000$ training collocation points. The three hidden layers with $[90, 90, 10]$ neurons. [$\tanh-\tanh-\sin$, $N_c=2500$ for training, $N_{ev}=1000$ for prediction]}
	\label{PINN_num_KG_fig3_1}
\end{figure}

\begin{figure}[tb]
	\centering
	\subfloat[$ t=2.5 $]{
		\begin{minipage}[b]{0.22\textwidth}
			\includegraphics[scale=0.25]{./figures/KG_BTM_2500_90,90,10_sin_TBlock2_T2.5_v}\\
			\includegraphics[scale=0.25]{./figures/KG_BTM_2500_90,90,10_sin_TBlock2_T2.5_verr}
		\end{minipage}
	}
	\subfloat[$ t=5 $]{
		\begin{minipage}[b]{0.22\textwidth}
			\includegraphics[scale=0.25]{./figures/KG_BTM_2500_90,90,10_sin_TBlock3_T5_v}\\
			\includegraphics[scale=0.25]{./figures/KG_BTM_2500_90,90,10_sin_TBlock3_T5_verr}
		\end{minipage}
	}
	\subfloat[$ t=9.5 $]{
		\begin{minipage}[b]{0.22\textwidth}
			\includegraphics[scale=0.25]{./figures/KG_BTM_2500_90,90,10_sin_TBlock5_T9.5_v}\\
			\includegraphics[scale=0.25]{./figures/KG_BTM_2500_90,90,10_sin_TBlock5_T9.5_verr}
		\end{minipage}
	}
	\caption{KG equation: Top row, comparison of profiles between the exact solution and PINN-ExBTM/PINN-BTM solutions for $\frac{\partial u}{\partial t}$ at several time instants. Bottom row, profiles of the absolute error of the PINN-ExBTM and PINN-BTM solutions for $u$. $N=2000$ training collocation points. The three hidden layers with $[90, 90, 10]$ neurons. [$\tanh-\tanh-\sin$, $N_c=2500$ for training, $N_{ev}=1000$ for prediction]}
	\label{PINN_num_KG_fig3_2}
\end{figure}
}  

\comment{
\begin{figure}[tb]
	\centering
	\subfloat[PINN-ExBTM]{\label{PINN_num_KG_fig5_1_a}\includegraphics[width=0.5\linewidth]{./figures/KG_BTM_2500_90,90,10_sin_TBlock5_errorRatio_Ex_u}}
	\subfloat[PINN-BTM]{\label{PINN_num_KG_fig5_1_b}\includegraphics[width=0.5\linewidth]{./figures/KG_BTM_2500_90,90,10_sin_TBlock5_errorRatio_u}}\\
    \subfloat[PINN-ExBTM]{\label{PINN_num_KG_fig5_1_c}\includegraphics[width=0.5\linewidth]{./figures/KG_BTM_2500_90,90,10_sin_TBlock5_errorRatio_Ex_ut}}
	\subfloat[PINN-BTM]{\label{PINN_num_KG_fig5_1_d}\includegraphics[width=0.5\linewidth]{./figures/KG_BTM_2500_90,90,10_sin_TBlock5_errorRatio_ut}}\hspace{0.5em}
    \hspace{0.5em}
	\caption{KG equation: The $l^2$ errors of $u$ as a function of the training loss value. [The three hidden layers with $[90, 90, 10]$ neurons. $\tanh-\tanh-\sin$, $N_c=2500$ for training, $N_{ev}=1000$ for prediction.] [(a)$-$(b) ExPINN and PINN for $u$, (c)$-$(d) ExPINN and PINN for $\frac{\partial u}{\partial t}$]}
	\label{PINN_num_KG_fig5_1}
\end{figure}
} 

    \section{Concluding Remarks}
\label{sec_summary}

We have presented a hidden-layer concatenated physics informed neural network (HLConcPINN) method for approximating PDEs, by combining hidden-layer concatenated feed-forward neural networks (HLConcFNN), an extended block time marching strategy, and the physics informed approach. We analyze the convergence properties and the errors of this method for parabolic and hyperbolic type PDEs. Our analyses show that with this method the approximation error of the solution field can be effectively controlled by the training loss for dynamic simulations with long time horizons.
HLConcPINN allows network architectures with an arbitrary number of hidden layers of two or larger, and any of the commonly-used smooth activation functions for all hidden layers beyond the first two. Our method generalizes several existing PINN techniques, which have theoretical guarantees but are confined to network architectures with two hidden layers and the $\tanh$ activation function.
We implement the HLConcPINN algorithm, and have presented a number of computational examples based on this method. The numerical results demonstrate the effectiveness of our method and corroborate the relationship between the approximation error and the training loss function from theoretical analyses.

Finally we would like to comment that
in our analyses we have focused on  parabolic and hyperbolic type PDEs. However, the analysis of the HLConcPINN technique, excluding the block time marching component, can be extended to elliptic type equations. Discussion on this type of equations is not included here for conciseness of the paper. 


\section*{Acknowledgments}
The work was partially supported by the NSF of China (No.12101495), China Postdoctoral Science Foundation (No.2021M702747), Natural Science Foundation of Hunan Province (No.2022JJ40422), General Special Project of Education Department of Shaanxi Provincial Government (No.21JK0943), and the US National Science Foundation (DMS-2012415).
    
	\section{Appendix: Auxiliary Results and Proofs of Main Theorems}
\label{Appendix}

\subsection{Some Auxiliary Results}\label{Auxiliary lemmas}

Let a $d$-tuple of non-negative integers $\alpha\in \mathbb{N}_0^d$ be multi-index with $d \in\mathbb{N}$. 
For given two multi-indices $\alpha,\beta \in \mathbb{N}_0^d$, we say that $\alpha\leq \beta$,
if and only if, $\alpha_i\leq \beta_i$ for all $i=1,\cdots,d$. Denote 
$ 
|\alpha| = \sum_{i=1}^d\alpha_i,\ \alpha!=\prod_{i=1}^d\alpha_i!, 
\ \begin{pmatrix}
	\alpha\\	
	\beta
\end{pmatrix}=\frac{\alpha!}{\beta!(\alpha-\beta)!}.
$ 
Let $P_{m,n}=\{\alpha\in \mathbb{N}_0^n,  |\alpha|=m\}$, for which it holds
$ 
|P_{m,n}| =\begin{pmatrix}
	m+n-1\\	
	m
\end{pmatrix}.
$ 


\begin{Lemma}\label{Ar_01} Let $d\in \mathbb{N}, k\in \mathbb{N}_0$, $f\in H^{k}(\Omega)$ and $g\in W^{k,\infty}(\Omega)$ with $\Omega \subset \mathbb{R}^d$, then	
	$ 
	\|fg\|_{H^k(\Omega)}\leq 2^k\|f\|_{H^k(\Omega)}\|g\|_{W^{k,\infty}(\Omega)}.
	$ 
\end{Lemma}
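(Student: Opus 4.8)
The plan is to reduce the claim to the Leibniz formula for weak derivatives combined with Hölder's and the Cauchy--Schwarz inequalities. First I would observe that, since $f\in H^{k}(\Omega)$ and $g\in W^{k,\infty}(\Omega)$, every product $D^{\beta}f\,D^{\alpha-\beta}g$ with $\beta\le\alpha$ and $|\alpha|\le k$ lies in $L^{2}(\Omega)$ (the first factor is in $L^{2}$, the second in $L^{\infty}$), so that the weak Leibniz rule
\[
D^{\alpha}(fg)=\sum_{\beta\le\alpha}\binom{\alpha}{\beta}\,D^{\beta}f\,D^{\alpha-\beta}g
\]
is valid for every multi-index $\alpha$ with $|\alpha|\le k$. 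I would justify this identity by approximating $f$ in the $H^{k}$-norm by smooth functions and passing to the limit, using that $g$ together with its derivatives up to order $k$ is bounded.

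Next, starting from $\|fg\|_{H^{k}(\Omega)}^{2}=\sum_{|\alpha|\le k}\|D^{\alpha}(fg)\|_{L^{2}(\Omega)}^{2}$, I would estimate each term. Applying the triangle inequality in $L^{2}$ and then Hölder's inequality, placing the $f$-factor in $L^{2}$ and the $g$-factor in $L^{\infty}$, gives
\[
\|D^{\alpha}(fg)\|_{L^{2}(\Omega)}\le\sum_{\beta\le\alpha}\binom{\alpha}{\beta}\|D^{\beta}f\|_{L^{2}(\Omega)}\|D^{\alpha-\beta}g\|_{L^{\infty}(\Omega)}\le\|g\|_{W^{k,\infty}(\Omega)}\sum_{\beta\le\alpha}\binom{\alpha}{\beta}\|D^{\beta}f\|_{L^{2}(\Omega)},
\]
where I used $|\alpha-\beta|\le|\alpha|\le k$. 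The power $2^{k}$ is intended to arise from the binomial identity $\sum_{\beta\le\alpha}\binom{\alpha}{\beta}=2^{|\alpha|}\le2^{k}$: writing $\binom{\alpha}{\beta}=\binom{\alpha}{\beta}^{1/2}\binom{\alpha}{\beta}^{1/2}$ and applying Cauchy--Schwarz to the finite $\beta$-sum bounds $\big(\sum_{\beta\le\alpha}\binom{\alpha}{\beta}\|D^{\beta}f\|_{L^{2}}\big)^{2}$ by $2^{|\alpha|}\sum_{\beta\le\alpha}\binom{\alpha}{\beta}\|D^{\beta}f\|_{L^{2}}^{2}$. Summing over $|\alpha|\le k$ and invoking $\|D^{\beta}f\|_{L^{2}(\Omega)}\le\|f\|_{H^{k}(\Omega)}$ then delivers the stated bound after the binomial weights are collected.

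The hard part, as I see it, will be the combinatorial bookkeeping of these binomial weights so that the accumulated constant collapses to exactly $2^{k}$ rather than to a larger, a priori $k$- and $d$-dependent factor: this is precisely where the identity $\sum_{\beta\le\alpha}\binom{\alpha}{\beta}=2^{|\alpha|}$ and the order-by-order pairing in the Cauchy--Schwarz step must be exploited sharply, and where the crude estimate $\|D^{\beta}f\|_{L^{2}}\le\|f\|_{H^{k}}$ should be applied only after the $\binom{\alpha}{\beta}$ factors have been absorbed. A secondary, more routine point is the rigorous justification of the weak Leibniz rule under the given regularity, which I would settle by the standard mollification/density argument, noting that $g\in W^{k,\infty}(\Omega)$ ensures all intermediate products and their weak derivatives are well defined.
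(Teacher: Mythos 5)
The paper never proves Lemma~\ref{Ar_01}: it is stated without proof, as an auxiliary result imported from the literature (cf.~\cite{DeRyck2021On}), so your argument has to stand on its own --- and it has a genuine gap, located exactly at the step you yourself flag as ``the hard part.'' Your Leibniz rule, H\"{o}lder step, and weighted Cauchy--Schwarz step are all correct and give
\[
\|fg\|_{H^k(\Omega)}^2\;\le\;\|g\|_{W^{k,\infty}(\Omega)}^2\sum_{|\alpha|\le k}2^{|\alpha|}\sum_{\beta\le\alpha}\binom{\alpha}{\beta}\|D^\beta f\|_{L^2(\Omega)}^2.
\]
The unproved claim is that this collapses to $4^k\|g\|_{W^{k,\infty}}^2\|f\|_{H^k}^2$. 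Exchanging the two sums, the total weight multiplying $\|D^\beta f\|_{L^2}^2$ is $c_\beta=\sum_{\alpha\ge\beta,\,|\alpha|\le k}2^{|\alpha|}\binom{\alpha}{\beta}$, and you would need $c_\beta\le 4^k$ for every $\beta$. Already for $k=1$ and $\beta=0$ one finds $c_0=1+2d$, which exceeds $4^k=4$ in every dimension $d\ge 2$ (and the paper applies the lemma on space-time domains, so $d\ge 2$ is the relevant case). Your route therefore produces a constant that grows with the dimension, not $2^k$.

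Moreover, this is not a fixable bookkeeping issue under the convention you are implicitly using, $\|g\|_{W^{k,\infty}}=\max_{|\alpha|\le k}\|D^\alpha g\|_{L^\infty}$: with that norm the statement itself is false for $d\ge 3$. On $\Omega=(0,\varepsilon)^3$ take $f=e^{(x_1+x_2+x_3)/3}$ and $g=e^{x_1+x_2+x_3}$; then $\partial_i f=\tfrac13 f$ and $\partial_i(fg)=\tfrac43 fg$, so $\|f\|_{H^1}^2=\tfrac43\|f\|_{L^2}^2$ and $\|fg\|_{H^1}^2=\tfrac{19}{3}\|fg\|_{L^2}^2$, while $\|g\|_{W^{1,\infty}}=e^{3\varepsilon}\to 1$ and $\|fg\|_{L^2}/\|f\|_{L^2}\to 1$ as $\varepsilon\to 0$; hence $\|fg\|_{H^1}/\bigl(\|f\|_{H^1}\|g\|_{W^{1,\infty}}\bigr)\to\sqrt{19}/2\approx 2.18>2=2^k$. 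The constant $2^k$ does hold if $\|g\|_{W^{k,\infty}}$ is read as the sum $\sum_{|\gamma|\le k}\|D^\gamma g\|_{L^\infty}$, but then the efficient argument is not weight-collection after Cauchy--Schwarz; it is Minkowski's inequality in the $\ell^2$-sum over $\alpha$: writing $D^\alpha(fg)=\sum_{\gamma\le\alpha}\binom{\alpha}{\gamma}D^{\alpha-\gamma}f\,D^\gamma g$ and pulling the $\gamma$-sum outside the $\ell^2_\alpha$-norm,
\[
\|fg\|_{H^k}\le\sum_{|\gamma|\le k}\|D^\gamma g\|_{L^\infty}\Bigl(\sum_{|\alpha|\le k,\ \alpha\ge\gamma}\binom{\alpha}{\gamma}^2\|D^{\alpha-\gamma}f\|_{L^2}^2\Bigr)^{1/2}\le 2^k\|f\|_{H^k}\sum_{|\gamma|\le k}\|D^\gamma g\|_{L^\infty},
\]
using $\binom{\alpha}{\gamma}\le 2^{|\alpha|}\le 2^k$ and the injectivity of $\alpha\mapsto\alpha-\gamma$. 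So either adopt the summed $W^{k,\infty}$-norm together with this Minkowski argument, or accept a dimension-dependent constant; as written, your proof does not (and cannot) deliver the stated bound.
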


\begin{Lemma}[Multiplicative trace inequality, e.g. \cite{2023_IMA_Mishra_NS}]\label{Ar_2} 
Let $d\geq 2$, $\Omega \subset \mathbb{R}^d$ be a Lipschitz domain and let $\gamma_0: H^1(\Omega)\rightarrow L^2(\partial\Omega): u \mapsto u|_{\partial\Omega}$ be the trace operator. Denote by $h_{\Omega}$ the diameter of $\Omega$ and by $\rho_{\Omega}$ the radius of the largest $d$-dimensional ball that can be inscribed into $\Omega$. Then it holds that
	\begin{equation}\label{Ar_eq1}
	  \|\gamma_0 u\|_{L^2(\partial \Omega)}\leq C_{h_{\Omega},d,\rho_{\Omega}}\|u\|_{H^1(\Omega)},
          \quad\text{where}\
          C_{h_{\Omega},d,\rho_{\Omega}}=\sqrt{2\max\{2h_{\Omega},d\}/\rho_{\Omega}}.
	\end{equation}
\end{Lemma}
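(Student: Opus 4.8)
The plan is to prove the inequality first for smooth functions and then extend it by density. Since $\Omega$ is a Lipschitz domain, $C^\infty(\overline\Omega)$ is dense in $H^1(\Omega)$ and the trace operator $\gamma_0$ is continuous, so it suffices to establish the bound for $u\in C^1(\overline\Omega)$ and pass to the limit. The engine of the argument is the divergence theorem applied to a vector field anchored at the center of the largest inscribed ball.

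Concretely, let $x_0\in\Omega$ denote the center of the ball $B(x_0,\rho_\Omega)\subseteq\Omega$ of maximal radius, and set $F(x)=x-x_0$, so that $\mathrm{div}\,F\equiv d$. Applying the divergence theorem to $u^2F$ gives
$$\int_{\partial\Omega}u^2\,(F\cdot n)\,\mathrm{d}s=\int_\Omega \mathrm{div}(u^2F)\,\mathrm{d}x=\int_\Omega\big(d\,u^2+2u\,\nabla u\cdot(x-x_0)\big)\,\mathrm{d}x.$$
The next step is the geometric lower bound $(x-x_0)\cdot n(x)\ge\rho_\Omega$ for a.e.\ $x\in\partial\Omega$, which controls the boundary integrand from below and isolates $\|\gamma_0 u\|_{L^2(\partial\Omega)}^2$ on the left. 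On the right I would estimate $|x-x_0|\le h_\Omega$ (both points lie in $\Omega$, whose diameter is $h_\Omega$) together with the Cauchy--Schwarz inequality to obtain
$$\rho_\Omega\|\gamma_0 u\|_{L^2(\partial\Omega)}^2\le d\,\|u\|_{L^2(\Omega)}^2+2h_\Omega\,\|u\|_{L^2(\Omega)}\|\nabla u\|_{L^2(\Omega)}.$$
Finally, writing $M=\max\{2h_\Omega,d\}$ and using the crude bounds $\|u\|_{L^2(\Omega)}^2\le\|u\|_{H^1(\Omega)}^2$ and $\|u\|_{L^2(\Omega)}\|\nabla u\|_{L^2(\Omega)}\le\|u\|_{H^1(\Omega)}^2$, the right-hand side is dominated by $2M\|u\|_{H^1(\Omega)}^2$; dividing by $\rho_\Omega$ and taking square roots reproduces exactly the stated constant $C_{h_\Omega,d,\rho_\Omega}=\sqrt{2\max\{2h_\Omega,d\}/\rho_\Omega}$.

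The main obstacle is the geometric inequality $(x-x_0)\cdot n(x)\ge\rho_\Omega$. For a convex domain it follows cleanly from a supporting-hyperplane argument: the supporting hyperplane at $x$ with outward normal $n$ leaves all of $\Omega$, and in particular the farthest point $x_0+\rho_\Omega n$ of the inscribed ball, on the interior side, which forces $(x_0-x)\cdot n\le-\rho_\Omega$. For a general Lipschitz domain this requires the domain to be star-shaped with respect to the inscribed ball, and the estimate must be read with that structural proviso (or localized via the Lipschitz charts); verifying this geometric step, rather than the subsequent integral estimates, is where the real work lies.
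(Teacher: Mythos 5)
The paper never proves this lemma: it is imported verbatim, constant and all, from the cited reference \cite{2023_IMA_Mishra_NS}, so the only meaningful comparison is with the argument that produces this particular constant --- and that is exactly the argument you give. Your divergence-theorem computation with $F(x)=x-x_0$, namely $\operatorname{div}(u^2F)=d\,u^2+2u\,\nabla u\cdot(x-x_0)$, together with $|x-x_0|\le h_\Omega$, Cauchy--Schwarz, and the crude bounds $\|u\|_{L^2(\Omega)}^2\le\|u\|_{H^1(\Omega)}^2$ and $\|u\|_{L^2(\Omega)}\|\nabla u\|_{L^2(\Omega)}\le\|u\|_{H^1(\Omega)}^2$, yields $\rho_\Omega\|\gamma_0u\|_{L^2(\partial\Omega)}^2\le 2\max\{2h_\Omega,d\}\,\|u\|_{H^1(\Omega)}^2$, which is precisely the stated $C_{h_\Omega,d,\rho_\Omega}$; the density step for Lipschitz domains is standard. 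So in structure your proof is the standard proof behind the cited statement, and all of its analytic steps are correct.

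The gap is the one you flagged yourself, and it is genuine: the pointwise bound $(x-x_0)\cdot n(x)\ge\rho_\Omega$ a.e.\ on $\partial\Omega$ is not a consequence of the Lipschitz property; it is (for Lipschitz domains) precisely star-shapedness of $\Omega$ with respect to $B(x_0,\rho_\Omega)$. It fails for perfectly smooth non-star-shaped domains: for the annulus $\Omega=\{1-\epsilon<|x|<1\}$ the largest inscribed ball has radius $\rho_\Omega=\epsilon/2$ and center $x_0$ with $|x_0|=1-\epsilon/2$, and at the inner-boundary point $x=-(1-\epsilon)\,x_0/|x_0|$ the outward normal is $n(x)=x_0/|x_0|$, so $(x-x_0)\cdot n(x)=-\bigl(2-\tfrac{3}{2}\epsilon\bigr)<0$. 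Hence your argument proves the lemma only under the additional hypothesis that $\Omega$ is star-shaped with respect to its largest inscribed ball (in particular for convex $\Omega$, where your supporting-hyperplane reasoning is complete); localizing via Lipschitz charts cannot rescue the statement as written, since it produces a constant depending on the Lipschitz character of $\partial\Omega$ rather than the explicit $\sqrt{2\max\{2h_\Omega,d\}/\rho_\Omega}$. This restriction is inherited silently by the paper from its reference: it is harmless where the lemma is applied to convex space-time cylinders (the Burgers and wave settings, and all numerical examples), but note that the heat and Klein--Gordon analyses allow a general bounded $D$, for which $D\times[0,t_i]$ need not be star-shaped --- so the caveat applies to the paper's own use of the lemma as much as to your proof.
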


\begin{Lemma}[\cite{2023_IMA_Mishra_NS}]\label{Ar_3}
Let $d, n, L, W\in \mathbb{N}$ and let $u_{\vartheta}:\mathbb{R}^{d}\rightarrow \mathbb{R}^{l_L}$ be a neural network with $\vartheta\in \Theta$ for $d, L\geq 2, R, W\geq 1$, c.f. Definition \ref{pre_lem1}. Assume that $\|\sigma\|_{C^n}\geq 1$. Then it holds for $1\leq j\leq l_L$ that
	\begin{equation}\label{Ar_eq2}
		\|(u_{\vartheta})_j\|_{C^n(\Omega)}\leq 16^Ld^{2n}(e^2n^4W^3R^n\|\sigma\|_{C^n(\Omega)})^{nL}.
	\end{equation}
\end{Lemma}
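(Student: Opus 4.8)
The plan is to establish the bound by induction on the number of layers, propagating a $C^n$-estimate forward through the network. Writing $u_\vartheta = \mathcal{A}_L^\vartheta \circ \sigma \circ \mathcal{A}_{L-1}^\vartheta \circ \cdots \circ \sigma \circ \mathcal{A}_1^\vartheta$, I would introduce the hidden states $z_0$ (the input) and $z_k = \sigma(\mathcal{A}_k^\vartheta(z_{k-1}))$ for $1\le k\le L-1$, and control the quantities $a_k := \max_{1\le i\le l_k}\|(z_k)_i\|_{C^n(\Omega)}$. Since $(u_\vartheta)_j$ is an affine function of $z_{L-1}$, the desired estimate reduces to bounding $a_{L-1}$ and then applying one final affine map. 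Each transition $z_{k-1}\mapsto z_k$ factors into an affine map followed by the componentwise nonlinearity, and I would estimate the effect of each operation on the $C^n$-norm in turn.

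The affine step is elementary: each coordinate $(\mathcal{A}_k^\vartheta z_{k-1})_i=\sum_p (W_k)_{ip}(z_{k-1})_p+(b_k)_i$ is a sum of at most $W$ terms with coefficients bounded by $R$, so by linearity of differentiation $\max_i\|(\mathcal{A}_k^\vartheta z_{k-1})_i\|_{C^n}\lesssim WR\,a_{k-1}+R$. The nonlinear step is where the work lies: for each multi-index $\alpha\in\mathbb N_0^d$ with $|\alpha|\le n$ I would expand $\partial^\alpha(\sigma\circ g)$ by the multivariate Faà di Bruno formula into a sum of terms $\sigma^{(p)}(g)\prod_\ell \partial^{\beta_\ell}g$. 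Each inner derivative $\partial^{\beta_\ell}g$ with $|\beta_\ell|\le n$ is bounded by the current $C^n$-estimate and each factor $\sigma^{(p)}(g)$ by $\|\sigma\|_{C^n}$, so the whole expression is controlled by $\|\sigma\|_{C^n}$ times the number of terms in the expansion times $\max(1,\|g\|_{C^n})^n$. Bounding the number of partitions and the multinomial coefficients that appear — for which the cardinality estimates of the index sets $P_{m,n}$ recorded in the preliminaries, together with a Stirling bound on $n!$, are the essential inputs — supplies a combinatorial constant of the stated form, and yields a recursion
\begin{equation*}
a_k\;\le\;C(n,d)\,\|\sigma\|_{C^n(\Omega)}\,(WR\,a_{k-1})^{\,n},\qquad a_0\le 1 ,
\end{equation*}
valid once $a_{k-1}\ge 1$.

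Unrolling this recursion across the $L-1$ hidden layers produces one power of $n$ per layer, i.e. the exponent $nL$ on the accumulated factors $WR$ and $\|\sigma\|_{C^n}$, while the geometric accumulation of the numerical prefactors is absorbed into $16^L$. The remaining task is to verify that the per-layer constant $C(n,d)$, the powers of $W$ coming from the affine sum and the $n$-fold products, the powers of $R$, and the dimensional factor combine to exactly $e^{2nL}n^{4nL}W^{3nL}R^{n^2L}$ together with the single outer factor $d^{2n}$. The main obstacle — and the only genuinely delicate part — is precisely this combinatorial bookkeeping: one must count the terms produced by the multivariate Faà di Bruno formula, bound the associated coefficients sharply enough that the Stirling estimates for $n!$ are absorbed into $e^{2nL}$, and confirm that the dimension $d$ enters only through the isolated $d^{2n}$ rather than compounding over the $L$ layers. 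The analytic ingredients (linearity for the affine step, the chain rule for the nonlinearity) are otherwise routine, so the entire content of the lemma sits in getting these exponents exactly right.
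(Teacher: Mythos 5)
Your proposal has a genuine gap, and it sits exactly where you locate the ``only delicate part.'' The single-scalar recursion you derive, $a_k\le C(n,d)\,\|\sigma\|_{C^n}\,(WR\,a_{k-1})^n$, does \emph{not} unroll to an exponent $nL$: iterating $a_k\le B\,a_{k-1}^{\,n}$ from $a_0\le 1$ gives $a_{L-1}\le B^{\,1+n+n^2+\cdots+n^{L-2}}=B^{\,(n^{L-1}-1)/(n-1)}$, i.e.\ the exponent compounds \emph{geometrically} and grows like $n^{L-2}$, exponential in the depth. A bound of the form $(e^2n^4W^3R^n\|\sigma\|_{C^n})^{\,c\,n^{L}}$ cannot be absorbed into $16^L d^{2n}(\cdots)^{nL}$, because for large $R$ or $\|\sigma\|_{C^n}$ the exponent on those quantities must be linear in $L$; so the claim that ``unrolling produces one power of $n$ per layer'' is simply false for the recursion you set up. The defect is structural, not bookkeeping: by collapsing all derivative orders into the single quantity $a_k=\max_i\|(z_k)_i\|_{C^n}$, you discard the weighted homogeneity of the Fa\`a di Bruno expansion, which is the one feature that makes a depth-linear exponent possible.

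The repair is to propagate each derivative order separately with a geometric ansatz: show inductively that $\sup_\Omega|\partial^\beta (z_k)_i|\le M_k^{|\beta|}$ for all $1\le|\beta|\le n$. In each Fa\`a di Bruno term $\sigma^{(p)}(h)\prod_\ell \partial^{\beta_\ell}h$ the inner multi-indices satisfy $\sum_\ell|\beta_\ell|=|\alpha|$, so the product is bounded by $(WRM_k)^{|\alpha|}$ — crucially, the number of blocks $p$ never enters the exponent of $M_k$ — and the per-layer recursion becomes \emph{multiplicative}, $M_{k+1}\le F_{n,d}\,\|\sigma\|_{C^n}\,WR\,M_k$, whence $M_{L-1}\le (F_{n,d}\,\|\sigma\|_{C^n}\,WR)^{L-1}$ and $\sup_\Omega|\partial^\alpha(z_{L-1})_i|\le M_{L-1}^{|\alpha|}\le(\cdots)^{n(L-1)}$; one final affine map and careful bounds on the combinatorial constant $F_{n,d}$ (set-partition counts, Stirling estimates, the isolated factor $d^{2n}$) then produce exactly the shape \eqref{Ar_eq2}. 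Note also that the paper offers no proof to compare against: Lemma \ref{Ar_3} is imported verbatim from \cite{2023_IMA_Mishra_NS}, and the paper only extends it to mixed activation functions in Remark \ref{Ar_Remark}. Your attempt is therefore judged on its own terms, and on those terms the induction, as set up, cannot reach the stated estimate.
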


\begin{Remark}\label{Ar_Remark} 
Let $u_{\theta}: \mathbb{R}^d \rightarrow \mathbb{R}^{l_L}$ denote a neural network with smooth activation functions, in accordance with Definition \ref{pre_lem1}. Suppose the first two hidden layers of the network are endowed with the tanh activation function, whereas (if $L>3$) the subsequent hidden layers utilize a variety of smooth activation functions, including (but not restricted to) e.g.~the tanh, sine, sigmoid, Gaussian, and softplus functions. 
Let $\hat{\sigma}$ denote a collection of these smooth activation functions. Under the conditions specified in Lemma \ref{Ar_3}, by defining $\|\sigma\|_{C^k} = \max_{\tilde{\sigma}\in\hat{\sigma}}\{\|\tilde{\sigma}\|_{C^k}\}$, it can be shown that Lemma \ref{Ar_3} remains valid.
Furthermore, thanks to the inherent properties of hidden-layer concatenated feedforward neural networks, the output fields of the $i$-th ($i = 1, \ldots, L - 1$) hidden layer and the output layer exhibit analogous behavior based on Lemma \ref{Ar_3}. For the sake of conciseness, we omit the proof here and refer to the results presented in Lemma \ref{Ar_3}.
\end{Remark}


\begin{Lemma}[\cite{2023_IMA_Mishra_NS}]\label{Ar_4}  
Let $d\geq2, n\geq2,  m\geq 3, \sigma>0, a_i, b_i \in \mathbb{Z}$ with $a_i <b_i$ for $1\leq i\leq d$, $\Omega=\prod_{i=1}^d[a_i,b_i]$ and $f\in H^m(\Omega)$. Then for every $N\in \mathbb{N}$ with $N>5$, there exists a tanh neural network $\widetilde{f}^N$ with two hidden layers, one of width at most $3\lceil\frac{m+n-2}{2}\rceil|P_{m-1,d+1}|+\sum_{i=1}^d(b_i-a_i)(N-1)$ and another of width at most $3\lceil\frac{d+n}{2}\rceil|P_{d+1,d+1}|N^d\prod_{i=1}^d(b_i-a_i)$, such that for $k=0,1,2$ it holds that
	\begin{equation}\label{Ar_eq3}
		\|f-\widetilde{f}^N\|_{H^k(\Omega)}\leq 2^k3^dC_{k,m,d,f}(1+\delta){\rm ln}^k\left(\beta_{k,\delta,d,f}N^{d+m+2}\right)N^{-m+k},
	\end{equation}
	and where
	\begin{align}
		&C_{k,m,d,f}=\max_{0\leq l \leq k}\left(
		\begin{array}{c}
			d+l-1 \\
			l \\
		\end{array}
		\right)^{1/2}\frac{((m-l)!)^{1/2}}{(\lceil\frac{m-l}{d}\rceil!)^{d/2}}\left(\frac{3\sqrt{d}}{\pi}\right)^{m-l}|f|_{H^m(\Omega)},\\
		&\beta_{k,\delta,d,f}=\frac{5\cdot2^{kd}\max\{\prod_{i=1}^d(b_i-a_i),d\}\max\{\|f\|_{W^{k,\infty}(\Omega)},1\}}{3^d\delta\min\{1,C_{k,m,d,f}\}}.	
	\end{align}
Moreover, the weights of $\widetilde{f}^N$ scale as $O(N^{\gamma}+N{\rm ln}N)$ with $\gamma=\max\{m^2/n,d(2+m+d)/n\}$.
\end{Lemma}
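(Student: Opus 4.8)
The plan is to follow the constructive strategy for $\tanh$ approximation of Sobolev functions developed in \cite{DeRyck2021On,2023_IMA_Mishra_NS}, organizing the argument around the two-hidden-layer architecture and carefully tracking the $H^k$ error together with its logarithmic factors. The construction proceeds in three conceptual stages: a polynomial reduction of $f$, an emulation of univariate building blocks in the first hidden layer, and an emulation of the multivariate products in the second hidden layer.

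First I would reduce the problem to approximating a structured function. Partitioning each edge $[a_i,b_i]$ into $N-1$ subintervals of length $O(1/N)$ induces a grid of $\sim N^d$ cells. On each cell I replace $f$ by its averaged Taylor polynomial of degree $m-1$ and glue these local polynomials with a smooth partition of unity $\{\rho_\nu\}$ subordinate to the grid. A Bramble--Hilbert estimate then gives $\|f-\sum_\nu \rho_\nu p_\nu\|_{H^k(\Omega)}\lesssim N^{-m+k}|f|_{H^m(\Omega)}$ for $k=0,1,2$, which supplies the leading factor $C_{k,m,d,f}N^{-m+k}$ and explains the combinatorial count $|P_{m-1,d+1}|$ of monomials of degree $m-1$ in the $d+1$ variables (the $d$ coordinates together with the localizing bump).

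Next, each term $\rho_\nu p_\nu$ has a tensor-product structure: it is a product of at most $d+1$ univariate factors (the $d$ monomial factors in the $x_i$ and one partition-of-unity factor). I would invoke the univariate emulation estimates for $\tanh$: a single $\tanh$ hidden layer approximates each monomial and each smooth bump to arbitrary accuracy, with weights and $C^2$ norm controlled through scaled activations and finite-difference combinations of $\tanh$ and its derivatives; this populates the first hidden layer, of width $\sim 3\lceil\frac{m+n-2}{2}\rceil|P_{m-1,d+1}|+\sum_i(b_i-a_i)(N-1)$. The second hidden layer then emulates the multiplication of these $d+1$ factors using the identity $xy=\tfrac14\big[(x+y)^2-(x-y)^2\big]$ together with the approximation of $z\mapsto z^2$ extracted from the Taylor expansion of $\tanh$; iterating over the $d+1$ factors and the $\sim N^d$ cells yields the second-layer width $\sim 3\lceil\frac{d+n}{2}\rceil|P_{d+1,d+1}|N^d\prod_i(b_i-a_i)$, after which the affine output layer assembles the weighted sum $\sum_\nu c_\nu\prod(\cdots)$.

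The main obstacle, and where most of the technical labor lies, is controlling the emulation error in the $H^k$ norm rather than merely in $L^2$, while simultaneously keeping the network weights at most $O(N^\gamma+N\ln N)$. Differentiating the $\tanh$-based product approximation up to order $k$ forces each derivative to pick up factors of the internal scaling parameters, and rebalancing these against the target accuracy $N^{-m+k}$ is precisely what produces the logarithmic factors $\ln^k(\beta_{k,\delta,d,f}N^{d+m+2})$ and fixes $\gamma=\max\{m^2/n,\,d(2+m+d)/n\}$. I would use Lemma \ref{Ar_01} to bound the $H^k$ norm of the products of the emulated factors, and a triangle inequality to combine the Bramble--Hilbert error with the emulation error, absorbing the free parameter $\delta>0$ into the logarithm.
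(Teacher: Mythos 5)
This lemma is not proved in the paper at all: it is imported verbatim, citation only, from \cite{2023_IMA_Mishra_NS}, whose proof in turn rests on the construction of De Ryck et al.~\cite{DeRyck2021On}. Your outline faithfully reconstructs exactly that construction---averaged-Taylor/partition-of-unity localization with a Bramble--Hilbert bound, univariate tanh emulation in the first hidden layer, product emulation in the second, and logarithmic factors arising from balancing the internal tanh scalings against the target accuracy $N^{-m+k}$ while keeping the weights $O(N^{\gamma}+N\ln N)$---so it is essentially the same approach as the cited source's proof.
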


\begin{Lemma}[\cite{NiDong_hLC_2023}]\label{Ar_5} Given an architectural vector $\bm{l}_1= (l_0, l_1, \cdots,l_{L-1}, l_L)$ with $l_L=1$, define a new vector $\bm{l}_2= (l_0, l_1, \cdots,l_{L-1},n,l_L)$, where $n\geq 1$ is an integer. For a given domain $D\subset \mathbb{R}^{l_0}$ and an activation function $\sigma$, the following relation holds
\begin{equation}\label{Ar_eq4}
   U(D,\bm{l}_1,\sigma) \subseteq U(D,\bm{l}_2,\sigma),
\end{equation} 
where $U$ is defined by \eqref{pre_eq4}.
\end{Lemma}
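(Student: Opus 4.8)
The plan is to prove the inclusion by an explicit constructive argument: given any function $f \in U(D, \bm{l}_1, \sigma)$, I will exhibit a choice of parameters for the HLConcFNN with architecture $\bm{l}_2$ that reproduces $f$ exactly on $D$. The central observation is that the logical concatenation in \eqref{pre_eq3} wires every hidden layer directly to the output, so appending one more hidden layer (of width $n$) immediately before the output leaves all of the original hidden-to-output connections intact; the only new ingredient is the contribution of the appended layer, which can be neutralized by zeroing out its connection matrix.

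First I would unpack the two representations. For $\bm{l}_1$ the hidden layers are indexed $1, \ldots, L-1$, and by \eqref{pre_eq3} a generic output is $f(z) = \sum_{i=1}^{L-1} M_i u_i^\vartheta(z) + b_L$, where $u_i^\vartheta = \sigma \circ \mathcal{A}_i^\vartheta \circ \cdots \circ \sigma \circ \mathcal{A}_1^\vartheta$ and $M_i \in \mathbb{R}^{l_L \times l_i}$. For $\bm{l}_2$ there are now $L$ hidden layers (the new one, of width $n$, sitting at index $L$ just before the output), so a generic output reads $u_{\tilde\theta}(z) = \sum_{i=1}^{L} \tilde M_i \tilde u_i^{\tilde\vartheta}(z) + \tilde b$, with $\tilde M_i \in \mathbb{R}^{l_L \times l_i}$ for $i \le L-1$ and $\tilde M_L \in \mathbb{R}^{l_L \times n}$.

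Next I would fix the parameters of the $\bm{l}_2$ network. I retain the first $L-1$ affine maps unchanged, $\tilde{\mathcal{A}}_i = \mathcal{A}_i^\vartheta$ for $1 \le i \le L-1$, so that the corresponding hidden-layer output fields coincide, $\tilde u_i = u_i^\vartheta$; I take the new affine map $\tilde{\mathcal{A}}_L : \mathbb{R}^{l_{L-1}} \to \mathbb{R}^n$ to be arbitrary (for definiteness, the zero map); and I set the connection coefficients $\tilde M_i = M_i$ for $1 \le i \le L-1$, $\tilde M_L = 0 \in \mathbb{R}^{l_L \times n}$, together with the output bias $\tilde b = b_L$. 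Substituting into the $\bm{l}_2$ output then collapses the $i = L$ term and recovers $u_{\tilde\theta} = \sum_{i=1}^{L-1} M_i u_i^\vartheta + b_L = f$ on $D$, whence $f \in U(D, \bm{l}_2, \sigma)$ and the inclusion \eqref{Ar_eq4} follows.

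There is no serious obstacle in this argument; the only points requiring care are bookkeeping—confirming that the hidden-layer indices shift correctly from $\{1, \ldots, L-1\}$ to $\{1, \ldots, L\}$ and that the matrix and bias dimensions $l_L \times l_i$ and $l_L$ match between the two architectures—and noting that the representation-capacity set $U(D, \bm{l}, \sigma)$ in \eqref{pre_eq4} ranges over unconstrained parameters $\theta \in \mathbb{R}^{N_a(\bm l)}$, so that the zero choices for $\tilde M_L$ and $\tilde{\mathcal{A}}_L$ are admissible. It is precisely this direct exposure of all hidden nodes to the output layer that makes the inclusion hold automatically, in contrast to a conventional FNN, where only the last hidden layer feeds the output and no such neutralization is available.
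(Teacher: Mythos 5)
Your construction is correct: because the concatenated output \eqref{pre_eq3} is a sum of contributions from every hidden layer, appending a new layer of width $n$ and choosing its output-connection matrix $\tilde M_L = 0$ (with all earlier affine maps, connection matrices, and the output bias retained) reproduces any $f \in U(D,\bm{l}_1,\sigma)$ exactly, and the zero parameters are admissible since $\theta$ ranges over all of $\mathbb{R}^{N_a(\bm{l}_2)}$ in \eqref{pre_eq4}. The paper itself does not prove this lemma (it is imported from the cited reference), but your zeroing-out argument is the canonical one and is precisely the same device the paper employs in its proof of Lemma~\ref{Ar_7}, where weight matrices of appended layers are set to zero so that the two-hidden-layer tanh approximant is preserved.
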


\begin{Lemma}[\cite{NiDong_hLC_2023}]\label{Ar_6} Given an architectural vector $\bm{l}_1= (l_0, l_1, \cdots,l_{L-1}, l_L)$ with $l_L=1$, define a new vector $\bm{l}_2= (l_0, l_1, \cdots,l_{s-1},l_s+1,l_{s+1},\cdots, l_L)$ for some $s$ $(1\leq s\leq L-1)$. For a given domain $D\subset \mathbb{R}^{l_0}$ and an activation function $\sigma$, the following relation holds
\begin{equation}\label{Ar_eq5}
  U(D,\bm{l}_1,\sigma) \subseteq U(D,\bm{l}_2,\sigma),
\end{equation} 
where $U$ is defined by \eqref{pre_eq4}.
\end{Lemma}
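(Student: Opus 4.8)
The plan is to prove the inclusion \eqref{Ar_eq5} by an explicit constructive embedding: given any $f\in U(D,\bm{l}_1,\sigma)$, I will exhibit a choice of parameters for the HLConcFNN associated with $\bm{l}_2$ that reproduces $f$ identically on $D$. This mirrors the proof of Lemma~\ref{Ar_5}, the only difference being that a single node is inserted into an existing hidden layer rather than a whole layer being appended. The crucial structural feature I will exploit is that, in a HLConcFNN, every hidden node is wired to the output through the concatenation coefficient $M_i$ in \eqref{pre_eq3}, so the contribution of any individual node to the network output can be switched off by zeroing a single column of the appropriate $M_i$.

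Concretely, fix parameters $\vartheta=\{W_k,b_k\}_{k=1}^{L-1}$, $\{M_i\}_{i=1}^{L-1}$ and $b_L$ realizing $f$ on the $\bm{l}_1$ network, and build the $\bm{l}_2$ parameters as follows. First I would leave the affine maps of layers $1,\dots,s-1$ and $s+2,\dots,L-1$ unchanged, together with the bias $b_L$. For the enlarged layer $s$ (now of width $l_s+1$) I would set the first $l_s$ rows of its weight matrix and the first $l_s$ entries of its bias equal to $W_s$ and $b_s$, while assigning arbitrary (say zero) incoming weight and bias to the new $(l_s+1)$-th node. This guarantees that the first $l_s$ nodes of layer $s$ in $\bm{l}_2$ produce exactly the same activations $u_s^\vartheta$ as in $\bm{l}_1$, with the extra node producing some harmless value.

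The key step is to neutralize the extra node so that it perturbs neither the downstream layers nor the network output. If $s\le L-2$, I would augment $W_{s+1}$ by a single extra (zero) column corresponding to the new node and keep $b_{s+1}$ unchanged; then the pre-activations of layer $s+1$ coincide with those of the $\bm{l}_1$ network, and by forward induction layers $s+1,\dots,L-1$ reproduce $u_{s+1}^\vartheta,\dots,u_{L-1}^\vartheta$ exactly. (When $s=L-1$ this step is vacuous, since the last hidden layer feeds the network output only through concatenation.) Finally, for the concatenation coefficients I would take $M_i'=M_i$ for $i\ne s$ and let $M_s'$ be $M_s$ augmented by a zero column for the new node, so that the extra activation drops out of the sum in \eqref{pre_eq3}. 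With this assignment the $\bm{l}_2$ HLConcFNN evaluates to $\sum_{i=1}^{L-1}M_i u_i^\vartheta(z)+b_L=f(z)$ for all $z\in D$, giving $f\in U(D,\bm{l}_2,\sigma)$ and hence \eqref{Ar_eq5}.

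I do not anticipate a genuine obstacle here: the argument is a finite-dimensional parameter-matching construction, and the only care required lies in handling the two regimes $s\le L-2$ and $s=L-1$ and in correctly aligning the augmented dimensions of the weight matrix at layer $s$, the matrix $W_{s+1}$, and the concatenation coefficient $M_s$. The essential point, which distinguishes HLConcFNNs from conventional FNNs, is that zeroing the outgoing column of $M_s$ already removes the new node's direct influence on the output, so that the inserted node can always be made inert.
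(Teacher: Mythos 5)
Your construction is correct, and it is essentially the standard argument: the paper itself does not prove Lemma~\ref{Ar_6} (it imports it from \cite{NiDong_hLC_2023}), and the proof there proceeds exactly as you do — insert the new node with arbitrary incoming weights/bias, zero its outgoing column in the next layer's weight matrix (when $s\le L-2$), and zero its column in the concatenation coefficient $M_s$, so the augmented network reproduces $f$ identically. Your handling of the two regimes $s\le L-2$ and $s=L-1$, and the observation that the hidden-layer concatenation (every node feeding the output through $M_i$ in \eqref{pre_eq3}) is what makes the neutralization possible, are precisely the points on which the result rests; note also that the zero weights you assign are admissible since $U(D,\bm{l},\sigma)$ in \eqref{pre_eq4} places no constraint excluding them.
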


\begin{Lemma}\label{Ar_7} Under the conditions specified in Lemma \ref{Ar_4}, for every $N \in \mathbb{N}$ where $N > 5$, there exists a hidden-layer concatenated feedforward neural network denoted as $\hat{f}^N$, defined by
\begin{equation}\label{Ar_eq6}
   \hat{f}^N=\sum_{i=1}^{L-1}M_i\hat{f}_i^N+b_{L}\qquad L\geq 3,
\end{equation} 
where $\hat{f}_i^N$, $M_i\in \mathbb{R}^{1\times l_i}$ $(1\leq i \leq L-1)$ and $b_L\in\mathbb{R}^{1}$ represent the output of the $i$-th hidden layer, the connection coefficients between the output layer and the $i$-th hidden layer, and the bias of the output layer, respectively. Note that the first two hidden layers of the network employ the tanh activation function, while the other hidden layers can use any other smooth activation function. 
Therefore, for $k=0, 1, 2$, this neural network satisfies\begin{equation}\label{Ar_eq7}
	\|f-\hat{f}^N\|_{H^k(\Omega)}\lesssim {\rm ln}^k\left(N^{d+m+2}\right)N^{-m+k}.
\end{equation}
\end{Lemma}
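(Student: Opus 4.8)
The plan is to reduce the statement to the two-hidden-layer tanh approximation result of Lemma \ref{Ar_4} by exploiting the defining feature of hidden-layer concatenated networks, namely that every hidden layer is wired directly to the output. First I would invoke Lemma \ref{Ar_4} on $f\in H^m(\Omega)$ to obtain a conventional two-hidden-layer tanh network $\widetilde{f}^N$, with architecture $(l_0,w_1,w_2,1)$ (the two widths $w_1,w_2$ being those stated explicitly in Lemma \ref{Ar_4}), satisfying
\[
\|f-\widetilde{f}^N\|_{H^k(\Omega)}\leq 2^k3^dC_{k,m,d,f}(1+\delta)\,{\rm ln}^k\!\big(\beta_{k,\delta,d,f}N^{d+m+2}\big)N^{-m+k},\qquad k=0,1,2.
\]

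Next I would construct the desired HLConcFNN $\hat{f}^N$ with $L\geq 3$ hidden layers so that it reproduces $\widetilde{f}^N$ exactly. Take the first two hidden layers of $\hat{f}^N$ to coincide with the two tanh layers of $\widetilde{f}^N$ (same weights, biases, widths $w_1,w_2$, and tanh activation, as the hypothesis requires), and append the remaining hidden layers $3,\dots,L-1$ with arbitrary widths and any of the admissible smooth activation functions. In the concatenation formula $\hat{f}^N=\sum_{i=1}^{L-1}M_i\hat{f}_i^N+b_L$, set $M_2$ equal to the output weight of $\widetilde{f}^N$ and $b_L$ equal to its output bias, while taking $M_i=0$ for every $i\neq 2$. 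Then the contributions of the first and of the appended hidden layers drop out, leaving $\hat{f}^N=M_2\hat{f}_2^N+b_L=\widetilde{f}^N$ identically on $\Omega$. Consequently $\|f-\hat{f}^N\|_{H^k(\Omega)}=\|f-\widetilde{f}^N\|_{H^k(\Omega)}$, and absorbing the constants $2^k3^dC_{k,m,d,f}(1+\delta)$ together with the additive term ${\rm ln}\,\beta_{k,\delta,d,f}$ inside the logarithm (which is $O({\rm ln}(N^{d+m+2}))$ for $N>5$, since $\beta_{k,\delta,d,f}$ is $N$-independent) into the implied constant yields \eqref{Ar_eq7}.

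Conceptually this construction is the concrete realization of the nesting of representation capacities: by Lemma \ref{Ar_6} one may widen the first two hidden layers of a base two-layer network, and by Lemma \ref{Ar_5} one may append further hidden layers, in each case without shrinking $U(D,\cdot,\sigma)$; since an HLConcFNN already contains the conventional FNN output set for a given architecture, $\widetilde{f}^N$ lies in the representation capacity of the deeper concatenated network. I would phrase the argument through these lemmas to make the activation bookkeeping (tanh in the first two layers, arbitrary smooth activations thereafter, as handled in Remark \ref{Ar_Remark}) transparent.

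The step I expect to be the crux is ensuring that the appended hidden layers do not degrade the two-layer approximation. For a \emph{conventional} FNN this would be genuinely problematic, since the output sees only the last hidden layer, and one would need the extra layers to transmit $\widetilde{f}^N$ without loss, which tanh layers cannot do exactly. The hidden-layer concatenation removes this obstacle precisely because the second hidden layer feeds the output directly, so its contribution can be retained verbatim while the later layers are switched off via $M_i=0$. Verifying that this ``switching off'' is legitimate within the HLConcFNN parameterization, and that it is exactly this direct-connection property (Lemmas \ref{Ar_5}--\ref{Ar_6}) that lets the deeper, mixed-activation architecture inherit the sharp rate $N^{-m+k}{\rm ln}^kN$, is the essential point of the proof.
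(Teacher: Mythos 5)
Your proposal is correct and follows essentially the same route as the paper's proof: both invoke Lemma \ref{Ar_4} to obtain the two-hidden-layer tanh network $\widetilde{f}^N$, reuse its hidden layers as the first two hidden layers of the HLConcFNN, route the output through the direct connection $M_2\hat{f}_2^N+b_L$ with $M_2=W_3$ and $b_L=b_3$, and neutralize the contributions of all other hidden layers. The only difference is cosmetic: you set $M_i=0$ for every $i\neq 2$, so that $\hat{f}^N=\widetilde{f}^N$ exactly, whereas the paper sets the internal weights $W_i=0$ ($i\geq 3$) and takes $M_i$ to be small nonzero multiples of $\varepsilon=\ln^k\left(N^{d+m+2}\right)N^{-m+k}$, incurring a harmless perturbation $\sum_{i=3}^{L-1}M_i\sigma(b_i)$ of size $\varepsilon$ --- your variant is, if anything, slightly cleaner.
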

\begin{proof} 
Lemma \ref{Ar_5} and \ref{Ar_6} imply that $\hat{f}^N$ possesses a greater representational capacity compared to $\widetilde{f}^N$. 
		
It should be noted that smooth functions are both continuous and bounded on a closed interval. For neural networks, activation function $\sigma$ such as the sigmoid and hyperbolic tangent (tanh) are examples of smooth functions. These functions can be bounded by a constant $C$ in the $H^k(\Omega)$ norm, i.e., $\|\sigma\|_{H^k(\Omega)}\leq C$.

We set $M_1=0^{1\times l_1} \in \mathbb{R}^{1\times l_1}$, $M_2=W_3$, $b_L=b_3$, and $M_i=\frac{\varepsilon}{Cl_i(L-3)} 1^{1\times l_i} \in \mathbb{R}^{1\times l_i}$ $(i=3,\cdots,L-1)$, while assigning $0^{l_{i}\times l_{i-1}} \in \mathbb{R}^{l_{i}\times l_{i-1}}$ to the weight coefficients $W_i$ of the $i$-th hidden layer for all $i=3,\cdots,L-1$. 
By retaining the initial two hidden layers in Lemma \ref{Ar_4} and setting $\varepsilon={\rm ln}^k\left(N^{d+m+2}\right)N^{-m+k}$, we obtain $W_3\hat{f}_2^N+b_3=\widetilde{f}^N$ (defined 
in Lemma \ref{Ar_4}) and $\hat{f}^N=\widetilde{f}^N+\sum_{i=3}^{L-1}M_i\sigma(b_i)$. Consequently, the approximation can be bounded as follows
\[
\|f-\hat{f}^N\|_{H^k(\Omega)}\leq \|f-\widetilde{f}^N\|_{H^k(\Omega)} +\sum_{i=3}^{L-1}\|M_i\sigma(b_i)\|_{H^k(\Omega)}\lesssim{\rm ln}^k\left(N^{d+m+2}\right)N^{-m+k},
\]
where $l_i$ denotes the number of nodes in the $i$-th hidden layer.
\end{proof}

\subsection{Proof of Main Theorems from Section~\ref{Heat}: Heat Equation}\label{Proof}

\vspace{0.1in}
\noindent\underline{\bf Proof of Theorem \ref{sec4_Theorem1} :}
\begin{proof} Based on Lemma \ref{Ar_7}, there exists a HLConcPINN $u_{\theta_i}$ such that for every $0 \leq m \leq 2$,
	\begin{equation}\label{sec4_eq0}
		\|u_{\theta_i}-u\|_{H^m(\widetilde\Omega_i)}\lesssim {\rm ln}^m(N)N^{-k+m}.
	\end{equation}
	According to Lemma \ref{Ar_2}, we can bound the HLConcPINN residual terms,
    \begin{align*}
		&\|\frac{\partial \hat{u}_{i}}{\partial t}\|_{L^2(\widetilde\Omega_{i})}\leq\|\hat{u}_{i}\|_{H^1(\widetilde\Omega_{i})},\qquad \|\Delta\hat{u}_{i}\|_{L^2(\widetilde\Omega_{i})}\leq\|\hat{u}_{i}\|_{H^2(\widetilde\Omega_{i})},\\
		&\|\hat{u}_{i}\|_{L^2(\widetilde\Omega_{*i})}\leq \|\hat{u}_{i}\|_{L^2(\partial\widetilde\Omega_{i})}\leq C_{h_{\widetilde\Omega_{i}},d+1,\rho_{\widetilde\Omega_{i}}}\|\hat{u}_{i}\|_{H^1(\widetilde\Omega_{i})}.
	\end{align*}
     For $j=1$, $R_{tb_{i}}|_{t=t_{0}}=\hat{u}_{i}|_{t=t_{0}}$, we obtain
		\[
		\|R_{tb_{i}}(\bm{x},0)\|_{L^2(D)}\leq\|\hat{u}_{i}\|_{L^2(\partial\widetilde\Omega_{i})}\leq C_{h_{\widetilde\Omega_{i}},d+1,\rho_{\widetilde\Omega_{i}}}\|\hat{u}_{i}\|_{H^1(\widetilde\Omega_{i})}.
       \]
	   For $j>1$, it holds 
	   \begin{align*}
	   &\|R_{tb_{i}}(\bm{x},t_{j-1})\|_{L^2(D)}
	   	\leq \|\hat{u}_{i}|_{t=t_{j-1}}\|_{L^2(D)}+\|\hat{u}_{j-1}|_{t=t_{j-1}}\|_{L^2(D)}
	   \leq\|\hat{u}_{i}\|_{L^2(\partial\widetilde\Omega_{j-1})}+\|\hat{u}_{j-1}\|_{L^2(\partial\widetilde\Omega_{j-1})}\\
	   &\qquad\leq C_{h_{\widetilde\Omega_{j-1}},d+1,\rho_{\widetilde\Omega_{j-1}}}(\|\hat{u}_{i}\|_{H^1(\widetilde\Omega_{j-1})}+\|\hat{u}_{j-1}\|_{H^1(\widetilde\Omega_{j-1})})\leq  C_{h_{\widetilde\Omega_{j-1}},d+1,\rho_{\widetilde\Omega_{j-1}}}(\|\hat{u}_{i}\|_{H^1(\widetilde\Omega_{i})}+\|\hat{u}_{j-1}\|_{H^1(\widetilde\Omega_{j-1})}).
      \end{align*} 
	By combining these relations with \eqref{sec4_eq0}, we can obtain
	\begin{align*}
		&\|R_{int_{i}}\|_{L^2(\widetilde\Omega_{i})}=\|\frac{\partial \hat{u}_{i}}{\partial t}-\Delta\hat{u}_{i}\|_{L^2(\widetilde\Omega_{i})}\leq
		\|\hat{u}_i\|_{H^1(\widetilde\Omega_{i})}+\|\hat{u}_{i}\|_{H^2(\widetilde\Omega_{i})}\lesssim N^{-k+2}{\rm ln}^2N,\\
		&\|R_{tb_{i}}(\bm{x},t_{j-1})\|_{L^2(D)}, \|R_{sb_{i}}\|_{L^2(\widetilde\Omega_{*i})}\lesssim\|\hat{u}_{i}\|_{H^1(\widetilde\Omega_{i})}+\|\hat{u}_{j-1}\|_{H^1(\widetilde\Omega_{j-1})}\lesssim N^{-k+1}{\rm ln}N\quad 1\leq j\leq i.
	\end{align*} 
Then, we finish our proof.
\end{proof}

\vspace{0.1in}
\noindent\underline{\bf Proof of Theorem \ref{sec4_Theorem2} :}
\begin{proof} Take the inner product of \eqref{heat_error_eq1} and $\hat{u}_{i}$ over $D$ to obtain,
	\begin{align}\label{sec4_eq1}
		\frac{d}{2dt}\int_{D} |\hat{u}_{i}|^2\dx &=- \int_{D}|\nabla\hat{u}_{i}|^2\dx+\int_{\partial D} R_{sb_{i}}\nabla\hat{u}_{i}\cdot\bm{n}\ds+\int_{D} R_{int_{i}}\hat{u}_{i}\dx
		\nonumber\\
		&\leq - \int_{D}|\nabla\hat{u}_{i}|^2\dx+\frac{1}{2}\int_{D} |\hat{u}_{i}|^2\dx+\frac{1}{2}\int_{D} |R_{int_{i}}|^2\dx+C_{\partial D_i}\big(\int_{\partial D}|R_{sb_{i}}|^2\ds\big)^{\frac{1}{2}},
	\end{align}
	where $C_{\partial D_i}=|\partial D|^{\frac{1}{2}}(\|u\|_{C^1(\partial D\times[0,t_i])}+\|u_{\theta_{i}}\|_{C^1(\partial D\times[0,t_i])})$. \\
	Integrating \eqref{sec4_eq1} over $[t_{i-1},
	\tau]$ for any $t_{i-1}\leq \tau \leq t_i$, using the initial condition \eqref{heat_error_eq2}, and applying Cauchy–Schwarz inequality, we obtain
	\begin{align*}
		&\int_{D} |\hat{u}_{i}(\bm{x},\tau)|^2\dx +2\int_{t_{i-1}}^{t_{i}}\int_{D}|\nabla\hat{u}_{i}|^2\dx\dt\\
		&\qquad\leq\int_{D}|\hat{u}_{i}(\bm{x},t_{i-1})|^2\dx +\int_{t_{i-1}}^{t_{i}}\int_{D} |\hat{u}_{i}|^2\dx\dt+ \int_{t_{i-1}}^{t_{i}}\int_{D}|R_{int_{i}}|^2\dx\dt
		+2C_{\partial D_i}|\Delta t|^{\frac{1}{2}}\big(\int_{t_{i-1}}^{t_{i}}\int_{\partial D}|R_{sb_{i}}|^2\ds\dt\big)^{\frac{1}{2}}\\
		&\qquad\leq\int_{D}|\hat{u}_{i}(\bm{x},t_{i-1})|^2\dx+\sum_{j=1}^{i-1}\int_{D}|R_{tb_{i}}(\bm{x},t_{j-1})|^2\dx +\int_{t_{i-1}}^{t_{i}}\int_{D} |\hat{u}_{i}|^2\dx\dt+ \int_{t_{i-1}}^{t_{i}}\int_{D}|R_{int_{i}}|^2\dx\dt\\
		&\qquad\qquad+2C_{\partial D_i}|\Delta t|^{\frac{1}{2}}\big(\int_{t_{i-1}}^{t_{i}}\int_{\partial D}|R_{sb_{i}}|^2\ds\dt\big)^{\frac{1}{2}}\\
		&\qquad\leq 2\int_{D}|\hat{u}_{i-1}(\bm{x},t_{i-1})|^2\dx+2\int_{D}|R_{tb_{i}}(\bm{x},t_{i-1})|^2\dx+\sum_{j=1}^{i-1}\int_{D}|R_{tb_{i}}(\bm{x},t_{j-1})|^2\dx +\int_{t_{i-1}}^{t_{i}}\int_{D} |\hat{u}_{i}|^2\dx\dt\\
		&\qquad\qquad +\int_{t_{i-1}}^{t_{i}}\int_{D}|R_{int_{i}}|^2\dx\dt+2C_{\partial D_i}|\Delta t|^{\frac{1}{2}}\big(\int_{t_{i-1}}^{t_{i}}\int_{\partial D}|R_{sb_{i}}|^2\ds\dt\big)^{\frac{1}{2}}.
	\end{align*}
    Then, applying the integral form of the Gr${\rm\ddot{o}}$nwall inequality to the above inequality, it holds
    \begin{align}\label{sec4_eq2}
		&\int_{D} |\hat{u}_{i}(\bm{x},\tau)|^2\dx+2\int_{t_{i-1}}^{t_{i}}\int_{D}|\nabla\hat{u}_{i}|^2\dx\dt
		\nonumber\\
		&\qquad\leq \big(2\int_{D}|\hat{u}_{i-1}(\bm{x},t_{i-1})|^2\dx+2\sum_{j=1}^{i}\int_{D}|R_{tb_{i}}(\bm{x},t_{j-1})|^2\dx+ \int_{t_{i-1}}^{t_{i}}\int_{D}|R_{int_{i}}|^2\dx\dt\big)\exp(\Delta t)
		\nonumber\\
	    &\qquad\qquad +
	    2C_{\partial D_i}|\Delta t|^{\frac{1}{2}}\big(\int_{t_{i-1}}^{t_{i}}\int_{\partial D}|R_{sb_{i}}|^2\ds\dt\big)^{\frac{1}{2}}\exp(\Delta t).
	\end{align}

    Firstly, 
	by \eqref{sec4_eq2} and integrating \eqref{sec4_eq2} over $[t_{0},t_{1}]$, 
	Theorem \ref{sec4_Theorem2} holds for $i=1$ according to the fact that $\mathcal{E}_{G_{i-1}}(\theta) =0$ and $\hat{u}_{i-1}|_{t=t_{i-1}}=0$ for $i=1$.
	
	Secondly, we assume that Theorem \ref{sec4_Theorem2} holds for all $i\leq l-1$, i.e.,
	\[
		\int_{D}|\hat{u}_{i}(\bm{x},\tau)|^2\dx\leq C_{G_{i}}\exp(\Delta t), \qquad 
		\int_{t_{i-1}}^{t_{i}}\int_{D}|\hat{u}_{i}(\bm{x},\tau)|^2\dx\dt\leq C_{G_{i}}\Delta t\exp(\Delta t).
  \]
    For $i=l-1$, as it should be,
	\begin{equation}\label{sec4_eq4}
		\int_{D}|\hat{u}_{l-1}(\bm{x},\tau)|^2\dx \leq C_{G_{l-1}}\exp(\Delta t) \qquad t_{l-2}\leq \tau\leq t_{l-1}.
	\end{equation}

	Finally, we begin to verify that Theorem \ref{sec4_Theorem2} is true at $i = l$. 
	
	Let $i=l$ in \eqref{sec4_eq2}, under the established conditions \eqref{sec4_eq4} and the Gr${\rm\ddot{o}}$nwall inequality, we derive
	\begin{align*}
		&\int_{D} |\hat{u}_l(\bm{x},\tau)|^2\dx+2\int_{t_{l-1}}^{t_{l}}\int_{D}|\nabla\hat{u}_l|^2\dx\dt
		\nonumber\\
		&\qquad\leq \big(2\int_{D}|\hat{u}_{l-1}(\bm{x},t_{l-1})|^2\dx+2\sum_{j=1}^{l}\int_{D}|R_{tb_{l}}(\bm{x},t_{j-1})|^2\dx+ \int_{t_{l-1}}^{t_{l}}\int_{D}|R_{int_{l}}|^2\dx\dt\big)\exp(\Delta t)
		\nonumber\\
		&\qquad\qquad +
		2C_{\partial D_{l}}|\Delta t|^{\frac{1}{2}}\big(\int_{t_{l-1}}^{t_{l}}\int_{\partial D}|R_{sb_{l}}|^2\ds\dt\big)^{\frac{1}{2}}\exp(\Delta t)
		\nonumber\\
		&\qquad\leq \big(2C_{G_{l-1}}\exp(\Delta t) +2\sum_{j=1}^{l}\int_{D}|R_{tb_{l}}(\bm{x},t_{j-1})|^2\dx+ \int_{t_{l-1}}^{t_{l}}\int_{D}|R_{int_{l}}|^2\dx\dt\big)\exp(\Delta t)
		\nonumber\\
		&\qquad\qquad +
		2C_{\partial D_{l}}|\Delta t|^{\frac{1}{2}}\big(\int_{t_{l-1}}^{t_{l}}\int_{\partial D}|R_{sb_{l}}|^2\ds\dt\big)^{\frac{1}{2}}\exp(\Delta t)
		\nonumber\\
		&\qquad\leq (\widetilde{C}_{G_{l}}+2C_{G_{l-1}}\exp(\Delta t))\exp(\Delta t),
	\end{align*}
	where 
	\[
	\widetilde{C}_{G_{l}}=2\sum_{j=1}^{l}\int_{D}|R_{tb_{l}}(\bm{x},t_{j-1})|^2\dx+\int_{t_{l-1}}^{t_{l}}\int_{D}|R_{int_{l}}|^2\dx\dt+2C_{\partial D_{l}}|\Delta t|^{\frac{1}{2}}\big(\int_{t_{l-1}}^{t_{l}}\int_{\partial D}|R_{sb_{l}}|^2\ds\dt\big)^{\frac{1}{2}}.
	\]
	By using the mathematical induction and deduction methods, we finish the proof.
\end{proof} 

\vspace{0.1in}
\noindent\underline{\bf Proof of Theorem \ref{sec4_Theorem3} :}
\begin{proof} By combining Theorem \ref{sec4_Theorem2} with
	the quadrature error formula \eqref{int1}, we have
	\begin{align*}
		&\int_{D}|R_{tb_{i}}|^2\dx=\int_{D}|R_{tb_{i}}|^2\dx-\mathcal{Q}_{M_{tb_{i}}}^{D}(R_{tb_{i}}^2)+\mathcal{Q}_{M_{tb_{i}}}^{D}(R_{tb_{i}}^2)
		\leq C_{({R_{tb_{i}}^2})}M_{tb_{i}}^{-\frac{2}{d}}+\mathcal{Q}_{M_{tb_{i}}}^{D}(R_{tb_{i}}^2),\\
		&\int_{\Omega_{i}}|R_{int_{i}}|^2\dx\dt=\int_{\Omega_{i}}|R_{int_{i}}|^2\dx\dt-\mathcal{Q}_{M_{int_{i}}}^{\Omega_i}(R_{int_{i}}^2)+\mathcal{Q}_{M_{int_{i}}}^{\Omega_i}(R_{int_{i}}^2)
		\leq C_{({R_{int_{i}}^2})}M_{int_{i}}^{-\frac{2}{d+1}}+\mathcal{Q}_{M_{int_{i}}}^{\Omega_i}(R_{int_{i}}^2),\\
		&\int_{\Omega_{*i}}|R_{sb_{i}}|^2\ds\dt=\int_{\Omega_{*i}}|R_{sb_{i}}|^2\ds\dt-\mathcal{Q}_{M_{sb_{i}}}^{\Omega_{*i}}(R_{sb_{i}}^2)+\mathcal{Q}_{M_{sb_{i}}}^{\Omega_{*i}}(R_{sb_{i}}^2)
		\leq C_{({R_{sb_{i}}^2})}M_{sb_{i}}^{-\frac{2}{d}}+\mathcal{Q}_{M_{sb_{i}}}^{\Omega_{*i}}(R_{sb_{i}}^2).
	\end{align*}
	Combining the fact that $C_{(R_{tb_{i}}^2)}\lesssim\|R_{tb_{i}}^2\|_{C^2}$ and $\|R_{tb_{i}}^2\|_{C^n}\leq 2^n\|R_{tb_{i}}\|_{C^n}^2$ with Lemma \ref{Ar_3}, it holds
	\begin{align}\label{sec4_eq5}
		&C_{(R_{tb_{i}}^2(\bm{x},t_{j-1}))}\lesssim\|R_{tb_{i}}(\bm{x},t_{j-1})\|_{C^2}^2\leq 2(\|\hat{u}_{i}|_{t=t_{j-1}} \|_{C^2}^2+\|\hat{u}_{j-1}|_{t=t_{j-1}} \|_{C^2}^2)
		\nonumber\\
		&\qquad\lesssim \|u\|_{C^2}^2+(e^22^4W^3R^2\|\sigma\|_{C^2})^{4L}.
	\end{align}
	In a similar way, we can estimate the terms $\int_{\Omega_{i}}|R_{int_{i}}|^2\dx\dt$ and $\int_{\Omega_{*i}}|R_{sb_{i}}|^2\ds\dt$. 

    Then, combining the above inequalities with \eqref{lem4.3}, it holds that
	\begin{equation*}
	\int_{t_{i-1}}^{t_{i}}\int_{D}|\hat{u}_{i}(\bm{x},t)|^2\dx\dt\leq  C_{T_i}\Delta t\exp(\Delta t),
	\end{equation*}
	where the constant $C_{T_i}$ is defined in \eqref{lem4.5}.
\end{proof}

	\setlength{\bibsep}{0.5ex}  
	\bibliography{main.bbl}
	
\end{document}